\numberwithin{equation}{section}
\def\phii{\widetilde{\varphi}}
\def\eps{\epsilon}
\def\si{\sigma}
\def\varep{\varepsilon}
\def\D{\mathcal{D}}
\newtheorem{theorem}{Theorem}[section]
\newtheorem{lemma}[theorem]{Lemma}
\newtheorem{corollary}[theorem]{Corollary}
\newtheorem{proposition}[theorem]{Proposition}
\newtheorem{definition}[theorem]{Definition}
\newtheorem{remark}[theorem]{Remark}
\begin{document}

\title[The Euler--Maxwell system for electrons: global solutions in $2D$]{The Euler--Maxwell system for electrons: global \\ solutions in $2D$}

\author{Yu Deng}
\address{Princeton University}
\email{yudeng@math.princeton.edu}

\author{Alexandru D. Ionescu}
\address{Princeton University}
\email{aionescu@math.princeton.edu}

\author{Benoit Pausader}
\address{Princeton University}
\email{pausader@math.princeton.edu}

\thanks{The first author was supported in part by a Jacobus Fellowship from Princeton University. 
The second author was supported in part by NSF grant DMS-1265818. The third author was supported in part by NSF grants 
DMS-1069243 and DMS-1362940, and a Sloan fellowship.}

\begin{abstract}
A basic model for describing plasma dynamics is given by the Euler-Maxwell system, in which 
compressible ion and electron fluids interact with their own self-consistent electromagnetic field. In this paper we consider the ``one-fluid'' Euler--Maxwell model for electrons, in 2 spatial dimensions, and prove global 
stability of a constant neutral background. 

In 2 dimensions our global solutions have relatively slow (strictly less than $1/t$) pointwise decay and the system has a 
large (codimension 1) set of quadratic time resonances. The issue in such a situation is to solve the ``division problem''. 
To control the solutions we use a combination of improved energy estimates in the Fourier space, 
an $L^2$ bound on an oscillatory integral operator, and Fourier analysis of the Duhamel formula. 
\end{abstract}

\maketitle

\setcounter{tocdepth}{1}

\tableofcontents

\section{Introduction}\label{Eqs}

A plasma is a collection of fast-moving charged particles. It is believed that more than 90\% of the matter in the universe 
is in the form of plasma, from sparse intergalactic plasma, to the interior of stars to neon signs. In addition, 
understanding of the instability formation in plasma is one of the main challenges for nuclear fusion, in which charged 
particles are accelerated at high speed to create energy. We refer to \cite{Bit,DelBer} for physics references in book form.

At high temperature and velocity, ions and electrons in a plasma tend to become two separate fluids due to their different 
physical properties (inertia, charge). One of the basic fluid models for describing plasma dynamics is the 
so-called \textquotedblleft two-fluid\textquotedblright\ model, in which two compressible ion and electron fluids interact with 
their own self-consistent electromagnetic field. In 3 dimensions, nontrivial global solutions of the full two-fluid system were 
constructed for the first time by Guo--Ionescu--Pausader \cite{GuIoPa} (small irrotational perturbations of constant solutions), 
following earlier partial results in simplified models in \cite{Guo,GuPa,GeMa,IoPa2}. See also the introduction of \cite{GuIoPa} for a longer discussion of the Euler--Maxwell system in 3D, and its connections to many other models in mathematical physics, such as the Euler--Poisson model, the Zakharov system, the KdV, and the NLS.

A simplification of the full system is the ``one-fluid" model, which accounts for the interaction of electrons and the electromagnetic field, but neglects the dynamics of the ion fluid. Under suitable irrotationality assumptions, this model can be reduced to a coupled system of two Klein--Gordon equations with different speeds and no null structure. While global results are classical
in the case of scalar wave and Klein--Gordon equations, see for example \cite{Jo,JoKl, Kl2, KlVf,  Kl, Kl4, Ch, Ch2, Sh, Si, DeFa, DeFaXu, Al, Alin1, Alin2, Alin3}, it was pointed out by Germain \cite{Ge} that there are key 
new difficulties in the case of two Klein--Gordon equations with different speeds. In this case, the classical vector-field  method does not 
seem to work well, and there are large sets of resonances that contribute in the analysis.  

The one-fluid Euler--Maxwell model in 3D was analyzed in \cite{GeMa}, using the ``space-time resonance method'', and the authors proved global existence and scattering, with weak decay like $t^{-1/2}$. 
A more robust result for this problem, which gives time-integrability of the solution in $L^\infty$, for all parameters, 
was obtained by two of the authors in \cite{IoPa2}.

In this paper we consider the one-fluid Euler--Maxwell model\footnote{The simpler one-fluid Euler--Poisson model in 2D, which can be reduced to a single Klein--Gordon equation, was considered previously in \cite{IoPa1} and \cite{LiWu}.} in 2D. As in dimension 3, in the irrotational case this can still be reduced to a quasilinear coupled system of two Klein--Gordon equations with different speeds and no null structure. At the analytical level, one has, of course, all the difficulties of the 3D problem, such as large sets of resonances. In addition, there is one critical new difficulty, namely the slow decay of solutions, as it was observed by Bernicot--Germain \cite{BeGe} that the nonlinear solutions cannot have the ``almost" integrable $1/t$ decay, due to strong resonant quadratic interactions. This slow decay and the presence of large sets of resonances require new ideas to control the growth of the solution over time.

\subsection{The one-fluid Euler--Maxwell system in 2D and the Main Theorem}\label{EMMain} 

For any vector-field $X=(X_1,X_2)$ and any function $f$ defined in a domain of $\mathbb{R}^2$ let
\begin{equation}\label{conventions}
\begin{split}
&\nabla f:=(\partial_1f,\partial_2f),\qquad \nabla^\perp f:=(-\partial_2f,\partial_1f),\\
&\hbox{div}(X):=\partial_1X_1+\partial_2X_2,\qquad\hbox{curl}(X):=\partial_1X_2-\partial_2X_1,\qquad X^\perp:=(-X_2,X_1).
\end{split}
\end{equation}

The Euler--Maxwell system for electrons is the coupled system
\begin{equation}\label{EuMa1}
\begin{cases}
& \partial _{t}n_{e}+\hbox{div}(n_{e}v_{e})=0, \\
& n_{e}m_{e}\left( \partial _{t}v_{e}+v_{e}\cdot \nabla v_{e}\right) +\nabla p_{e}=-n_{e}e\left( E-\frac{bv_{e}^\perp}{c}\right) , \\
& \partial _{t}b+c\cdot\hbox{curl}(E)=0, \\
& \partial _{t}E+c\nabla^\perp b=4\pi en_{e}v_{e},
\end{cases}
\end{equation}
where $n_e,b:\mathbb{R}^2\times I\to\mathbb{R}$ and $v_e,E:\mathbb{R}^2\times I\to\mathbb{R}^2$ are $C^2$ functions, and $e,m_e,c$ are strictly positive constants. The electrons have charge $-e$, density $n_{e}$, mass $m_{e}$, velocity $v_{e}$, 
and pressure $p_{e}$, $c$ denotes the speed of light, and $E,b$ denote the electric and magnetic field. The system has a family of equilibrium solutions $(n_e,v_e,b,E)=(n_0,0,0,0)$, where $n_0>0$ is a constant. The goal of this paper is to investigate their stability properties.

We consider a barotropic pressure law given by
\begin{equation*}
p_e=p_e(n_e),\qquad p_e^\prime>0,
\end{equation*}
where $p_e$ is assumed to be analytic in a neighborhood of $n_0$. We assume that the electric field and the velocity field satisfy the compatibility and irrotationality equations
\begin{equation}\label{compatibility}
\hbox{div}(E)=4\pi e(n_0-n_e),\qquad\hbox{curl}(v_{e})=\frac{e}{m_{e}c}b.
\end{equation}
These two equations are propagated by the dynamic flow if they are satisfied at the initial time.

The system \eqref{EuMa1}-\eqref{compatibility} is a complicated nonlinear system of six scalar evolution equations with two constraints. To study it, we first want to simplify it and reduce it to a system of unconstrained evolution equations (see \eqref{system3prime}). We start by nondimensionalizing the system: let\footnote{$\beta$ is the ``electron plasma frequency'' and $d$ is the ratio of the speed of sound over the speed of light.}
\begin{equation}\label{variables8}
\begin{split}
n_e(x,t):=n_0\cdot (1+\rho(\lambda x,\beta t)),\quad&\quad v_e(x,t):=c\cdot u(\lambda x,\beta t),\\
b(x,t):=c\sqrt{4\pi n_0m_e}\cdot\widetilde{b}(\lambda x,\beta t),\quad&\quad E(x,t):=c\sqrt{4\pi n_0m_e}\cdot\widetilde{E}(\lambda x,\beta t),\\
\lambda:=\frac{1}{c}\sqrt{\frac{4\pi e^2n_0}{m_e}},\quad\beta:=\sqrt{\frac{4\pi e^2n_0}{m_e}},\quad&\quad h'(y):=\frac{1}{m_ec^2}\frac{p'_e(n_0(1+y))}{1+y},\quad d:=h'(0).
\end{split}
\end{equation}
In the physically relevant case $0<d<1$. We can rewrite the equations as
\begin{equation}\label{system2}
\begin{cases}
\partial_t\rho+\hbox{div}[(1+\rho)u]&=0,\\
\partial_tu+u\cdot\nabla u+h'(\rho)\nabla \rho&=-\widetilde{E}+\widetilde{b}u^\perp,\\
\partial_t\widetilde{b}+\hbox{curl}(\widetilde{E})&=0,\\
\partial_t\widetilde{E}+\nabla^\perp\widetilde{b}&=(1+\rho)u.
\end{cases}
\end{equation}

The structure condition (\ref{compatibility}) becomes
\begin{equation}\label{condition}
\hbox{div}(\widetilde{E})+\rho=0,\qquad \hbox{curl}(u)-\widetilde{b}=0.
\end{equation}
Notice that, as a consequence of \eqref{system2},
\begin{equation*}
\partial_t\big(\hbox{div}(\widetilde{E})+\rho\big)=0,\qquad\partial_t\big(\hbox{curl}(u)-\widetilde{b}\big)+\hbox{div}\big(u\cdot (\hbox{curl}(u)-\widetilde{b})\big)=0.
\end{equation*}
Therefore the identities \eqref{condition} are propagated by the flow if they are satisfied at time $t=0$.

In terms of the variables $(\rho,u,\widetilde{E},\widetilde{b})$, our main theorem can be stated as follows:

\begin{theorem}
\label{main} Assume that $\overline{c}>0$, $h:(-\overline{c},\overline{c})\to\mathbb{R}$ is an analytic function, and $d=h'(0)\in(0,1)$. Assume that $(\rho_0,u_0,\widetilde{E}_0,\widetilde{b}_0):\mathbb{R}^2\to\mathbb{R}\times\mathbb{R}^2\times\mathbb{R}^2\times\mathbb{R}$ is small, smooth, and localized data that satisfies the irrotationality assumption
\begin{equation*}
\hbox{div}(\widetilde{E}_0)+\rho_0=0,\qquad \hbox{curl}(u_0)-\widetilde{b}_0=0,
\end{equation*}
and the smallness condition
\begin{equation}
\Vert (\rho_0,u_0,\widetilde{E}_0,\widetilde{b}_0)\Vert_{H^{N_{0}}\cap H_{\Omega}^{N_1}}+\Vert (\rho_0,u_0,\widetilde{E}_0,\widetilde{b}_0)\Vert _{Z}=\eps_{0}\leq \overline{\eps}.
\label{maincond2}
\end{equation}
Here $\overline{\eps}>0$ is sufficiently small, $N_{0},N_1$ are sufficiently large, see Definition \ref{MainZDef} for the precise description of the norms. 

Then there exists a unique global solution $(\rho,u,\widetilde{E},\widetilde{b})\in C([0,\infty ):{H^{N_{0}}}\cap H_{\Omega}^{N_1})$ of the system \eqref{system2}--\eqref{condition} with initial data $(\rho(0),u(0),\widetilde{E}(0),\widetilde{b}(0))=(\rho_0,u_0,\widetilde{E}_0,\widetilde{b}_0)$. Moreover, the system returns to equilibrium, in the sense that for any $t\in [0,\infty)$,
\begin{equation}
\Vert (\rho(t),u(t),\widetilde{E}(t),\widetilde{b}(t))\Vert_{H^{N_{0}}\cap H_{\Omega}^{N_1}}+\sup_{|\alpha |\leq 10}(1+t)^{0.999}\Vert D_{x}^{\alpha }(\rho(t),u(t),\widetilde{E}(t),\widetilde{b}(t))\Vert _{L^{\infty }}\lesssim \eps_{0}.
\label{mainconcl2.1}
\end{equation}
\end{theorem}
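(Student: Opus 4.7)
The plan is to set up a continuity/bootstrap argument on a complex-valued profile obtained after diagonalization. First, using the irrotationality constraints $\hbox{curl}(u)=\widetilde{b}$ and $\hbox{div}(\widetilde{E})=-\rho$, I would reduce the system \eqref{system2} to an equivalent system in two scalar complex unknowns $U_{+},U_{-}$, each satisfying a quasilinear Klein--Gordon equation with respective dispersion relations
\[
\Lambda_{+}(\xi)=\sqrt{1+d|\xi|^{2}},\qquad \Lambda_{-}(\xi)=\sqrt{1+|\xi|^{2}},
\]
and a quadratic-plus-higher-order nonlinearity. Setting $V_{\sigma}(t):=e^{it\Lambda_{\sigma}}U_{\sigma}(t)$ yields the profile formulation. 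The bootstrap fixes $T>0$ and assumes
\[
\Vert (U_{+},U_{-})(t)\Vert_{H^{N_{0}}\cap H^{N_{1}}_{\Omega}}+\Vert (V_{+},V_{-})(t)\Vert_{Z}\le \varepsilon_{1},\qquad t\in[0,T],
\]
where $\varepsilon_{0}\ll\varepsilon_{1}\ll 1$, and aims to recover the same bound with $\varepsilon_{1}$ replaced by $\varepsilon_{0}+\varepsilon_{1}^{3/2}$. Linear dispersive estimates applied to the $Z$-norm deliver the pointwise decay $(1+t)^{-0.999}$ asserted in \eqref{mainconcl2.1}.

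On the energy side, standard paradifferential/commutator estimates commuted with powers of $\Omega=x_{1}\partial_{2}-x_{2}\partial_{1}$ yield
\[
\frac{d}{dt}\Vert U(t)\Vert_{H^{N_{0}}\cap H^{N_{1}}_{\Omega}}^{2}\lesssim \Vert (\nabla u,\nabla\rho,\widetilde{b},\widetilde{E})\Vert_{L^{\infty}}\Vert U(t)\Vert_{H^{N_{0}}\cap H^{N_{1}}_{\Omega}}^{2},
\]
which, combined with the $L^{\infty}$ decay $(1+t)^{-0.999}$, is non-integrable and only controls slow growth $(1+t)^{C\varepsilon_{1}}$. To prevent this loss from compounding with the $Z$-norm argument I would introduce a modified energy computed on the Fourier side, absorbing the most dangerous resonant quadratic interactions by a carefully designed cubic correction; this is the ``improved energy estimate in Fourier space'' mentioned in the abstract.

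The heart of the proof is the $Z$-norm estimate. Starting from the Duhamel formula
\[
\widehat{V}_{\sigma}(\xi,t)=\widehat{V}_{\sigma}(\xi,0)+\sum_{\mu,\nu}\int_{0}^{t}\!\!\int_{\mathbb{R}^{2}} e^{is\Phi_{\sigma\mu\nu}(\xi,\eta)}\,m_{\sigma\mu\nu}(\xi,\eta)\,\widehat{V}_{\mu}(\eta,s)\widehat{V}_{\nu}(\xi-\eta,s)\,d\eta\,ds,
\]
I would localize dyadically in frequency and split the time integrand according to the size of the phase $\Phi_{\sigma\mu\nu}(\xi,\eta)=\Lambda_{\sigma}(\xi)-\mu\Lambda_{\mu}(\eta)-\nu\Lambda_{\nu}(\xi-\eta)$. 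Where $|\Phi|\gtrsim s^{-1+\delta}$, integration by parts in $s$ (a normal form) converts the oscillatory factor into a $1/\Phi$ weight, and the resulting boundary and cubic bulk terms are estimated by standard bilinear/trilinear multiplier methods using the bootstrap hypotheses. Where $|\Phi|\ll s^{-1+\delta}$ the frequency is pinned near the codimension-one resonant curve, and no normal form is available; here I would instead exploit the oscillation in $\eta$ by proving an $L^{2}\to L^{2}$ bound, uniform in a small tubular parameter and with a genuine gain in $s$, for the operator
\[
T_{s}[g](\xi):=\int_{\mathbb{R}^{2}} e^{is\Phi_{\sigma\mu\nu}(\xi,\eta)}\chi(\xi,\eta)\,g(\eta)\,d\eta
\]
restricted to a neighborhood of the resonant curve. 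This is the second key ingredient listed in the abstract, and angular regularity provided by the vector field $\Omega$ is essential to furnish the input transverse to the resonance.

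The main obstacle will be precisely this $Z$-norm bootstrap on and near the resonant curve: the decay rate $(1+t)^{-0.999}$ is below the integrable threshold, normal forms fail on a codimension-one set, and the curve passes through frequencies where the $Z$-norm weight itself is sensitive. Closing the argument therefore requires that the oscillatory-integral operator bound for $T_s$ be strong enough to beat the $(1+t)^{-0.999}$ barrier after one integration in time, which forces a careful tuning of the $Z$-norm (likely a weighted $L^{2}$ in frequency, with additional localization near the resonance). Once the improved $Z$-bound and the improved modified-energy bound are both established, the bootstrap closes, the local solution extends to $[0,\infty)$, and \eqref{mainconcl2.1} follows from linear dispersive estimates applied to $V_{\pm}$.
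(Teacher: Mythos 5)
Your overall architecture (diagonalization into two Klein--Gordon unknowns with speeds $\sqrt{d}$ and $1$, a bootstrap on $H^{N_0}\cap H^{N_1}_\Omega$ plus a $Z$-norm, modified energies, and a Duhamel/normal-form analysis split by the size of $\Phi$) matches the paper, but two steps as you describe them would fail. First, on the energy side, ``absorbing the most dangerous resonant quadratic interactions by a carefully designed cubic correction'' is precisely what cannot be done here: the correction's symbol is $m/\Phi$, and $\Phi$ vanishes on a codimension-one set, so the correction is unbounded exactly where it is needed. The paper's resolution is structural: after the paralinearization of Proposition \ref{ChangeUnknownsProp}, every quadratic term in the energy increment is either nonresonant ($|\Phi|\gtrsim 1$, so normal forms are safe) or strongly semilinear in the sense of \eqref{SSTerm}, i.e.\ the symbol gains a derivative. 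It is this gain, combined with the anticipated almost-optimal decay, that permits restricting to frequencies $\lesssim(1+t)^{\beta}$ and then invoking the $L^2$ bound of Lemma \ref{L2EstLem} --- which is deployed in the energy estimate (Lemma \ref{BootstrapEE1}), not in the dispersive one --- for modulations $|\Phi|\leq(1+t)^{-0.99999}$, with a normal form for the larger modulations. Without identifying this dichotomy your energy argument only yields $(1+t)^{C\varepsilon_1}$ growth and the bootstrap does not close.

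Second, for the $Z$-norm, an $L^2\to L^2$ bound for the phase-localized operator is not the right tool near the space-time resonances: the genuine output there is concentrated on the spheres $|\xi|=\gamma_{1},\gamma_2$, schematically $c(\xi)\varphi_{\leq -m}(|\xi|-\gamma_j)$, which is not small in any unweighted $L^2$-based norm at spatial scale $2^j\approx 2^m$. One must build the norm around the distance to these spheres (the operators $A^\sigma_{n,(j)}$ in Definition \ref{MainZDef}), and the hardest case --- substituting $\partial_s V$, which itself contains coherent outputs concentrated at the resonant spheres, back into the normal-form terms --- requires the precise decomposition of $\partial_tV$ in Lemma \ref{dtfLem} together with the iterated-resonance separation of Proposition \ref{Separation2}, which rules out accidental $2$-cascades. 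Neither ingredient appears in your sketch, and without them the near-resonant part of the Duhamel integral cannot be closed.
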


We remark that under the constraints \eqref{condition}, we may rewrite \eqref{system2} solely in terms of $(u,\widetilde{E})$ as
\begin{equation}\label{system3prime}
\begin{cases}
\partial_tu+h'(\rho)\nabla\rho+\widetilde{E}&=-(1/2)\nabla\vert u\vert^2,\\
\partial_t\widetilde{E}+\nabla^\perp\hbox{curl}(u)-u&=-\hbox{div}(\widetilde{E})u.
\end{cases}
\end{equation}
This is an equivalent formulation of the system, as we can define $(\rho,b)$ through \eqref{condition}.

Theorem \ref{main} can be used to generate smooth, global solutions of the Euler--Maxwell system \eqref{EuMa1} starting from data in a full neighborhood of the constant solution $(n_e,v_e,E,B)=(n_0,0,0,0)$, according to the linear transformations in \eqref{variables8}. 

The rest of the paper is concerned with the proof of Theorem \ref{main}.

\subsection{Challenges and main ingredients}

The system \eqref{ener4} is a quasilinear time-reversible system, with no dissipation and no relaxation effects.\footnote{When dissipation or relaxation is present, one expects stronger decay, even at the level of the $L^2$-norm, see e.g. \cite{ChJeWa,Peng} and the references therein. In our case however, the evolution is time-reversible and we need a different mechanism of decay based on dispersion.} Starting with the seminal works of John, Klainerman, Christodoulou and Shatah \cite{Jo,Kl2,JoKl,KlVf,Kl,Sh,Kl4,Ch,ChKl}, the main strategy to prove long-time regularity, relies on an interplay between 
\begin{enumerate}
\item control of high order energies;
\item dispersion and decay estimates.
\end{enumerate}

In this paper we use a combination of energy estimates and the Fourier transform method to accomplish these goals. The situation we consider here is substantially more difficult than in other quasilinear evolutions, due to the combination of the following  factors:

\setlength{\leftmargini}{1.8em}
\begin{itemize}
\item Less than $1/t$ pointwise decay of the solutions. The decay of the linear solutions of the system of Klein--Gordon equations is $t^{-1}$ as $t\to\infty$ in 2D. However, this optimal pointwise decay cannot be propagated by the nonlinear flow, even in simpler semilinear evolutions, due to the presence of a large set of space-time resonances. This was pointed out by Bernicot--Germain \cite{BeGe} who found a logarithmic loss. In this paper we prove $t^{-1+\kappa}$ pointwise decay of the nonlinear solution $(U_e,U_b)$, for certain $\kappa>0$ small.

\item Large set of time resonances. In certain cases, particularly in dimension $1$, one can overcome the slow pointwise decay using the method of normal forms of Shatah \cite{Sh}. The critical ingredient needed is the absence of time resonances (or at least a 
suitable ``null structure" of the quadratic nonlinearity matching the set of time resonances), see \eqref{Tres} for the precise definition. Our system, however, has a full (codimension 1) set of time resonances\footnote{This situation arises also in semilinear equations and is called the ``division problem". The issue is to control 
bilinear operators defined by singular kernels (denominators) in the Fourier space. This motivated the introduction of the $X^{s,b}$ 
spaces, as an elegant framework for analysis. However, the semilinear  mechanism based on perturbative analysis in $X^{s,b}$ 
spaces does not seem to work in quasilinear problems, due to the unavoidable loss of derivative.}, and no meaningful null 
structures.
\end{itemize}

To address these issues we use a combination of improved energy estimates and Fourier analysis. The natural framework to carry out the analysis is the $(\xi,t)$ space, where $\xi$ is the frequency corresponding to the physical variable and $t$ is the time variable. The analysis we perform here (based essentially on an improved energy identity and Fourier analysis in the $(\xi,t)$ space) is, in certain ways, reminiscent of the type of analysis performed in semilinear problems using the I-method of Colliander--Keel--Staffilani--Takaoka--Tao \cite{CoKeStTaTa}. 

The proof of the main theorem has two main components:

\begin{itemize}

\item Energy estimates to control the increment of high order Sobolev norms and weighted norms. Many of the new difficulties of the problem are present at this stage and are caused by the combination of slow decay (which prevents direct estimates of the energy increment) and large time resonant sets (which prevent the use of global normal forms).    

\item Semilinear dispersive analysis using a suitable $Z$ norm, to provide a precise description of the nonlinear solution at a lower regularity level, including pointwise decay. The main difficulty here is caused by the presence of a large set of space-time resonances and the slow dispersion/decay in dimension 2.  
\end{itemize}
We discuss the main ideas in more detail below.

\subsubsection{Energy estimates} The dynamics of the evolution can be described as a coupled quasilinear system for two complex-valued variables $U_e$ and $U_b$, see \eqref{variables4} for the precise definition. To prove energy estimates we start with a paradifferential reduction of the system, which allows us to isolate the skew-symmetric quasilinear terms and the perturbative nonlinearities. Then we construct suitable energy functionals, which control for example the $H^N$ norm of the solution, and prove energy identities of the form
\begin{equation}\label{eta1}
\partial_tE_N(t)\approx\langle D\rangle^NU\ast\langle D\rangle^NU\ast\langle D\rangle^2U. 
\end{equation}
The cubic terms in the right-hand side do not lose derivatives, so such an identity can be used to prove local regularity. However, in our problem, the solution is expected to have strictly less that $1/t$ pointwise decay (as pointed out in \cite{BeGe}). As a result, an identity like \eqref{eta1} cannot be used directly to control the long-term growth of the high order energy.

To construct long-term solutions we need to understand better the cubic terms in the right-hand side of \eqref{eta1}. By adjusting the definition of the energy functionals, we notice that these terms have special structure. More precisely, when written in the Fourier space, the energy increment $|\mathcal{E}_N(t)-\mathcal{E}_N(0)|$ can be estimated by a sum of space-time integrals of the form
\begin{equation}\label{OPT}
\begin{split}
&\int_0^t\langle T[\langle D\rangle^2V_\mu(s),\langle D\rangle^NV_\nu(s)],\langle D\rangle^NV_\sigma(s)\rangle ds,\\
&\widehat{T[f,g]}(\xi):=\int_{\mathbb{R}^{2}}e^{is\Phi (\xi
,\eta )}m(\xi ,\eta )\widehat{f}(\xi -\eta)\widehat{g}(\eta)\,d\eta.
\end{split}
\end{equation}
Here $V_\sigma(s):=e^{is\Lambda_\sigma}U_\sigma(s)$, $\sigma\in\{e,b\}$, are the {\it{profiles}} of the nonlinear solutions $U_{\sigma}$, see \eqref{variables4}, and the oscillatory phases $\Phi$ are specific to each interaction and are of the form
\begin{equation}\label{ModelPhase}
\Phi(\xi,\eta)=\Lambda_0(\xi)\pm\Lambda_{1}(\xi-\eta)\pm\Lambda_{2}(\eta),\quad\Lambda_j\in\{\Lambda_e,\Lambda_b\}.
\end{equation} 

The key algebraic property is that all the resulting cubic space-time integrals are of one of the following two special types:

\begin{enumerate}
\item Nonresonant type: the quadratic interaction phase is bounded away from $0$, i.e. $\vert\Phi\vert\gtrsim 1$;
\item Strongly semilinear type: the interaction is smoothing, i.e.
\begin{equation}\label{SSTerm}
\vert m(\xi,\eta)\vert\lesssim (1+\vert\xi\vert+\vert\eta\vert)^{-1}.
\end{equation}
\end{enumerate} 
This special structure is connected to the Hamiltonian structure of the problem, in particular to the conserved physical energy defined by $K''=2h'$, $K(0)=K'(0)=0$, and  
\begin{equation}\label{Econserved}
\mathcal{E}_{conserved}:=\int_{\mathbb{R}^2}[K(\rho)+(1+\rho)|u|^2+|\widetilde{E}|^2+|\widetilde{b}|^2]\,dx.
\end{equation}

The contribution of the nonresonant integrals can be estimated using the method of normal forms of Shatah \cite{Sh} (integration by 
parts in time). This is somewhat delicate in quasilinear problems, due to the potential loss of derivatives. However, it has been done 
recently in some cases, for example either by using carefully constructed nonlinear changes of variables 
(as in \cite{WuAG}), or the ``iterated energy method" as in \cite{GeMa}, or the ``paradifferential normal 
form method" as in \cite{AlDe}, or the ``modified energy method" as in \cite{HuIfTa}. The method 
we use here, which is based on energy estimates in the Fourier space in the spirit of the ``I-method", draws inspiration from these works as well as from the previous work of one of the authors \cite{IoPu4}.

The main remaining issue is to control the strongly semilinear terms. This requires a new idea as normal forms cannot be used in this case, because the time resonant sets
\begin{equation}\label{Tres}
\mathcal{R}_{\Phi}:=\{(\xi,\eta)\in\mathbb{R}^2\times\mathbb{R}^2:\,\Phi (\xi,\eta )=0\},
\end{equation} 
are large, codimension 1 sets, for certain phases $\Phi$ as in \eqref{ModelPhase}. We notice however that the gain of one derivative in \eqref{SSTerm}, together with the fact that we anticipate proving almost optimal decay, allows us to restrict to frequencies that are very small relative to the time variable, i.e.
\begin{equation}\label{SemiCase}
|\xi|+|\eta|\lesssim (1+|t|)^\beta,\qquad \beta\ll 1.
\end{equation}

The key observation we need to bound the contribution of the frequencies in \eqref{SemiCase} is that the resonant sets $\mathcal{R}_\Phi$ satisfy a crucial {\it restricted nondegeneracy property}, namely the function
\begin{equation}\label{RNDC}
\Upsilon(\xi,\eta):=\nabla^2_{\xi,\eta}\Phi(\xi,\eta)[\nabla^\perp_\xi\Phi(\xi,\eta),\nabla^\perp_\eta\Phi(\xi,\eta)]
\end{equation}
can vanish only up to finite order on the resonant set $\mathcal{R}_\Phi$. Using this nondegeneracy property and a $TT^\ast$ argument we can show that localized Fourier integral operators of the form
\begin{equation}\label{OPT2}
Lf(\xi):=\int_{\mathbb{R}^2}e^{it\Phi(\xi,\eta)}a(\xi,\eta)\varphi(2^\lambda\Phi(\xi,\eta))f(\eta)d\eta,
\end{equation}
where $1\ll 2^\lambda\approx |t|^{0.999}$, satisfy nontrivial $L^2$ bounds of the form
\begin{equation}\label{OPT3}
\big\Vert Lf\big\Vert_{L^2}\lesssim 2^{-\lambda}2^{-\lambda/250}\Vert f\Vert_{L^2}.
\end{equation}
See Lemma \ref{L2EstLem} for a precise version at a suitable level of generality. The point is the strong (better than $|t|^{-1}$) gain in the $L^2$ bound.

Given these ingredients we can finish the proof of the energy estimates: we decompose the strongly semilinear integrals in \eqref{OPT} dyadically over the size of the modulation $|\Phi(\xi,\eta)|$. Then we estimate the integral corresponding to small modulation $|\Phi(\xi,\eta)|\leq (1+|t|)^{-0.99999}$ using the $L^2$ bound \eqref{OPT3}. To estimate the higher modulation contributions we integrate by parts in time again (normal forms) and gain time integrability; the potential loss of derivatives is not an issue here because of \eqref{SemiCase}.

The energy analysis in this paper is simplified by the fact that we anticipate proving almost optimal $|t|^{-1+\kappa}$ pointwise decay for the nonlinear solution, for very small values of $\kappa$. This allows us to take $\beta$ very small in \eqref{SemiCase} and simplifies significantly the analysis of the strongly semilinear integrals. However, in future work we will show that these ideas can be expanded to prove global regularity in other problems that involve strictly less than $|t|^{-1}$ decay and large sets of time resonances, such as certain water wave models in 3 dimensions.

\subsubsection{Dispersive analysis} The goal of the dispersive analysis is to give a precise description of the nonlinear solution, at a lower level of regularity. This description is encoded in the $Z$ norm, and the goal is to prove a partial bootstrap estimate for the $Z$ norm, of the form
\begin{equation}\label{OPT4}
\begin{split}
\text{ if }\,\,
&\sup_{t\in[0,T]}\big[\|(U_e(t),U_b(t))\|_{H^{N_{0}}\cap H^{N_1}_\Omega}+\|(V_e(t),V_b(t))\|_{Z}\big]\leq \epsilon_{1}\\
\text{ then }\,\,&\sup_{t\in[0,T]}\|(V_e(t),V_b(t))\|_{Z}\lesssim \eps_0+\epsilon_1^{2},
\end{split}
\end{equation} 
where $\eps_0\ll\eps_1$ is the size of the initial data. Loss of derivatives is not an issue here, so we can use the Duhamel formula, written in terms of the profiles $V_e$ and $V_b$,
\begin{equation}\label{OPT5}
\widehat{V_{\sigma}}(\xi,t)=\widehat{V_{\sigma}}(\xi,0)+\sum_{\mu,\nu\in\{e,b\}}\int_0^t\int_{\mathbb{R}^2}e^{is\Phi(\xi,\eta)}\mathfrak{m}(\xi,\eta)\widehat{V_{\pm\mu}}(s,\xi-\eta)\widehat{V_{\pm\nu}}(s,\eta)\,d\eta ds,
\end{equation}
where the sum is over all possible quadratic interactions, and, for simplicity, we ignore here the higher order interactions. The main contributions in this integral come from the set of {\it{space-time resonances}}, which is the set of points where the function $\Phi$ is stationary in all variables, 
\begin{equation}
\nabla _{(t,\eta )}[t\Phi (\xi ,\eta )]=0,\qquad \hbox{i.e.}\qquad \Phi (\xi
,\eta )=0\,\,\text{ and }\,\,\nabla _{\eta }\Phi (\xi ,\eta )=0.  \label{STRes}
\end{equation}
Understanding these contributions forms 
the basis of the ``method of space-time resonances'', as it was highlighted by 
Germain, Masmoudi, and Shatah \cite{Ge, GeMa,GeMaSh,GeMaSh2} in several problems. In our case the space-time resonant 
set is a finite union of spheres in $\mathbb{R}^2\times\mathbb{R}^2$, of the form
\begin{equation}\label{OPT6}
\{(\xi,\eta)=(R_j\omega,r_j\omega):\omega\in\mathbb{S}^1\},
\end{equation}
for finitely many pairs $(R_j,r_j)\in (0,\infty)^2$.

As in semilinear problems, having an effective $Z$ norm is critical in order to prove a bound like \eqref{OPT4}. To construct such a norm we can gain some intuition by substituting Schwartz functions, independent of $s$, as inputs $V_{\pm\mu}$ and $V_{\pm\nu}$ in the right-hand side of \eqref{OPT5}. An important observation is that the space-time resonant points are {\it{nondegenerate}} (according to the terminology introduced in \cite{IoPa2}), in the sense that the Hessian of the matrix $\nabla_{\eta\eta}^2\Phi(\xi,\eta)$ is non-singular at these points. Assume that $s\approx 2^m\gg 1$. Integration by parts in $\eta$ and $s$ shows that the main contribution comes from a small neighborhood of the stationary points where $|\nabla_{\eta}\Phi(\xi,\eta)|\leq 2^{-m/2+\delta m}$ and $|\Phi(\xi,\eta)|\leq 2^{-m+\delta m}$ (the space-time resonant points as in \eqref{STRes}--\eqref{OPT6}). A simple calculation shows that this main contribution is of the type
\begin{equation*}
\widehat{V}(\xi)\approx {\sum}_j c_j(\xi)\varphi_{\leq -m}(|\xi|-R_j),
\end{equation*}
up to factors of $2^{\delta m}$, where the functions $c_j$ are smooth.

We can now describe more precisely the choice of the $Z$ space. We use the framework introduced by two of the authors in \cite{IoPa1}, which was later refined in \cite{IoPa2,GuIoPa,De}. 
The idea is to decompose the profile as a superposition of atoms, using localization in both space and frequency,
\begin{equation*}
f={\sum}_{j,k}Q_{jk}f,\qquad Q_{jk}f=\varphi_j(x)\cdot P_kf(x).
\end{equation*} 
The $Z$ norm is then defined by measuring suitably every atom.

In our case, the $Z$ space should include all Schwartz functions. It also has to include functions like $\varphi_{\leq -m}(|\xi|-R_j)$, due to the considerations above, for any $m$ large. It should measure localization in both space and frequency, and be strong enough, at least, to recover the $t^{-1+\kappa}$, $\kappa\ll 1$, pointwise decay. A space with these properties is proposed in Definition \ref{MainZDef}; we notice that the $Z$ space depends in a significant way on the location and the shape of the set of space-time resonance outputs. 

Once the norm is defined we prove the conclusion of \eqref{OPT4} by careful Fourier analysis of the Duhamel formula: we decompose our profiles in space and frequency, localize to small sets in the frequency space, 
keeping track in particular of the frequencies around the space-time resonance sets, use integration by parts in $s$ and $\eta$ to bound nonresonant interactions, use integration by parts in $\xi$ to control the location of the output, and use multilinear H\"{o}lder-type estimates to bound $L^2$ norms. We emphasize that the semilinear analysis in this paper is more difficult than in our earlier papers \cite{IoPa1,IoPa2,GuIoPa}. Some of these new difficulties are:   

\begin{itemize}

\item The slow decay of the solution, which prevents simple estimates even in nonresonant cases.

\item The derivatives of the profiles $\partial_tV_e$ and $\partial_tV_b$ contain additional secondary oscillations that need to be properly accounted for in the normal form transformation. They also contain secondary resonance terms, and time derivatives of functions with better estimates. See Lemma \ref{dtfLem} for the complex description of the derivatives $\partial_tV_e$ and $\partial_tV_b$.  

\item The argument in the most difficult resonant cases relies on exploiting a key algebraic property of iterated resonances, 
which is proved in Proposition \ref{Separation2}. Roughly speaking, when considering second iterates, this property implies that 
there can be no ``accidental" 2-cascades when the output of a space-time resonance interacts in a coherent and resonant way with another wave.
\end{itemize}

To deal with these issues it is important to be able to restrict to a suitable class of ``almost radial" functions. This is possible because of the rotation invariance of the system, as long as we propagate control of energy norms of the type $H^{N_0}\cap H^{N_1}_\Omega$ containing both a high order Sobolev norm and a high order weighted $L^2$ norm defined by the rotation vector-field $\Omega=x_1\partial_2-x_2\partial_1$. We notice that the linear estimates in Lemma \ref{LinEstLem} and many of the bilinear estimates are much stronger because the functions we consider are almost radial, in a suitable quantitative sense. See also \cite{De}, where several techniques related to ``almost radiality'' are developed and used in dimension 3 in order to control certain types of degenerate resonances.

\subsection{Organization of the paper} In section \ref{NotationsF} we summarize the main notation, define precisely the main norms, and set up the main bootstrap argument (Proposition \ref{bootstrap}). In section \ref{LemSec}, we collect several important lemmas that are used in the paper. These lemmas include the linear estimates in Lemma \ref{LinEstLem}, some elements of paradifferential calculus in Lemmas \ref{PropProd}--\ref{PropHHSym}, integration by parts bounds in Lemmas \ref{tech5}--\ref{RotIBP}, and a localization bound in Lemma \ref{PhiLocLem}. 

Sections \ref{ENERGY0} and \ref{SSterm} contain the improved energy estimates. The main components of the proof are Proposition \ref{ChangeUnknownsProp} (paralinearization of the system), Proposition \ref{IncrementEnergyProp} (the basic energy estimate), Lemma \ref{BootstrapEE2} (control of the nonresonant space-time integrals), and Lemma \ref{BootstrapEE1} (control of the strongly semilinear space-time integrals). The proof of this last lemma relies on a key $L^2$ bound on localized Fourier integral operators, proved in Lemma \ref{L2EstLem}.

In sections \ref{partialt} and \ref{Sec:Z1Norm} we prove the dispersive estimates. The main results are Lemmas \ref{dtfLemPrelim} and \ref{dtfLem} (precise descriptions of the functions $\partial_tV_e$ and $\partial_tV_b$), and Lemmas \ref{ZNormEstSimpleLem1}--\ref{ResLem} (the core bilinear estimates, divided in several cases).

In section \ref{phacolle} we collect all the estimates related to the phase functions $\Phi$. The proofs require very precise information about these functions, including bounds on sub-level sets, structure and separation of resonances, a slow propagation property of iterated resonances, and a restricted nondegeneracy property of the time resonant set.

\section{Functions spaces and the main proposition}\label{NotationsF}

\subsection{Notation, atomic decomposition, and the $Z$-norm}\label{defznorm} We start by summarizing our main definitions and notations.

\subsubsection{Littlewood-Paley projections}

We fix $\varphi:\mathbb{R}\to[0,1]$ an even smooth 
function supported in $[-8/5,8/5]$ and equal to $1$ in $[-5/4,5/4]$. For simplicity of notation, we also 
let $\varphi:\mathbb{R}^2\to[0,1]$ denote the corresponding radial function on $\mathbb{R}^2$. Let
\begin{equation*}
\varphi_{k}(x):=\varphi(|x|/2^k)-\varphi(|x|/2^{k-1})\text{ for any }k\in\mathbb{Z},\qquad \varphi_I:=\sum_{m\in I\cap\mathbb{Z}}\varphi_m\text{ for any }I\subseteq\mathbb{R}.
\end{equation*}
For any $B\in\mathbb{R}$ let 
\begin{equation*}
\varphi_{\leq B}:=\varphi_{(-\infty,B]},\quad\varphi_{\geq B}:=\varphi_{[B,\infty)},\quad\varphi_{<B}:=\varphi_{(-\infty,B)},\quad \varphi_{>B}:=\varphi_{(B,\infty)}.
\end{equation*}
For any $a<b\in\mathbb{Z}$ and $j\in[a,b]\cap\mathbb{Z}$ let
\begin{equation}\label{Alx80}
\varphi^{[a,b]}_j:=
\begin{cases}
\varphi_{j}\quad&\text{ if }a<j<b,\\
\varphi_{\leq a}\quad&\text{ if }j=a,\\
\varphi_{\geq b}\quad&\text{ if }j=b.
\end{cases}
\end{equation}

For any $x\in\mathbb{Z}$ let $x_{+}=\max(x,0)$ and $x_-:=\min(x,0)$. Let
\begin{equation*}
\mathcal{J}:=\{(k,j)\in\mathbb{Z}\times\mathbb{Z}_+:\,k+j\geq 0\}.
\end{equation*}
For any $(k,j)\in\mathcal{J}$ let
\begin{equation*}
\phii^{(k)}_j(x):=
\begin{cases}
\varphi_{\leq -k}(x)\quad&\text{ if }k+j=0\text{ and }k\leq 0,\\
\varphi_{\leq 0}(x)\quad&\text{ if }j=0\text{ and }k\geq 0,\\
\varphi_j(x)\quad&\text{ if }k+j\geq 1\text{ and }j\geq 1,
\end{cases}
\end{equation*}
and notice that, for any $k\in\mathbb{Z}$ fixed, $\sum_{j\geq-\min(k,0)}\phii^{(k)}_j=1$. 

Let $P_k$, $k\in\mathbb{Z}$, denote the operator on $\mathbb{R}^2$ defined by the Fourier multiplier $\xi\to \varphi_k(\xi)$. 
Let $P_{\leq B}$ (respectively $P_{>B}$) denote the operators on $\mathbb{R}^2$ defined by the Fourier 
multipliers $\xi\to \varphi_{\leq B}(\xi)$ (respectively $\xi\to \varphi_{>B}(\xi)$). For $(k,j)\in\mathcal{J}$ let $Q_{jk}$ denote the operator
\begin{equation}\label{qjk}
(Q_{jk}f)(x):=\phii^{(k)}_j(x)\cdot P_kf(x).
\end{equation}
In view of the uncertainty principle the operators $Q_{jk}$ are relevant only when $2^j2^k\gtrsim 1$, which explains the definitions above.

\subsubsection{Quadratic phases, linear profiles and norms}

We fix the expansion of the function $h$ defined in \eqref{variables8} (note that we may assume that $h(0)=0$):
\begin{equation}\label{Exph}
h^\prime(\rho)=d+\kappa\rho+h_2(\rho),\qquad\vert h_2(\rho)\vert\lesssim\rho^2.
\end{equation}

An important role will be played by the functions $U_e,U_b,V_e,V_b$ defined by
\begin{equation}\label{variables4}
\begin{split}
V_e(t)&:=e^{it\Lambda_e}U_e(t),\qquad V_b(t):=e^{it\Lambda_b}U_b(t),\\
U_e(t)&:=\vert\nabla\vert^{-1}\hbox{div}(u)-i\vert\nabla\vert^{-1}\Lambda_e\hbox{div}(\widetilde{E}),\qquad\Lambda_e:=\sqrt{1+d\vert\nabla\vert^2}\\
U_b(t)&:=\vert\nabla\vert^{-1}\Lambda_b\hbox{curl}(u)-i\vert\nabla\vert^{-1}\hbox{curl}(\widetilde{E}),\qquad\Lambda_b:=\sqrt{1+\vert\nabla\vert^2}
\end{split}
\end{equation}

With $\Lambda_e=\sqrt{1-d\Delta}$ and $\Lambda_{b}:=\sqrt{1-\Delta}$ as in \eqref{variables4}, we define
\begin{equation}U_{-e}:=\overline{U_{e}},\quad U_{-b}:=\overline{U_{b}};\qquad V_{-e}:=\overline{V_{e}},\quad V_{-b}:=\overline{V_{b}};\qquad\Lambda_{-e}:=-\Lambda_{e},\quad\Lambda_{-b}:=-\Lambda_{b}.\label{notation}\end{equation} 
Let \begin{equation}\label{symbol0}\mathcal{P}:=\{e,b,-e,-b\}.\end{equation}
For $\sigma,\mu,\nu\in \mathcal{P}$, we define the associated phase function
\begin{equation}\label{phasedef}
\Phi_{\sigma\mu\nu}(\xi,\eta):=\Lambda_{\sigma}(\xi)-\Lambda_{\mu}(\xi-\eta)-\Lambda_{\nu}(\eta),
\end{equation} 
and the corresponding function
\begin{equation*}
\begin{split}
\Phi^{+}_{\sigma\mu\nu}(\alpha,\beta):=\Phi_{\sigma\mu\nu}(\alpha e,\beta e)=\lambda_{\sigma}(\alpha)-\lambda_{\mu}(\alpha-\beta)-\lambda_{\nu}(\beta),\\
\lambda_e(r)=-\lambda_{-e}(r):=\sqrt{1+dr^2},\qquad\lambda_b(r)=-\lambda_{-b}(r):=\sqrt{1+r^2},
\end{split}
\end{equation*}
where $e\in\mathbb{S}^{1}$ and $\alpha,\beta\in\mathbb{R}$.
If $\nu+\mu\neq 0$, by Proposition \ref{spaceres} for any $\xi\in\mathbb{R}^2$ there exists a unique $\eta=p(\xi)\in\mathbb{R}^2$ so that $(\nabla_{\eta}\Phi_{\sigma\mu\nu})(\xi,\eta)=0$. We define 
\begin{equation}\label{psidag}
\Psi_{\sigma\mu\nu}(\xi):=\Phi_{\sigma\mu\nu}(\xi,p(\xi)),\qquad \Psi_{\sigma}^{\dagger}(\xi):=2^{\mathcal{D}_0}(1+|\xi|)\inf_{\mu,\nu\in\mathcal{P};\nu+\mu\neq 0}|\Psi_{\sigma\mu\nu}(\xi)|,
\end{equation}
and notice that these functions are radial. The functions $\Psi_{e}^{\dagger}$ and $\Psi_{b}^{\dagger}$ are described in Remark \ref{largeres}; in particular, by setting $\D_0$ sufficiently large, $\Psi_e^{\dagger}\geq 10$ while $\Psi_b^{\dagger}$ vanishes on two spheres $|\xi|=\gamma_{1,2}=\gamma_{1,2}(d)\in(0,\infty)$. These spheres correspond to space-time resonances. For $n\in\mathbb{Z}$ we define the operators $A_{n}^{\sigma}$ by
\begin{equation}\label{aop}
\widehat{A_{n}^{\sigma}f}(\xi):=\varphi_{-n}(\Psi_{\sigma}^{\dagger}(\xi))\cdot\widehat{f}(\xi),
\end{equation}
for $\sigma\in\{e,b\}$. Given an integer $j\geq 0$ we define the operators $A^\sigma_{n,(j)}$, $n\in\{0,\ldots,j+1\}$, by
\begin{equation*}
A_{0,(j)}^\sigma:=\sum _{n'\leq 0}A_{n'}^\sigma,\qquad A_{j+1,(j)}^\sigma:=\sum _{n'\geq j+1}A_{n'}^\sigma,\qquad A_{n,(j)}^\sigma:=A_{n}^\sigma\,\,\text{ if }\,\,0<n<j+1.
\end{equation*}
We fix a constant $\D\geq 1$ sufficiently large depending only on the smoothness of $h$ and the parameter $d\in(0,1)$. 

We are now ready to define the main norms.

\begin{definition}\label{MainZDef}
Assume that $N_0,N_1$ are sufficiently large and $\delta$ is sufficiently small (for example $\delta:=4\cdot 10^{-7},\,N_{1}:=8/\delta^2,\,N_{0}:=20/\delta^2$ would work\footnote{The numbers $N_0$ and $N_1$ can, of course, be improved substantially by reexamining and reworking parts of the argument. However, our primary objective in this paper is to show that the global existence argument can be closed in {\it{some topology}}, and we will not deal with such additional challenges here.}). Let 
\begin{equation}\label{OmegaVF}
\Omega:=x_1\partial_2-x_2\partial_1
\end{equation}
denote the rotation vector-field, and define
\begin{equation}\label{Z2norm}
H_\Omega^{N_1}:=\{f\in L^2(\mathbb{R}^2):\,\|f\|_{H_\Omega^{N_1}}:=\sup_{m\leq N_1}\|\Omega^{m}f\|_{L^2}<\infty\}.
\end{equation}
For $\sigma\in\{e,b\}$ we define
\begin{equation}\label{sec5}
Z_1^\sigma:=\{f\in L^2(\mathbb{R}^2):\,\|f\|_{Z_1^\sigma}:=\sup_{(k,j)\in\mathcal{J}}2^{6k_+}\|Q_{jk}f\|_{B^\sigma_{j}}<\infty\},
\end{equation}
where
\begin{equation}\label{znorm2}
\|g\|_{B_j^{\sigma}}:=2^{(1-20\delta)j}\sup_{0\leq n\leq j+1}2^{-(1/2-19\delta)n}\|A_{n,(j)}^{\sigma}g\|_{L^{2}}.
\end{equation}
Finally, we define
\begin{equation}\label{znorm}
Z:=\big\{(f_e,f_b)\in L^2\times L^2:\,\|(f_e,f_b)\|_{Z}:=\sup_{m\leq N_1/2}\big[\|\Omega^mf_e\|_{Z_1^e}+\|\Omega^mf_b\|_{Z_1^b}\big]<\infty\big\}.
\end{equation}
\end{definition}

Notice that, when $\sigma=e$ we have the simpler formula, 
\begin{equation*}
\|g\|_{B_j^{e}}\approx 2^{(1-20\delta)j}\|g\|_{L^{2}}.
\end{equation*}
Similarly if $j\lesssim 1$ then $\|g\|_{B_j^{b}}\approx\|g\|_{L^{2}}$. The operators $A_{n,(j)}^{\sigma}$ are relevant only when $\sigma=b$ and $j\gg 1$, to localize to thin neighborhoods of the space-time resonant sets.

\subsection{The main bootstrap proposition}\label{bootstrap0} Our main result is the following proposition:

\begin{proposition}\label{bootstrap} Suppose $(\rho,u,\widetilde{E},\widetilde{b})$ is a solution to \eqref{system2}--\eqref{condition} on some time 
interval $[0,T]$, $T\geq 1$, with initial data $(\rho_0,u_0,\widetilde{E}_0,\widetilde{b}_0)$, and define $(U_e,U_b)$, $(V_e,V_b)$ as in \eqref{variables4}. Assume that 
\begin{equation}\label{bootstrap1}
\|(\rho_0,u_0,\widetilde{E}_0,\widetilde{b}_0)\|_{H^{N_{0}}\cap H^{N_1}_\Omega}+\|(V_e(0),V_b(0))\|_{Z}\leq\epsilon_{0}\ll 1
\end{equation} 
and, for any $t\in[0,T]$,
\begin{equation}\label{bootstrap2}
\|(\rho(t),u(t),\widetilde{E}(t),\widetilde{b}(t))\|_{H^{N_{0}}\cap H^{N_1}_\Omega}+\|(V_e(t),V_b(t))\|_{Z}\leq \epsilon_{1}\ll 1.
\end{equation} 
Then, for any $t\in[0,T]$,
\begin{equation}\label{bootstrap3}
\|(\rho(t),u(t),\widetilde{E}(t),\widetilde{b}(t))\|_{H^{N_{0}}\cap H^{N_1}_\Omega}+\|(V_e(t),V_b(t))\|_{Z}\lesssim \eps_0+\epsilon_1^{3/2}.
\end{equation} 
\end{proposition}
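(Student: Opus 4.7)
The plan is to close the bootstrap by separately improving the two components of (\ref{bootstrap2}): the high-regularity energy $\|(\rho,u,\widetilde{E},\widetilde{b})\|_{H^{N_{0}}\cap H^{N_1}_\Omega}$, and the dispersive $Z$-norm of $(V_e,V_b)$. From the hypothesis (\ref{bootstrap2}) and the linear decay estimates of Lemma \ref{LinEstLem}, I would first extract the almost optimal pointwise bound $\|(U_e(t),U_b(t))\|_{L^\infty}\lesssim\epsilon_1(1+t)^{-1+\kappa}$ for a small $\kappa$; this decay, together with the "almost-radial" character forced by the $H^{N_1}_\Omega$ control, is what drives both subsequent arguments and justifies the frequency restriction (\ref{SemiCase}).

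For the energy piece I would follow the outline of sections \ref{ENERGY0}--\ref{SSterm}. First apply the paradifferential reduction (Proposition \ref{ChangeUnknownsProp}) to isolate the skew-symmetric principal part, then construct modified energies adapted to both $\langle D\rangle^{N_0}$ and $\Omega^{N_1}$, so that the energy identity (Proposition \ref{IncrementEnergyProp}) takes the schematic form (\ref{OPT}) with every cubic integrand falling into the \emph{nonresonant} or \emph{strongly semilinear} class. Nonresonant terms are handled by integration by parts in $s$ (normal form) and bilinear estimates on the profiles, yielding Lemma \ref{BootstrapEE2}. Strongly semilinear terms are reduced to the regime (\ref{SemiCase}) via the smoothing factor (\ref{SSTerm}) coupled with the $t^{-1+\kappa}$ decay, then decomposed dyadically in $|\Phi(\xi,\eta)|$: on the sub-level $|\Phi|\le(1+t)^{-0.99999}$ apply the FIO $L^2$ bound of Lemma \ref{L2EstLem} (restricted nondegeneracy (\ref{RNDC}) $+$ $TT^\ast$, giving the decisive $2^{-\lambda/250}$ gain beyond $|t|^{-1}$), while on higher modulation levels integrate by parts in $s$ again, now at no derivative cost thanks to (\ref{SemiCase}). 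This produces Lemma \ref{BootstrapEE1} and gives the energy upgrade by $\lesssim\epsilon_1^{3/2}$.

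For the $Z$-norm I would iterate the Duhamel formula (\ref{OPT5}) written for $\Omega^m V_\sigma$, $m\le N_1/2$, and show each atom $Q_{jk}\Omega^m V_\sigma$ satisfies the $B_j^\sigma$ bound with room to spare. The analysis splits on the position of $(\xi,\eta)$ relative to the space-time resonant spheres (\ref{OPT6}). In nonresonant regions I would iterate integration by parts in $\eta$ (using $\nabla_\eta\Phi$) and in $s$ (using $\Phi$), and integrate by parts in $\xi$ to control the spatial localization encoded in the $\varphi^{(k)}_j$ factor; almost-radiality makes the bilinear Hölder bounds of Lemmas \ref{ZNormEstSimpleLem1}--\ref{ResLem} strong enough. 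Near the resonant spheres I would use the precise decomposition of $\partial_t V_e,\partial_t V_b$ from Lemma \ref{dtfLem} so that the normal form transformation correctly absorbs the secondary oscillations, and invoke Proposition \ref{Separation2} to rule out accidental 2-cascades when a space-time resonant output feeds into a second resonant interaction; this is what controls the dangerous iterated terms localized on the supports of the operators $A^\sigma_{n,(j)}$.

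The main obstacle will be the strongly semilinear cubic integrals in the energy estimate: they cannot be removed by any global normal form because the resonant set $\mathcal{R}_\Phi$ is codimension one, and the room available is less than the expected $t^{-1}$ decay. The argument relies entirely on extracting a strictly better-than-$|t|^{-1}$ $L^2$ bound for the FIO (\ref{OPT2}), which in turn forces the use of the restricted nondegeneracy property (\ref{RNDC}) via a $TT^\ast$ analysis; coupling this bound cleanly with the modulation decomposition and the paradifferential energy identity is what requires the most care. The next most delicate step is the $Z$-norm analysis at iterated space-time resonances, where the secondary-resonance structure of $\partial_tV$ in Lemma \ref{dtfLem} and the separation statement of Proposition \ref{Separation2} must be combined to prevent a logarithmic loss that would destroy the bootstrap.
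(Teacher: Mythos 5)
Your proposal follows essentially the same route as the paper: the bootstrap is closed by combining the energy estimate of Proposition \ref{BootstrapEE} (paradifferential reduction, modified energies, normal forms for nonresonant terms, and the modulation decomposition plus the $L^2$ bound of Lemma \ref{L2EstLem} for the strongly semilinear terms) with the $Z$-norm estimate of Proposition \ref{BootstrapZnorm} (Duhamel formula, atomic decomposition, integration by parts, and the analysis of $\partial_tV_\sigma$ and iterated resonances via Lemma \ref{dtfLem} and Proposition \ref{Separation2}). The decompositions, key lemmas, and the identification of the main difficulties all match the paper's argument.
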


Given Proposition \ref{bootstrap}, Theorem \ref{main} follows using a local existence result and a continuity argument. 
See \cite[Sections 2 and 3]{IoPa2} (in particular Proposition 2.1 and Proposition 2.4) for similar arguments in dimension 3.
 
The rest of this paper is concerned with the proof of Proposition \ref{bootstrap}. This proposition follows from 
Proposition \ref{BootstrapEE} and Proposition \ref{BootstrapZnorm}.

\section{Main lemmas}\label{LemSec}

In this section we collect several important lemmas which are used often in the proofs in the next sections. Let $\Phi=\Phi_{\sigma\mu\nu}$ as in \eqref{phasedef}.

\subsection{Operator Estimates}

We define the class of Calder\'{o}n--Zygmund symbols $\mathcal{S}^n$, $n\ge 1$, by
\begin{equation*}
\mathcal{S}^n=\{q:\mathbb{R}^2\to\mathbb{C}:\,\,\Vert q\Vert_{\mathcal{S}^n}:=\sup_{\xi\ne0}\sup_{\vert\alpha\vert\le n}\vert\xi\vert^{\vert\alpha\vert}\vert D^\alpha_\xi q(\xi)\vert\le 1\}.
\end{equation*}

\begin{lemma}\label{lem:CZ}
Assume that $q\in\mathcal{S}^{N_1}$, $1\le p\le\infty$, $k\in\mathbb{Z}$ then
\begin{equation*}
\Vert \mathcal{F}^{-1}q\mathcal{F}P_kf\Vert_{L^p}\lesssim\Vert P_kf\Vert_{L^p},\qquad\Vert \mathcal{F}^{-1}q\mathcal{F}f\Vert_{Z^\sigma_1}\lesssim\Vert f\Vert_{Z^\sigma_1},\qquad\Vert \mathcal{F}^{-1}q\mathcal{F}f\Vert_{H^{N_1}_\Omega}\lesssim\Vert f\Vert_{H^{N_1}_\Omega}
\end{equation*}
uniformly in $q$, $p$, $k$.
\end{lemma}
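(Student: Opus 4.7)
The plan is to reduce all three bounds to kernel estimates for $K_k := \mathcal{F}^{-1}(q\,\widetilde{\varphi}_k)$, where $\widetilde{\varphi}_k$ is a minor fattening of $\varphi_k$ satisfying $\widetilde{\varphi}_k\varphi_k = \varphi_k$. First, I would prove the pointwise bound $|K_k(x)|\lesssim 2^{2k}(1+2^k|x|)^{-N_1}$ by repeated integration by parts in $\xi$, using the Mikhlin-type hypothesis $|\xi|^{|\alpha|}|D^\alpha q(\xi)|\leq 1$ for $|\alpha|\leq N_1$. The integration by parts is harmless because $\widetilde{\varphi}_k$ restricts to $|\xi|\sim 2^k$, away from the singularity of $q$ at the origin. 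This gives $\|K_k\|_{L^1}\lesssim 1$ uniformly in $k$, and the $L^p$ estimate follows from Young's inequality applied to $\mathcal{F}^{-1}q\mathcal{F}P_k f = K_k\ast P_k f$.

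For the $H^{N_1}_\Omega$ bound I would iterate the commutator identity
\begin{equation*}
[\Omega,\mathcal{F}^{-1}q\mathcal{F}] = -\mathcal{F}^{-1}(\Omega_\xi q)\mathcal{F},\qquad \Omega_\xi := \xi_1\partial_{\xi_2}-\xi_2\partial_{\xi_1}.
\end{equation*}
A direct computation shows that, since $\Omega_\xi$ does not rescale the $\mathcal{S}^n$ seminorms, if $q\in\mathcal{S}^{N_1}$ then $\Omega_\xi q$ belongs to $\mathcal{S}^{N_1-1}$ up to a universal constant. Iterating gives $\Omega^m\mathcal{F}^{-1}q\mathcal{F}f = \sum_{m'\leq m}\mathcal{F}^{-1}q_{m,m'}\mathcal{F}\Omega^{m'}f$ for uniformly bounded symbols $q_{m,m'}$, and each term is bounded on $L^2$ by Plancherel. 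Summing over $m\leq N_1$ yields the desired estimate.

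The $Z_1^\sigma$ bound is the most delicate, and the approach is to exploit an approximate commutation between $T := \mathcal{F}^{-1}q\mathcal{F}$ and the atomic cutoffs $Q_{jk}$. Since $T$ commutes with $P_k$ and with the Fourier multipliers $A_{n,(j)}^\sigma$, the only genuine noncommutation involves the spatial factor $\phii^{(k)}_j$. I would write $[T,\phii^{(k)}_j]g$ as integration against the kernel $K_k(x-y)\bigl(\phii^{(k)}_j(x)-\phii^{(k)}_j(y)\bigr)$; since $K_k$ is concentrated at scale $2^{-k}$ and $\phii^{(k)}_j$ is Lipschitz at scale $2^{-j}$, each commutation gains a factor of $2^{-(j+k)}$, which is at most $1$ on $\mathcal{J}$ and can be iterated $N_1$ times to produce rapidly decaying error kernels. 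Combining this with the decomposition $P_k f = \sum_{j'\geq -\min(k,0)} Q_{j'k}f$, the Plancherel bound $\|Tg\|_{L^2}\lesssim\|g\|_{L^2}$, and a parallel almost-commutation for $A_{n,(j)}^\sigma$, one obtains a near-diagonal estimate of the form
\begin{equation*}
\|A_{n,(j)}^\sigma Q_{jk} Tf\|_{L^2}\lesssim \sum_{j'\geq -\min(k,0)} 2^{-N_1(|j-j'|+j+k)/2}\|A_{n,(j')}^\sigma Q_{j'k}f\|_{L^2},
\end{equation*}
which sums to give $\|Tf\|_{Z_1^\sigma}\lesssim \|f\|_{Z_1^\sigma}$ after taking supremum in $(k,j,n)$ and noting that $2^{6k_+}$ is preserved. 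The main obstacle is precisely this three-way interplay between the spatial cutoffs $\phii^{(k)}_j$, the dyadic frequency projections $P_k$, and the refined frequency operators $A_{n,(j)}^\sigma$ — but the constraint $j+k\geq 0$ on $\mathcal{J}$ ensures that the $T$-kernel is always narrower than the spatial cutoff, making the commutator losses harmless and compatible with the $j$-dependent normalization in $\|\cdot\|_{B_j^\sigma}$.
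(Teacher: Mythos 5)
Your overall strategy --- uniform $L^1$ kernel bounds for $\mathcal{F}^{-1}(q\,\widetilde{\varphi}_k)$ by integration by parts away from the origin, the commutator identity $[\Omega,\mathcal{F}^{-1}q\mathcal{F}]=\pm\mathcal{F}^{-1}(\Omega_\xi q)\mathcal{F}$ with $\Omega_\xi q$ losing one order of $\mathcal{S}^n$ regularity, and almost-orthogonality against the atoms for the $Z_1^\sigma$ bound --- is exactly the standard route, which the paper itself delegates to the cited reference. The $L^p$ and $H^{N_1}_\Omega$ parts are complete as written (the sign in the commutator is immaterial, and the symbols produced by iteration stay bounded, hence $L^2$-bounded by Plancherel).

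The one step that needs repair is the displayed near-diagonal estimate in the $Z_1^\sigma$ part. As literally written it fails on the diagonal: for $q\equiv 1$ it would assert $\|A_{n,(j)}^\sigma Q_{jk}f\|_{L^2}\lesssim 2^{-N_1(j+k)/2}\|A_{n,(j)}^\sigma Q_{jk}f\|_{L^2}+\dots$, so the diagonal term must simply be bounded, with the gain reserved for $|j-j'|$ large. More substantively, the right-hand side has to carry a sum over $n'\in[0,j'+1]$, not the single index $n$: after writing $Q_{j'k}f=\sum_{n'}A_{n',(j')}^\sigma Q_{j'k}f$, the operators $A_{n,(j)}^\sigma\,\phii_j^{(k)}\,T\,A_{n',(j')}^\sigma$ cannot be handled by the Lipschitz-commutator gain you describe, because the kernel of $A_{n'}^\sigma$ lives at spatial scale $2^{n'}\leq 2^{j'+1}$, comparable to the cutoff scale, so ``each commutation gains $2^{-(j+k)}$'' does not apply to the $A$'s. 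What saves the argument is frequency separation: for $|n-n'|\geq 2$ the supports of $\varphi_{-n}(\Psi_\sigma^\dagger)$ and $\varphi_{-n'}(\Psi_\sigma^\dagger)$ are separated by $\gtrsim 2^{-\min(n,n')}$, while $\phii_j^{(k)}$ smears frequencies only at scale $2^{-j}$, so $\|A_{n}^\sigma\phii_j^{(k)}A_{n'}^\sigma\|_{L^2\to L^2}\lesssim 2^{-M(j-\min(n,n'))}$ for any $M$; since $n,n'\leq j+1$ forces $j-\min(n,n')\geq 1$ whenever $|n-n'|\geq 2$, this beats the weight discrepancy $2^{(1/2-19\delta)|n-n'|}$ coming from the $B_j^\sigma$ norm. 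With that substitution your scheme closes.
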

We refer to \cite[Lemma 5.1.]{IoPa2} for a similar proof. To bound multilinear operators, we will often use the following simple lemma:
\begin{lemma}\label{L1easy}
Assume $l\geq 2$, $f_1,\ldots,f_l,f_{l+1}\in L^2(\mathbb{R}^2)$, and $M:(\mathbb{R}^2)^l\to\mathbb{C}$ is a continuous compactly supported function. Then
\begin{equation}\label{ener62}
\begin{split}
\Big|\int_{(\mathbb{R}^2)^l}M(\xi_1,\ldots,\xi_l)\widehat{f_1}(\xi_1)\cdot\ldots\cdot\widehat{f_l}(\xi_l)\cdot\widehat{f_{l+1}}(-\xi_1-\ldots-\xi_l)\,d\xi_1\ldots d\xi_l\Big|\\
\lesssim \big\|\mathcal{F}^{-1}M\big\|_{L^1}\|f_1\|_{L^{p_1}}\cdot\ldots\cdot\|f_{l+1}\|_{L^{p_{l+1}}},
\end{split}
\end{equation}
for any exponents $p_1,\ldots p_{l+1}\in[1,\infty]$ satisfying $1/p_1+\ldots+1/p_{l+1}=1$. As a consequence
\begin{equation}\label{ener62.1}
\begin{split}
\Big\|\mathcal{F}_{\xi}^{-1}\Big\{\int_{(\mathbb{R}^2)^{l-1}}M(\xi,\eta_2\ldots,\eta_l)\widehat{f_2}(\eta_2)\cdot\ldots\cdot\widehat{f_l}(\eta_l)\cdot\widehat{f_{l+1}}(-\xi-\eta_2\ldots-\eta_l)\,d\eta_2\ldots d\eta_l\Big\}\Big\|_{L^{q}}\\
\lesssim \big\|\mathcal{F}^{-1}M\big\|_{L^1((\mathbb{R}^2)^l)}\|f_2\|_{L^{p_2}}\cdot\ldots\cdot\|f_{l+1}\|_{L^{p_{l+1}}},
\end{split}
\end{equation}
if $q,p_2\ldots p_{l+1}\in[1,\infty]$ satisfy $1/p_2+\ldots+1/p_{l+1}=1/q$.
\end{lemma}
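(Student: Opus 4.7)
\medskip

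\noindent\textbf{Proof proposal for Lemma \ref{L1easy}.}

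The plan is to reduce both estimates to a kernel representation obtained by Fourier inversion, followed by one application of H\"older. Write $K:=\mathcal{F}^{-1}M$, so that (up to a harmless constant depending only on the dimension and the normalization of $\mathcal{F}$)
\[
M(\xi_1,\ldots,\xi_l)=\int_{(\mathbb{R}^2)^l}K(y_1,\ldots,y_l)\,e^{i(\xi_1\cdot y_1+\ldots+\xi_l\cdot y_l)}\,dy_1\ldots dy_l.
\]
Substituting this into the left-hand side of \eqref{ener62}, using the identity $\widehat{f_{l+1}}(-\xi_1-\ldots-\xi_l)=\int f_{l+1}(x)\,e^{ix\cdot(\xi_1+\ldots+\xi_l)}\,dx$, and performing the $\xi_j$ integrations (each of which produces an inverse Fourier transform of $\widehat{f_j}$ evaluated at the shifted point $x+y_j$), the whole expression collapses to
\[
C\int_{\mathbb{R}^2}\int_{(\mathbb{R}^2)^l}K(y_1,\ldots,y_l)\,f_1(x+y_1)\cdots f_l(x+y_l)\,f_{l+1}(x)\,dy_1\ldots dy_l\,dx.
\]
This is the key identity; everything now is bookkeeping.

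For \eqref{ener62}, I pull the absolute value inside, freeze the variables $y_1,\ldots,y_l$, and apply the $l{+}1$-fold H\"older inequality in $x$ to the product $|f_1(x+y_1)|\cdots|f_l(x+y_l)||f_{l+1}(x)|$ with exponents $p_1,\ldots,p_{l+1}$ satisfying $\sum 1/p_j=1$. Translation invariance of the Lebesgue norms makes each factor $\|f_j\|_{L^{p_j}}$, which is independent of $y_j$. Integrating the resulting constant against $|K(y_1,\ldots,y_l)|$ then yields the bound $\|K\|_{L^1}\prod_j\|f_j\|_{L^{p_j}}$, which is exactly \eqref{ener62}.

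For \eqref{ener62.1}, I test against an arbitrary $g\in L^{q'}(\mathbb{R}^2)$ with $\|g\|_{L^{q'}}=1$ and recognize that Parseval turns
\[
\int_{\mathbb{R}^2}g(x)\,\mathcal{F}_{\xi}^{-1}\!\Big\{\int_{(\mathbb{R}^2)^{l-1}}M(\xi,\eta_2,\ldots,\eta_l)\widehat{f_2}(\eta_2)\cdots\widehat{f_{l+1}}(-\xi-\eta_2-\ldots-\eta_l)\,d\eta_2\ldots d\eta_l\Big\}(x)\,dx
\]
into an $l$-linear integral of the form \eqref{ener62} with inputs $(g,f_2,\ldots,f_{l+1})$ and exponents $(q',p_2,\ldots,p_{l+1})$; the compatibility $1/q'+\sum_{j\ge 2}1/p_j=1$ is exactly $\sum_{j\ge 2}1/p_j=1/q$. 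Applying the estimate just proved and taking the supremum over $g$ gives \eqref{ener62.1}.

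No step is really an obstacle: the main point to be careful about is the interchange of order of integration, which is justified a priori because $M$ is assumed continuous and compactly supported (so $K\in L^\infty$ and everything is absolutely convergent whenever $\|K\|_{L^1}<\infty$); if $\|K\|_{L^1}=\infty$ the statement is vacuous. The argument makes no use of the specific structure of $M$ beyond integrability of its Fourier inverse, so the lemma holds in the stated generality.
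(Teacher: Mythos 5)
Your proof is correct and is exactly the standard kernel-representation argument (write $M=\mathcal{F}K$, reduce to a translated product, apply H\"older and translation invariance, then dualize for the second estimate); the paper states this lemma without proof precisely because this routine argument is the intended one. No gaps.
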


Given a continuous compactly supported $M:(\mathbb{R}^2)^2\to\mathbb{C}$ we define
\begin{equation}\label{ener65}
\|M\|_{S^\infty}:=\big\|\mathcal{F}^{-1}M\big\|_{L^1}
\end{equation}
and notice that
\begin{equation}\label{ener65.5}
\|M_1\cdot M_2\|_{S^\infty}\lesssim \|M_1\|_{S^\infty}\|M_2\|_{S^\infty}.
\end{equation}
Given integers $k,k_1,k_2\in\mathbb{Z}$, we define
\begin{equation}\label{ener66}
\|M(\xi,\eta)\|_{S^\infty_{kk_1k_2}}:=\big\|\mathcal{F}^{-1}\big[M(\xi,\eta)\varphi_{k}(\xi)\varphi_{k_{1}}(\xi-\eta)\varphi_{k_{2}}(\eta)\big]\big\|_{L^1}.
\end{equation}

We will often use the following simple lemma to estimate the $S^\infty$ norm of symbols:

\begin{lemma}\label{Sinfinity}
If $f:\mathbb{R}^2\times\mathbb{R}^2\to\mathbb{C}$ is a bounded function, $\chi,\chi'\in\mathcal{S}$ and $l_1,l_2\in\mathbb{Z}$ then
\begin{equation}\label{ener66.5}
\begin{split}
\Big\|\int_{\mathbb{R}^2\times\mathbb{R}^2}&e^{ix\cdot\alpha}e^{iy\cdot \beta}f(\alpha,\beta)\chi(2^{-l_1}\alpha)\chi'(2^{-l_2}\beta)\,d\alpha d\beta\Big\|_{L^1_{x,y}}\\
&\lesssim \sum_{m=0}^3\Big[2^{ml_1}\|\partial_\alpha^m f\|_{L^\infty}+2^{ml_2}\|\partial_\beta^m f\|_{L^\infty}\Big].
\end{split}
\end{equation}
\end{lemma}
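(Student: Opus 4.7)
The plan is to first reduce to the case $l_1=l_2=0$ via a rescaling, and then to bound the $L^1_{xy}$-norm by a weighted $L^\infty$-estimate of the oscillatory integral, with the weight controlled by repeated integration by parts in $\alpha$ and $\beta$.

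For the scaling reduction, I would substitute $\alpha=2^{l_1}\widetilde\alpha$, $\beta=2^{l_2}\widetilde\beta$, and correspondingly $x=2^{-l_1}\widetilde x$, $y=2^{-l_2}\widetilde y$. Since this change of variables preserves the $L^1_{xy}$-norm, the problem reduces to the case $l_1=l_2=0$ with $f$ replaced by $\widetilde f(\alpha,\beta):=f(2^{l_1}\alpha,2^{l_2}\beta)$. The scaling factors $2^{ml_j}$ on the right-hand side arise naturally via the chain rule, since $\|\partial_\alpha^m\widetilde f\|_\infty=2^{ml_1}\|\partial_\alpha^m f\|_\infty$ and analogously in $\beta$. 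Setting $F(x,y):=\int e^{ix\cdot\alpha+iy\cdot\beta}\widetilde f(\alpha,\beta)\chi(\alpha)\chi'(\beta)\,d\alpha d\beta$, it then suffices to prove
\[
\|F\|_{L^1_{xy}}\lesssim \sum_{m=0}^{3}\bigl[\|\partial_\alpha^m\widetilde f\|_\infty+\|\partial_\beta^m\widetilde f\|_\infty\bigr].
\]

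Next I would use the elementary inequality
\[
\|F\|_{L^1_{xy}(\mathbb R^2\times\mathbb R^2)}\lesssim \sup_{x,y}(1+|x|)^{3}(1+|y|)^{3}\,|F(x,y)|,
\]
which follows from $\int_{\mathbb R^2}(1+|x|)^{-3}\,dx<\infty$. The weighted supremum is then estimated by repeated integration by parts via the identity $|x|^j e^{ix\cdot\alpha}=(-i)^{j}(\hat x\cdot\nabla_\alpha)^{j}e^{ix\cdot\alpha}$ with $\hat x=x/|x|$, and its analog in $\beta$. Carrying this out up to three times in each variable produces
\[
(1+|x|)^3(1+|y|)^3|F(x,y)|\lesssim \sum_{j,k\leq 3}\bigl\|\partial_\alpha^j\partial_\beta^k[\widetilde f\chi\chi']\bigr\|_{L^1_{\alpha\beta}},
\]
and expanding by Leibniz while using the Schwartzness of $\chi,\chi'$ (so that their $W^{k,1}$-norms are finite for every $k$) reduces the right-hand side to a linear combination of $L^\infty$-norms of derivatives of $\widetilde f$ multiplied by constants depending only on $\chi,\chi'$.

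The main obstacle will be handling the cross terms $\partial_\alpha^{a}\partial_\beta^{b}\widetilde f$ with both $a,b\geq 1$ that arise from the joint Leibniz expansion, since these mixed derivatives do not appear explicitly on the right-hand side of the lemma. To address this, I would distribute the integrations by parts asymmetrically using a dichotomy on the relative sizes of $|x|$ and $|y|$: on the region $\{|x|\geq |y|\}$ integrate by parts three times only in $\alpha$, producing the bound $|F|\lesssim (1+|x|)^{-3}\sum_{m=0}^{3}\|\partial_\alpha^m\widetilde f\|_\infty$ involving only pure $\alpha$-derivatives of $\widetilde f$, and symmetrically on $\{|y|>|x|\}$. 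The rapid decay of $\chi,\chi'$ (captured via a dyadic decomposition of $\chi(\alpha)=\sum_{s}\chi(\alpha)\varphi_s(\alpha)$ into annuli) then supplies the additional joint decay in $(x,y)$ needed to make the resulting single-variable estimates integrable on $\mathbb R^4$, completing the argument.
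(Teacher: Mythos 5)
Your rescaling reduction is correct and is exactly the paper's first step, but the second half has a genuine gap: the dichotomy on $\{|x|\geq|y|\}$ versus $\{|y|>|x|\}$ does not close. On $\{|x|\geq|y|\}$, integrating by parts three times only in $\alpha$ yields $|F(x,y)|\lesssim (1+|x|)^{-3}\sum_{m\leq 3}\|\partial_\alpha^m \widetilde f\|_{L^\infty}$, and this bound carries \emph{no} decay in $y$ whatsoever: decay in $y$ can only come from smoothness of the integrand in $\beta$, i.e.\ from integrating by parts in $\beta$, which reintroduces exactly the mixed derivatives $\partial_\beta\partial_\alpha^m \widetilde f$ you were trying to avoid. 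The rapid decay of $\chi'$ in $\beta$ is irrelevant here (decay of the symbol does not produce decay of its Fourier transform), so the dyadic decomposition into annuli cannot supply the missing joint decay. Quantitatively, $\int_{\{|x|\geq|y|\}}(1+|x|)^{-3}\,dx\,dy\approx\int_{\mathbb{R}^2}|x|^2(1+|x|)^{-3}\,dx=\infty$, and no interpolation of the three available pointwise bounds $\min\big(1,|x|^{-3},|y|^{-3}\big)$ is integrable on $\mathbb{R}^4$ either (the region $|x|\geq|y|\geq 1$ already contributes $\int_{|y|\geq 1}|y|^{-1}\,dy=\infty$). So a pure sup-norm argument using only third-order \emph{pure} derivatives cannot work, even in principle.

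The paper avoids this by working in $L^2$ rather than $L^\infty$: it shows $\|F\|_{L^2}\lesssim\|f\|_{L^\infty}$ and $\|(|x|+|y|)^3F\|_{L^2}\lesssim\sum_{m=0}^3[\|\partial_\alpha^mf\|_{L^\infty}+\|\partial_\beta^mf\|_{L^\infty}]$ via Plancherel, and then concludes by Cauchy--Schwarz since $(1+|x|+|y|)^{-3}\in L^2(\mathbb{R}^4)$ (here $6>4$ is what makes three derivatives suffice). The key point is that the weight $(|x|+|y|)^3\lesssim |x_1|^3+|x_2|^3+|y_1|^3+|y_2|^3$ splits into pure monomials, so Plancherel only ever requires pure $\alpha$- or pure $\beta$-derivatives of $f$, never mixed ones. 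If you replace your weighted-sup step by this $L^2$ duality step, the rest of your argument (rescaling, Leibniz, Schwartz bounds on $\chi,\chi'$) goes through.
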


\begin{proof} By rescaling we may assume that $l_1=l_2=0$. Letting
\begin{equation*}
F(x,y):=\int_{\mathbb{R}^2\times\mathbb{R}^2}e^{ix\cdot\alpha}e^{iy\cdot \beta}f(\alpha,\beta)\chi(\alpha)\chi'(\beta)\,d\alpha d\beta
\end{equation*}
we have, using Plancherel theorem and integration by parts
\begin{equation*}
\|F\|_{L^2}\lesssim \|f\|_{L^\infty}\qquad\text{ and }\qquad\|(|x|+|y|)^3F\|_{L^2}\lesssim\sum_{m=0}^3\Big[\|\partial_\alpha^m f\|_{L^\infty}+\|\partial_\beta^m f\|_{L^\infty}\Big].
\end{equation*}
The desired $L^1$ bound follows, with an implicit constant that depends only on $\chi,\chi'$.
\end{proof}

\subsection{Weyl paradifferential calculus}\label{ParaDiffCalc} We recall first the definition of paradifferential operators (Weyl quantization): given a symbol $a=a(x,\zeta):\mathbb{R}^2\times\mathbb{R}^2\to\mathbb{C}$, we define the operator $T_a$ by
\begin{equation}\label{Tsigmaf2}
\begin{split}
\mathcal{F}\left\{T_{a}f\right\}(\xi)=\frac{1}{4\pi^2}\int_{\mathbb{R}^2}\chi\Big(\frac{\vert\xi-\eta\vert}{\vert\xi+\eta\vert}\Big)\widetilde{a}(\xi-\eta,(\xi+\eta)/2)\widehat{f}(\eta)d\eta,
\end{split}
\end{equation}
where $\widetilde{a}$ denotes the partial Fourier transform of $a$ in the first coordinate and $\chi=\varphi_{\le-2\mathcal{D}}$.

We will use a simple norm to estimate symbols: for $q\in [1,\infty]$ and $l\in \mathbb{R}$ we define
\begin{equation}\label{nor1}
\begin{split}
&\|a\|_{\mathcal{L}^q_l}:=\sup_{\zeta\in\mathbb{R}^2}(1+|\zeta|^2)^{-l/2}\|\,|a|(.,\zeta)\|_{L^q_x},\\
&|a|(x,\zeta):=\sum_{|\beta|\leq 20,\,|\alpha|\leq 2}|\zeta|^{|\beta|}|(D^\beta_\zeta D^\alpha_x a)(x,\zeta)|.
\end{split}
\end{equation}
The index $l$ is called the {\it{order}} of the symbol, and it measures the contribution of the symbol in terms of derivatives on $f$. Notice that we have the simple product rule
\begin{equation}\label{nor2}
\|ab\,\|_{\mathcal{L}^p_{l_1+l_2}}\lesssim \|a\|_{\mathcal{L}^q_{l_1}}\|b\|_{\mathcal{L}^r_{l_2}},\qquad 1/p=1/q+1/r.
\end{equation}

An important property of paradifferential operators is that they behave well with respect to products. More precisely:

\begin{lemma}\label{PropProd} (i) If $1/p=1/q+1/r$ and $k\in\mathbb{Z}$, and $l\in [-10,10]$ then
\begin{equation}
\label{LqBdTa}
\Vert P_kT_af\Vert_{L^p}\lesssim 2^{lk_+}\Vert a\Vert_{\mathcal{L}^q_l}\Vert P_{[k-2,k+2]}f\Vert_{L^r}.
\end{equation}

(ii) For any symbols $a,b$ let $E(a,b):=T_aT_b-T_{ab}$. If $1/p=1/q_1+1/q_2+1/r$, $k\in\mathbb{Z}$, and $l_1,l_2\in [-4,4]$ then 
\begin{equation}
\label{LqBdTa2}
2^{k_+}\Vert P_kE(a,b)f\Vert_{L^p}\lesssim (2^{l_1k_+}\Vert a\Vert_{\mathcal{L}^{q_1}_{l_1}})(2^{l_2k_+}\Vert b\Vert_{\mathcal{L}^{q_2}_{l_2}})\cdot\Vert P_{[k-4,k+4]}f\Vert_{L^r}.
\end{equation}
\end{lemma}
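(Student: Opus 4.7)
The plan is to exploit the Fourier-side representation \eqref{Tsigmaf2} directly and reduce both parts to $L^1$ bounds on explicit kernels via Lemma \ref{L1easy} and Lemma \ref{Sinfinity}. For part (i), the crucial structural fact is that the cutoff $\chi\bigl(|\xi-\eta|/|\xi+\eta|\bigr)=\varphi_{\le -2\mathcal{D}}\bigl(|\xi-\eta|/|\xi+\eta|\bigr)$ forces $|\xi-\eta|\le 2^{-2\mathcal{D}+2}|\xi+\eta|$, so on the support of the integrand $|\xi|\approx|\eta|$. Consequently $P_k T_a f=P_k T_a P_{[k-2,k+2]}f$ up to terms that vanish once $\mathcal{D}$ is fixed large enough. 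Writing out this identity and changing variables to $\theta=\xi-\eta$, $\zeta=(\xi+\eta)/2$, one exhibits $P_k T_a P_{[k-2,k+2]}f$ as a bilinear operator applied to $(a, P_{[k-2,k+2]}f)$ whose symbol is localized to $|\zeta|\sim 2^k$, $|\theta|\lesssim 2^{k-2\mathcal{D}}$. Freezing $\zeta$ and invoking Lemma \ref{L1easy} reduces matters to an $L^1$-kernel bound in the $\theta$-variable; this is handled by Lemma \ref{Sinfinity}, whose hypotheses require only three $\theta$-derivatives of the symbol, and these derivatives land on the smooth cutoff (for which each one costs $2^{-k}$, canceling the $2^{3k-6\mathcal{D}}$ volume factor). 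The weight $(1+|\zeta|^2)^{l/2}\sim 2^{lk_+}$ appearing in the $\mathcal{L}^q_l$ definition then yields precisely the factor $2^{lk_+}\|a\|_{\mathcal{L}^q_l}$, and the $L^q_x$-control over $a$ combines with $L^r$-control over $P_{[k-2,k+2]}f$ through Lemma \ref{L1easy} to give \eqref{LqBdTa}.

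For part (ii), the proof runs through the Weyl symbolic calculus. Using the Weyl representation
\[
T_a f(x)=\frac{1}{(2\pi)^2}\iint e^{i(x-y)\cdot\zeta}\,a_\chi\bigl((x+y)/2,\zeta\bigr)f(y)\,dy\,d\zeta,
\]
where $a_\chi$ incorporates the paradifferential cutoff $\chi$, composing two such operators and performing the $y$-integration gives a triple integral for $T_aT_bf$ with oscillatory factor $e^{i(x-y)\cdot\zeta}e^{i(y-z)\cdot\zeta'}$. A second-order Taylor expansion of $a_\chi((x+y)/2,\zeta)$ around $(x+z)/2$ produces three pieces: (a) a zeroth-order term which, after integrating in $y$ and identifying the delta distribution, yields $T_{ab}f$; (b) a first-order term which vanishes by the Weyl symmetry (the analogous expansion of $b_\chi((y+z)/2,\zeta')$ around $(x+z)/2$ contributes an equal and opposite first-order correction, so that the sum cancels); (c) a second-order remainder involving $\nabla^2_{x,\zeta}a \cdot \nabla^2_{x,\zeta}b$ type products. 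It is the vanishing of (b) that supplies the one-derivative gain, i.e.\ the factor $2^{-k_+}$ on the left-hand side of \eqref{LqBdTa2}. The remainder (c) is then estimated as in part (i): it is a bilinear operator in $(a,b)$ applied to $f$, whose kernel $L^1$-norm is controlled by Lemma \ref{Sinfinity} using the derivatives $|\zeta|^{|\beta|}D_\zeta^\beta D_x^\alpha a$ built into the $\mathcal{L}^{q_i}_{l_i}$ norms, and Lemma \ref{L1easy} distributes the factors across the exponents $q_1,q_2,r$.

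The main obstacle I anticipate is part (ii): one must perform the Taylor expansion carefully to verify that the first-order term really does cancel between the $a$- and $b$-factors (this is the whole point of using Weyl rather than standard quantization) and to track the frequency localization imposed by the two cutoffs $\chi(|\xi-\eta|/|\xi+\eta|)$ present in $T_a$ and $T_b$. In particular, one must justify that the two symbols can be taken to be frequency-localized at scale $2^k$ (not $2^{k_1}$ and $2^{k_2}$ independently), which follows from the fact that the two cutoffs together force $|\zeta|\approx|\zeta'|\approx 2^k$. The counting of derivatives is comfortable: Lemma \ref{Sinfinity} needs only three derivatives, while $\mathcal{L}^q_l$ provides twenty in $\zeta$ and two in $x$, so the integrations by parts that generate decay in the kernel close easily.
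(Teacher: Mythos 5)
Your part (i) is essentially the paper's argument: write the Schwartz kernel of $P_kT_a$ after the change of variables $\theta=\xi-\eta$, use the cutoff $\chi$ to force $|\eta|\approx|\xi|\approx 2^k$ (so that $f$ may be replaced by $P_{[k-2,k+2]}f$), and integrate by parts in $\xi$ and $\theta$ to obtain an $L^1$ bound on the kernel in which the $|\zeta|^{|\beta|}D^\beta_\zeta D^\alpha_x a$ weights of the $\mathcal{L}^q_l$ norm absorb the derivative costs; the paper does the integration by parts by hand with $(1-2^{2k}\Delta_\theta)^2(1-2^{2k}\Delta_\xi)^2$ rather than invoking Lemma \ref{Sinfinity}, but the content is the same. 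One small imprecision: the $\theta$-derivatives do not only land on the cutoff, they also land on $a(z,\xi+\theta/2)$, which is exactly why $\mathcal{L}^q_l$ carries up to twenty $\zeta$-derivatives; this is cosmetic.

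Part (ii) has a genuine conceptual error. You assert that the first-order term in the composition expansion ``vanishes by the Weyl symmetry'' and that this vanishing is what supplies the factor $2^{-k_+}$ in \eqref{LqBdTa2}. Neither claim is correct. In the Weyl calculus the first-order term of $a\#b-ab$ is $\tfrac{1}{2i}\{a,b\}=\tfrac{1}{2i}(\nabla_x a\cdot\nabla_\zeta b-\nabla_\zeta a\cdot\nabla_x b)$, which is antisymmetric in $(a,b)$ but generically nonzero; what Weyl quantization buys is that this term is \emph{exactly} the Poisson bracket (so that subtracting $T_{\{a,b\}}$ would gain a second derivative, as the paper's remark after the proof states explicitly), not that it is zero. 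The one-derivative gain claimed in \eqref{LqBdTa2} comes instead from the cancellation of the \emph{zeroth}-order term $ab$ against $T_{ab}$: the leading surviving term is the Poisson bracket, which involves one $\zeta$-derivative of one symbol and hence is a symbol of order $l_1+l_2-1$. As written, your proof estimates only the second-order remainder and silently discards the Poisson bracket contribution, so the dominant term of $E(a,b)$ is unaccounted for. The fix is straightforward — the bracket term is controlled by the $\mathcal{L}^{q_1}_{l_1}$ and $\mathcal{L}^{q_2}_{l_2}$ norms with exactly the gain $2^{-k_+}$, by the same kernel estimate as in part (i) — but it must be estimated, not cancelled. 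You should also account for the mismatch between the cutoff $\chi$ appearing in $T_{ab}$ and the two cutoffs appearing in $T_aT_b$, which produces additional (harmless, but nonzero) error terms; you flag this but do not resolve it.
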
 

\begin{proof} The proof follows directly from the definitions \eqref{Tsigmaf2} and \eqref{nor1}. Indeed, for (i) we write 
\begin{equation*}
\begin{split}
\langle P_kT_af,g\rangle&=C\int_{\mathbb{R}^4} \overline{g}(x)f(y)I(x,y)dxdy,\\
I(x,y)&=\int_{\mathbb{R}^6} a(z,(\xi+\eta)/2)e^{i\xi\cdot (x-z)}e^{i\eta\cdot (z-y)}\chi\Big(\frac{\vert \xi-\eta\vert }{\vert\xi+\eta\vert}\Big)\varphi_k(\xi) \,d\eta d\xi dz\\
&=\int_{\mathbb{R}^6} a(z,\xi+\theta/2)e^{i\theta\cdot(z-y)}e^{i\xi\cdot (x-y)}\chi\Big(\frac{\vert \theta\vert }{\vert 2\xi+\theta\vert}\Big)\varphi_k(\xi)\varphi_{\leq k-10}(\theta) \,d\xi d\theta dz.
\end{split}
\end{equation*}
We observe that
\begin{equation*}
\begin{split}
(1+2^{2k}\vert x-y\vert^2)^2&I(x,y)=\int_{\mathbb{R}^6} \frac{a(z,\xi+\theta/2)}{(1+2^{2k}\vert z-y\vert^2)^2}\chi\Big(\frac{\vert \theta\vert }{\vert 2\xi+\theta\vert}\Big)\varphi_k(\xi)\varphi_{\leq k-10}(\theta)\\
&\times \left[(1-2^{2k}\Delta_\theta)^2(1-2^{2k}\Delta_\xi)^2\{e^{i\theta\cdot(z-y)}e^{i\xi\cdot (x-y)}\}\right] \,d\xi d\theta dz.
\end{split}
\end{equation*}
By integration by parts in $\xi$ and $\theta$ it follows that
\begin{equation*}
(1+2^{2k}\vert x-y\vert^2)^2|I(x,y)|\lesssim \int_{\mathbb{R}^6} \frac{|a|(z,\xi+\theta/2)}{(1+2^{2k}\vert z-y\vert^2)^2}\varphi_{[k-2,k+2]}(\xi)\varphi_{\leq k-8}(\theta)\,d\xi d\theta dz,
\end{equation*}
where $|a|$ is defined as in \eqref{nor1}. The bound \eqref{LqBdTa} now follows since $\|(1+2^{2k}|y|^2)^{-2}\|_{L^1(\mathbb{R}^2)}\lesssim 2^{-2k}$.

The proof of part (ii) is similar. The point is, of course, the gain of one derivative in the left-hand side for the operator $T_aT_b-T_{ab}$. We remark that one could in fact gain two derivatives by subtracting the contribution of the Poisson bracket between the symbols $a$ and $b$, defined by
\begin{align*}
\{ a,b \} := \nabla_x a \nabla_\zeta b - \nabla_\zeta a \nabla_x b.
\end{align*}
We do not need a refinement of this type in this paper.
\end{proof}

An important feature of the Weyl paradifferential calculus is self-adjointness of operators defined by real-valued symbols. We record below some properties that follow easily from the definition (for \eqref{Alu3} one also needs the observation that $(\Omega_{\xi}+\Omega_{\eta})\big[\chi\big(\frac{\vert\xi-\eta\vert}{\vert\xi+\eta\vert}\big)\big]\equiv 0$).

\begin{lemma}\label{PropSym} 

(i) If $\|a\|_{\mathcal{L}_0^\infty}<\infty$ is real-valued then $T_a$ is a bounded self-adjoint operator on $L^2$.

(ii) We have
\begin{equation}\label{Alu2}
\overline{T_af}=T_{a'}\overline{f},\quad\text{ where }\quad a'(y,\zeta):=\overline{a(y,-\zeta)}
\end{equation} 
and
\begin{equation}\label{Alu3}
\Omega (T_af)=T_a(\Omega f)+T_{\Omega_{x,\zeta}a}f\quad \text{ where }\quad \Omega_{x,\zeta}a(x,\zeta)=(\Omega_xa)(x,\zeta)+(\Omega_{\zeta}a)(x,\zeta).
\end{equation} 
\end{lemma}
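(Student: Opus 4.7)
The statement really has four separate claims, all of which follow by direct computation from the definition \eqref{Tsigmaf2}, so my plan is to dispatch them in turn using Plancherel and change of variables. Throughout, I write $K_a(\xi,\eta):=\chi(|\xi-\eta|/|\xi+\eta|)\,\widetilde{a}(\xi-\eta,(\xi+\eta)/2)$, so that $(4\pi^2)\mathcal{F}\{T_a f\}(\xi)=\int K_a(\xi,\eta)\widehat{f}(\eta)\,d\eta$.

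For the $L^2$ boundedness in (i), I would simply invoke Lemma \ref{PropProd}(i) with $l=0$, $p=r=2$, $q=\infty$, giving $\|P_k T_a f\|_{L^2}\lesssim \|a\|_{\mathcal{L}^\infty_0}\|P_{[k-2,k+2]}f\|_{L^2}$, then square-sum in $k$ (the frequency localization on the output is built into the definition of $K_a$ since $\chi$ forces $|\xi-\eta|\ll|\xi+\eta|$, so $P_k T_a=P_k T_a P_{[k-2,k+2]}$ up to negligible contributions). For self-adjointness, when $a$ is real-valued the definition of the partial Fourier transform gives $\widetilde{a}(-z,\zeta)=\overline{\widetilde{a}(z,\zeta)}$, hence $K_a(\eta,\xi)=\overline{K_a(\xi,\eta)}$ since $\chi$ is symmetric in $(\xi,\eta)$. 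Inserting this identity into $\langle T_a f,g\rangle$ and swapping names of the variables of integration yields $\langle f,T_a g\rangle$, completing (i).

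For the conjugation identity in (ii), I would compute $\widehat{\overline{T_a f}}(\xi)=\overline{\widehat{T_a f}(-\xi)}$, change variables $\eta\mapsto-\eta$ inside the integral, and use $\overline{\widetilde{a}(-z,-\zeta)}=\widetilde{a'}(z,\zeta)$ (which follows directly from the definition of the partial Fourier transform and the formula $a'(y,\zeta)=\overline{a(y,-\zeta)}$) together with $\overline{\widehat{f}(-\eta)}=\widehat{\overline{f}}(\eta)$; the symmetry $\chi(|z|/|\zeta|)=\chi(|-z|/|-\zeta|)$ makes the cutoff come out right. Matching expressions proves $\widehat{\overline{T_a f}}=\widehat{T_{a'}\overline{f}}$.

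The $\Omega$ commutator is the only step with any real content, and I expect it to be the main obstacle to writing cleanly; it relies on two observations, both of which require chain-rule bookkeeping. First, since $\widehat{\Omega h}=\Omega_\xi\widehat{h}$, one can pass $\Omega$ through the Fourier transform; second, setting $z=\xi-\eta$, $\zeta=(\xi+\eta)/2$ and using $\partial_{\xi_j}=\partial_{z_j}+\tfrac{1}{2}\partial_{\zeta_j}$, $\partial_{\eta_j}=-\partial_{z_j}+\tfrac{1}{2}\partial_{\zeta_j}$ gives $(\Omega_\xi+\Omega_\eta)F(z,\zeta)=(\Omega_z+\Omega_\zeta)F$. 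Applied to $\chi$ this is $0$ (the hint), while applied to $\widetilde{a}$ it equals $\widetilde{\Omega_{x,\zeta}a}$ (the IBP identity $\widetilde{\Omega_x a}=\Omega_z\widetilde{a}$ is immediate). Writing
\begin{equation*}
\Omega_\xi[\chi\,\widetilde{a}]=-\Omega_\eta[\chi\,\widetilde{a}]+\chi\cdot\widetilde{\Omega_{x,\zeta}a},
\end{equation*}
inserting this into $\Omega_\xi\mathcal{F}\{T_a f\}(\xi)$, and integrating by parts in $\eta$ (with $\Omega_\eta$ skew-adjoint on $L^2(\mathbb{R}^2)$) turns $-\Omega_\eta[\chi\,\widetilde{a}]\,\widehat{f}$ into $\chi\,\widetilde{a}\,\Omega_\eta\widehat{f}=\chi\,\widetilde{a}\,\widehat{\Omega f}$, which yields exactly $\mathcal{F}\{T_a(\Omega f)+T_{\Omega_{x,\zeta}a}f\}(\xi)$, as desired.
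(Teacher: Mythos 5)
Your proof is correct and is exactly the argument the paper intends: the lemma is stated without proof there, with only the hint that $(\Omega_\xi+\Omega_\eta)\big[\chi\big(\tfrac{|\xi-\eta|}{|\xi+\eta|}\big)\big]\equiv 0$, and your computation (self-adjointness from $\overline{K_a(\eta,\xi)}=K_a(\xi,\eta)$ via $\widetilde{a}(-z,\zeta)=\overline{\widetilde{a}(z,\zeta)}$ for real $a$, the conjugation identity via $\eta\mapsto-\eta$, and the commutator via $\Omega_\xi+\Omega_\eta=\Omega_z+\Omega_\zeta$ plus integration by parts in $\eta$) fills in the details along precisely that route.
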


The paradifferential calculus is useful to linearize products and compositions.

\begin{lemma}\label{PropHHSym}
(i) If $f,g\in L^2$ then
\begin{equation}\label{paracomp0}
fg=T_fg+T_gf+\mathcal{H}(f,g)
\end{equation}
where $\mathcal{H}$ is smoothing in the sense that
\begin{equation*}
\Vert P_k\mathcal{H}(f,g)\Vert_{L^q}\lesssim \sum_{k',k''\geq k-3\D,\,|k'-k''|\leq 3\D}\min\big(\Vert P_{k'}f\Vert_{L^q}\Vert P_{k''}g\Vert_{L^\infty},\Vert P_{k'}f\Vert_{L^\infty}\Vert P_{k''}g\Vert_{L^q}\big).
\end{equation*}

(ii) Assume that $c>0$ and $F(z)=z+h(z)$, where $h$ is analytic for $\vert z\vert <c\leq 1$ and satisfies $\vert h(z)\vert\lesssim \vert z\vert^3$. If $\Vert \Omega^au\Vert_{W^{5,\infty}}\le c/2$, $a\in[0,N_1/2]$, then
\begin{equation}\label{Paracomp}
\begin{split}
F(u) & = T_{F^\prime(u)}u + E^3(u),
\\
\Vert (1-\Delta)E^3(u)\Vert_{H^{N_0}\cap H^{N_1}_\Omega}&\lesssim (\Vert u\Vert_{W^{5,\infty}}+\Vert\Omega^{N_1/2}u\Vert_{W^{5,\infty}})^2\Vert u\Vert_{H^{N_0}\cap H^{N_1}_\Omega}.
\end{split}
\end{equation}
\end{lemma}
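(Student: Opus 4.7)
The plan for part (i) is a direct Fourier-space computation. Writing
\[
\widehat{fg}(\xi) = \frac{1}{4\pi^2}\int_{\mathbb{R}^2}\widehat{f}(\xi-\eta)\widehat{g}(\eta)\,d\eta,
\]
I would insert the partition of unity $1 = \chi_1(\xi,\eta) + \chi_2(\xi,\eta) + r(\xi,\eta)$ in which $\chi_1(\xi,\eta):=\chi(|\xi-\eta|/|\xi+\eta|)$ and $\chi_2(\xi,\eta):=\chi(|\eta|/|2\xi-\eta|)$ are the Weyl cutoffs from \eqref{Tsigmaf2}. By direct matching of the definition \eqref{Tsigmaf2}, the $\chi_1$ piece is exactly $\widehat{T_fg}(\xi)$, and after the change of variable $\eta\mapsto \xi-\eta$ the $\chi_2$ piece is $\widehat{T_gf}(\xi)$. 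The remainder $r$ is then supported where $|\xi-\eta|$ and $|\eta|$ are comparable up to a factor $2^{3\mathcal{D}}$ (since $\chi = \varphi_{\leq -2\mathcal{D}}$), and on this support $|\xi|\leq |\xi-\eta|+|\eta|$ forces the two input frequencies to be at least $|\xi|$ up to the same factor. Inserting Littlewood--Paley decompositions $f = \sum_{k'}P_{k'}f$ and $g = \sum_{k''}P_{k''}g$, only pairs with $|k'-k''|\leq 3\mathcal{D}$ and $k',k''\geq k-3\mathcal{D}$ contribute, and H\"older's inequality on each dyadic piece yields the claimed bound on $\mathcal{H}(f,g)$.

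For part (ii), by Taylor expansion applied to $h(u) := F(u)-u$ (using $h(0)=h'(0)=h''(0)=0$, which follows from $|h(z)|\lesssim |z|^3$ and analyticity), there exist analytic functions $G,\widetilde{G}:(-\bar c,\bar c)\to\mathbb{R}$ with $F(u) = u + u^3 G(u)$ and $F'(u) = 1 + u^2 \widetilde{G}(u)$. A direct computation using $\widetilde{\mathbf{1}}(\xi,\zeta) = 4\pi^2\delta(\xi)$ and $\chi(0)=1$ in \eqref{Tsigmaf2} shows $T_1 u = u$, and therefore
\[
E^3(u) = F(u)-T_{F'(u)}u = u^3 G(u) - T_{u^2\widetilde{G}(u)}u.
\]
Applying part (i) to the product $u^3 G(u) = u\cdot (u^2 G(u))$ rewrites this as
\[
E^3(u) = T_u(u^2 G(u)) + T_{u^2(G(u)-\widetilde{G}(u))}u + \mathcal{H}(u, u^2 G(u)),
\]
a decomposition in which every summand carries two ``low-frequency'' factors of $u$ (either inside the symbol or as one argument of $\mathcal{H}$) and exactly one ``high-frequency'' factor. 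The quadratic $W^{5,\infty}$ factor in the target bound comes from the two low-frequency factors, while the $H^{N_0}$ factor comes from the high-frequency one; I would estimate each summand via Lemma \ref{PropProd}(i) and part (i), combined with Moser-type bounds for $u^2 G(u)$ and $u^2(G-\widetilde{G})(u)$ based on the analyticity of $G,\widetilde{G}$ and the smallness hypothesis $\|u\|_{W^{5,\infty}}\leq c/2$.

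The $(1-\Delta)$ weight is distributed through each paraproduct using the commutator identity $(1-\Delta)T_a = T_a(1-\Delta) + [(1-\Delta),T_a]$, in which $[(1-\Delta),T_a]$ has order-$1$ Weyl symbol essentially equal to $-2i\zeta\cdot\nabla_x a$ and hence $L^2$-bounded by $\|\nabla_x a\|_{L^\infty}\|\nabla u\|_{L^2}$. The design is to route the two derivatives from $1-\Delta$ onto the low-frequency factors controlled by $\|u\|_{W^{5,\infty}}$, via \eqref{nor1} and the product rule \eqref{nor2}, rather than onto the single $H^{N_0}$-factor. The $H^{N_1}_\Omega$ part of the estimate then follows by iterating the commutation identity \eqref{Alu3} to push $\Omega$ through the paraproducts and through $\mathcal{H}$, and invoking a Leibniz-type distribution: out of three factors of $u$, at most one can absorb more than $N_1/2$ rotations, so the other two are controlled by $\|\Omega^{N_1/2}u\|_{W^{5,\infty}}$.

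The main technical obstacle will be the two-derivative gain in part (ii). What makes it delicate is the bookkeeping: one must verify that after the Bony identity is applied to $u^3 G(u)$ and combined with the paralinear subtraction $-T_{F'(u)}u$, the cancellation of the worst contribution ($T_{3u^2}u$-type terms in the Taylor expansion of $F$) is exact enough that every remaining derivative produced by $(1-\Delta)$, by Moser expansions of $G(u),\widetilde{G}(u)$, and by the paradifferential commutators lands on a low-frequency factor. Handling the Bony remainder $\mathcal{H}(u,u^2 G(u))$ requires the sharper product estimate proved in part (i) so that its frequency-localized pieces absorb the $(1-\Delta)$ and Moser derivatives into $\|u\|_{W^{5,\infty}}$ without overloading the high-frequency $H^{N_0}$-factor.
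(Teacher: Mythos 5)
Your part (i) is correct and is exactly the argument the paper intends (the paper simply says it ``follows directly from definition \eqref{Tsigmaf2}''): the two Weyl cutoffs $\chi(|\xi-\eta|/|\xi+\eta|)$ and $\chi(|\eta|/|2\xi-\eta|)$ have disjoint supports, the complement forces $|\xi-\eta|\approx|\eta|\gtrsim 2^{-3\D}|\xi|$, and Lemma \ref{L1easy} together with the uniform $S^\infty$ bound on the localized remainder symbol gives the stated estimate.

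Part (ii), however, has a genuine gap, and it sits precisely at the point you flag as ``the main technical obstacle'' but then do not resolve. The bound \eqref{Paracomp} requires $E^3(u)\in H^{N_0+2}$ while $u$ is only in $H^{N_0}$, i.e.\ a gain of \emph{two} derivatives. A single application of part (i) to $u\cdot(u^2G(u))$ does not produce such a gain: in your decomposition the term $T_{u^2(G(u)-\widetilde G(u))}u=-T_{u^2(2G(u)+uG'(u))}u$ is a zeroth-order paraproduct applied to the full high-frequency part of $u$, so by \eqref{LqBdTa} it lies in $H^{N_0}$ but not in $H^{N_0+2}$ (and $u^2(G-\widetilde G)(u)$ does not vanish unless $h\equiv 0$). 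Likewise $T_u(u^2G(u))$ only inherits the $H^{N_0}$ regularity of $u^2G(u)$, since $\|P_{k''}(u^2G(u))\|_{L^2}$ decays like $2^{-N_0k''}$, not $2^{-(N_0+2)k''}$; having ``two low-frequency factors'' in the symbol or elsewhere in the product does not help when the high frequency sits on a single bare copy of $u$. The smoothing in \eqref{Paracomp} comes \emph{only} from terms where two factors simultaneously carry the top frequency ($\mathcal{H}$-type terms, where the $L^\infty$ factor absorbs $2^{2k''}$) and from symbol-composition errors. To get there you must iterate the paralinearization: inside $T_u(u^2G(u))$ you must further write $u^2G(u)=T_{(2uG+u^2G')(u)}u+(\text{smoothing})$, so that the paraproduct coefficients recombine as $u^2G+u(2uG+u^2G')=3u^2G+u^3G'=h'(u)$ and cancel $-T_{h'(u)}u$ exactly, leaving only $\mathcal{H}$-type remainders, $T_u(\text{smoothing})$, and errors of the form $T_uT_b-T_{ub}$. (For the latter, note that Lemma \ref{PropProd}(ii) as stated gains only one derivative; here one needs the two-derivative gain, which holds because the symbols are $\zeta$-independent so the Poisson bracket in the Weyl composition vanishes --- or one avoids the issue entirely by doing the Littlewood--Paley analysis monomial by monomial on $h(u)=\sum_{n\geq 3}a_nu^n$, which is the route the paper takes.)
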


\begin{proof} Part (i) follows directly from definition \eqref{Tsigmaf2}. For part (ii) we expand $h(z)=\sum_{n\geq 3}a_nz^n$ and apply \eqref{Alu3} and standard Littlewood-Paley analysis for each monomial at a time. 
\end{proof}

The paradifferential calculus and the results in this subsection are used mostly in the derivation of the quasilinear equations in Proposition \ref{ChangeUnknownsProp} below.

\subsection{Integration by parts} In this subsection we prove two lemmas that are used in the paper in integration by parts arguments. We start with an oscillatory integral estimate. See \cite[Lemma 5.4]{IoPa2} for the proof.

\begin{lemma}\label{tech5} (i) Assume that $0<\eps\leq 1/\eps\leq K$, $N\geq 1$ is an integer, and $f,g\in C^N(\mathbb{R}^2)$. Then
\begin{equation}\label{ln1}
\Big|\int_{\mathbb{R}^2}e^{iKf}g\,dx\Big|\lesssim_N (K\eps)^{-N}\big[\sum_{|\alpha|\leq N}\eps^{|\alpha|}\|D^\alpha_xg\|_{L^1}\big],
\end{equation}
provided that $f$ is real-valued, 
\begin{equation}\label{ln2}
|\nabla_x f|\geq \mathbf{1}_{{\mathrm{supp}}\,g},\quad\text{ and }\quad\|D_x^\alpha f \cdot\mathbf{1}_{{\mathrm{supp}}\,g}\|_{L^\infty}\lesssim_N\eps^{1-|\alpha|},\,2\leq |\alpha|\leq N.
\end{equation}

(ii) Similarly, if $0<\rho\leq 1/\rho\leq K$ then
\begin{equation}\label{ln3}
\Big|\int_{\mathbb{R}^2}e^{iKf}g\,dx\Big|\lesssim_N (K\rho)^{-N}\big[\sum_{m\leq N}\rho^{m}\|\Omega^m g\|_{L^1}\big],
\end{equation}
provided that $f$ is real-valued, 
\begin{equation}\label{ln4}
|\Omega f|\geq \mathbf{1}_{{\mathrm{supp}}\,g},\quad\text{ and }\quad\|\Omega^m f \cdot\mathbf{1}_{{\mathrm{supp}}\,g}\|_{L^\infty}\lesssim_N\rho^{1-m},\,2\leq m\leq N.
\end{equation}
\end{lemma}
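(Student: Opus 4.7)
The plan is to use the standard integration-by-parts technique for oscillatory integrals, setting up a first-order differential operator that preserves the oscillatory factor $e^{iKf}$ and then iterating its adjoint $N$ times.

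For part (i), I would define the operator
\begin{equation*}
L h := \frac{1}{iK}\,\frac{\nabla f}{|\nabla f|^2}\cdot\nabla h,
\end{equation*}
which is well-defined on $\mathrm{supp}\,g$ by the hypothesis $|\nabla f|\geq 1$ there, and satisfies $L(e^{iKf})=e^{iKf}$. Then I would write
\begin{equation*}
\int_{\mathbb{R}^2} e^{iKf}\,g\,dx = \int_{\mathbb{R}^2}(L(e^{iKf}))g\,dx = \int_{\mathbb{R}^2}e^{iKf}\,(L^\ast)^N g\,dx,
\end{equation*}
where $L^\ast h = -\frac{1}{iK}\nabla\!\cdot\!\bigl(\tfrac{\nabla f}{|\nabla f|^2}\,h\bigr)$. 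The core task is then to expand $(L^\ast)^N g$ by Leibniz and induction, and to show that it is a finite sum of terms of the form $K^{-N}\cdot P(\partial f)\cdot D^\alpha g$, with $|\alpha|\leq N$ and $P$ a polynomial in derivatives of $f$ (of order $\geq 1$) and in $|\nabla f|^{-2}$.

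Next I would establish the pointwise bound $|P(\partial f)|\lesssim\varepsilon^{|\alpha|-N}$ on $\mathrm{supp}\,g$. This is a homogeneity count: each application of $L^\ast$ either (a) pushes a derivative onto $g$, contributing no $\varepsilon$-weight, or (b) differentiates the symbol $\nabla f/|\nabla f|^2$, producing a factor controlled by $\|D^\beta f\|_{L^\infty}\lesssim \varepsilon^{1-|\beta|}$ with $|\beta|\geq 2$, hence a cost of $\varepsilon^{-1}$ (the factor $|\nabla f|^{-2}$ is bounded since $|\nabla f|\geq 1$; its derivatives are again controlled by derivatives of $f$ via the chain rule). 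After $N$ iterations, a term with $|\alpha|$ derivatives on $g$ must have accumulated exactly $N-|\alpha|$ factors of $\varepsilon^{-1}$. Integrating in $x$ and taking $L^1$-norms gives
\begin{equation*}
\Big|\int_{\mathbb{R}^2}e^{iKf}g\,dx\Big|\lesssim_N K^{-N}\sum_{|\alpha|\leq N}\varepsilon^{|\alpha|-N}\|D^\alpha g\|_{L^1}=(K\varepsilon)^{-N}\sum_{|\alpha|\leq N}\varepsilon^{|\alpha|}\|D^\alpha g\|_{L^1},
\end{equation*}
which is \eqref{ln1}.

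Part (ii) runs in exactly the same way with $L_\Omega h:=(iK\,\Omega f)^{-1}\Omega h$; since $\Omega$ is divergence-free its formal adjoint is $-\Omega$, and $L_\Omega^\ast h = -\frac{1}{iK}\Omega\bigl(\tfrac{h}{\Omega f}\bigr) = -\frac{1}{iK}\bigl(\tfrac{\Omega h}{\Omega f}-\tfrac{(\Omega^2 f)\,h}{(\Omega f)^2}\bigr)$. The structure is simpler because $\Omega f$ is a scalar rather than a vector, and the bounds $|\Omega f|\geq 1$ and $|\Omega^m f|\lesssim\rho^{1-m}$ plug in identically, yielding \eqref{ln3}.

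The main technical obstacle will be the combinatorial bookkeeping in iterating $L^\ast$: controlling the derivatives of $|\nabla f|^{-2}$ via Faà di Bruno and verifying the homogeneity count that assigns exactly the right power of $\varepsilon^{-1}$ to each term. Once this is set up cleanly, however, the induction is routine and both parts follow in parallel.
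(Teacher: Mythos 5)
Your proposal is correct and is the standard non-stationary-phase argument; note that the paper does not actually prove this lemma but refers to \cite[Lemma 5.4]{IoPa2}, whose proof is exactly this iterated integration by parts, so you are on the same route. Two bookkeeping points are worth making explicit when you run the induction. First, since \eqref{ln2} gives no upper bound on $|\nabla f|$ itself, you must check (as your homogeneity count implicitly does) that in every term of $D^\gamma\big(\nabla f/|\nabla f|^2\big)$ the undifferentiated factors of $\nabla f$ in the numerator are always matched by at least as many powers of $|\nabla f|$ in the denominator, so that only $|\nabla f|\geq 1$ and the bounds on $D^\beta f$, $|\beta|\geq 2$, are used. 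Second, the worst term in $(L^\ast)^N g$ --- all $N$ derivatives landing on a single copy of the symbol --- involves $D^{N+1}f$, which formally falls outside the stated range $2\leq|\alpha|\leq N$ in \eqref{ln2}; any proof by $N$-fold integration by parts needs control of $N+1$ derivatives of the phase, so the hypothesis should be read as extending to $|\alpha|=N+1$ (harmless in all applications in the paper, where the phases are smooth with all derivatives obeying these bounds, but worth flagging rather than passing over silently).
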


We will need another result about integration by parts using the vector-field $\Omega$. 

\begin{lemma}\label{RotIBP}
Assume that $t\in[2^{m}-1,2^{m+1}]$, $m\geq 0$, $k,k_1,k_2\in\mathbb{Z}$, $L\le 1\le U$, and
\begin{equation*}
2^{-2m/5}\le L\le 2^{k_1},\qquad 2^k+2^{k_1}+2^{k_2}\le U\leq U^2\le 2^{2m/5}L.
\end{equation*}
Assume that $A\geq 1+2^{-k_1}$ and
\begin{equation}\label{hypo}
\begin{split}
\sup_{0\le a\le 100}\big[\Vert\Omega^ag\Vert_{L^2}+\Vert\Omega^af\Vert_{L^2}\big]+\sup_{0\leq\vert\alpha\vert\le N}A^{-\vert\alpha\vert}\Vert D^\alpha\widehat{f}\,\Vert_{L^2}&\le 1,\\
\sup_{\xi,\eta}\sup_{\vert\alpha\vert\le N}(2^{-m/2}\vert\eta\vert)^{\vert\alpha\vert}\vert D^\alpha_\eta m(\xi,\eta)\vert&\le 1.
\end{split}
\end{equation}
Fix $\xi\in\mathbb{R}^2$ and $\Phi=\Phi_{\sigma\mu\nu}$ as in \eqref{phasedef}, and let
\begin{equation*}
I_p=I_p(f,g):=\int_{\mathbb{R}^2}e^{it\Phi(\xi,\eta)}m(\xi,\eta)\varphi_p(\Omega_\eta\Phi(\xi,\eta))\varphi_k(\xi)\varphi_{k_1}(\xi-\eta)\varphi_{k_2}(\eta)\widehat{f}(\xi-\eta)\widehat{g}(\eta)d\eta.
\end{equation*}
If $U^42^{-m}\leq 2^{2p}\leq 1$ and $AL^{-1}U^2\leq 2^m$ then
\begin{equation}\label{OmIBP}
\vert I_p\vert\lesssim_N  (2^{p+m})^{-N} \big[2^{m/2}+U^42^{-p}+U^2L^{-1}A2^{p}\big]^N+2^{-10m}.
\end{equation}
In addition, assuming that $(1+\delta/4)\nu\geq -m$, the same bounds holds when $I_p$ is replaced by
\begin{equation*}
\widetilde{I_p}:=\int_{\mathbb{R}^2}e^{it\Phi(\xi,\eta)}\varphi_\nu(\Phi(\xi,\eta))m(\xi,\eta)\varphi_p(\Omega_\eta\Phi(\xi,\eta))\varphi_k(\xi)\varphi_{k_1}(\xi-\eta)\varphi_{k_2}(\eta)\widehat{f}(\xi-\eta)\widehat{g}(\eta)d\eta.
\end{equation*}
\end{lemma}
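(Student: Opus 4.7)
The plan is to iterate integration by parts in $\eta$ using the rotation vector field $\Omega_\eta$. On the support of $\varphi_p(\Omega_\eta\Phi)$ we have $|\Omega_\eta\Phi|\sim 2^p>0$, so the identity
\begin{equation*}
e^{it\Phi(\xi,\eta)}=\frac{1}{it\,\Omega_\eta\Phi(\xi,\eta)}\,\Omega_\eta\!\left(e^{it\Phi(\xi,\eta)}\right),
\end{equation*}
together with the fact that $\Omega_\eta$ is divergence-free in $\eta$ (so integration by parts produces no boundary-free lower-order terms), allows one to transfer $\Omega_\eta$ onto the amplitude at a gain of $(2^{p+m})^{-1}$ per application. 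After $N$ such iterations $I_p$ becomes a sum (with $\le C^N$ many terms, coming from how the $N$ derivatives are distributed) of integrals whose amplitudes are products of $\Omega_\eta$-derivatives of $m$, of $\varphi_p(\Omega_\eta\Phi)$, of the Littlewood--Paley cutoffs, of additional copies of $(\Omega_\eta\Phi)^{-1}$, and of $\widehat f(\xi-\eta),\widehat g(\eta)$, multiplied by the prefactor $(it)^{-N}\prod_{s=1}^{N}(\Omega_\eta\Phi)^{-1}$ of total size $\lesssim (2^{p+m})^{-N}$.

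The three summands in the bracket correspond to the three per-derivative costs. First, since $\Phi=\Lambda_\sigma(\xi)-\Lambda_\mu(\xi-\eta)-\Lambda_\nu(\eta)$ and $\Lambda_\nu$ is radial, only $\Lambda_\mu(\xi-\eta)$ contributes, and the symbol hypothesis $(2^{-m/2}|\eta|)^{|\alpha|}|D_\eta^\alpha m|\le 1$ combined with $|\Omega_\eta|\lesssim|\eta|$ yields $|\Omega_\eta^j m|\lesssim 2^{jm/2}$, which is the first summand $2^{m/2}$. Second, from $\Omega_\eta\Phi=\eta^\perp\cdot(\nabla\Lambda_\mu)(\xi-\eta)$ and $\Omega_\eta\eta^\perp=-\eta$, iterating and using $|\nabla^j\Lambda_\mu(\zeta)|\lesssim L^{1-j}$ for $|\zeta|\sim 2^{k_1}\ge L$, each $\Omega_\eta$ landing on $\varphi_p(\Omega_\eta\Phi)$ or on an extra $(\Omega_\eta\Phi)^{-1}$ factor produces $\sim\Omega_\eta^{1+k}\Phi/2^{(1+k)p}$-type quantities whose worst accumulated cost (after pairing with one consumed inverse-phase factor) is $U^4 2^{-p}$, the second summand. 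Third, using the identity
\begin{equation*}
\Omega_\eta\widehat f(\xi-\eta)=(\widehat{\Omega f})(\xi-\eta)-(\xi_1\partial_2-\xi_2\partial_1)\widehat f(\xi-\eta),
\end{equation*}
iterated together with $\|D^\alpha\widehat f\|_{L^2}\le A^{|\alpha|}$ and $\|\Omega^a f\|_{L^2},\|\Omega^a g\|_{L^2}\le 1$, controlled on the annular support $|\xi-\eta|\sim 2^{k_1}\ge L$, each derivative landing on the profiles costs $U^2 L^{-1}A\cdot 2^p$ (the $2^p$ coming from pairing with one consumed inverse-phase factor and the $L^{-1}$ from splitting the gradient into radial and angular parts on the annulus).

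Assembling, after $N$ iterations one obtains the product bound $(2^{p+m})^{-N}\big[2^{m/2}+U^4 2^{-p}+U^2 L^{-1}A\,2^p\big]^N$, combinatorial constants absorbed into $\lesssim_N$; a final Cauchy--Schwarz in $\eta$ absorbs $\|f\|_{L^2},\|g\|_{L^2}\lesssim 1$ together with the $\eta$-support volume $\lesssim U^2$, which is dominated by the other gains. The hypotheses $U^4 2^{-m}\le 2^{2p}\le 1$ and $U^2 L^{-1}A\le 2^m$ ensure that each of the three summands is $\le 2^{p+m}$, so the bound improves with $N$. The residual $2^{-10m}$ absorbs the small region where $|\eta|\lesssim 2^{-2m/5}$ and the symbol estimate on $m$ may degenerate, handled by the trivial bound $|I_p|\lesssim U^2$ on a set of measure $\lesssim 2^{-cm}$ (choosing $N$ large enough). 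For $\widetilde I_p$ the same scheme applies with the extra factor $\varphi_\nu(\Phi)$, whose $\Omega_\eta$-derivatives satisfy $|\Omega_\eta^j\varphi_\nu(\Phi)|\lesssim (2^{p-\nu})^j$; the hypothesis $(1+\delta/4)\nu\ge -m$ then gives $2^{p-\nu}(2^{p+m})^{-1}\lesssim 2^{-m-\nu}\lesssim 2^{-\delta m/(4+\delta)}$, an acceptable per-derivative loss that is absorbed into the $2^{-10m}$ error for $N\gtrsim 1/\delta$.

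The main obstacle I anticipate is the combinatorial bookkeeping of how $N$ total $\Omega_\eta$-derivatives distribute across the many amplitude factors, together with the delicate conversion of $\Omega_\eta$ acting on $\widehat f(\xi-\eta)$ into $L^2$-controlled quantities. This is where the interplay between the rotation $\Omega$ acting on $f$, the Cartesian derivative bound $A$, and the radial lower bound $L\le 2^{k_1}$ on $|\xi-\eta|$ conspires to produce the combination $U^2 L^{-1}A$ with the correct exponents; verifying that no distribution of derivatives yields a larger per-derivative cost than the three listed is the crux of the argument.
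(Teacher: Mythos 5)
Your overall strategy is the same as the paper's: iterate integration by parts in $\eta$ with the vector field $\Omega_\eta$, gaining $(2^{p+m})^{-1}$ per step, and track three per-derivative costs corresponding to the symbol, the phase/cutoff factors, and the profiles. However, there are two genuine gaps, one of which concerns precisely the step you yourself identify as the crux.

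First, the hypothesis \eqref{hypo} only controls $\Vert\Omega^a f\Vert_{L^2}$ and $\Vert\Omega^a g\Vert_{L^2}$ for $a\le 100$, while $N$ must be taken large (in applications, much larger than $100$). You cannot simply let $N$ angular derivatives fall on $\widehat g(\eta)$ and ``absorb $\Vert g\Vert_{L^2}$ by Cauchy--Schwarz.'' The paper first replaces $f,g$ by their projections $\mathcal{R}_{\leq m/10}f$, $\mathcal{R}_{\leq m/10}g$ onto angular frequencies $\le 2^{m/10}$; the discarded pieces have $L^2$ norm $\lesssim 2^{-10m}$ by the $a=100$ hypothesis, and this reduction — not a small-$|\eta|$ region or a degeneration of the symbol bound (there is none: $|\Omega^a_\eta m|\lesssim 2^{am/2}$ uniformly) — is the actual source of the $2^{-10m}$ error term. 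After the truncation, arbitrarily many $\Omega_\eta$'s on $\widehat{g}$ cost only $2^{m/10}$ each.

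Second, and more seriously, your derivation of the third summand $U^2L^{-1}A2^p$ does not close. The identity you write, $\Omega_\eta\widehat f(\xi-\eta)=(\widehat{\Omega f})(\xi-\eta)-(\xi_1\partial_2-\xi_2\partial_1)\widehat f(\xi-\eta)$, is correct but only yields a per-derivative cost $\lesssim |\xi|A\lesssim UA$, with no factor $2^p$ and no $L^{-1}$; and ``pairing with a consumed inverse-phase factor'' cannot produce a factor $2^{p}$ (each inverse phase factor contributes $2^{-p}$, and all $N$ of them are already accounted for in the prefactor). The bound $UA$ per derivative is genuinely insufficient: from the hypotheses one only gets $UA\,2^{-p}\le 2^{3m/2}LU^{-3}$, which can exceed $2^{m}$, so the iteration would not improve with $N$. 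The correct mechanism is geometric: since $\Omega_\eta\Phi=\frac{\lambda'_\mu(|\xi-\eta|)}{|\xi-\eta|}\,\xi\cdot\eta^\perp$, the support of $\varphi_{\le p+4}(\Omega_\eta\Phi)$ forces $|\xi\cdot\eta^\perp|\lesssim 2^pU$; writing $\xi=(s,0)$ this gives $|\eta_2|\lesssim 2^pU2^{-k}\ll L$, hence $|\eta_1-s|\approx 2^{k_1}$, and one decomposes
\begin{equation*}
-\Omega_\eta\widehat{f}(\xi-\eta)=\frac{\eta_1}{s-\eta_1}(\Omega\widehat{f})(\xi-\eta)-\frac{s\eta_2}{s-\eta_1}(\partial_1\widehat{f})(\xi-\eta),
\end{equation*}
so that the Cartesian derivative (costing $A$ in $L^2$) appears with the small coefficient $|s\eta_2/(s-\eta_1)|\lesssim 2^pUL^{-1}$. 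Without this use of the angular localization of the support, the third summand in \eqref{OmIBP} is not recovered.
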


A slightly simpler version of this integration by parts lemma was used recently by one of the authors \cite{De}. The main interest of this lemma is that we have essentially no assumption on $g$ and very mild assumptions on $f$. We will often use this lemma with
\begin{equation*}
U,L\approx 1,\qquad A\le 2^{(1-\delta^2)m},
\end{equation*}
in which case we obtain 
\begin{equation*}
\vert I_p\vert\lesssim_N 2^{-(N/2)(m+2p)}+(2^{-m}A)^{-N}.
\end{equation*}
However, our more precise version allows us to consider some cases when a frequency is small.

\begin{proof}[Proof of Lemma \ref{RotIBP}] We decompose first $f=\mathcal{R}_{\leq m/10}f+[I-\mathcal{R}_{\leq m/10}]f$, $g=\mathcal{R}_{\leq m/10}g+[I-\mathcal{R}_{\leq m/10}]g$, where the operators $\mathcal{R}_{\leq L}$ are defined in polar coordinates by
\begin{equation}\label{RadOp}
(\mathcal{R}_{\leq L}h)(r\cos\theta,r\sin\theta):=\sum_{n\in\mathbb{Z}}\varphi_{\leq L}(n)h_n(r)e^{in\theta}\quad\text { if }\quad h(r\cos\theta,r\sin\theta):=\sum_{n\in\mathbb{Z}}h_n(r)e^{in\theta}.
\end{equation}
Since $\Omega$ corresponds to $d/d\theta$ in polar coordinates, using \eqref{hypo} we have,
\begin{equation*}
\big\|[I-\mathcal{R}_{\leq m/10}]f\big\|_{L^2}+\big\|[I-\mathcal{R}_{\leq m/10}]g\big\|_{L^2}\lesssim 2^{-10m}.
\end{equation*}
Therefore, using the H\"{o}lder inequality,
\begin{equation*}
\vert I_p\big([I-\mathcal{R}_{\leq m/10}]f,g\big)\vert+\vert I_p\big(\mathcal{R}_{\leq m/10}f,[I-\mathcal{R}_{\leq m/10}]g\big)\vert\lesssim 2^{-10m}.
\end{equation*}

It remains to prove a similar inequality for $I_p:=I_p\big(f_1,g_1\big)$, where $f_1:=\varphi_{[k_1-2,k_1+2]}\cdot \mathcal{R}_{\leq m/10}f$, $g_1:=\varphi_{[k_2-2,k_2+2]}\cdot \mathcal{R}_{\leq m/10}g$. Integration by parts gives
\begin{equation*}
I_p=c\int_{\mathbb{R}^2}e^{it\Phi(\xi,\eta)}\Omega_\eta\Big\{\frac{1}{t\Omega_\eta\Phi(\xi,\eta)}m(\xi,\eta)\varphi_p(\Omega_\eta\Phi(\xi,\eta))\varphi_k(\xi)\varphi_{k_1}(\xi-\eta)\varphi_{k_2}(\eta)\widehat{f_1}(\xi-\eta)\widehat{g_1}(\eta)\Big\}\,d\eta.
\end{equation*}
Iterating $N$ times, we obtain an integrand made of a linear combination of terms like
\begin{equation*}
\begin{split}
e^{it\Phi(\xi,\eta)}&\varphi_k(\xi)\left(\frac{1}{t\Omega_\eta\Phi(\xi,\eta)}\right)^N\times\Omega^{a_1}_\eta\left\{m(\xi,\eta)\varphi_{k_1}(\xi-\eta)\varphi_{k_2}(\eta)\right\}\\
&\times\Omega_\eta^{a_2}\widehat{f}(\xi-\eta)\cdot\Omega_\eta^{a_3}\widehat{g}(\eta)\cdot\Omega_\eta^{a_4}\varphi_p(\Omega_\eta\Phi(\xi,\eta))\cdot \frac{\Omega^{a_5+1}_\eta\Phi}{\Omega_\eta\Phi}\dots\frac{\Omega_\eta^{a_q+1}\Phi}{\Omega_\eta\Phi},
\end{split}
\end{equation*}
where $\sum a_i=N$. Then \eqref{OmIBP} follows from the pointwise bounds
\begin{equation}\label{BdsIBP}
\begin{split}
\left\vert\Omega^{a}_\eta\left\{m(\xi,\eta)\varphi_{k_1}(\xi-\eta)\varphi_{k_2}(\eta)\right\}\right\vert&\lesssim 2^{am/2},\\
\left\vert \Omega^{a}_\eta\varphi_p(\Omega_\eta\Phi(\xi,\eta))\right\vert+ \left\vert\frac{\Omega^{a+1}_\eta\Phi}{\Omega_\eta\Phi}\right\vert&\lesssim U^{4a}2^{-pa},\\
\end{split}
\end{equation}
which hold in the support of the integral, and the $L^2$ bounds
\begin{equation}\label{l2ibp}
\begin{split}
\Vert \Omega^{a}_\eta \widehat{g_1}(\eta)\Vert_{L^2}&\lesssim 2^{am/2}\\
\Vert \Omega^a_\eta\widehat{f_1}(\xi-\eta)\varphi_{k}(\xi)\varphi_{k_1}(\xi-\eta)\varphi_{k_2}(\eta)\varphi_{\leq p+4}(\Omega_\eta\Phi(\xi,\eta))\Vert_{L^2_\eta}&\lesssim U^{2a}\big(L^{-1}A2^p\big)^a+2^{am/2}.
\end{split}
\end{equation}

The first bound in \eqref{BdsIBP} is direct. For the second bound we notice that
\begin{equation}\label{OmegaBounds}
\begin{split}
\Omega_\eta\vert\xi-\eta\vert&=-\frac{\xi\cdot\eta^\perp}{\vert\xi-\eta\vert},\qquad\Omega_\eta(\xi\cdot\eta^\perp)=-\xi\cdot\eta,\qquad\Omega_\eta(\xi\cdot\eta)=\xi\cdot\eta^\perp,\\
\Omega_\eta\Phi(\xi,\eta)&=\frac{\lambda^\prime_\mu(\vert\xi-\eta\vert)}{\vert\xi-\eta\vert}(\xi\cdot\eta^\perp),\qquad\vert\Omega_\eta^a\Phi(\xi,\eta)\vert\lesssim U^{2a}.
\end{split}
\end{equation}

The first bound in \eqref{l2ibp} follows from the hypothesis. To prove the second bound, we may assume that $U2^p\ll L2^{\min(k,k_2)}$, and $\xi=(s,0)$, $s\approx 2^k$. The identities \eqref{OmegaBounds} show that $\varphi_{\leq p+4}(\Omega_\eta\Phi(\xi,\eta))\neq 0$ only if $|\xi\cdot\eta^\perp|\lesssim 2^p U$, which gives $|\eta_2|\lesssim 2^pU2^{-k}$. Therefore, $|\eta_2|\ll L$, so we may assume that $|\eta_1-s|\approx 2^{k_1}$.

We write now
\begin{equation*}
-\Omega_\eta\widehat{f_1}(\xi-\eta)=(\eta_1\partial_2\widehat{f_1}-\eta_2\partial_1\widehat{f_1})(\xi-\eta)=\frac{\eta_1}{s-\eta_1}(\Omega\widehat{f_1})(\xi-\eta)-\frac{s\eta_2}{s-\eta_1}(\partial_1\widehat{f_1})(\xi-\eta).
\end{equation*}
The second bound in \eqref{l2ibp} follows by iterating this identity and using the bounds on $f$ in \eqref{hypo} and the bounds proved earlier, $|s\eta_2|\lesssim 2^pU$, $|\eta_1-s|\approx 2^{k_1}$.

The last claim follows from the formula
\begin{equation*}
\varphi_\nu(\Phi(\xi,\eta))=c2^{\nu}\int_{\mathbb{R}}\widehat{\varphi_0}(2^\nu\lambda)e^{i\lambda\Phi(\xi,\eta)}d\lambda
\end{equation*}
using an argument as in the proof of the localization Lemma \ref{PhiLocLem} below.
\end{proof}

\subsection{Schur test and localization}\label{ShurSubSec}

We will often use the classical Schur's test.

\begin{lemma}[Schur's test]\label{ShurLem}
Consider the operator $T$ given by
\begin{equation*}
Tf(\xi)=\int_{\mathbb{R}^2}K(\xi,\eta)f(\eta)d\eta.
\end{equation*}
Assume that
\begin{equation*}
\sup_\xi\int_{\mathbb{R}^2}\vert K(\xi,\eta)\vert d\eta\le K_1,\qquad \sup_\eta\int_{\mathbb{R}^2}\vert K(\xi,\eta)\vert d\xi\le K_2.
\end{equation*}
Then
\begin{equation*}
\Vert Tf\Vert_{L^2}\lesssim \sqrt{K_1K_2}\Vert f\Vert_{L^2}.
\end{equation*}
\end{lemma}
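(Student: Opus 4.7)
The plan is to prove this via a standard Cauchy--Schwarz argument, splitting the kernel as $|K(\xi,\eta)| = |K(\xi,\eta)|^{1/2}\cdot |K(\xi,\eta)|^{1/2}$ and pairing one factor with $1$ and the other with $|f(\eta)|$. This is exactly the classical proof of Schur's test; there is no real obstacle, only a bookkeeping step to track the constants.

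First I would estimate $|Tf(\xi)|$ pointwise. Applying Cauchy--Schwarz in $\eta$ gives
\begin{equation*}
|Tf(\xi)| \leq \int_{\mathbb{R}^2} |K(\xi,\eta)|^{1/2}\cdot |K(\xi,\eta)|^{1/2} |f(\eta)|\,d\eta \leq \Big(\int_{\mathbb{R}^2}|K(\xi,\eta)|\,d\eta\Big)^{1/2}\Big(\int_{\mathbb{R}^2}|K(\xi,\eta)||f(\eta)|^2\,d\eta\Big)^{1/2}.
\end{equation*}
The first factor is bounded by $K_1^{1/2}$ uniformly in $\xi$ by hypothesis, so squaring yields
\begin{equation*}
|Tf(\xi)|^2 \leq K_1 \int_{\mathbb{R}^2}|K(\xi,\eta)||f(\eta)|^2\,d\eta.
\end{equation*}

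Next I would integrate in $\xi$ and swap the order of integration (Fubini--Tonelli, justified by nonnegativity of the integrand). This produces
\begin{equation*}
\|Tf\|_{L^2}^2 \leq K_1\int_{\mathbb{R}^2}|f(\eta)|^2\Big(\int_{\mathbb{R}^2}|K(\xi,\eta)|\,d\xi\Big)d\eta \leq K_1 K_2 \|f\|_{L^2}^2,
\end{equation*}
using the second hypothesis on the inner integral. Taking square roots gives the claimed bound $\|Tf\|_{L^2}\lesssim \sqrt{K_1K_2}\|f\|_{L^2}$, with an implicit constant equal to $1$. The only point worth noting is that the symmetric split $|K|^{1/2}\cdot|K|^{1/2}$ (rather than $1\cdot|K|$) is what yields the geometric mean $\sqrt{K_1K_2}$ instead of just $K_1$ or $K_2$; this is the entire content of the lemma, and no further ingredients are needed.
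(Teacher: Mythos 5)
Your proof is correct and is the standard argument for Schur's test (the paper states this lemma as classical and gives no proof of its own, so there is nothing to compare against beyond the textbook route you follow). The symmetric split $|K|^{1/2}\cdot|K|^{1/2}$, Cauchy--Schwarz, and Fubini--Tonelli are exactly the right ingredients, and your constant $1$ is sharp.
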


Our second lemma in this subsection shows that localization with respect to the phase is often a bounded operation:

\begin{lemma}\label{PhiLocLem}
Let $s\in[2^{m}-1,2^m]$, $m\geq 0$, and $(1+\varepsilon)\nu\le m$ for some $\varepsilon>0$. Let $\Phi=\Phi_{\sigma\mu\nu}$ as in \eqref{phasedef} and assume that $1/2=1/q+1/r$ and $\chi$ is a Schwartz function. Then
\begin{equation}\label{PhiLocLem2}
\begin{split}
\Big\Vert \varphi_{\leq 10m}(\xi)\int_{\mathbb{R}^2}e^{is\Phi(\xi,\eta)}&\chi(2^\nu\Phi(\xi,\eta))\widehat{f}(\xi-\eta)\widehat{g}(\eta)d\eta\Big\Vert_{L^2_\xi}\\
&\lesssim\sup_{t\in[s/10,10s]}\Vert e^{it\Lambda_\mu}f\Vert_{L^q}\Vert e^{it\Lambda_\nu}g\Vert_{L^r}+2^{-10m}\Vert f\Vert_{L^2}\Vert g\Vert_{L^2},
\end{split}
\end{equation}
where the constant in the inequality only depends on $\varepsilon$ and the function $\chi$.
\end{lemma}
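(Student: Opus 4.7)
The plan is to turn the phase-localization multiplier $\chi(2^\nu\Phi)$ into an explicit superposition of pure oscillations $e^{it\Phi}$ at nearby times $t\approx s$, and then reduce each such oscillation to a product of linear propagators via Plancherel. Concretely, Fourier inversion gives
\begin{equation*}
\chi(2^\nu\Phi(\xi,\eta))=\frac{1}{2\pi}\int_{\mathbb{R}}\widehat{\chi}(\rho)\,e^{i 2^\nu\rho\,\Phi(\xi,\eta)}\,d\rho,
\end{equation*}
so that $e^{is\Phi}\chi(2^\nu\Phi)=\frac{1}{2\pi}\int\widehat{\chi}(\rho)\,e^{it(\rho)\Phi}\,d\rho$ with $t(\rho):=s+2^\nu\rho$. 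Swapping the order of integration via Fubini (justified by the Schwartz decay of $\widehat{\chi}$) and using the elementary identity
\begin{equation*}
\int_{\mathbb{R}^2}e^{it\Phi(\xi,\eta)}\widehat{f}(\xi-\eta)\widehat{g}(\eta)\,d\eta=C\,e^{it\Lambda_\sigma(\xi)}\,\mathcal{F}\bigl[(e^{-it\Lambda_\mu}f)(e^{-it\Lambda_\nu}g)\bigr](\xi),
\end{equation*}
the $L^2_\xi$-norm of the inner integral is controlled, by Plancherel and H\"older with $1/2=1/q+1/r$, by $\Vert e^{-it(\rho)\Lambda_\mu}f\Vert_{L^q}\Vert e^{-it(\rho)\Lambda_\nu}g\Vert_{L^r}$.

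Next I would split the $\rho$-integral into a \emph{main region} $|2^\nu\rho|\le 9s/10$ (so $t(\rho)\in[s/10,10s]$) and a \emph{tail region}. On the main region, Minkowski together with $\int|\widehat{\chi}(\rho)|\,d\rho<\infty$ immediately yields the principal contribution $\sup_{t\in[s/10,10s]}\Vert e^{it\Lambda_\mu}f\Vert_{L^q}\Vert e^{it\Lambda_\nu}g\Vert_{L^r}$ (the sign in front of $\Lambda_\mu$ is immaterial because $\mu\in\mathcal{P}$ already carries both signs). On the tail region I would discard the oscillation and appeal to a cheap Bernstein bound: because $L^2\cdot L^2\hookrightarrow L^1$ and the projection $\varphi_{\le 10m}(D)\colon L^1\to L^2$ has operator norm $\lesssim 2^{10m}$ in dimension $2$,
\begin{equation*}
\Bigl\Vert\varphi_{\le 10m}(\xi)\int_{\mathbb{R}^2} e^{it(\rho)\Phi}\widehat{f}(\xi-\eta)\widehat{g}(\eta)\,d\eta\Bigr\Vert_{L^2_\xi}\lesssim 2^{10m}\Vert f\Vert_{L^2}\Vert g\Vert_{L^2}
\end{equation*}
uniformly in $\rho$.

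The hypothesis $(1+\varepsilon)\nu\le m$ is exactly what makes the tail contribution negligible: it forces $s/2^\nu\gtrsim 2^{\varepsilon m/(1+\varepsilon)}$, so the tail region is $|\rho|\gtrsim 2^{\varepsilon m/(1+\varepsilon)}$. Choosing $K$ large (depending only on $\varepsilon$) in the Schwartz bound $|\widehat{\chi}(\rho)|\lesssim_K|\rho|^{-K}$ then defeats the $2^{10m}$ Bernstein loss and produces the claimed $2^{-10m}\Vert f\Vert_{L^2}\Vert g\Vert_{L^2}$ remainder. The main obstacle is just this final bookkeeping: aligning the power $K$ of Schwartz decay with the power loss $2^{10m}$ so that the tail is indeed absorbed by the error term. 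Once that quantitative separation between $\nu$ and $m$ is exploited, the entire argument is a routine combination of Fourier inversion, Plancherel, and H\"older.
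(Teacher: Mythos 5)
Your proposal is correct and follows essentially the same route as the paper's proof: Fourier inversion of $\chi(2^\nu\Phi)$ into a superposition of oscillations $e^{i(s+2^\nu\lambda)\Phi}$, the region $|2^\nu\lambda|\lesssim s$ giving the main term via Plancherel and H\"older, and the tail being negligible because the hypothesis $(1+\varepsilon)\nu\le m$ forces $|\lambda|\gtrsim 2^{\varepsilon m/(1+\varepsilon)}$ there, so the Schwartz decay of $\widehat{\chi}$ beats the crude $2^{10m}$ Bernstein loss. Your write-up in fact supplies the quantitative bookkeeping that the paper leaves implicit.
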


\begin{proof}
We use the Fourier transform to write
\begin{equation*}
\chi(2^\nu\Phi(\xi,\eta))=c\int_{\mathbb{R}}e^{i\lambda 2^\nu\Phi(\xi,\eta)}\widehat{\chi}(\lambda)d\lambda.
\end{equation*}
We substitute this formula in the left-hand side of \eqref{PhiLocLem2}. The part of the integral over $|\lambda 2^\nu|\leq s/2$ is dominated by the first term in the right-hand side. Moreover, the integral over $|\lambda|\geq 2^{-\nu} s/2$ is negligible (thus dominated by $C2^{-10m}\Vert f\Vert_{L^2}\Vert g\Vert_{L^2}$), since $\widehat{\chi}$ is a Schwartz function.
\end{proof}

\subsection{Linear estimates}\label{LinearEstSubSec} We note first the straightforward estimates, $\sigma\in\{e,b\}$,
\begin{equation}\label{Straightforward}
\Vert P_kf\Vert_{L^2}\lesssim\min\{2^{(1-20\delta)k},2^{-N_0k}\}\Vert f\Vert_{Z_1^\sigma\cap H^{N_0}}.
\end{equation}
We prove now several linear estimates for functions in the $Z$ space.

\begin{lemma}\label{LinEstLem}
Assume that $\sigma\in\{e,b\}$ and 
\begin{equation}\label{Zs}
\Vert f\Vert_{Z_1^\sigma\cap H^{N_1/8}_\Omega}\leq 1.
\end{equation}
For any $(k,j)\in\mathcal{J}$ and $n\in\{0,\ldots,j+1\}$ let (recall the notation \eqref{Alx80})
\begin{equation}\label{Alx100}
f_{j,k}:=P_{[k-2,k+2]}Q_{jk}f,\qquad \widehat{f_{j,k,n}}(\xi):=\varphi_{-n}^{[-j-1,0]}(\Psi^\dagger_\sigma(\xi))\widehat{f_{j,k}}(\xi).
\end{equation}
For any $\xi_0\in\mathbb{R}^2\setminus\{0\}$ and $\kappa,\rho\in[0,\infty)$ let $\mathcal{R}(\xi_0;\kappa,\rho)$ denote the rectangle
\begin{equation}\label{Alx100.1}
\mathcal{R}(\xi_0;\kappa,\rho):=\{\xi\in\mathbb{R}^2:\big|(\xi-\xi_0)\cdot \xi_0/|\xi_0|\big|\leq\rho,\,\big|(\xi-\xi_0)\cdot \xi_0^\perp/|\xi_0|\big|\leq\kappa\}.
\end{equation}
Then, for any $(k,j)\in\mathcal{J}$ and $n\in\{0,\ldots,j+1\}$,
\begin{equation}\label{RadL2}
\big\Vert \sup_{\theta\in\mathbb{S}^1}|\widehat{f_{j,k,n}}(r\theta)|\,\big\Vert_{L^2(rdr)}+\big\Vert \sup_{\theta\in\mathbb{S}^1}|f_{j,k,n}(r\theta)|\,\big\Vert_{L^2(rdr)}\lesssim 2^{-5k_+}2^{(1/2-19\delta)n-(1-20\delta)j+2\delta^2 j},
\end{equation}
\begin{equation}\label{FL1bd}
\sup_{\kappa+\rho\leq 2^{k-10}}\int_{\mathbb{R}^2}|\widehat{f_{j,k,n}}(\xi)|\mathbf{1}_{\mathcal{R}(\xi_0;\kappa,\rho)}(\xi)\,d\xi\lesssim 2^{-5k_+}2^{-j+21\delta j}2^{-19\delta n}\kappa 2^{-k/2}\min(1,2^n\rho)^{1/2},
\end{equation}
\begin{equation}\label{FLinftybd}
\|\widehat{f_{j,k,n}}\|_{L^\infty}\lesssim 
\begin{cases}
2^{2\delta n}2^{-(1/2-21\delta)(j-n)}\,\,&\text{ if }\,\,2^k\approx 1,\\
2^{-5k_+}2^{-21\delta k}2^{-(1/2-21\delta)(j+k)}\,\,&\text{ if }\,\,2^k\gg 1\text{ or }2^k\ll 1,
\end{cases}
\end{equation}
\begin{equation}\label{FLinftybdDER}
\|D^\alpha\widehat{f_{j,k,n}}\|_{L^\infty}\lesssim_{|\alpha|}
\begin{cases}
2^{|\alpha|j}2^{2\delta n}2^{-(1/2-21\delta)(j-n)}\,\,&\text{ if }\,\,2^k\approx 1,\\
2^{|\alpha|j}2^{-5k_+}2^{-21\delta k}2^{-(1/2-21\delta)(j+k)}\,\,&\text{ if }\,\,2^k\gg 1\text{ or }2^k\ll 1.
\end{cases}
\end{equation}
Moreover, if $m\geq 0$ and $|t|\in[2^{m}-1,2^{m+1}]$ then
\begin{equation}\label{LinftyBd}
2^{\delta^2n}\left\| e^{-it\Lambda_\sigma}f_{j,k,n}\right\|_{L^\infty}\lesssim 
\begin{cases}
2^{-m+20\delta j}&\hbox{ if }2^k\ll 1,\\
2^{-j+20\delta j}&\hbox{ for all }j,k,m,\\
2^{-m+2\delta m}2^{-3/4k_-}&\hbox{ if }j\le (1-\delta^2)m+k_-.
\end{cases}\\
\end{equation}
In particular
\begin{equation}\label{LinftyBd2}
\Vert e^{-it\Lambda_\sigma}P_kf\Vert_{L^\infty}\lesssim (1+\vert t\vert)^{-1+21\delta}.
\end{equation}
\end{lemma}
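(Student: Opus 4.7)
The plan is to prove the six bounds in sequence, exploiting two structural facts. First, since the cutoffs defining $f_{j,k,n}=P_{[k-2,k+2]}A^\sigma_{n,(j)}Q_{jk}f$ are radial (in $x$ for $\phii^{(k)}_j$, in $\xi$ for $\varphi_k$ and $\varphi_{-n}(\Psi^\dagger_\sigma)$), the operators $Q_{jk}$, $A^\sigma_{n,(j)}$, and $P_{[k-2,k+2]}$ commute with $\Omega$ and with the Fourier transform. The definition of the $Z_1^\sigma$ norm and the hypothesis \eqref{Zs} thus give
\[
\Vert f_{j,k,n}\Vert_{L^2}\lesssim 2^{-6k_+}2^{-(1-20\delta)j+(1/2-19\delta)n},\qquad \Vert\Omega^a f_{j,k,n}\Vert_{L^2}\lesssim 1\ \text{ for } a\leq N_1/8,
\]
with the same bounds for $\widehat{f_{j,k,n}}$. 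Second, $f_{j,k,n}$ is morally supported in $|x|\sim 2^j$, while $\widehat{f_{j,k,n}}$ is supported in a shell $|\xi|\sim 2^k$ of radial width $\lesssim\min(2^k,2^{-n})$.

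For \eqref{RadL2}, Sobolev embedding on $S^1$ gives $\sup_\theta|G(\theta)|^2\lesssim \Vert G\Vert^2_{H^{1/2+\varepsilon}(S^1)}$ for any $\varepsilon>0$. Applied to $\widehat{f_{j,k,n}}(r\cdot)$ at each $r$ and integrated with measure $r\,dr$, this yields
\[
\bigl\Vert\sup_\theta|\widehat{f_{j,k,n}}(r\theta)|\bigr\Vert_{L^2(rdr)}\lesssim \Vert(1+\Omega^2)^{(1+2\varepsilon)/4}\widehat{f_{j,k,n}}\Vert_{L^2},
\]
since $\Omega$ acts as $\partial_\theta$ in polar coordinates. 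Interpolating the right-hand side between $L^2$ and $\Omega^{N_1/8}L^2$ and using $N_1\sim\delta^{-2}$ (so the interpolation exponent is $O(\delta^2)$) produces the claimed loss $2^{2\delta^2 j}$. The analogous bound for $f_{j,k,n}$ in place of $\widehat{f_{j,k,n}}$ follows identically.

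Estimate \eqref{FL1bd} is then a consequence of \eqref{RadL2}: in polar coordinates the rectangle $\mathcal{R}(\xi_0;\kappa,\rho)$ has angular arc length $\sim\kappa$, while the cutoff $\varphi^{[-j-1,0]}_{-n}(\Psi^\dagger_\sigma)$ further restricts the radial integration to a set of length $\lesssim\min(\rho,2^{-n})$; Cauchy--Schwarz in $r$ combined with \eqref{RadL2} provides the bound. For \eqref{FLinftybd}, combine \eqref{RadL2} with 1D Sobolev in the radial variable applied to $G(r):=\sup_\theta|\widehat{f_{j,k,n}}(r\theta)|$ on the interval $r\sim 2^k$:
\[
\sup_r G(r)^2\lesssim 2^{-2k}\Vert G\Vert^2_{L^2(rdr)}+2^{-k}\Vert G\Vert_{L^2(rdr)}\Vert\partial_rG\Vert_{L^2(rdr)}.
\]
Since $|\partial_rG|\leq\sup_\theta|\widehat{xf_{j,k,n}}|$ and, by the spatial localization $|x|\sim 2^j$, the function $xf_{j,k,n}$ satisfies the same $L^2$ and $\Omega^{N_1/8}L^2$ bounds as $f_{j,k,n}$ multiplied by $2^j$, the same Sobolev--interpolation argument as in \eqref{RadL2} applied to $xf_{j,k,n}$ controls $\Vert\partial_rG\Vert_{L^2(rdr)}$, yielding the desired $L^\infty$ estimate. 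Bound \eqref{FLinftybdDER} follows by iteration: $D^\alpha\widehat{f_{j,k,n}}=\widehat{(-ix)^\alpha f_{j,k,n}}$, each factor of $x$ costing $2^j$.

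Finally, \eqref{LinftyBd} is a stationary-phase bound for $e^{-it\Lambda_\sigma}$. The three regimes correspond respectively to: (i) the universal estimate $\Vert e^{-it\Lambda_\sigma}f_{j,k,n}\Vert_{L^\infty}\leq \Vert\widehat{f_{j,k,n}}\Vert_{L^1}$ combined with \eqref{FL1bd} applied over a covering of the Fourier support by rectangles of size $\sim 2^{k-10}$, giving the $2^{-j+20\delta j}$ bound; (ii) the low-frequency regime $2^k\ll 1$, where the group velocity $|\nabla\Lambda_\sigma|\sim 2^k$ is small and oscillation in $\xi$ yields $t^{-1}$ decay; (iii) the sharp regime $j\leq(1-\delta^2)m+k_-$, where $x$ lies near the wave front $|x|\sim t|\nabla\Lambda_\sigma|$ and standard 2D stationary phase using the non-degeneracy of $\nabla^2\Lambda_\sigma$ gives the optimal $t^{-1}$ decay (the factor $2^{-3k_-/4}$ arising from the low-frequency asymptotics of $|\det\nabla^2\Lambda_\sigma|$). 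Non-stationary contributions are handled by Lemma \ref{tech5}, with derivative bounds supplied by \eqref{FLinftybd}--\eqref{FLinftybdDER}. Estimate \eqref{LinftyBd2} then follows by decomposing $P_kf=\sum_{j,n}f_{j,k,n}$ and summing the three cases in \eqref{LinftyBd} over $(j,n,k)$. The main obstacle is the sharp bound in regime (iii): stationary-phase analysis must be combined with angular integration by parts (Lemma \ref{RotIBP}) to extract uniform $t^{-1}$ decay across the $2^{-n}$-shell, carefully tracking the $2^{-\delta^2 n}$ gain and the low-frequency dependence in $k$ needed for the subsequent summation.
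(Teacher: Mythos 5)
Your proposal follows essentially the same route as the paper: interpolation between the $Z_1^\sigma$ and $H^{N_1/8}_\Omega$ bounds plus Sobolev embedding on the circle for \eqref{RadL2}, Cauchy--Schwarz in polar coordinates for \eqref{FL1bd}, one-dimensional radial Sobolev combined with the spatial localization (each $\partial_r$ in $\xi$ costing $2^j$) for \eqref{FLinftybd}--\eqref{FLinftybdDER}, and the $L^1$-Fourier / dispersive / stationary-phase trichotomy for \eqref{LinftyBd}. One small correction: the factor $2^{-3k_-/4}$ in the third regime of \eqref{LinftyBd} does not come from a low-frequency degeneracy of $\det\nabla^2\Lambda_\sigma$ (both eigenvalues of the Hessian of $\Lambda_\sigma$ stay bounded below as $\xi\to 0$); in the paper it arises from the bookkeeping in the subcase $m/2\le j\le(1-\delta^2)m$, where one localizes angularly at scale $\kappa_\theta\approx 2^{\delta^2 m}2^{(m+k+k_-)/2}$ and estimates the resulting strip using the radial $L^2$ bound \eqref{RadL2}, the non-stationary pieces being removed by Lemma \ref{tech5} (not Lemma \ref{RotIBP}, which is a bilinear statement).
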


\begin{proof} Recall that $N_1\geq 8/\delta^2$. The hypothesis gives 
\begin{equation}\label{Alx101}
\Vert f_{j,k,n}\Vert_{L^2}\lesssim 2^{-6 k_+}2^{(1/2-19\delta)n-(1-20\delta)j},\qquad \big\Vert \Omega^{N_1/8}f_{j,k,n}\big\Vert_{L^2}\lesssim \Vert \Omega^{N_1/8}f\Vert_{L^2}\lesssim 1.
\end{equation} 
The first inequality in \eqref{RadL2} follows by interpolation, using also Sobolev embedding,
\begin{equation*}
\begin{split}
\big\Vert \sup_{\theta\in\mathbb{S}^1}|\widehat{f_{j,k,n}}(r\theta)|\,\big\Vert_{L^2(rdr)}
&\lesssim \Vert \widehat{f_{j,k,n}}\Vert_{L^2}+\Vert \Omega\widehat{f_{j,k,n}}\Vert_{L^2}.
\end{split}
\end{equation*}
The second inequality follows similarly. 

Inequality \eqref{FL1bd} follows from \eqref{RadL2}. Indeed, the left-hand side is dominated by
\begin{equation*}
C(\kappa 2^{-k})\sup_{\theta\in\mathbb{S}^1}\int_{\mathbb{R}}|\widehat{f_{j,k,n}}(r\theta)|\mathbf{1}_{\mathcal{R}(\xi_0;\kappa,\rho)}(r\theta)\,rdr\lesssim \sup_{\theta\in\mathbb{S}^1}\big\Vert \widehat{f_{j,k,n}}(r\theta)\,\big\Vert_{L^2(rdr)}(\kappa 2^{-k})[2^k\min(\rho,2^{-n})]^{1/2},
\end{equation*}
which gives the desired result.

We now consider \eqref{FLinftybd}. For $\theta\in\mathbb{S}^1$ fixed we estimate
\begin{equation*}
\begin{split}
\|\widehat{f_{j,k,n}}(r,\theta)\|_{L^\infty_r}&\lesssim 2^{j/2}\|\widehat{f_{j,k,n}}(r,\theta)\|_{L^2_r}+2^{-j/2}\|(\partial_r\widehat{f_{j,k,n}})(r,\theta)\|_{L^2_r}\\
&\lesssim 2^{j/2}2^{-k/2}\|\widehat{f_{j,k,n}}(r,\theta)\|_{L^2(rdr)},
\end{split}
\end{equation*}
using the support property of $Q_{jk}f$ in the physical space. The desired bound follows using \eqref{RadL2} and the observation that $f_{j,k,n}=0$ if $n\geq 1$ and $|k|\geq\D$. The bounds in \eqref{FLinftybdDER} follow as well, since differentiation in $\xi$ corresponds to multiplication by $2^j$ factors, due to space localization.

We now turn to \eqref{LinftyBd}. We estimate first, using \eqref{FL1bd},
\begin{equation}\label{LinEstLemA}
\Vert e^{-it\Lambda_\sigma}f_{j,k,n}\Vert_{L^\infty}\lesssim \Vert \widehat{f_{j,k,n}}\Vert_{L^1}\lesssim \min\{2^k,2^{-3k}\}2^{-(1-20\delta)j}2^{-19\delta n}.
\end{equation}
Similarly, if $2^k\ll 1$ then the usual dispersion estimate gives
\begin{equation}\label{LinEstLemA1}
\Vert e^{-it\Lambda_\sigma}f_{j,k,n}\Vert_{L^\infty}\lesssim 2^{-m}\Vert f_{j,k,n}\Vert_{L^1}\lesssim 2^{20\delta j-m}.
\end{equation}
The bounds in the first two lines of \eqref{LinftyBd} follow.

Assume now that $j\le (1-\delta^2)m+k_-$, $1\ll 2^m$, $k\in[-25\delta j-\D,m/4]$. Using Lemma \ref{tech5} (i)  it is easy to see that
\begin{equation*}
\big|\big(e^{-it\Lambda_\sigma}f_{j,k,n}\big)(x)\big|\lesssim 2^{-2m} \qquad \text{ unless }|x|\approx 2^{m+k_-}.
\end{equation*}
Also, letting $f'_{j,k,n}:=\mathcal{R}_{\leq m/10}f_{j,k,n}$, see \eqref{RadOp}, we have $\|f_{j,k,n}-f'_{j,k,n}\|_{L^2}\lesssim 2^{-4m}$ therefore
\begin{equation*}
\big\|e^{-it\Lambda_\sigma}(f_{j,k,n}-f'_{j,k,n})\big\|_{L^\infty}\lesssim 2^{-2m}.
\end{equation*} 
On the other hand, if $|x|\approx 2^{m+k_-}$ then, using again Lemma \ref{tech5} and \eqref{FLinftybdDER},
\begin{equation}\label{Alx90.4}
\begin{split}
\big(e^{-it\Lambda_\sigma}f'_{j,k,n}\big)(x)&=C\int_{\mathbb{R}^2}e^{i\Psi}\varphi(\kappa_r^{-1}\nabla_\xi\Psi)\varphi(\kappa_\theta^{-1}\Omega_\xi\Psi)\widehat{f'_{j,k,n}}(\xi)d\xi+O(2^{-2m}),\\
\Psi&:=-t\Lambda_\sigma(\xi)+x\cdot\xi,\quad\kappa_r:=2^{\delta^2m}\big(2^{m/2}+2^{j}\big),\quad\kappa_\theta:=2^{\delta^2m}2^{(m+k+k_-)/2}.
\end{split}
\end{equation}
If $j\le m/2$, the second cut-off is not needed and a simple estimate using \eqref{FLinftybd} gives
\begin{equation}\label{LinEstLemB}
\big\vert  \big(e^{-it\Lambda_\sigma}f'_{j,k,n}\big)(x)\big\vert\lesssim 2^{-2m}+ \kappa_r^22^{-2m}2^{4k_+}\cdot\Vert \widehat{f'_{j,k,n}}\Vert_{L^\infty}\lesssim 2^{-m+2\delta m-\delta^2m}.
\end{equation}
On the other hand, if $m/2\le j\le (1-\delta^2)m$, then
\begin{equation}\label{LinEstLemC}
\begin{split}
\left\vert  \left(e^{-it\Lambda_\sigma}Q_{jk}f\right)(x)\right\vert&\lesssim 2^{-2m}+ (2^{-m-k_--k}\kappa_\theta)\cdot\sup_\theta\big\Vert [\varphi(\kappa_r^{-1}\nabla_\xi\Psi)\widehat{f'_{j,k,n}}](r\theta)\big\Vert_{L^1(rdr)}\\
&\lesssim
2^{-2m}+(2^{-m-k_--k}\kappa_\theta)(2^{-m}\kappa_r2^{4k_+}2^{k-})^{1/2}\cdot\sup_\theta\Vert \widehat{f'_{j,k,n}}(r\theta)\Vert_{L^2(rdr)}\\
&\lesssim 2^{2\delta m-m-\delta^2m}2^{-3k_-/4}.
\end{split}
\end{equation}
Estimate \eqref{LinftyBd} follows from \eqref{LinEstLemA}, \eqref{LinEstLemA1}, \eqref{LinEstLemB} and \eqref{LinEstLemC}. 

Estimate \eqref{LinftyBd2} follows from \eqref{LinftyBd}.
\end{proof}

\section{Energy estimates, I: quasilinear energies}\label{ENERGY0}

In the next two sections we prove our main energy estimate:

\begin{proposition}\label{BootstrapEE}
Assume that $(\rho,u,\widetilde{E},\widetilde{b})$ is a solution to \eqref{system2}-\eqref{condition} on a time interval $[0,T]$, $T\ge 1$, satisfying the hypothesis \eqref{bootstrap1}-\eqref{bootstrap2} in Proposition \ref{bootstrap}. Then, for any $t\in[0,T]$,
\begin{equation}\label{bootstrapEE}
\Vert (\rho(t),u(t),\widetilde{E}(t),\widetilde{b}(t))\Vert_{H^N\cap H_\Omega^{N_1}}\lesssim\epsilon_0+\epsilon_1^{3/2}.
\end{equation}
\end{proposition}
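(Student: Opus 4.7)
The plan is to prove \eqref{bootstrapEE} by constructing modified energy functionals $\mathcal{E}_N(t)$ equivalent to the square of the Sobolev/weighted norm on the left-hand side of \eqref{bootstrapEE}, and showing that $|\mathcal{E}_N(t)-\mathcal{E}_N(0)|\lesssim \eps_0^2+\eps_1^3$ for $t\in[0,T]$. First I would reduce \eqref{system2}--\eqref{condition} to a diagonal system for the complex variables $(U_e,U_b)$ from \eqref{variables4} and paralinearize it in the sense of Lemmas \ref{PropProd}--\ref{PropHHSym}, recasting each equation in the schematic form $\partial_t U_\sigma+i\Lambda_\sigma U_\sigma=T_{\Sigma_\sigma}U_\sigma+\mathcal{N}_\sigma$, where $\Sigma_\sigma$ is a real symbol of order one (so that $T_{\Sigma_\sigma}$ is skew-adjoint by Lemma \ref{PropSym}) and $\mathcal{N}_\sigma$ is a semilinear remainder that does not lose derivatives. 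This is exactly the content the paper calls Proposition \ref{ChangeUnknownsProp}.

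Next I would define the energies. For the $H^{N_0}$ bound, take $\mathcal{E}_{N_0}(t):=\sum_\sigma\langle(\langle D\rangle^{N_0}+T_{A_\sigma})U_\sigma,(\langle D\rangle^{N_0}+T_{A_\sigma})U_\sigma\rangle$, with correctors $T_{A_\sigma}$ chosen so that $[\langle D\rangle^{N_0},T_{\Sigma_\sigma}]+[T_{A_\sigma},i\Lambda_\sigma]$ kills the leading quadratic term; the rotationally weighted piece is handled in parallel, using the commutator identity \eqref{Alu3} to propagate $\Omega^m$ through the paradifferential operators for $m\le N_1$. Skew-adjointness kills the diagonal quadratic contribution, so differentiation in time gives, after Plancherel, $\partial_t\mathcal{E}_N\approx \sum \int T[\langle D\rangle^{a}V_\mu,\langle D\rangle^{b}V_\nu]\cdot\overline{\langle D\rangle^{c}V_\sigma}$ with $a+b+c=2N$ and phases of the form \eqref{ModelPhase}; this is the basic energy identity of the form \eqref{OPT}.

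The heart of the proof is then to dominate these cubic space-time integrals by $\eps_1^3$. A careful algebraic inspection (carried out by tracking the symbols produced in the paralinearization) shows that every surviving trilinear term is either (i) \emph{nonresonant}, with $|\Phi(\xi,\eta)|\gtrsim 1$ uniformly on its support, or (ii) \emph{strongly semilinear}, with symbol obeying \eqref{SSTerm}. Case (i) is dispatched by the classical normal form of Shatah: one integrates by parts in $s$, producing boundary terms and quartic space-time integrals; the factor $\Phi^{-1}$ is bounded and the potential derivative loss is controlled by \eqref{bootstrap2} and Lemma \ref{lem:CZ}. This is the content of Lemma \ref{BootstrapEE2}.

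For case (ii)---the step I expect to be the main obstacle---normal forms cannot be applied globally because the time-resonant set $\mathcal{R}_\Phi$ is codimension one. Instead I would dyadically decompose in the modulation $2^\mu:=|\Phi(\xi,\eta)|$. Because the symbol gains one derivative on the largest frequency and the anticipated decay is better than $|t|^{-1+\kappa}$, the effective support lies in \eqref{SemiCase}. For small modulations $2^\mu\lesssim |t|^{-0.99999}$, I would apply the $TT^\ast$-based $L^2$ bound \eqref{OPT3} from Lemma \ref{L2EstLem} on the localized Fourier integral operator \eqref{OPT2}; this is where the restricted nondegeneracy \eqref{RNDC} of $\mathcal{R}_\Phi$ (proved in Section \ref{phacolle}) is indispensable, since it upgrades the trivial $|t|^{-1/2}$ bound to $2^{-\mu(1+1/250)}$ and beats the $|t|^{-1}$ threshold. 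For larger $2^\mu$, the factor $\Phi^{-1}$ is small enough that a second integration by parts in $s$ is permissible despite the loss of derivatives, again thanks to \eqref{SemiCase}. This is Lemma \ref{BootstrapEE1}, and combining it with Lemma \ref{BootstrapEE2} and the initial bound $\mathcal{E}_N(0)\lesssim\eps_0^2$ yields \eqref{bootstrapEE}.
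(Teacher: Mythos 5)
Your proposal follows essentially the same route as the paper: paralinearize into quasilinear variables (Proposition \ref{ChangeUnknownsProp}), build the energy from $O^p\mathcal{U}_\sigma$ (Proposition \ref{IncrementEnergyProp}), split the cubic increment into nonresonant and strongly semilinear pieces, treat the former by normal forms (Lemma \ref{BootstrapEE2}) and the latter by a modulation decomposition combining the $L^2$ bound of Lemma \ref{L2EstLem} at small modulations with a further integration by parts in time at large modulations (Lemma \ref{BootstrapEE1}). The only divergence is minor: where you propose to kill the quasilinear commutator $[\langle D\rangle^{N_0},T_{\Sigma_\sigma}]$ with an energy corrector $T_{A_\sigma}$, the paper instead keeps this term in the increment, splits it by output frequency into strongly semilinear and nonresonant parts, and recovers the apparent derivative loss in the normal form by symmetrizing the resulting symbol (the computation around \eqref{nor57}).
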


We prove Proposition \ref{BootstrapEE} in several steps. We start by switching to new unknowns which respect better the quasilinear nature of our system, Proposition \ref{ChangeUnknownsProp} recasts the main system \eqref{system2}-\eqref{condition} as a quasilinear system and allows us to obtain suitable bounds on the energy increment (see Proposition \ref{IncrementEnergyProp}). We then bound these increments in Lemmas \ref{BootstrapEE2} and \ref{BootstrapEE1}. 

We often use the bounds in Proposition \ref{BootSimple} below. These bounds follow from \eqref{bootstrap2}, Lemma \ref{LinEstLem}, Lemma \ref{dtfLemPrelim}, and Corollary \ref{AlxCoro}.

\begin{proposition}\label{BootSimple}
Let $(U_e,U_b), (V_e,V_b)$ be defined as in \eqref{variables4}, and assume that the hypothesis of Proposition \ref{bootstrap} holds. Then, for $\sigma\in\{e,b\}$, $k\in\mathbb{Z}$, and $t\in[0,T]$,
\begin{equation}\label{BootSimple1}
\|U_\sigma(t)\|_{H^{N_{0}}\cap H^{N_1}_\Omega}\lesssim\eps_1,
\end{equation}
\begin{equation}\label{BootSimple2}
\sup_{|\mu|\leq (1+t)2^{-\D}}\sum_{a\leq N_1/2}\|e^{-i\mu\Lambda_\sigma}P_k\Omega^aU_\sigma(t)\|_{L^{\infty}}\lesssim\eps_1(1+t)^{-1+21\delta},
\end{equation}
\begin{equation}\label{Alx2}
\big\|P_{\leq k}(\partial_t+i\Lambda_\sigma)U_{\sigma}(t)\big\|_{H^{N_0}}+\sum_{a\leq N_1}\big\|P_{\leq k}\Omega^a(\partial_t+i\Lambda_\sigma)U_{\sigma}(t)\big\|_{L^2}\lesssim \eps_1^22^{k_+}(1+t)^{-1+22\delta}.
\end{equation}
Moreover, for $0\le a\le N_1/2$ we can decompose $\Omega^a(\partial_t+i\Lambda_\sigma)U_{\sigma}(t)=G_2(t)+G_{\infty}(t)$ such that
\begin{equation}\label{Alx3}
\begin{split}
\sup_{|\mu|\leq (1+t)^{1-\delta/4}}\|e^{-i\mu\Lambda_\sigma}P_kG_{\infty}(t)\|_{L^{\infty}}&\lesssim\eps_1^2(1+t)^{-2+50\delta},\\
\|P_kG_{2}(t)\|_{L^{2}}&\lesssim\eps_1^2(1+t)^{-3/2+60\delta}.
\end{split}
\end{equation}
\end{proposition}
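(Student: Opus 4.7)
The plan is to handle the four displays in order of increasing difficulty, in each case invoking the tool explicitly cited in the paragraph introducing the proposition: the Fourier multiplier boundedness of Lemma \ref{lem:CZ} combined with the algebraic definition \eqref{variables4}; the linear dispersive estimate of Lemma \ref{LinEstLem}; a direct computation of $(\partial_t+i\Lambda_\sigma)U_\sigma$ from the reformulation \eqref{system3prime}; and the refined structural description in Lemma \ref{dtfLemPrelim} together with Corollary \ref{AlxCoro}. The main obstacle is this last step, since producing a two-piece decomposition with $L^\infty$ control valid at shifted times is the ingredient that powers the subsequent energy and $Z$-norm analysis.

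For \eqref{BootSimple1} I would observe that the transformation \eqref{variables4} from $(\rho,u,\widetilde{E},\widetilde{b})$ to $(U_e,U_b)$ is a linear operator with Fourier multipliers smooth and bounded at all frequencies (the apparent $|\nabla|^{-1}$ is compensated by $\hbox{div}$ or $\hbox{curl}$) that commutes with $\Omega$, so Lemma \ref{lem:CZ} transfers the bootstrap bound \eqref{bootstrap2} to $(U_e,U_b)$. For \eqref{BootSimple2} I would rewrite $e^{-i\mu\Lambda_\sigma}U_\sigma(t)=e^{-i(t+\mu)\Lambda_\sigma}V_\sigma(t)$ and use $[\Omega,\Lambda_\sigma]=0$ to obtain $Z_1^\sigma\cap H_\Omega^{N_1/8}$ control of $\Omega^a V_\sigma(t)$ for $a\le N_1/2$ from the bootstrap. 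Applying the pointwise bound \eqref{LinftyBd2} at the shifted time $t+\mu$, which satisfies $|t+\mu|\approx 1+t$ since $|\mu|\le (1+t)2^{-\D}$, then yields the stated decay.

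For \eqref{Alx2} I would read off $(\partial_t+i\Lambda_\sigma)U_\sigma$ from \eqref{system3prime} expressed in the variables $(U_e,U_b,\overline{U_e},\overline{U_b})$; the result is a quadratic (plus cubic and higher) nonlinearity whose symbols carry at most one output derivative, which explains the $2^{k_+}$ factor after applying $P_{\le k}$. Distributing $\Omega^a$ by Leibniz (splitting $\Omega$ over products) and using Lemmas \ref{lem:CZ} and \ref{L1easy}, I reduce to bilinear terms in which the factor carrying more $\Omega$'s is placed in $L^2$ via \eqref{BootSimple1} and the other in $L^\infty$ via \eqref{BootSimple2} taken at $\mu=0$; the claimed bound $\eps_1^2\,2^{k_+}(1+t)^{-1+22\delta}$ follows.

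The decomposition \eqref{Alx3} is the substantive content and is essentially what Lemma \ref{dtfLemPrelim} asserts for $U_\sigma$, extended to $\Omega^a$-derivatives through Corollary \ref{AlxCoro}. The strategy is to split the quadratic nonlinearity into a main piece $G_\infty$ consisting of the symbols whose frequency structure permits a dispersive $L^\infty$ estimate robust under time shifts $|\mu|\le (1+t)^{1-\delta/4}$, and a remainder $G_2$ collecting the cubic terms together with the quadratic pieces supported where one factor has frequency too large for the shifted dispersive bound to apply. The residual $G_2$ then admits the improved $L^2$ bound $\eps_1^2(1+t)^{-3/2+60\delta}$ by placing one factor in $L^\infty$ at the sharp rate from \eqref{BootSimple2}, the other in $L^2$ via \eqref{BootSimple1}, and gaining an extra $(1+t)^{-1/2}$ either from frequency balance in the truncation threshold or from the cubic algebraic structure. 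Once Lemma \ref{dtfLemPrelim} and Corollary \ref{AlxCoro} are in hand, the bounds in \eqref{Alx3} are read off directly.
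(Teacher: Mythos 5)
Your proposal is correct and follows the paper's route exactly: the paper's entire justification of Proposition \ref{BootSimple} is the one-line citation of \eqref{bootstrap2}, Lemma \ref{LinEstLem}, Lemma \ref{dtfLemPrelim}, and Corollary \ref{AlxCoro}, and you invoke precisely these ingredients in the same way (bounded multipliers commuting with $\Omega$ for \eqref{BootSimple1}, the identity $e^{-i\mu\Lambda_\sigma}U_\sigma=e^{-i(t+\mu)\Lambda_\sigma}V_\sigma$ plus \eqref{LinftyBd2} for \eqref{BootSimple2}, the $L^2\times L^\infty$ Duhamel estimate for \eqref{Alx2}, which is \eqref{Alx1} transported by the unitary $e^{-it\Lambda_\sigma}$, and the $\widetilde{f_C}/\widetilde{f_{NC}}$ decomposition of Corollary \ref{AlxCoro} for \eqref{Alx3}). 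Your heuristic gloss on the $G_2/G_\infty$ split is phrased in terms of frequency truncation rather than the coherent/non-coherent (stationary-phase) dichotomy actually used in Lemma \ref{dtfLemPrelim}, but since you explicitly defer to that lemma and the corollary for the construction, this is a difference of description, not of proof.
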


\subsection{Quasilinear variables} We now introduce the Hodge decomposition
\begin{equation*}
\begin{split}
&P=\vert\nabla\vert^{-1}\hbox{div},\qquad Q=\vert\nabla\vert^{-1}\hbox{curl},\qquad R_j=\partial_j\vert\nabla\vert^{-1},\qquad Id=-R_jP+\in_{jk}R_kQ,\\
&A=Pu,\qquad C=P\widetilde{E},\qquad B=Qu,\qquad D=Q\widetilde{E},
\end{split}
\end{equation*}
where we use the standard convention that repeated indices are summed. We define the nonlinear dispersion relation
\begin{equation}\label{DispersionRelations}
\Sigma_e:=\sqrt{(1+\rho)(1+h^\prime(\rho)\vert\zeta\vert^2)},
\end{equation}
and the new unknowns
\begin{equation}\label{NewUnknowns}
\mathcal{U}_e:=T_{\sqrt{1+\rho}}A-iT_{\Sigma_e}T_{1/\sqrt{1+\rho}}C,\qquad \mathcal{U}_b:=\Lambda_bB-iD.
\end{equation}
Recall also \eqref{variables4}
\begin{equation*}
U_e=A-i\Lambda_eC,\qquad U_b=\Lambda_bB-iD,\qquad \Lambda_e=\sqrt{1+d|\nabla|^2},\qquad \Lambda_b=\sqrt{1+|\nabla|^2}.
\end{equation*}
In particular $\mathcal{U}_b=U_b$ and the main unknowns $(A,B,C,D)$ can be recovered by
\begin{equation}\label{ABCD}
\begin{split}
&2A=\big[U_e+\overline{U_e}\big],\qquad 2C=i\Lambda_e^{-1}\big[U_e-\overline{U_e}\big],\\
&2B=\Lambda_b^{-1}\big[U_b+\overline{U_b}\big],\qquad 2D=i\big[U_b-\overline{U_b}\big],\\
&\rho=-\vert\nabla\vert C,\qquad u_j=-R_jA+\in_{jk}R_kB.
\end{split}
\end{equation}

With the notation in subsection \ref{ParaDiffCalc}, see in particular \eqref{nor1} and Lemmas \ref{PropProd} and \ref{PropSym}, and using also the bounds \eqref{BootSimple1}--\eqref{BootSimple2} on $U_e$, we have
\begin{equation}\label{nor5}
\begin{split}
&\sup_{a\leq N_1}\big\|\Omega^a_{x,\zeta}\big[\Sigma_e(x,\zeta)-\Lambda_e(\zeta)\big]\big\|_{\mathcal{L}_1^2}\lesssim \eps_1,\\
&\sup_{a\leq N_1/2}\big\|\Omega^a_{x,\zeta}\big[\Sigma_e(x,\zeta)-\Lambda_e(\zeta)\big]\big\|_{\mathcal{L}_1^\infty}\lesssim \eps_1(1+t)^{-99/100}.
\end{split}
\end{equation}

We derive now the main system for the quasilinear variables $\mathcal{U}_e$ and $\mathcal{U}_b$. The main point of this derivation is to write the quadratic components of the nonlinearities as sums of (1) {\it{strongly semilinear}} terms, which gain one derivative relative to the main variables, and (2) {\it{nonresonant}} terms, for which the corresponding phases do not vanish so we can later estimate them using normal forms. More precisely:

\begin{proposition}\label{ChangeUnknownsProp}
With the definitions above and the assumptions of Proposition \ref{BootstrapEE} we have
\begin{equation}\label{EquivalenceNorms}
\big\|\mathcal{U}_e-U_e\big\|_{H^{N_0}\cap H^{N_1}_\Omega}\lesssim \eps_1^2(1+t)^{-99/100},\qquad \sup_{a\leq N_1/2}\big\|\Omega^a[\mathcal{U}_e-U_e]\big\|_{W^{100,\infty}}\lesssim \eps_1^2(1+t)^{-99/50}.
\end{equation}
The variables $\mathcal{U}_e$ and $\mathcal{U}_b$ satisfy the equations
\begin{equation}\label{ParaUnknowns}
\begin{split}
\left(\partial_t+iT_{\Sigma_e}+iT_{u\cdot\zeta}\right)\mathcal{U}_e&=\mathcal{SS}_e+\mathcal{N}_e+\mathcal{C}_e,\\
\left(\partial_t+i\Lambda_b\right)\mathcal{U}_b&=\mathcal{SS}_b+\mathcal{N}_b+\mathcal{C}_b,
\end{split}
\end{equation}
where 

$\bullet\,\,$ $\mathcal{N}_e$, $\mathcal{N}_b$ are quadratic semilinear nonresonant terms, i.e. linear combinations of operators of the form
\begin{equation}\label{NRMult1}
\mathcal{N}_\sigma=\sum_{\mu,\nu}N_{\sigma\mu\nu}[U_\mu,U_\nu],\qquad \mathcal{F}\big\{N_{\sigma\mu\nu}[f,g]\big\}(\xi)=\frac{1}{4\pi^2}\int_{\mathbb{R}^2}n_{\sigma\mu\nu}(\xi,\eta)\widehat{f}(\xi-\eta)\widehat{g}(\eta)\,d\eta,
\end{equation}
where the symbols $n_{\sigma\mu\nu}$ of the bilinear operators $N_{\sigma\mu\nu}$ satisfy
\begin{equation}\label{NRMult0}
n_{\sigma\mu\nu}\equiv 0\qquad\text{ unless }\qquad\sigma=e\text{ or }\nu\in\{e,-e\}
\end{equation}
and, with $n^a_{\sigma\mu\nu}:=(\Omega_\xi+\Omega_\eta)^an_{\sigma\mu\nu}$ and recalling \eqref{ener66},
\begin{equation}\label{NRMult}
\sup_{a\le N_1}\big[\Vert n^a_{\sigma\mu\nu}\Vert_{S^\infty_{kk_1k_2}}+2^{k_2}\Vert (\partial_\xi+\partial_\eta) n^a_{\sigma\mu\nu}\Vert_{S^\infty_{kk_1k_2}}\big]\lesssim (1+2^{3k_1})\mathbf{1}_{\{k_2\ge \max(k_1,0)+\D/2\}};
\end{equation}

$\bullet\,\,$ $\mathcal{C}_e$, $\mathcal{C}_b$ are higher order semilinear terms satisfying
\begin{equation}\label{CubBoun}
\Vert \mathcal{C}_e\Vert_{H^{N_0}\cap H^{N_1}_\Omega}+\Vert \mathcal{C}_b\Vert_{H^{N_0}\cap H^{N_1}_\Omega}\lesssim \eps_1^3(1+t)^{-9/5};
\end{equation}

$\bullet\,\,$ $\mathcal{SS}_e$, $\mathcal{SS}_b$ are quadratic strongly semilinear terms, i.e. linear combinations of operators smoothing of order $1$, namely
\begin{equation}\label{SSMult}
\mathcal{SS}_\sigma=\sum_{\mu,\nu}SS_{\sigma\mu\nu}[U_{\mu},U_{\nu}],\qquad \mathcal{F}\big\{SS_{\sigma\mu\nu}[f,g]\big\}(\xi)=\frac{1}{4\pi^2}\int_{\mathbb{R}^2}ss_{\sigma\mu\nu}(\xi,\eta)\widehat{f}(\xi-\eta)\widehat{g}(\eta)\,d\eta,
\end{equation}
where, with $ss^a_{\sigma\mu\nu}:=(\Omega_\xi+\Omega_\eta)^ass_{\sigma\mu\nu}$,
\begin{equation}\label{SSMult2}
\begin{split}
&\sup_{a\le N_1}\Vert ss^a_{\sigma\mu\nu}\Vert_{S^\infty_{kk_1k_2}}\lesssim 2^{-\max(k,k_1,k_2,0)}(1+2^{4\min(k,k_1,k_2)}),
\end{split}
\end{equation}
\end{proposition}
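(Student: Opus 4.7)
The proof proceeds in two movements: first establish the paradifferential near-identity $\mathcal{U}_e\approx U_e$, then derive \eqref{ParaUnknowns} by time-differentiating and paralinearizing \eqref{system2}, classifying every quadratic residue as nonresonant, smoothing-by-one-derivative, or cubic. The specific form of $\mathcal{U}_e$ in \eqref{NewUnknowns} is chosen so that the principal part of its evolution equation is the self-adjoint paradifferential operator $i(T_{\Sigma_e}+T_{u\cdot\zeta})$ --- the quasilinear shadow of the Hamiltonian \eqref{Econserved}. For the equivalence \eqref{EquivalenceNorms}, write
\begin{equation*}
\mathcal{U}_e-U_e = T_{\sqrt{1+\rho}-1}A - i\big(T_{\Sigma_e}T_{1/\sqrt{1+\rho}}-\Lambda_e\big)C.
\end{equation*}
The multipliers $\sqrt{1+\rho}-1$, $1/\sqrt{1+\rho}-1$, and $\Sigma_e-\Lambda_e$ are $O(\rho)$ in the appropriate $\mathcal{L}^q_l$ classes via \eqref{nor5}, controlled by Proposition \ref{BootSimple}. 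The claimed bounds follow from Lemma \ref{PropProd}(i) combined with \eqref{Alu3} for the $\Omega$-derivatives.

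\textbf{The equation for $\mathcal{U}_b=U_b$.} Applying $Q=|\nabla|^{-1}\hbox{curl}$ to \eqref{system2}, using $\widetilde{b}=|\nabla|B$, $\rho=-|\nabla|C$, and $\hbox{curl}(\nabla^\perp \widetilde{b})=\Delta\widetilde{b}$, one computes $\partial_t B=-D$ (purely linear) and $\partial_t D=\Lambda_b^2 B+Q[\rho u]$, whence
\begin{equation*}
(\partial_t+i\Lambda_b)U_b = -iQ[\rho u].
\end{equation*}
Dyadically localize both inputs. In the low-high regime $k_2\ge\max(k_1,0)+\D/2$ with the high-frequency input of $u$ coming from $A$ (so $\nu=\pm e$), the phase $\Lambda_b(\xi)\mp\Lambda_e(\xi-\eta)\mp\Lambda_e(\eta)$ is bounded away from zero when $d<1$, so the term satisfies \eqref{NRMult0}--\eqref{NRMult} and is routed into $\mathcal{N}_b$. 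If the high-frequency input of $u$ comes from $B$ instead ($\nu=\pm b$), the identity $B=(U_b+\overline{U_b})/(2\Lambda_b)$ supplies a $\Lambda_b^{-1}\sim 2^{-k_2}$ factor; combined with $|\xi-\eta|\sim 2^{k_1}$ from $\rho=-|\nabla|C$, this yields the smoothing bound \eqref{SSMult2} and the term is routed into $\mathcal{SS}_b$. The symmetric regime $k_1\ge k_2+\D/2$ is handled by the $|\xi-\eta|$ factor alone.

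\textbf{The equation for $\mathcal{U}_e$; main obstacle.} Applying $P=|\nabla|^{-1}\hbox{div}$ to the velocity equation and using the density equation yields
\begin{equation*}
\partial_t A+P(u\cdot\nabla u)+P(h'(\rho)\nabla\rho)=-C+P[\widetilde{b}u^\perp],\qquad \partial_t C=P[(1+\rho)u].
\end{equation*}
Paralinearize each product via Lemma \ref{PropHHSym}: $u\cdot\nabla u=T_{u\cdot i\zeta}u+T_{\nabla u}u+\mathcal{H}$, $h'(\rho)\nabla\rho=T_{h'(\rho)i\zeta}\rho+T_{\nabla\rho}h'(\rho)+\mathcal{H}$, and similarly on the right. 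Using $\rho=-|\nabla|C$ to rewrite $T_{h'(\rho)i\zeta}\rho=T_{h'(\rho)|\zeta|^2}C$, and conjugating with $T_{\sqrt{1+\rho}}$ and $T_{\Sigma_e}T_{1/\sqrt{1+\rho}}$ via Lemma \ref{PropProd}(ii), the algebraic identity $\Sigma_e^2=(1+\rho)(1+h'(\rho)|\zeta|^2)$ from \eqref{DispersionRelations} forces the principal symbols to combine into exactly $iT_{\Sigma_e}\mathcal{U}_e+iT_{u\cdot\zeta}\mathcal{U}_e$; commutator errors are smoothing of order one via \eqref{LqBdTa2}. Every remaining bilinear term --- $T_{\nabla u}u$, $T_{\nabla\rho}h'(\rho)$, $T_u\rho$, the three $\mathcal{H}$-remainders, and $P[\widetilde{b}u^\perp]$ --- is dyadically localized and classified as in the previous paragraph: for $\sigma=e$, the lower bound $\Psi_e^\dagger\ge 10$ from \eqref{psidag} makes every phase $\Phi_{e\mu\nu}$ nonvanishing, so the term is routed into $\mathcal{N}_e$; for $\sigma=b$, either the high-frequency input can be arranged to be $\pm e$ (yielding $\mathcal{N}_b$) or the symbol exhibits a one-derivative gain from $|\nabla|$ landing on the lower-frequency input or from the $\Lambda_b^{-1}$ factor in $B=(U_b+\overline{U_b})/(2\Lambda_b)$ (yielding $\mathcal{SS}_b$). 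Cubic remainders --- from $h_2(\rho)$ in \eqref{Exph}, the Poisson-bracket correction in Lemma \ref{PropProd}(ii), and $\mathcal{H}$-terms of degree three --- form $\mathcal{C}_\sigma$; H\"older with one $L^2$ and two $L^\infty$ factors together with \eqref{BootSimple2} gives $\|\mathcal{C}_\sigma\|_{H^{N_0}\cap H^{N_1}_\Omega}\lesssim \eps_1^3(1+t)^{-2+42\delta}\lesssim\eps_1^3(1+t)^{-9/5}$. The main obstacle is this classification: verifying that \emph{no} residual quadratic term escapes the dichotomy \emph{nonresonant}-or-\emph{smoothing-by-one-derivative}, a non-obvious algebraic fact that reflects the conservation law \eqref{Econserved}.
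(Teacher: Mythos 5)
Your proposal follows essentially the same route as the paper: paralinearize the system, conjugate by $T_{\sqrt{1+\rho}}$ and $T_{\Sigma_e}T_{1/\sqrt{1+\rho}}$ so that the identity $\Sigma_e^2=(1+\rho)(1+h'(\rho)|\zeta|^2)$ assembles the principal part $i(T_{\Sigma_e}+T_{u\cdot\zeta})$, and then classify every quadratic residue as nonresonant (high--low, routed to $\mathcal{N}_\sigma$), smoothing of order one (routed to $\mathcal{SS}_\sigma$, with the $\Lambda_b^{-1}$ gain from $B=\Lambda_b^{-1}[U_b+\overline{U_b}]$ doing the work for $\nu=\pm b$ outputs of the $b$-equation), or cubic. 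The only substantive deviation (working from \eqref{system2} rather than the constrained form \eqref{system3prime}) is harmless.

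One justification, however, is wrong as stated and happens to sit at what you yourself identify as the main obstacle. You claim that for $\sigma=e$ "the lower bound $\Psi_e^\dagger\ge 10$ from \eqref{psidag} makes every phase $\Phi_{e\mu\nu}$ nonvanishing." This does not follow: $\Psi_{e\mu\nu}(\xi)=\Phi_{e\mu\nu}(\xi,p(\xi))$ evaluates the phase only at the stationary point of $\nabla_\eta\Phi$, so $\Psi_e^\dagger\geq 10$ rules out \emph{space-time} resonances, not time resonances. Indeed $\Phi_{e;-e,+b}$ and $\Phi_{e;+b,-b}$ vanish on large codimension-one sets (they lie in $\mathcal{P}_{Hyp}^1$ and $\mathcal{P}_{Hyp}^2$ respectively). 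What actually makes the $\mathcal{N}_e$ terms nonresonant is the combination of the paradifferential frequency restriction $k_2\geq\max(k_1,0)+\D/2$ with Lemma \ref{PhaseClass}: in that high--low regime the exceptional configurations \eqref{ResCond1}--\eqref{ResCond2} are excluded, so \eqref{EllPhiBdd} gives $|\Phi_{e\mu\nu}(\xi,\eta)|\gtrsim(1+|\xi-\eta|)^{-1}$, which is exactly the lower bound needed later in \eqref{ControlPhase}. Your conclusion is correct, and your dyadic setup already places you in the right regime, but the cited mechanism should be replaced by Lemma \ref{PhaseClass}; as written, the argument would not survive scrutiny, since the same (false) reasoning would also declare the genuinely resonant interaction $\Phi_{b;\pm e,\pm b}$ in the $b$-equation harmless, whereas that term must instead be absorbed into $\mathcal{SS}_b$ via the derivative gain — precisely the dichotomy encoded in \eqref{NRMult0}.
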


\begin{proof} We remark first that \eqref{NRMult1}--\eqref{NRMult} guarantee that the modulation $|\Phi_{\sigma\mu\nu}(\xi,\eta)|$ is bounded from below in the support of the multiplier $n_{\sigma\mu\nu}$, due to Lemma \ref{PhaseClass}.

The bounds in \eqref{EquivalenceNorms} follows easily from Lemma \ref{PropSym} and \eqref{BootSimple1}--\eqref{BootSimple2}. To prove \eqref{ParaUnknowns}, using Lemma \ref{PropHHSym} and \eqref{Exph} we observe that
\begin{equation*}
h(\rho)=T_{h^\prime(\rho)}\rho+\kappa \mathcal{H}(\rho,\rho)/2+E^3(\rho),\qquad \Vert (1-\Delta)E^3(\rho)\Vert_{H^{N_0}\cap H^{N_1}_\Omega}\lesssim \eps_1^3(1+t)^{-9/5}.
\end{equation*}
We apply $P$ to the first equation in \eqref{system3prime}, so
\begin{equation*}
\begin{split}
\partial_tA-\vert\nabla\vert\big[-T_{h^\prime(\rho)}\vert\nabla\vert C&+\kappa\mathcal{H}(\rho,\rho)/2+E^3(\rho)\big]+C=\vert\nabla\vert\big[T_{u_j}u_j+\mathcal{H}(u_j,u_j)/2\big]\\
&=-\vert\nabla\vert T_{u_j}R_jA+\in_{jk}\vert\nabla\vert T_{u_j}R_kB+\vert\nabla\vert\mathcal{H}(u_j,u_j)/2.
\end{split}
\end{equation*}
Therefore
\begin{equation*}
\begin{split}
\partial_tA+&T_{1+h^\prime(\rho)\vert\zeta\vert^2}C+iT_{u\cdot\zeta}A=\mathcal{SS}^1_e+\mathcal{N}^1_e+\mathcal{C}^1_e,\\
\mathcal{SS}^1_e&:=\vert\nabla\vert\mathcal{H}(u_j,u_j)/2+\kappa \vert\nabla\vert \mathcal{H}(\rho,\rho)/2,\\
\mathcal{N}^1_e&:=\kappa(T_{\rho\vert\zeta\vert^2}-\vert\nabla\vert T_{\rho}\vert\nabla\vert)C-[\vert\nabla\vert, T_{u_j}]R_jA+\big(iT_{u\cdot\zeta}A-|\nabla|T_{u_j}R_jA\big)+\in_{jk}\vert\nabla\vert T_{u_j}R_kB,\\
\mathcal{C}^1_e&:=(T_{h_2(\rho)\vert\zeta\vert^2}-\vert\nabla\vert T_{h_2(\rho)}\vert\nabla\vert)C+\vert\nabla\vert E^3(\rho).
\end{split}
\end{equation*}

Using the second equation in \eqref{system3prime} we obtain
\begin{equation*}
\partial_tC=P\left[(1+\rho)u\right]=T_{1+\rho}A+[R_j,T_{\rho}]u_j-R_jT_{u_j}\vert\nabla\vert C+R_j\mathcal{H}(\rho,u_j),
\end{equation*}
which we rewrite as
\begin{equation*}
\begin{split}
\partial_tC-T_{1+\rho}A+iT_{u\cdot\zeta}C&=\mathcal{SS}^2_e+\mathcal{N}^2_e\\
\mathcal{SS}^2_e&:=R_j\mathcal{H}(\rho,u_j),\\
\mathcal{N}^2_e&:=\big[iT_{u\cdot\zeta}C-R_jT_{u_j}\vert\nabla\vert C\big]+[R_j,T_{\rho}]u_j.
\end{split}
\end{equation*}
Therefore we obtain the system
\begin{equation*}
\begin{split}
\left(\partial_t+iT_{u\cdot\zeta}\right)A+T_{1+h^\prime(\rho)\vert\zeta\vert^2}C&=\mathcal{SS}^1_e+\mathcal{N}^1_e+\mathcal{C}^1_e,\\
\left(\partial_t+iT_{u\cdot\zeta}\right)C-T_{1+\rho}A&=\mathcal{SS}^2_e+\mathcal{N}^2_e.
\end{split}
\end{equation*}
Using \eqref{NewUnknowns}, this leads to
\begin{equation}\label{zaw1}
\begin{split}
\left(\partial_t+iT_{\Sigma_e}+iT_{u\cdot\zeta}\right)\mathcal{U}_e&=i[T_{u\cdot\zeta},T_{\sqrt{1+\rho}}]A + i(T_{\Sigma_e}T_{\sqrt{1+\rho}}-T_{\Sigma_e}T_{1/\sqrt{1+\rho}}T_{1+\rho})A\\
&+T_{\partial_t\sqrt{1+\rho}}A+T_{\sqrt{1+\rho}}(\mathcal{SS}^1_e+\mathcal{N}^1_e+\mathcal{C}^1_e)\\
&+[T_{u\cdot\zeta},T_{\Sigma_e}T_{1/\sqrt{1+\rho}}]C+(T_{\Sigma_e}T_{\Sigma_e}T_{1/\sqrt{1+\rho}}-T_{\sqrt{1+\rho}}T_{1+h^\prime(\rho)\vert\zeta\vert^2})C\\
&-i[\partial_t,T_{\Sigma_e}T_{1/\sqrt{1+\rho}}]C-iT_{\Sigma_e}T_{1/\sqrt{1+\rho}}(\mathcal{SS}_e^2+\mathcal{N}^2_e).
\end{split}
\end{equation}

To see that the right-hand side of \eqref{zaw1} has the required form we expand
\begin{equation}\label{ApproxSigma}
\begin{split}
\Sigma_e&=\Lambda_e\Big[1+\rho(1+\frac{\kappa\vert\zeta\vert^2}{1+d\vert\zeta\vert^2})+(h_2(\rho)(1+\rho)+\kappa\rho^2)\frac{\vert\zeta\vert^2}{1+d\vert\zeta\vert^2}\Big]^{1/2}=\Lambda_e+\Sigma_e^{\ge 1},\\
\Sigma_e^{\ge1}&:=\Sigma_e^1+\Sigma_e^{\ge 2},\qquad \Sigma_e^1=\rho\Lambda_e^1=\frac{\rho}{2}\Big[\sqrt{1+d\vert\zeta\vert^2}+\frac{\kappa\vert\zeta\vert^2}{\sqrt{1+d\vert\zeta\vert^2}}\Big].
\end{split}
\end{equation}
where $\Sigma^1_e$ is a symbol of order $1$ depending linearly on $\rho$, and $\Sigma^{\ge 2}_e$ is a symbol of order $1$ depending at least quadratically on $\rho$. 

Recall that $\partial_t\rho=-|\nabla|A-|\nabla|P(\rho u)$. With $E(a,b)=T_aT_b-T_{ab}$ as in Lemma \ref{PropProd}, we define
\begin{equation*}
\begin{split}
\mathcal{Q}_e&:=T_{-|\nabla|A/2}A+\mathcal{N}^1_e+[T_{u\cdot\zeta},\Lambda_e]C\\
&+d[T_{\rho\vert\zeta\vert^2}-\frac{1}{2}(\vert\nabla\vert^2T_\rho+T_\rho\vert\nabla\vert^2)]C+[E(\Sigma_e^1,\Lambda_e)+E(\Lambda_e,\Sigma_e^1)]C\\
&-\frac{i}{2}(T_{\Lambda_e(\zeta) (-|\nabla|A)}+\kappa T_{\Lambda_e(\zeta)^{-1}\vert\zeta\vert^2(-|\nabla|A)}-\Lambda_eT_{-|\nabla|A})C-i\Lambda_e\mathcal{N}_e^2,\\
\mathcal{SS}_e&:=\mathcal{SS}^1_e-i\Lambda_e\mathcal{SS}_e^2+P_{\le \mathcal{D}}\mathcal{Q}_e,\\
\mathcal{N}_e&:=P_{\geq \mathcal{D}+1}\mathcal{Q}_e,
\end{split}
\end{equation*}
and
\begin{equation*}
\begin{split}
\mathcal{C}_e&:=i[T_{u\cdot\zeta},T_{\sqrt{1+\rho}}]A + i(T_{\Sigma_e}T_{\sqrt{1+\rho}}-T_{\Sigma_e}T_{1/\sqrt{1+\rho}}T_{1+\rho})A,\\
&+T_{\partial_t\sqrt{1+\rho}+|\nabla|A/2}A+T_{\sqrt{1+\rho}-1}(\mathcal{SS}^1_e+\mathcal{N}^1_e)+T_{\sqrt{1+\rho}}\mathcal{C}^1_e\\
&+[T_{u\cdot\zeta},T_{\Sigma_e^{\ge1}}T_{1/\sqrt{1+\rho}}+T_{\Lambda_e}T_{1/\sqrt{1+\rho}-1}]C\\
&+[T_{\Sigma_e^2}T_{1/\sqrt{1+\rho}}-T_{\sqrt{1+\rho}}T_{1+h^\prime(\rho)\vert\zeta\vert^2}-d(T_{\rho\vert\zeta\vert^2}-\frac{1}{2}(\vert\nabla\vert^2T_\rho+T_\rho\vert\nabla\vert^2))]C\\
&+[E(\Sigma_e^{\ge 1},\Sigma_e^{\ge 1})+E(\Lambda_e,\Sigma_e^{\ge 2})+E(\Sigma^{\ge 2}_e,\Lambda_e)+E(\Sigma_e,\Sigma_e)T_{1/\sqrt{1+\rho}-1}]C\\
&-i\Big([\partial_t,T_{\Sigma_e}T_{1/\sqrt{1+\rho}}]-\frac{1}{2}(T_{\Lambda_e(\zeta) (-|\nabla|A)}+\kappa T_{\Lambda_e(\zeta)^{-1}\vert\zeta\vert^2(-|\nabla|A)}-\Lambda_eT_{-|\nabla|A})\Big)C\\
&-iT_{\Sigma_e^{\geq 1}}(\mathcal{SS}^2_e+\mathcal{N}^2_e)-iT_{\Sigma_e}T_{1/\sqrt{1+\rho}-1}(\mathcal{SS}^2_e+\mathcal{N}^2_e).
\end{split}
\end{equation*}

It is easy to see that $\mathcal{Q}_e$ is a quadratic term in $\rho,A,C,u_j,B$ that does not lose derivatives; in view of \eqref{ABCD} this can be written as a sum of bilinear operators in terms of $U_e,\overline{U_e},U_b,\overline{U_b}$. Moreover, all the interactions are between high and low frequencies, due to presence of the paradifferential operators. These interactions are automatically nonresonant when $\sigma=e$. The desired conclusions on the quadratic terms $\mathcal{SS}_e$ and $\mathcal{N}_e$ follow.

The term $\mathcal{C}_e$ contains cubic and higher order terms that do not lose derivatives. The desired bound in \eqref{CubBoun} follows using Lemmas \ref{PropProd}--\ref{PropHHSym} and the bounds \eqref{BootSimple1}--\eqref{BootSimple2}.

We now turn to magnetic variables. Applying $Q$ to \eqref{system3prime}, we obtain
\begin{equation*}
\begin{split}
&\partial_tB+D=0,\\
&\partial_tD+\Delta B-B=Q(\rho u)=QT_{\rho}u+Q\mathcal{H}(\rho,u)+QT_u\rho.
\end{split}
\end{equation*}
Therefore, recalling that $\mathcal{U}_b=\Lambda_bB-iD$ and $u_k=-R_kA+\in_{kn}R_nB$,
\begin{equation*}
\begin{split}
(\partial_t+i\Lambda_b)\mathcal{U}_b&=(-i)[QT_{\rho}u+Q\mathcal{H}(\rho,u)+QT_u\rho]=\mathcal{SS}_b+\mathcal{N}_b,\\
\mathcal{SS}_b:&=iP_{\leq \D}(\in_{jk}R_jT_{\rho}R_kA-\in_{jk}R_jT_{u_k}\rho)+iR_jT_{\rho}R_jB-i\in_{jk}R_j\mathcal{H}(\rho,u_k),\\
\mathcal{N}_b:&=iP_{\geq \D+1}(\in_{jk}R_jT_{\rho}R_kA-\in_{jk}R_jT_{u_k}\rho).
\end{split}
\end{equation*}
We notice that $B=\Lambda_b^{-1}[U_b+\overline{U}_b]$ gains a derivative, and the terms $R_jT_{\rho}R_kA$ and $R_jT_{u_k}\rho$ are nonresonant when restricted to high frequencies and $\sigma=b$. The desired conclusion follows.
\end{proof}

As a consequence, we can obtain our first high order energy estimates. Let $\langle\nabla\rangle:=(1-\Delta)^{1/2}$, $O^0:=\langle\nabla\rangle^{N_0}$, and $O^p:=\Omega^p$ for $p\in\{1,\ldots,N_1\}$. For $\sigma\in\{e,b\}$ and $p\in\{0,\ldots,N_1\}$ let
\begin{equation}\label{nor20}
W_\sigma^p:=O^p\mathcal{U}_{\sigma}.
\end{equation}

\begin{proposition}\label{IncrementEnergyProp} We define the energy functional
\begin{equation}\label{nor21}
\mathcal{E}(t):=(1/2)\sum_{\sigma\in\{e,b\},\,p\in\{0,\ldots,N_1\}}\|W_\sigma^p(t)\|_{L^2}^2.
\end{equation}
Under the hypothesis of Proposition \ref{BootstrapEE}, we have
\begin{equation}\label{EquivNorm2}
\Vert (\rho,u,\widetilde{E},\widetilde{b})(t)\Vert_{H^{N_0}\cap H_\Omega^{N_1}}^2\lesssim \mathcal{E}(t)+\varepsilon_1^3,\qquad \mathcal{E}(t)\lesssim \Vert (\rho,u,\widetilde{E},\widetilde{b})(t)\Vert_{H^{N_0}\cap H_\Omega^{N_1}}^2+\varepsilon_1^3,
\end{equation}
and
\begin{equation}\label{Increments}
\begin{split}
\partial_t\mathcal{E}=\mathcal{B}_{SS}+\mathcal{B}_{NR}+\mathcal{B}_{hot},\qquad\vert\mathcal{B}_{hot}(t)\vert\lesssim \varepsilon_1^3(1+t)^{-3/2}.
\end{split}
\end{equation}
Here the strongly semilinear increment is 
\begin{equation}\label{BSS}
\mathcal{B}_{SS}:=\sum_{\sigma\in\{e,b\}}\sum_{p\in\{0,\ldots,N_1\}}\Re\langle O^p\mathcal{SS}_\sigma,W_{\sigma}^p\rangle+\sum_{p\in\{0,\ldots,N_1\}}\Re\langle i[T_{\Sigma_e^1+u\cdot\zeta},O^p]U_e,P_{\leq\D}W_{e}^p\rangle,
\end{equation}
and the nonresonant increment is 
\begin{equation}\label{BNR}
\mathcal{B}_{NR}:=\sum_{\sigma\in\{e,b\}}\sum_{p\in\{0,\ldots,N_1\}}\Re\langle O^p\mathcal{N}_\sigma,W_{\sigma}^p\rangle+\sum_{p\in\{0,\ldots,N_1\}}\Re\langle i[T_{\Sigma_e^1+u\cdot\zeta},O^p]U_e,P_{\geq \D+1}W_{e}^p\rangle.
\end{equation}
Moreover, for $p\in\{0,\ldots,N_1\}$,
\begin{equation}\label{EqW}
\begin{split}
&\left(\partial_t+iT_{\Sigma_e}+iT_{u\cdot\zeta}\right)W_e^p=Q_e^p,\qquad \left(\partial_t+i\Lambda_b\right)W_b^p=Q_b^p,\\
&\Vert W_e^p(t)\Vert_{L^2}+\Vert W_b^p(t)\Vert_{L^2}\lesssim \eps_1,\qquad \Vert Q_e^p(t)\Vert_{L^2}+\Vert Q_b^p(t)\Vert_{L^2}\lesssim \eps_1^2(1+t)^{-9/10}.
\end{split}
\end{equation}
\end{proposition}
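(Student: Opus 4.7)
The plan is to treat the three assertions in order: the equivalence \eqref{EquivNorm2}, then the evolution equation \eqref{EqW}, and finally the identity \eqref{Increments}, assembling the last from the first two together with self-adjointness.

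For \eqref{EquivNorm2}, the formulas \eqref{ABCD} express $(\rho, u, \widetilde{E}, \widetilde{b})$ as order-zero Fourier multipliers of $(U_e, U_b)$, yielding $\|(\rho, u, \widetilde{E}, \widetilde{b})\|_{H^{N_0}\cap H^{N_1}_\Omega}^2 \approx \|(U_e, U_b)\|_{H^{N_0}\cap H^{N_1}_\Omega}^2$. Since $\mathcal{U}_b = U_b$ and \eqref{EquivalenceNorms} provides $\|\mathcal{U}_e - U_e\|_{H^{N_0}\cap H^{N_1}_\Omega} \lesssim \eps_1^2(1+t)^{-99/100}$, expanding $\|W_\sigma^p\|_{L^2}^2 = \|O^p\mathcal{U}_\sigma\|_{L^2}^2$ and pairing $\mathcal{U}_e - U_e$ against the $\eps_1$-size factor produces an error $\lesssim \eps_1^3$, which delivers both inequalities.

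For \eqref{EqW}, I apply $O^p$ to \eqref{ParaUnknowns}. Since $\Lambda_e$, $\Lambda_b$, and $\langle\nabla\rangle^{N_0}$ are radial Fourier multipliers, they commute with $O^p$, so only $T_{\Sigma_e^{\geq 1}}$ and $T_{u\cdot\zeta}$ contribute commutators; iterating Lemma \ref{PropSym}(ii) yields
\[
Q_e^p = -i\bigl[T_{\Sigma_e^{\geq 1}} + T_{u\cdot\zeta},\, O^p\bigr]\mathcal{U}_e + O^p(\mathcal{SS}_e + \mathcal{N}_e + \mathcal{C}_e),
\qquad Q_b^p = O^p(\mathcal{SS}_b + \mathcal{N}_b + \mathcal{C}_b).
\]
The bound $\|W_\sigma^p\|_{L^2}\lesssim \eps_1$ is immediate from \eqref{EquivalenceNorms} and \eqref{BootSimple1}. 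The bound $\|Q_\sigma^p\|_{L^2}\lesssim \eps_1^2(1+t)^{-9/10}$ is established term by term: each semilinear quadratic piece from $\mathcal{SS}_\sigma$ or $\mathcal{N}_\sigma$ is controlled by placing one input in $L^\infty$ via \eqref{LinftyBd2} (decay $(1+t)^{-1+21\delta}$) and the other in $L^2$; the cubic $\mathcal{C}_\sigma$ uses \eqref{CubBoun}; the commutator is handled by a Bony-type decomposition, distributing $\Omega^j$ either on the symbol (whose constituents $\rho, u$ obey $L^\infty$ decay for $j\leq N_1/2$ via \eqref{BootSimple2}) or on $\mathcal{U}_e$ (with $H^{N_1}_\Omega$ control from \eqref{BootSimple1}), while Lemma \ref{PropProd} preserves the high-low frequency structure.

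For \eqref{Increments}, differentiating yields $\partial_t\mathcal{E} = \sum_{\sigma,p}\Re\langle\partial_tW_\sigma^p, W_\sigma^p\rangle$. Substituting \eqref{EqW} and invoking Lemma \ref{PropSym}(i), the real-symbol operators $T_{\Sigma_e}$, $T_{u\cdot\zeta}$, $\Lambda_b$ are self-adjoint, so their contributions produce purely imaginary pairings that vanish, leaving $\partial_t\mathcal{E} = \sum_{\sigma,p}\Re\langle Q_\sigma^p, W_\sigma^p\rangle$. The terms $\Re\langle O^p\mathcal{SS}_\sigma, W_\sigma^p\rangle$ and $\Re\langle O^p\mathcal{N}_\sigma, W_\sigma^p\rangle$ land directly in the first summands of $\mathcal{B}_{SS}$ and $\mathcal{B}_{NR}$. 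For the commutator, I split $\Sigma_e^{\geq 1} = \Sigma_e^1 + \Sigma_e^{\geq 2}$ as in \eqref{ApproxSigma}; since $\Sigma_e^1 + u\cdot\zeta$ is linear in $(\rho, u)$ while $\mathcal{U}_e - U_e$ and $\Sigma_e^{\geq 2}$ are at least quadratic, one may replace $\mathcal{U}_e$ by $U_e$ in $[T_{\Sigma_e^1 + u\cdot\zeta}, O^p]\mathcal{U}_e$, incurring a cubic error. Inserting the partition $W_e^p = P_{\leq\D}W_e^p + P_{\geq\D+1}W_e^p$ and matching signs via the freedom to redistribute $i$ between the slots of $\Re\langle\cdot,\cdot\rangle$ then places the low-frequency-output portion into $\mathcal{B}_{SS}$ (the frequency gap with the paradifferential structure makes this commutator strongly semilinear of order one) and the high-frequency-output portion into $\mathcal{B}_{NR}$. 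All leftover pieces --- the cubic commutator from $\Sigma_e^{\geq 2}$, the substitution error from $\mathcal{U}_e \mapsto U_e$, and the $O^p\mathcal{C}_\sigma$ terms --- have norm $\lesssim \eps_1^3(1+t)^{-3/2}$ by \eqref{CubBoun} together with the pointwise estimates of Proposition \ref{BootSimple}, yielding $\mathcal{B}_{hot}$.

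The principal obstacle is the $L^2$ estimate on $Q_\sigma^p$ at the maximal rotational level $p = N_1$: distributing all $N_1$ copies of $\Omega$ onto the symbol of $T_{\Sigma_e^1 + u\cdot\zeta}$ cannot be done with $L^\infty$ bounds alone, and requires a careful pairing of $L^\infty$ decay on $\Omega^{\leq N_1/2}$ of one factor with $L^2$ control on $\Omega^{\leq N_1}$ of the other, exploiting the paradifferential cutoff $\chi$ in \eqref{Tsigmaf2} to preserve the high-low frequency structure and prevent any loss of derivative.
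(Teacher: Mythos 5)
Your proposal follows the paper's proof essentially verbatim: \eqref{EquivNorm2} from \eqref{ABCD} together with \eqref{EquivalenceNorms}, the evolution equation by applying $O^p$ to \eqref{ParaUnknowns} and collecting the commutator with $T_{\Sigma_e+u\cdot\zeta}$ (the $T_{\Lambda_e}$ part commuting with $O^p$), self-adjointness from Lemma \ref{PropSym}(i) to annihilate the leading pairings, and the splitting of the commutator into $[T_{\Sigma_e^1+u\cdot\zeta},O^p]U_e$ (further split by $P_{\leq\D}$ and $P_{\geq\D+1}$) plus the cubic remainders $[T_{\Sigma_e^{\geq 2}},O^p]\mathcal{U}_e$ and $[T_{\Sigma_e^1+u\cdot\zeta},O^p](\mathcal{U}_e-U_e)$ absorbed into $\mathcal{B}_{hot}$. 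The one slip is the sign of the commutator in $Q_e^p$: the correct identity is $Q_e^p=O^p(\mathcal{SS}_e+\mathcal{N}_e+\mathcal{C}_e)+i[T_{\Sigma_e+u\cdot\zeta},O^p]\mathcal{U}_e$ (with $+i$, not $-i$), and it is this sign that makes the terms in \eqref{BSS} and \eqref{BNR} come out exactly as stated, with no need to ``redistribute'' the factor of $i$ between the slots of the real inner product.
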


\begin{proof} In view of \eqref{ABCD}, $\Vert (\rho,u,\widetilde{E},\widetilde{b})\Vert_{H^{N_0}\cap H_\Omega^{N_1}}\approx \|(U_e,U_b)\|_{H^{N_0}\cap H_\Omega^{N_1}}$, and \eqref{EquivNorm2} follows using also \eqref{EquivalenceNorms}. For the remaining claims, we start from the equations \eqref{ParaUnknowns} and write
\begin{equation}\label{nor30}
\begin{split}
&(\partial_t+iT_{\Sigma_e}+iT_{u\cdot\zeta})W_e^p=Q_e^p:=O^p(\mathcal{SS}_e+\mathcal{N}_e+\mathcal{C}_e)+i[T_{\Sigma_e+u\cdot\zeta},O^p]\mathcal{U}_e,\\
&(\partial_t+i\Lambda_b)W_b^p=Q_b^p:=O^p(\mathcal{SS}_b+\mathcal{N}_b+\mathcal{C}_b),
\end{split}
\end{equation}
 The bounds in the second line of \eqref{EqW} follow from \eqref{BootSimple1}--\eqref{BootSimple2} and Proposition \ref{ChangeUnknownsProp}. To prove \eqref{Increments} we start by writing
\begin{equation*}
\partial_t\{(1/2)\|W_\sigma^p(t)\|_{L^2}^2\}=\Re\langle\partial_tW_\sigma^p(t),W_\sigma^p(t)\rangle.
\end{equation*}
Since the operators $T_{\Sigma}$, $T_{u\cdot\zeta}$, and $\Lambda_b$ are self-adjoint, see Lemma \ref{PropSym} (i), we have
\begin{equation*}
\Re\langle iT_{\Sigma_e}W_e^p(t),W_e^p(t)\rangle=\Re\langle iT_{u\cdot\zeta}W_e^p(t),W_e^p(t)\rangle=\Re\langle i\Lambda_bW_b^p(t),W_b^p(t)\rangle=0.
\end{equation*}
Then we recall \eqref{ApproxSigma}, \eqref{BootSimple1}--\eqref{BootSimple2}, and \eqref{EquivalenceNorms}. Therefore $[T_{\Sigma_e^{\geq 2}},O^p]\mathcal{U}_e$ and $[T_{\Sigma_e^1+u\cdot\zeta},O^p](\mathcal{U}_e-U_e)$ are cubic and higher order terms which do not lose derivatives,
\begin{equation*}
\big\|[T_{\Sigma_e^{\geq 2}},O^p]\mathcal{U}_e\big\|_{L^2}+\big\|[T_{\Sigma_e^1+u\cdot\zeta},O^p](\mathcal{U}_e-U_e)\big\|_{L^2}\lesssim \eps_1^3(1+t)^{-3/2}.
\end{equation*} 
The desired decomposition follows.
\end{proof}

\subsection{Proof of Proposition \ref{BootstrapEE}} In view of \eqref{EquivNorm2}, it suffices to prove that $|\mathcal{E}(t)-\mathcal{E}(0)|\lesssim\eps_1^3$ for any $t\in[0,T]$. Using \eqref{Increments}, it suffices to prove that
\begin{equation*}
\Big|\int_0^t[\mathcal{B}_{NR}(s)+\mathcal{B}_{SS}(s)+\mathcal{B}_{hot}(s)]\,ds\Big|\lesssim \eps_1^3
\end{equation*} 
Since $|\mathcal{B}_{hot}(s)|\lesssim \eps_1^3(1+s)^{-3/2}$, the contribution of this term can be easily bounded. We consider the other two terms. Given $t\in[0,T_0]$, we fix a suitable decomposition of the function $\mathbf{1}_{[0,t]}$, i.e. we fix functions $q_0,\ldots,q_{L+1}:\mathbb{R}\to[0,1]$, $|L-\log_2(2+t)|\leq 2$, with the properties
\begin{equation}\label{nh2}
\begin{split}
&\mathrm{supp}\,q_0\subseteq [0,2], \qquad \mathrm{supp}\,q_{L+1}\subseteq [t-2,t],\qquad\mathrm{supp}\,q_m\subseteq [2^{m-1},2^{m+1}],\\
&\sum_{m=0}^{L+1}q_m(s)=\mathbf{1}_{[0,t]}(s),\qquad q_m\in C^1(\mathbb{R})\text{ and }\int_0^t|q'_m(s)|\,ds\lesssim 1\text{ for }m=1,\ldots,L.
\end{split}
\end{equation}

Using Proposition \ref{IncrementEnergyProp}, it suffices to prove the following two results uniformly in $m\ge 0$:

\begin{lemma}\label{BootstrapEE2}
Under the hypothesis of Proposition \ref{BootstrapEE}, we have
\begin{equation}\label{nor11}
\Big| \int_0^tq_m(s)\mathcal{B}_{NR}(s)ds\Big|\lesssim \epsilon_1^{3}2^{-\delta^2m}.
\end{equation}
\end{lemma}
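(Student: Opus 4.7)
The plan is a normal-form (integration by parts in time) argument, made possible by the crucial feature built into Proposition \ref{ChangeUnknownsProp}: every nonresonant multiplier $n_{\sigma\mu\nu}$ in \eqref{NRMult} is supported where $|\Phi_{\sigma\mu\nu}(\xi,\eta)|\gtrsim 1$ (via the paradifferential high--low constraint $k_2\ge\max(k_1,0)+\D/2$ and Lemma \ref{PhaseClass}). First, using \eqref{ABCD}, \eqref{NewUnknowns}, \eqref{EquivalenceNorms}, and \eqref{Alu3}, I would write $\mathcal{B}_{NR}$ as a finite sum of trilinear expressions
\[
\int_0^t q_m(s)\int_{\mathbb{R}^2\times\mathbb{R}^2} e^{is\Phi_{\sigma\mu\nu}(\xi,\eta)}\mathfrak{m}^{abc}_{\sigma\mu\nu}(\xi,\eta)\,\widehat{V_\mu^{(a)}}(s,\xi-\eta)\widehat{V_\nu^{(b)}}(s,\eta)\,\overline{\widehat{V_\sigma^{(c)}}(s,\xi)}\,d\eta\,d\xi\,ds,
\]
where $V_\sigma^{(p)}:=e^{is\Lambda_\sigma}O^p\mathcal{U}_\sigma$ are the rotated profiles with $a+b+c\le N_1$, and $\mathfrak{m}^{abc}_{\sigma\mu\nu}$ inherits the bounds \eqref{NRMult} together with the localization $|\Phi|\gtrsim 1$. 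The commutator piece $P_{\geq\D+1}[T_{\Sigma_e^1+u\cdot\zeta},O^p]U_e$ falls into the same template after distributing $\Omega^p$ across the paraproduct with \eqref{Alu3}, since the projection to $P_{\geq\D+1}$ again forces a high--low balanced structure.

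Next, I would integrate by parts in $s$ using $e^{is\Phi}=(i\Phi)^{-1}\partial_s e^{is\Phi}$. This produces three types of contributions: (i) endpoint/cutoff boundary terms $q_m(s_\ast)\cdot\bigl[\mathfrak{m}/(i\Phi)\bigr]\cdot\widehat{V_\mu^{(a)}}\widehat{V_\nu^{(b)}}\overline{\widehat{V_\sigma^{(c)}}}$, (ii) a term with $q_m'(s)$, which is controlled in the same way as (i) because $\int_0^t|q_m'|\,ds\lesssim 1$ by \eqref{nh2}, and (iii) cubic-of-profile terms where $\partial_s$ hits one of the $V$'s, converted via $\partial_s V_\sigma=e^{is\Lambda_\sigma}(\partial_t+i\Lambda_\sigma)U_\sigma$ into quartic expressions driven by the quadratic source in \eqref{Alx2}--\eqref{Alx3}. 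In every case the factor $1/\Phi$ is harmless (the multiplier is supported where $|\Phi|\gtrsim 1$), and the new multipliers still satisfy the high--low derivative bound $(1+2^{3k_1})$ from \eqref{NRMult}.

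For the boundary-type terms (i)--(ii), I would dyadically decompose in all frequencies and place the two highest-frequency inputs in $L^2$ (controlled by $\|U_\sigma\|_{H^{N_0}\cap H^{N_1}_\Omega}\lesssim\eps_1$ via \eqref{EquivNorm2} and \eqref{EqW}) and the low-frequency input in $L^\infty$, using the decay bound $\|e^{-is\Lambda_\sigma}P_kU_\sigma\|_{L^\infty}\lesssim\eps_1(1+s)^{-1+21\delta}$ from \eqref{LinftyBd2}/\eqref{BootSimple2}; the derivative loss $2^{3k_1}$ on the low frequency is absorbed by the Sobolev reservoir. This gives a contribution of size $\eps_1^3\cdot 2^{m(-1+21\delta)}$, which is acceptable since $21\delta<1-\delta^2$. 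For the quartic terms (iii), the substitution of \eqref{Alx2}--\eqref{Alx3} gains an extra factor $(1+s)^{-1+O(\delta)}$ in the decomposition $G_2+G_\infty$ (one handles $G_\infty$ by the same $L^2\times L^2\times L^\infty$ scheme and $G_2$ by Cauchy--Schwarz after paying the slower $\|G_2\|_{L^2}\lesssim\eps_1^2(1+s)^{-3/2+60\delta}$ decay), leading to a time-integrable estimate bounded by $\eps_1^4 2^{-m(1-O(\delta))}$.

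The main obstacle I anticipate is bookkeeping the distribution of rotation vector fields through the paradifferential commutator in $\mathcal{B}_{NR}$: when $O^p=\Omega^p$ with $p$ close to $N_1$, one cannot afford to put all $\Omega$-derivatives on a single factor, and one must use \eqref{Alu3} together with the symbol bound \eqref{nor5} to split them appropriately so that the Sobolev norms available from \eqref{EqW} suffice. A secondary technical point is verifying the Coifman--Meyer-type bound $\|\mathfrak{m}/(i\Phi)\|_{S^\infty_{kk_1k_2}}\lesssim 2^{3k_1}$ on each dyadic piece using Lemma \ref{Sinfinity} and Lemma \ref{PhaseClass}; once this is in place, Lemma \ref{L1easy} converts every multilinear estimate into products of $L^2$ and $L^\infty$ norms already controlled by Proposition \ref{BootSimple}.
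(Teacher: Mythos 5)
Your overall strategy (integration by parts in time, using that the nonresonant multipliers are supported where $|\Phi_{\sigma\mu\nu}|\gtrsim 1$ because of the high--low constraint $k_2\ge\max(k_1,0)+\D/2$, followed by $L^\infty\times L^2\times L^2$ estimates via Lemma \ref{L1easy} and Proposition \ref{BootSimple}) is the same as the paper's for the terms $J_1$ and $J_2$ in \eqref{ResultNFBootstrapEE2}, and that part of your argument is sound.

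There is, however, a genuine gap in your treatment of the terms where $\partial_s$ falls on one of the two \emph{high-frequency, top-order} factors. You convert these via $\partial_sV=e^{is\Lambda}(\partial_t+i\Lambda)U$ into ``quartic expressions driven by the quadratic source in \eqref{Alx2}--\eqref{Alx3}'' and claim a time-integrable bound $\eps_1^42^{-m(1-O(\delta))}$. But the relevant factors are $W_\sigma^p=O^p\mathcal{U}_\sigma$ with $O^p=\langle\nabla\rangle^{N_0}$ or $\Omega^p$, and by \eqref{nor50} their time derivative contains the quasilinear term $-iT_{\Sigma_e^{\geq1}+u\cdot\zeta}W_e^p$, which carries a full derivative on the top-order unknown: only $\langle\nabla\rangle^{-1}(\partial_s+i\Lambda_\mu)W_\mu^q$ is bounded in $L^2$ (see \eqref{nor51}). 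After dividing by $\Phi$, the multiplier bound \eqref{ControlPhase} gives no decay in the high frequency $k_2\approx k$, so the unmatched factor $2^{k_2}$ cannot be absorbed by the $H^{N_0}\cap H^{N_1}_\Omega$ norm of the output, and the estimate does not close. This is exactly the ``possible loss of derivatives'' the paper flags for $J_3$. The paper closes it in two different ways: when $\nu\neq\sigma$ the stronger lower bound $|\Phi_{\sigma\mu\nu}|\gtrsim 1+|\xi|+|\eta|$ yields the improved symbol estimate \eqref{ControlPhaseS} with the extra factor $2^{-k_2}$; and in the remaining case $\nu=\sigma=e$ it uses a symmetrization argument (\eqref{nor53}--\eqref{nor57}), combining $\widetilde{\mathcal{I}}[U_\mu,iT_{\Sigma_e^{\geq1}+u\cdot\zeta}W_e^p,W_e^p]$ with $\widetilde{\mathcal{I}}[U_\mu,W_e^p,iT_{\Sigma_e^{\geq1}+u\cdot\zeta}W_e^p]$ so that the kernel becomes a difference $n(\xi,\eta+\theta)/\Phi(\xi,\eta+\theta)-n(\xi-\theta,\eta)/\Phi(\xi-\theta,\eta)$, whose Taylor expansion gains back the lost derivative. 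Neither mechanism appears in your proposal; the obstacle you identify (distributing $\Omega$-derivatives through the commutator) is a secondary bookkeeping issue, not the essential difficulty.
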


\begin{lemma}\label{BootstrapEE1}
Under the hypothesis of Proposition \ref{BootstrapEE}, we have
\begin{equation}\label{nor12}
\Big|\int_0^tq_m(s)\mathcal{B}_{SS}(s)ds\Big|\lesssim \epsilon_1^{3}2^{-\delta^2m}.
\end{equation}
\end{lemma}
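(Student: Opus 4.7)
The plan is to open each summand of $\mathcal{B}_{SS}$ as a trilinear Fourier integral and exploit the two defining features of $\mathcal{SS}_\sigma$: the smoothing bound \eqref{SSMult2} and the $L^2$ estimate $\|W_\sigma^p\|_{L^2}\lesssim \eps_1$ from \eqref{EqW}. Writing $U_\sigma(s)=e^{-is\Lambda_\sigma}V_\sigma(s)$ and distributing $O^p$ across the bilinear inputs (using Lemma \ref{PropProd} for the commutator term in \eqref{BSS}, whose output is cut off to frequencies $\le 2^{\D}$), every contribution reduces to a space-time integral of the schematic form
\begin{equation*}
\mathcal{I}_m := \int_0^t q_m(s)\int_{\mathbb{R}^2\times\mathbb{R}^2} e^{is\Phi_{\sigma\mu\nu}(\xi,\eta)}\,M(\xi,\eta)\,\widehat{F}(\xi-\eta)\,\widehat{G}(\eta)\,\overline{\widehat{H}(\xi)}\,d\eta\,d\xi\,ds,
\end{equation*}
where $F,G,H$ are profiles or weighted profiles with $\|F\|_{L^2}+\|G\|_{L^2}+\|H\|_{L^2}\lesssim\eps_1$ and pointwise $Z$-type control coming from \eqref{FLinftybd}--\eqref{FLinftybdDER}, while $\|M(\xi,\eta)\|_{S^\infty_{kk_1k_2}}\lesssim 2^{-\max(k,k_1,k_2,0)}$.

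\textbf{Reduction to low frequencies.} First I would split $\mathcal{I}_m$ according to the largest of the three frequencies $\max(k,k_1,k_2)$ at threshold $\beta m$ with $\beta:=100\delta$. On the high-frequency piece the smoothing factor $2^{-\beta m}$ combines, via Lemma \ref{L1easy}, with one $L^\infty$ bound of order $\eps_1(1+s)^{-1+21\delta}$ (from \eqref{LinftyBd2} applied to a low-regularity factor) and two $L^2$ bounds of order $\eps_1$ (from \eqref{EqW} and \eqref{BootSimple1}), yielding $|\mathcal{I}_m^{\mathrm{high}}|\lesssim \eps_1^3\,2^{m(21\delta-\beta)}\lesssim \eps_1^3\,2^{-\delta^2 m}$ after integrating over $s\in[2^{m-1},2^{m+1}]$.

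\textbf{Dyadic modulation decomposition.} In the remaining low-frequency regime $\max(k,k_1,k_2)\le \beta m$, I would decompose dyadically in the size of the modulation,
\begin{equation*}
1=\varphi_{\le \ell_0}(\Phi)+\sum_{\ell_0<\ell\le C\beta m}\varphi_\ell(\Phi),\qquad \ell_0:=\lfloor -(1-\delta^2)m\rfloor,
\end{equation*}
(terms with $\ell>C\beta m$ being excluded by the support of the symbol). For the small-modulation piece I would apply Lemma \ref{L2EstLem}: freezing $\xi$ and viewing the $\eta$-integration as the operator $L$ of \eqref{OPT2} acting on $\widehat{G}$, with kernel symbol $M(\xi,\eta)\widehat{F}(\xi-\eta)$ whose required regularity comes from \eqref{FLinftybd}--\eqref{FLinftybdDER}, the bound \eqref{OPT3} gives $\|L\|_{L^2\to L^2}\lesssim 2^{\ell_0(1+1/250)}$; Cauchy--Schwarz against $\overline{\widehat H}$ in $L^2_\xi$ and $s$-integration over a window of length $\approx 2^m$ then yield
\begin{equation*}
|\mathcal{I}_m^{\mathrm{small}}|\lesssim \eps_1^3\cdot 2^m\cdot 2^{-(1-\delta^2)(1+1/250)m}\lesssim \eps_1^3\,2^{-\delta^2 m}.
\end{equation*}
For each fixed $\ell>\ell_0$ I would integrate by parts in $s$ via $e^{is\Phi}=\partial_s(e^{is\Phi})/(i\Phi)$; the division by $\Phi$ costs at most $2^{(1-\delta^2)m}$ but is harmless because all frequencies are $\le 2^{\beta m}$, so no derivatives are lost. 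The time derivative lands either on $q_m$ (an integrable boundary-type contribution by \eqref{nh2}) or on one of the profiles, in which case I would substitute the splitting of $\partial_sV_\sigma$ from \eqref{Alx2}--\eqref{Alx3}, gaining an extra $(1+s)^{-1+O(\delta)}$; the logarithmic loss from summing the $O(m)$ dyadic shells $\ell$ is then absorbed to give $|\mathcal{I}_m^{\mathrm{large}}|\lesssim \eps_1^3\,2^{-\delta^2 m}$.

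\textbf{Main obstacle.} The hard step is the small-modulation piece. Normal forms are unavailable there because $|\Phi|$ is essentially zero on the codimension-$1$ time-resonant set $\mathcal{R}_\Phi$; the only possible gain comes from the restricted nondegeneracy \eqref{RNDC} of $\Phi$ on $\mathcal{R}_\Phi$, upgraded to the $TT^*$ operator bound \eqref{OPT3}. The non-trivial improvement $2^{-\lambda/250}$ beyond the heuristic $2^{-\lambda}\approx(1+s)^{-1}$ is precisely what turns the borderline size $\eps_1^3\cdot 2^m\cdot 2^{-m}=\eps_1^3$ of $\mathcal{I}_m^{\mathrm{small}}$ into a genuine small gain $\eps_1^3\,2^{-\delta^2 m}$; verifying that the kernel symbol $M(\xi,\eta)\widehat{F}(\xi-\eta)$ satisfies the hypotheses of Lemma \ref{L2EstLem} in the presence of the low-regularity $Z$-bounds on $\widehat{F}$ is the most technically delicate point of the argument.
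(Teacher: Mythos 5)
Your strategy is the paper's: use the derivative gain in \eqref{SSMult2} to restrict to frequencies $\lesssim 2^{O(\delta)m}$, split dyadically in the modulation $|\Phi|$, treat the small-modulation piece with the $L^2$ operator bound of Lemma \ref{L2EstLem}, and integrate by parts in time on the rest. However, your choice of threshold $\ell_0=\lfloor -(1-\delta^2)m\rfloor$ breaks both halves of the argument. First, Lemma \ref{L2EstLem} requires $\lambda\le (1-\delta_1)m$ with $\delta_1=10^{-4}$, whereas your application needs $\lambda=(1-\delta^2)m>(1-\delta_1)m$ since $\delta^2=1.6\cdot 10^{-13}\ll\delta_1$; so the lemma simply does not apply to the piece $|\Phi|\le 2^{\ell_0}$. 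Second, on the shells just above the threshold the normal form costs $2^{-\ell}\approx 2^{(1-\delta^2)m}$, while the total gain available is only $2^{m}\cdot 2^{-m+22\delta m}\cdot 2^{-m+21\delta m}$ (one $L^\infty$ factor decaying like $(1+s)^{-1+21\delta}$ and one time-differentiated factor of size $2^{-m+22\delta m}$ in $L^2$, integrated over $|s|\approx 2^m$); the net is $2^{(43\delta-\delta^2)m}$, which grows because $\delta\gg\delta^2$. The threshold has to be placed at $-(1-c)m$ with $c$ an intermediate constant satisfying $\max(\delta_1,\,C\delta)<c<1/251$, so that $1.004(1-c)>1$ on one side and $c>43\delta$ on the other; the paper takes $c=\delta_2/2$ with $\delta_2=10^{-3}$ (see Lemmas \ref{L2Lem1SmallMod} and \ref{MainL2Lem}). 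As written, the shells $\ell\in[-(1-\delta^2)m,\,-(1-43\delta)m]$ are covered by neither of your two mechanisms.

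A second, smaller gap is the point you yourself flag as delicate: the symbol $a(\xi,\eta)=M(\xi,\eta)\widehat{F}(\xi-\eta)$ satisfies the hypothesis \eqref{BdOnAL2Lem} only after decomposing $F$ into atoms $Q_{j_1k_1}F$ with $j_1\le m/2$, since by \eqref{FLinftybdDER} each $\xi$-derivative of $\widehat{F_{j_1,k_1}}$ costs $2^{j_1}$. The complementary range $j_1>m/2$ cannot be fed into Lemma \ref{L2EstLem} at all and requires a separate argument — Schur's test combined with the sublevel-set volume bounds \eqref{cas4}, \eqref{cas5.5}, \eqref{cas5.55} and the pointwise decay $\|\widehat{F_{j_1,k_1,n_1}}\|_{L^\infty}\lesssim 2^{-(1/2-21\delta)(j_1-n_1)}$ from \eqref{FLinftybd}. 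This is precisely the content of the paper's Lemma \ref{L2Lem1SmallMod}, and it needs to be supplied rather than asserted.
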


We prove these two lemmas in subsection \ref{nor7} and section \ref{SSterm} below. For Lemma \ref{BootstrapEE2} we exploit the nonresonant condition and use normal forms: we integrate by parts in time, use again the equations \eqref{ParaUnknowns}, and symmetrize. As a result, the space-time integral in \eqref{nor11} can be bounded by a quartic expression, and then estimated using Proposition \ref{BootSimple}.

The proof of Lemma \ref{BootstrapEE1} is more difficult. Normal form transformations are not possible in this case, due to the vanishing of the resulting denominators on a large set (this is the "division problem" discussed in the introduction). To prove Lemma \ref{BootstrapEE1} we combine a partial normal norm transformation, to bound the contribution of sufficiently large modulations, and a crucial $L^2$ estimate on a localized Fourier integral operator, to bound the contribution of small modulations. This $L^2$ estimate is proved in Lemma \ref{L2EstLem} and depends on the nondegeneracy property of the resonant set \eqref{RNDC}.

\subsection{Proof of Lemma \ref{BootstrapEE2}}\label{nor7}

We consider the terms in \eqref{BNR} individually, and use also \eqref{NRMult1}. Let $n:\mathbb{R}^2\to\mathbb{R}^2$ denote symbols satisfying bounds similar to \eqref{NRMult},\begin{equation}\label{Ngen}
\Vert n\Vert_{S^\infty_{kk_1k_2}}+2^{k_2}\Vert (\partial_\xi+\partial_\eta)n\Vert_{S^\infty_{kk_1k_2}}\lesssim (1+2^{4k_1})\mathbf{1}_{\{k_2\ge \max(k_1,0)+\D/2\}},
\end{equation} 
for any $k,k_1,k_2\in\mathbb{Z}$. In particular $n(\xi,\eta)=0$ unless $|\eta|\gg 1+|\xi-\eta|$. Let 
\begin{equation}\label{Ngen2}
\mathcal{I}[f,g,h]:=\langle N[f,g],h\rangle=C\iint_{\mathbb{R}^2\times\mathbb{R}^2} n(\xi,\eta)\widehat{f}(\xi-\eta)\widehat{g}(\eta)\overline{\widehat{h}(\xi)}d\eta d\xi.
\end{equation} 
For \eqref{nor11} it suffices to prove that
\begin{equation}\label{Ngen3}
\Big|\int_0^tq_m(s)\mathcal{I}[\Omega^{p_1}U_\mu(s),\langle\nabla\rangle^\iota \Omega^{p_2}\mathcal{U}_\nu(s),W_\sigma^p(s)]\,ds\Big|\lesssim \eps_1^32^{-\delta^2m},
\end{equation}
provided that, as in \eqref{NRMult0}, either $\sigma=e$ or $\nu=\pm e$, and 
\begin{equation*}
\begin{split}
\text{ either } &p\in[1,N_1],\,p_1,p_2\in [0,N_1],\,p_1+p_2\leq p,\,\iota\in\{0,1\},\,\iota+p_2\leq p;\\
\text{ or } &p=p_1=p_2=0,\,\iota=N_0.
\end{split} 
\end{equation*}
Notice that we replaced $U_\nu$ with $\mathcal{U}_\nu$ in passing from \eqref{nor11} to \eqref{Ngen3} (with the same convention as before, $\mathcal{U}_{-\sigma}=\overline{\mathcal{U}_\sigma}$, $W_{-\sigma}=\overline{W_\sigma}$, $\sigma\in\{e,b\}$) which is acceptable due to \eqref{EquivalenceNorms}. In proving \eqref{Ngen3} we may also assume that $m\in[10,L]$, since $q_{L+1}$ is supported in an interval of length $\lesssim 1$.

To prove \eqref{Ngen3} we integrate by parts in $s$. We start from the observation that
\begin{equation*}
\begin{split}
0&=\int_{\mathbb{R}}\frac{d}{ds}\Big\{q_m(s)\widetilde{\mathcal{I}}[\Omega^{p_1}U_\mu(s),\langle\nabla\rangle^\iota \Omega^{p_2}\mathcal{U}_\nu(s),W_\sigma^p(s)]\Big\}ds,\\
\widetilde{\mathcal{I}}[f,g,h]&:=\iint_{\mathbb{R}^2\times\mathbb{R}^2} \frac{n(\xi,\eta)}{i\Phi_{\sigma\mu\nu}(\xi,\eta)}\widehat{f}(\xi-\eta)\widehat{g}(\eta)\overline{\widehat{h}(\xi)}d\eta d\xi.
\end{split}
\end{equation*}
This gives
\begin{equation*}
\begin{split}
-\int_{\mathbb{R}}\iint_{\mathbb{R}^2\times\mathbb{R}^2}q_m(s)&\mathcal{I}[\Omega^{p_1}U_\mu(s),\langle\nabla\rangle^\iota \Omega^{p_2}\mathcal{U}_\nu(s),W_\sigma^p(s)]ds\\
&=\int_{\mathbb{R}}q'_m(s)J_1(s)+\int_{\mathbb{R}}q_m(s)J_2(s)\,ds+\int_{\mathbb{R}}q_m(s)J_3(s)\,ds,
\end{split}
\end{equation*}
where
\begin{equation}\label{ResultNFBootstrapEE2}
\begin{split}
&J_1:=\widetilde{\mathcal{I}}[\Omega^{p_1}U_\mu,\langle\nabla\rangle^\iota \Omega^{p_2}\mathcal{U}_\nu(s),W_\sigma^p],\\
&J_2:=\widetilde{\mathcal{I}}[(\partial_s+i\Lambda_\mu)\Omega^{p_1}U_\mu,\langle\nabla\rangle^\iota \Omega^{p_2}\mathcal{U}_\nu,W_\sigma^p],\\
&J_3:=\widetilde{\mathcal{I}}[\Omega^{p_1}U_\mu,(\partial_s+i\Lambda_\nu)\langle\nabla\rangle^\iota \Omega^{p_2}\mathcal{U}_\nu,W_\sigma^p]+\widetilde{\mathcal{I}}[\Omega^{p_1}U_\mu,\langle\nabla\rangle^\iota \Omega^{p_2}\mathcal{U}_\nu,(\partial_s+i\Lambda_\sigma)W_\sigma^p].
\end{split}
\end{equation}
For \eqref{Ngen3} it remains to show that for any $s\in I_m=\mathrm{supp}(q_m)$,
\begin{equation}\label{Ngen4}
|J_1(s)|+2^m|J_2(s)|+2^m|J_3(s)|\lesssim \eps_1^32^{-\delta m}.
\end{equation}

The main observation is that the function $\Phi_{\sigma\mu\nu}(\xi,\eta)$ does not vanish in the support of $n$. More precisely, since $|\Phi_{\sigma\mu\nu}(\xi,\eta)|\gtrsim (1+|\xi-\eta|)^{-1}$, see \eqref{ener42.5}, we have  
\begin{equation}\label{ControlPhase}
\Big\Vert \frac{n(\xi,\eta)}{\Phi_{\sigma\mu\nu}(\xi,\eta)}\Big\Vert_{S^\infty_{kk_1k_2}}\lesssim (1+2^{20k_1})\mathbf{1}_{\{k_2\ge \max(k_1,0)+\mathcal{D}/2\}},
\end{equation}
as a consequence of Lemma \ref{Sinfinity} (with $f(\alpha,\beta)=[\Phi_{\sigma\mu\nu}(\alpha,\alpha-\beta)]^{-1}$) and \eqref{Ngen}. Moreover, if $\nu\neq\sigma$ then the stronger lower bound $|\Phi_{\sigma\mu\nu}(\xi,\eta)|\gtrsim 1+|\xi|+|\eta|$ holds, and therefore
\begin{equation}\label{ControlPhaseS}
\Big\Vert \frac{n(\xi,\eta)}{\Phi_{\sigma\mu\nu}(\xi,\eta)}\Big\Vert_{S^\infty_{kk_1k_2}}\lesssim (1+2^{20k_1})2^{-k_2}\mathbf{1}_{\{k_2\ge \max(k_1,0)+\mathcal{D}/2\}}.
\end{equation}

To prove the bounds on $|J_1(s)|$ and $|J_2(s)|$ in \eqref{Ngen4} we use Lemma \ref{L1easy}. If $p_1\leq N_1/2$ then
\begin{equation*}
\begin{split}
|J_1|&\lesssim \sum_{k,k_1,k_2\in\mathbb{Z}}\Big\Vert \frac{n(\xi,\eta)}{\Phi_{\sigma\mu\nu}(\xi,\eta)}\Big\Vert_{S^\infty_{kk_1k_2}}\|P_{k_1}\Omega^{p_1}U_\mu\|_{L^\infty}\|P_{k_2}\langle\nabla\rangle^\iota \Omega^{p_2}\mathcal{U}_\nu\|_{L^2}\|P_kW_\sigma^p\|_{L^2}\\
&\lesssim \sum_{k_2\geq \max(k_1,0)+\D/2,\,|k_2-k|\leq 10}2^{20\max(k_1,0)}\|P_{k_1}\Omega^{p_1}U_\mu\|_{L^\infty}\|P_{k_2}\mathcal{U}_{\nu}\|_{H^{N_0}\cap H^{N_1}_{\Omega}}\|P_k\mathcal{U}_\sigma\|_{H^{N_0}\cap H^{N_1}_{\Omega}}\\
&\lesssim \eps_1^32^{-m/2},
\end{split}
\end{equation*}
where we used \eqref{ControlPhase}, \eqref{BootSimple1}--\eqref{BootSimple2}, and \eqref{EquivalenceNorms}. A similar estimate gives the bound $|J_2(s)|\lesssim \eps_1^42^{-3m/2}$ if $p_1\leq N_1/2$, because, as a consequence of \eqref{Alx2}--\eqref{Alx3},
\begin{equation*}
\sum_{k_1\in\mathbb{Z}}2^{20\max(k_1,0)}\|P_{k_1}(\partial_s+i\Lambda_\mu)\Omega^{p_1}U_\mu(s)\|_{L^\infty}\lesssim \eps_1^22^{-3m/2}.
\end{equation*}
Moreover, the case $p_1\geq N_1/2$ (therefore $p_2\leq N_1/2$) is similar by estimating $\Omega^{p_1}U_\mu$ and $(\partial_s+i\Lambda_\mu)\Omega^{p_1}U_\mu$ in $L^2$, and estimating $\langle\nabla\rangle^\iota \Omega^{p_2}\mathcal{U}_\nu$ in $L^\infty$. Thus $|J_1(s)|+2^m|J_2(s)|\lesssim \eps_1^32^{-\delta m}$ as desired.

Some more care is needed for the bound on $|J_3(s)|$ in \eqref{Ngen4} because of the possible loss of derivatives. It follows from \eqref{EqW} that, for $q\in\{0,\ldots,N_1\}$,
\begin{equation}\label{nor50}
(\partial_s+i\Lambda_e)W_e^q=Q_e^q-iT_{\Sigma_e^{\geq 1}+u\cdot\zeta}W_e^q,\qquad (\partial_s+i\Lambda_e)W_b^q=Q_b^q.
\end{equation}
Using also \eqref{nor5} and \eqref{LqBdTa}, it follows that
\begin{equation}\label{nor51}
\|\langle\nabla\rangle^{-1}(\partial_s+i\Lambda_\mu)W_\mu^q\|_{L^2}\lesssim \eps_1^2(1+t)^{-9/10},\qquad \text{ for }\mu\in\{e,b,-e,-b\},\,q\in\{0,\ldots,N_1\}.
\end{equation}
Moreover, for $q\in[0,N_1/2]$,  
\begin{equation}\label{nor52}
\|(\partial_s+i\Lambda_\mu)\Omega^q\mathcal{U}_{\mu}\|_{L^\infty}\lesssim \eps_1^2(1+t)^{-9/5},\qquad \text{ for }\mu\in\{e,b,-e,-b\},
\end{equation}
as a consequence of Proposition \ref{ChangeUnknownsProp} and Lemma \ref{L1easy}. The desired bound on $|J_3(s)|$ follows by the same $L^2\times L^2\times L^\infty$ argument as before if $\nu\neq\sigma$, using the stronger bound \eqref{ControlPhaseS} instead of \eqref{ControlPhase} to recover the derivative loss.

Finally, assume that $\nu=\sigma=e$ (one cannot have $\nu=\sigma=b$ in this lemma, due to \eqref{NRMult0}). We may also assume that $p_1=0$ and $\langle\nabla\rangle^\iota \Omega^{p_2}\mathcal{U}_\nu=W_e^p$, since in the other cases the derivative loss can be recovered (notice that $\|\langle\nabla\rangle^2\Omega^q\mathcal{U}_{\nu}\|_{L^2}\lesssim \eps_1$ if $q\leq N_1-1$, due to \eqref{BootSimple} and the assumption $N_0\geq 2N_1$). Also, the contribution of $Q_e^p$ when applying \eqref{nor50} does not lose derivatives, due to \eqref{EqW}. For \eqref{Ngen4} it remains to prove that 
\begin{equation}\label{nor53}
\big|\widetilde{\mathcal{I}}[U_\mu,iT_{\Sigma^{\geq 1}_e+u\cdot\zeta}W_e^p,W_e^p]+\widetilde{\mathcal{I}}[U_\mu,W_e^p,iT_{\Sigma^{\geq 1}_e+u\cdot\zeta}W_e^p]\big|\lesssim \eps_1^32^{-m-\delta m}.
\end{equation}

We start with the contribution of $T_{u\cdot\zeta}$ and compute, for $k_1,k_2,k\in\mathbb{Z}$,
\begin{equation}\label{nor57}
\begin{split}
\mathcal{I}_{kk_1k_2}&:= \widetilde{\mathcal{I}}[P_{k_1}U_\mu,iP_{k_2}T_{u\cdot\zeta}W^p_e,P_kW^p_e]+\widetilde{\mathcal{I}}[P_{k_1}U_\mu,P_{k_2}W^p_e,iP_kT_{u\cdot\zeta}W^p_e]\\
&=C\int_{\mathbb{R}^6}\widehat{P'_{k_2}W^p_e}(\eta)\overline{\widehat{P'_{k}W^p_e}}(\xi)\widehat{P'_{k_1}U_\mu}(\xi-\eta-\theta)\widehat{u}_j(\theta)m^1_j(\xi,\eta,\theta)\,d\eta d\xi d\theta,
\end{split}
\end{equation}
\begin{equation*}
\begin{split}
m^1_j(\xi,\eta,\theta)&:=\frac{n_{kk_1k_2}(\xi,\eta+\theta)}{\Phi(\xi,\eta+\theta)}\chi(\frac{\theta}{\vert 2\eta+\theta\vert})\frac{(2\eta+\theta)_j}{2}-\frac{n_{kk_1k_2}(\xi-\theta,\eta)}{\Phi(\xi-\theta,\eta)}\chi(\frac{\theta}{\vert 2\xi-\theta\vert})\frac{(2\xi-\theta)_j}{2}\\
&=\frac{(\xi+\eta)_j}{2}\Big[\frac{n_{kk_1k_2}(\xi,\eta+\theta)}{\Phi(\xi,\eta+\theta)}\chi(\frac{\theta}{\vert 2\eta+\theta\vert})-\frac{n_{kk_1k_2}(\xi-\theta,\eta)}{\Phi(\xi-\theta,\eta)}\chi(\frac{\theta}{\vert 2\xi-\theta\vert})\Big]\\
&-\frac{(\xi-\eta-\theta)_j}{2}\Big[\frac{n_{kk_1k_2}(\xi,\eta+\theta)}{\Phi(\xi,\eta+\theta)}\chi(\frac{\theta}{\vert 2\eta+\theta\vert})+\frac{n_{kk_1k_2}(\xi-\theta,\eta)}{\Phi(\xi-\theta,\eta)}\chi(\frac{\theta}{\vert 2\xi-\theta\vert})\Big],
\end{split}
\end{equation*}
where $n_{kk_1k_2}(\xi,\eta):=n(\xi,\eta)\varphi_k(\xi)\varphi_{k_1}(\xi-\eta)\varphi_{k_2}(\eta)$ and, for simplicity of notation $\Phi:=\Phi_{e\mu e+}$ and $P'_l:=P_{[l-4,l+4]}$. We observe that
\begin{equation*}
\begin{split}
\frac{1}{\Phi(\xi,\eta+\theta)}-\frac{1}{\Phi(\xi-\theta,\eta)}&=\frac{\Lambda_e(\xi-\theta)-\Lambda_e(\xi)+\Lambda_e(\eta+\theta)-\Lambda_e(\eta)}{\Phi(\xi,\eta+\theta)\Phi(\xi-\theta,\eta)}\\
&=C\frac{\int_0^1\int_0^1\theta_j(\xi-\eta-\theta)_l(\partial_j\partial_l\Lambda_e)(\eta+x\theta+y(\xi-\eta-\theta))\,dxdy}{\Phi(\xi,\eta+\theta)\Phi(\xi-\theta,\eta)}.
\end{split}
\end{equation*}
We have a similar identity for $n_{kk_1k_2}(\xi,\eta+\theta)-n_{kk_1k_2}(\xi-\theta,\eta)$, which gains one factor of $2^{k_2}$, due to \eqref{Ngen}. Combining these identities with \eqref{ControlPhase} we recover the derivative loss, i.e.
\begin{equation*}
\big\|\mathcal{F}^{-1}\{m^1_j(\xi,\eta,\theta)\varphi_{k_3}(\theta)\}\big\|_{L^1(\mathbb{R}^6)}\lesssim (1+2^{k_1})^{50}(1+2^{k_3})^{10}.
\end{equation*}
Therefore, using Lemma \ref{L1easy}, 
\begin{equation*}
\begin{split}
\vert \mathcal{I}_{kk_1k_2}\vert&\lesssim \sum_{k_3\leq k_2-100}\Vert P'_{k_2}W_e^p\Vert_{L^2}\Vert P'_kW_e^p\Vert_{L^2}\Vert P'_{k_1}U_\mu\Vert_{L^\infty}\Vert P'_{k_3}u_j\Vert_{L^\infty}\cdot (1+2^{k_1})^{50}(1+2^{k_3})^{10}.
\end{split}
\end{equation*}
The desired conclusion follows by summing over $k,k_1,k_2$ with $|k-k_2|\leq 10$, $k_2\geq \max(k_1,0)+100$.

The contribution of $T_{\Sigma_e^{\geq 1}}$ in \eqref{nor53} can be bounded in a similar way. We first decompose 
\begin{equation*}
\begin{split}
&\Sigma_e^{\geq 1}=a_{(1)}+a_{(0)},\\
&a_{(1)}(x,\zeta):=\sqrt{1+d|\zeta|^2}\sum_{n\geq 1}d_n\Big(\frac{F(x)|\zeta|^2}{1+d|\zeta|^2}\Big)^n,\qquad F:=(d+\kappa)\rho+\kappa\rho^2+h_2(\rho)(1+\rho),
\end{split}
\end{equation*} 
compare with \eqref{ApproxSigma}. Here $a_{(0)}$ is a symbol of order $0$, whose contribution can be estimated directly without using symmetrization, and $d_n$ are the coefficients in the Taylor expansion of $\sqrt{1+x}-1$ around $0$. Due to \eqref{BootSimple1}--\eqref{BootSimple2}, we notice that $F$ satisfies the same $L^\infty$ bounds as $u_j$,
\begin{equation*}
\sum_{l\in\mathbb{Z}}2^{20\max(l,0)}\|P_lF\|_{L^\infty}\lesssim \eps_1(1+t)^{-9/10}.
\end{equation*}
The same symmetrization argument as in \eqref{nor57}, shows that the contribution of $T_{a_{(1)}}$ can be bounded as claimed in \eqref{nor53}. This completes the proof of Lemma \ref{BootstrapEE2}.

\section{Energy estimates, II: strongly semilinear terms}\label{SSterm}

We now turn to the proof of Lemma \ref{BootstrapEE1}. The idea is to first use the strongly semilinear nature to restrict to frequencies that are not too large. Then we use the main $L^2$ bound in Lemma \ref{L2EstLem} below to control the contribution of small modulations, and a partial normal form transformation to control the larger modulations.

\subsection{The main $L^2$ lemma}\label{L2Lemma} Assume $\Phi=\Phi_{\sigma\mu\nu}$ is as in \eqref{phasedef}, for some choice of $\sigma,\mu,\nu\in\mathcal{P}$.

\begin{lemma}\label{L2EstLem}
Assume that $T$ is given by
\begin{equation*}
Tf(\xi):=\int_{\mathbb{R}^2}e^{is\Phi(\xi,\eta)}a(\xi,\eta)\varphi(2^\lambda\Phi(\xi,\eta))f(\eta)d\eta,
\end{equation*}
with $2^m-1\le \vert s\vert\le 2^{m+1}$, $m\in\mathbb{Z}_+$, and
\begin{equation*}
5m/6\leq \lambda\le m(1-\delta_1),
\end{equation*}
where $\delta_1:=10^{-4}$. We assume that the function $a$ is supported in the set $B(0,2^k)\times B(0,2^k)$, where $k\geq 0$ is an integer, and satisfies the symbol-type estimates
\begin{equation}\label{BdOnAL2Lem}
\sup_{\xi,\eta\in\mathbb{R}^2}\big|D_{\xi,\eta}^\alpha a(\xi,\eta)\big|\lesssim_\alpha 2^{\vert\alpha\vert m/2}.
\end{equation}
Then 
\begin{equation}\label{L2Bd}
\big\Vert Tf\big\Vert_{L^2}\lesssim 2^{12k}2^{-1.004\lambda}\Vert f\Vert_{L^2}.
\end{equation}
\end{lemma}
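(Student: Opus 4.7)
My approach is a $TT^{*}$ argument combined with Schur's test (Lemma \ref{ShurLem}). It suffices to control the kernel of $TT^{*}$,
$$K(\xi,\xi') := \int_{\mathbb{R}^{2}} e^{is[\Phi(\xi,\eta)-\Phi(\xi',\eta)]}\,a(\xi,\eta)\overline{a(\xi',\eta)}\,\varphi(2^{\lambda}\Phi(\xi,\eta))\varphi(2^{\lambda}\Phi(\xi',\eta))\,d\eta,$$
by showing $\sup_{\xi}\int|K(\xi,\xi')|\,d\xi'\lesssim 2^{24k}2^{-2.008\lambda}$. Note that the naive Schur bound applied directly to $T$, using only the measure of the tube $\{|\Phi(\xi,\cdot)|\lesssim 2^{-\lambda}\}$, yields only $2^{k-\lambda}$; the extra $2^{-0.004\lambda}$ must come from oscillation in $K$, and the margin $m-\lambda\ge\delta_{1}m$ provided by the hypothesis $\lambda\le m(1-\delta_{1})$ is exactly what makes this feasible.

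The geometry: on the support of $K$ the variable $\eta$ lies in the intersection of two $2^{-\lambda}$-tubes around the resonant curves $\mathcal{R}_{\xi}:=\{\Phi(\xi,\cdot)=0\}$ and $\mathcal{R}_{\xi'}$. Writing $\xi'-\xi=r\omega$ and linearizing, $\nabla_{\eta}\Phi(\xi',\eta)-\nabla_{\eta}\Phi(\xi,\eta)\approx r\,\nabla^{2}_{\xi\eta}\Phi\cdot\omega$, so the tubes are transverse except when $\omega$ is nearly parallel to the ``bad'' direction $\nabla^{\perp}_{\xi}\Phi/|\nabla^{\perp}_{\xi}\Phi|$ tangent to the level set of $\Phi(\cdot,\eta)$. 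The restricted nondegeneracy \eqref{RNDC}, namely finite-order vanishing of $\Upsilon(\xi,\eta)=\nabla^{2}_{\xi\eta}\Phi[\nabla^{\perp}_{\xi}\Phi,\nabla^{\perp}_{\eta}\Phi]$ on $\mathcal{R}_{\Phi}$, measures precisely how fast the tangent direction of $\mathcal{R}_{\xi}$ rotates as $\xi$ moves in the bad direction, and therefore controls the size of the dangerous near-tangential subset of parameters.

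To extract the decay I would decompose dyadically in $r=|\xi-\xi'|$ and in the angular distance of $\omega$ from the bad direction. On the transverse region (moderate $r$, $\omega$ not near-tangential), a change of variables $\eta\mapsto(\alpha,\beta)=(\Phi(\xi,\eta),\Phi(\xi',\eta))$ with Jacobian $|\nabla_{\eta}\Phi(\xi,\eta)\wedge\nabla_{\eta}\Phi(\xi',\eta)|^{-1}$ is available; the phase becomes simply $s(\alpha-\beta)$, and the $(\alpha,\beta)$-integral is handled by Plancherel against $\widehat{\varphi}(s\cdot 2^{-\lambda})$, with $s\ge 2^{\lambda+\delta_{1}m}$ providing the fractional gain $2^{-(m-\lambda)}$ beyond the tube-measure bound. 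In the far regime, the pointwise lower bound $|\nabla_{\eta}[\Phi(\xi,\eta)-\Phi(\xi',\eta)]|\gtrsim r$ yields rapid decay by iterated integration by parts in $\eta$, while in the small-$r$/bad-angle regime one uses the trivial area bound combined with the fact that the nondegeneracy of $\Upsilon$ forces the tangential subset to have angular measure a small positive power of $2^{-\lambda}$.

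The main obstacle will be the near-tangential regime at small separation $r$, where both stationary-phase in $\eta$ and tube transversality degenerate simultaneously. There one must quantify the finite-order vanishing of $\Upsilon$ on $\mathcal{R}_{\Phi}$ uniformly over the compact support of $a$, which is the source of the (otherwise large and harmless) prefactor $2^{24k}$ that absorbs derivative losses from the symbol bound \eqref{BdOnAL2Lem} and from the change of variables. Carefully balancing the dyadic contributions will yield the claimed exponent $2^{-2.008\lambda}$ in the Schur bound, and hence $2^{12k}2^{-1.004\lambda}$ on $\|T\|_{L^{2}\to L^{2}}$.
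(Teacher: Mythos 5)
This is essentially the paper's own argument: a $TT^{*}$/Schur scheme in which the degenerate region where $|\Upsilon|$ is small is disposed of by the sublevel-set bound coming from the finite-order vanishing of $\Upsilon$ on the resonant set (Proposition \ref{volume} (ii)), while on the complement the lower bound on $\Upsilon$ supplies quantitative transversality/rotation of the two tubes and the margin $\lambda\le (1-\delta_1)m$ lets integration by parts in the difference phase beat the $2^{m/2}$-per-derivative growth of the amplitude. The paper's implementation differs only in bookkeeping: it first tiles $(\xi,\eta)$-space into balls of radius $R=2^{-17\lambda/242}$ so that the dichotomy in $\Upsilon$ is decided tile by tile, treats the small-$\Upsilon$ tiles by Schur's test on $T$ itself rather than on the $TT^{*}$ kernel, and extracts the oscillatory gain on the remaining tiles by an integration by parts in adapted coordinates with the parabolic support bound $|\omega_1|\lesssim 2^{-\lambda}+\omega_2^2$, rather than by your change of variables to $(\Phi(\xi,\eta),\Phi(\xi',\eta))$, whose Jacobian degenerates exactly where the amplitude derivatives become dangerous.
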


The rest of this subsection is concerned with the proof of Lemma \ref{L2EstLem}. We may assume that $\lambda\geq 12k+\D$. Let
\begin{equation}\label{cas70}
R:=2^{-17\lambda/242}.
\end{equation}
and fix a smooth function $\chi:\mathbb{R}^2\to[0,1]$, supported in $B(0,2)$ and satisfying
\begin{equation*}
\sum_{i\in\mathbb{Z}^2}\chi(x-i)=1\qquad\text{ for any }x\in\mathbb{R}^2.
\end{equation*}
For $j\in\mathbb{Z}^2$ let
\begin{equation*}
v^j:=\frac{R}{2^\D}j,\qquad \chi_j(x):=\chi\Big(\frac{2^\D}{R}x-j\Big)=\chi\Big(\frac{2^\D}{R}(x-v^j)\Big).
\end{equation*}

We decompose
\begin{equation}\label{cas71}
\begin{split}
&T=\sum_{(i,j)\in\mathbb{Z}^2\times\mathbb{Z}^2}T_{ij},\\
&T_{ij}f(\xi):=\int_{\mathbb{R}^2}e^{is\Phi(\xi,\eta)}a(\xi,\eta)\varphi(2^\lambda\Phi(\xi,\eta))f(\eta)\chi_i(\xi)\chi_j(\eta)d\eta.
\end{split}
\end{equation}
Our analysis of the operator $T$ is based on the size of the smooth function $\Upsilon:\mathbb{R}^2\times\mathbb{R}^2\to\mathbb{R}$,
\begin{equation}\label{cas72}
\Upsilon(\xi,\eta)=\nabla_{\xi,\eta}^{2}\Phi(\xi,\eta)\left[\nabla_{\xi}^{\perp}\Phi(\xi,\eta),\nabla_{\eta}^{\perp}\Phi(\xi,\eta)\right].
\end{equation}
Using the formula \eqref{cas13} it is easy to see that
\begin{equation}\label{cas72.5}
\sup_{\xi,\eta\in\mathbb{R}^2,|\alpha|\geq 0}\big|D_{\xi,\eta}^\alpha \Upsilon(\xi,\eta)\big|\lesssim_\alpha 1.
\end{equation}

Let
\begin{equation}\label{cas73}
\begin{split}
&V^1:=\{(i,j)\in\mathbb{Z}^2\times\mathbb{Z}^2:\Upsilon(v^i,v^j)< 2^\D R\},\qquad T^1:=\sum_{(i,j)\in V^1}T_{ij},\\
&V^2:=\{(i,j)\in\mathbb{Z}^2\times\mathbb{Z}^2:\Upsilon(v^i,v^j)\geq 2^\D R\}.
\end{split}
\end{equation}
We show in Lemma \ref{ElemOp2} and Lemma \ref{ElemOpLem} below that
\begin{equation*}
\big\|T^1\big\|_{L^2\to L^2}\lesssim 2^{12k}2^{-\lambda}R^{1/17}\qquad \text{ and }\qquad\sup_{(i,j)\in V^2}\big\|T_{ij}\big\|_{L^2\to L^2}\lesssim 2^{-5\lambda/4}R^{-3/2}.
\end{equation*}
Assuming these bounds it follows that
\begin{equation*}
\begin{split}
\|Tf\|_{L^2}^2&\lesssim \|T^1f\|_{L^2}^2+\sum_{i\in\mathbb{Z}^2}\Big\|\sum_{j\in\mathbb{Z}^2,\,(i,j)\in V^2}T_{ij}f\Big\|_{L^2}^2\\
&\lesssim (2^{12k}2^{-\lambda}R^{1/17})^2\|f\|_{L^2}^2+(2^{2k}R^{-2})^22^{-5\lambda/2}R^{-3}\|f\|_{L^2}^2\\
&\lesssim \|f\|_{L^2}^2\big[2^{24k}2^{-2\lambda}R^{2/17}+2^{4k}2^{-5\lambda/2}R^{-7}\big].
\end{split}
\end{equation*}
The desired conclusion \eqref{L2Bd} follows, using the definition $R=2^{-17\lambda/242}$, see \eqref{cas70}.

\begin{lemma}\label{ElemOp2}
With the definition above,
\begin{equation*}
\big\|T^1\big\|_{L^2\to L^2}\lesssim 2^{12k}2^{-\lambda}R^{1/17}.
\end{equation*}
\end{lemma}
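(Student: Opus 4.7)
The idea is to realize $T^1$ as an integral operator whose kernel is supported where both $|\Phi(\xi,\eta)|\lesssim 2^{-\lambda}$ and $|\Upsilon(\xi,\eta)|\lesssim R$, and then apply Schur's test, using the restricted nondegeneracy of $\Upsilon$ on the resonant set $\mathcal{R}_\Phi$ to bound the resulting sublevel set. First I would combine the summation over $V^1$ into a single smooth cutoff
\[
T^1 f(\xi)=\int_{\mathbb{R}^2}e^{is\Phi(\xi,\eta)}\,a(\xi,\eta)\,\varphi(2^\lambda\Phi(\xi,\eta))\,\Psi^1(\xi,\eta)\,f(\eta)\,d\eta,\qquad \Psi^1(\xi,\eta):=\sum_{(i,j)\in V^1}\chi_i(\xi)\chi_j(\eta).
\]
The function $\Psi^1$ is bounded and smooth, and since $\chi_i(\xi)\chi_j(\eta)\ne 0$ forces $|\xi-v^i|+|\eta-v^j|\lesssim R$, the defining condition $\Upsilon(v^i,v^j)<2^{\mathcal{D}}R$ of $V^1$ together with the uniform bound \eqref{cas72.5} on derivatives of $\Upsilon$ yields $\mathrm{supp}(\Psi^1)\subseteq\{(\xi,\eta):|\Upsilon(\xi,\eta)|\lesssim R\}$.

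Next I would apply Schur's test (Lemma \ref{ShurLem}) to the kernel $K^1(\xi,\eta):=e^{is\Phi}a\,\varphi(2^\lambda\Phi)\,\Psi^1$. Since $|a|\lesssim 1$ and $a$ is supported in $B(0,2^k)\times B(0,2^k)$, it suffices to prove
\[
\sup_{\xi}\int_{\mathbb{R}^2}\mathbf{1}_{\{|\Phi(\xi,\eta)|\lesssim 2^{-\lambda},\,|\Upsilon(\xi,\eta)|\lesssim R\}}\,d\eta\lesssim 2^{12k}\,2^{-\lambda}R^{1/17},
\]
together with the analogous bound with the roles of $\xi$ and $\eta$ exchanged; the latter follows by the same reasoning, since $\Phi$ and $\Upsilon$ are structurally symmetric in $\xi$ and $\eta$. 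Fixing $\xi$ and applying the coarea formula with $\nabla_\eta\Phi$, this inner integral is controlled by
\[
2^{-\lambda}\cdot\mathcal{H}^1\bigl(\{\eta\in\Gamma_\xi\cap B(0,2^k):|\Upsilon(\xi,\eta)|\lesssim R\}\bigr),\qquad \Gamma_\xi:=\{\eta:\Phi(\xi,\eta)=0\},
\]
the contribution from any small neighborhood of the finitely many points where $\nabla_\eta\Phi(\xi,\cdot)=0$ being of lower order and absorbable using Lemma \ref{PhiLocLem}-type arguments.

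The crux is the one-dimensional sublevel set bound on $\Gamma_\xi$. Here I would invoke the restricted nondegeneracy of $\Upsilon$ on $\mathcal{R}_\Phi$ established in Section \ref{phacolle}: along each connected arc of $\Gamma_\xi$, the function $\Upsilon(\xi,\cdot)$ vanishes at only finitely many points, each to a uniformly bounded order $N\leq 17$. Parametrizing such an arc by arclength and setting $F(t):=\Upsilon(\xi,\gamma_\xi(t))$, the standard single-variable sublevel estimate for a smooth function with $|F^{(j)}(t_0)|\gtrsim 2^{-C'k}$ for some $j\leq N$ and with higher derivatives controlled polynomially in $2^k$ gives $|\{t:|F(t)|\lesssim R\}|\lesssim 2^{Ck}R^{1/N}$. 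Taking $N=17$ and absorbing the $2^k$-losses from derivative bounds on $\Upsilon$ and from the length of $\Gamma_\xi$ into the prefactor $2^{12k}$ yields the desired estimate. The main obstacle is precisely this last step: converting the qualitative ``finite order of vanishing'' into a \emph{quantitative} sublevel bound with the sharp exponent $1/17$ and explicit polynomial dependence on $2^k$. This is where the fine algebraic structure of the specific phases $\Phi_{\sigma\mu\nu}$ collected in Section \ref{phacolle} (the explicit description of $\mathcal{R}_\Phi$ and of the zeros of $\Upsilon$ on it) becomes indispensable; a crude estimate on $\Upsilon$ alone would not suffice.
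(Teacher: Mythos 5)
Your overall strategy (dominate the kernel of $T^1$ by $|a|\,\varphi(2^\lambda\Phi)\,\varphi(cR^{-1}\Upsilon)$ and apply Schur's test to the resulting sublevel set) is exactly the paper's, and the first reduction via the support of $\Psi^1$ is fine. The gap is in the claim that \emph{both} Schur integrals satisfy the bound with the full gain $R^{1/17}$, ``by symmetry''. This is false, and the failure is not a technicality: at a space--time resonance $\eta=p(\xi)$ one has $\nabla_\eta\Phi(\xi,\eta)=0$ and $\eta\parallel\xi$, so by the formula \eqref{cas13} the function $\Upsilon$ vanishes there as well; hence near such points the constraint $|\Upsilon|\lesssim R$ is vacuous (the strip $\{|\Upsilon|\lesssim R\}$ has width $\approx R=2^{-17\lambda/242}$, far wider than the set $\{|\Phi|\leq 2^{-\lambda}\}$, which has diameter $\approx 2^{-\lambda/2}$), and the $\eta$-slice for such $\xi$ already has measure $\gtrsim 2^{-\lambda}$ with no gain in $R$ at all. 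Your coarea argument breaks precisely there, and Lemma \ref{PhiLocLem} (a bilinear $L^2$ localization lemma) is not a tool for discarding that region; its contribution is not lower order.

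The paper's Proposition \ref{volume} (ii) resolves this by decomposing $E'=E_1'\cup E_2'$ according to whether $|\nabla_\eta\Phi|\geq 2^{-\D}$ or $|\nabla_\xi\Phi|\geq 2^{-\D}$ (these cover everything by \eqref{res01}). On $E_1'$ the gain $(\eps')^{1/8}$ is available only for $\sup_\xi\int d\eta$, while $\sup_\eta\int d\xi$ is estimated crudely by \eqref{cas4} with no $\Upsilon$-gain; Schur's test then yields the geometric mean, i.e.\ a gain $R^{1/16}$, degraded to $R^{1/17}$ to absorb the logarithms (and symmetrically on $E_2'$). So the exponent $17$ is not an order of vanishing of $\Upsilon$ on $\mathcal{R}_\Phi$: the quantitative nondegeneracy actually used is that, after passing to the variables $r_\ast=\lambda_\nu(r)$ on the resonance surface, $\Upsilon$ becomes a polynomial of degree at most $4$ with a coefficient bounded below, giving $(\eps')^{1/4}$ by Lemma \ref{lemma00}; an angular splitting turns this into $(\eps')^{1/8}$, and the halving to $1/16\to 1/17$ comes from the asymmetric Schur argument just described. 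To repair your proof you would need to import this decomposition and accept the one-sided gain, rather than prove a symmetric bound.
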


\begin{proof} We use Schur's test (see Lemma \ref{ShurLem}). Notice that, using the definitions and \eqref{cas72.5},
\begin{equation*}
\begin{split}
&|T^1f(\xi)|\lesssim \int_{\mathbb{R}^2}L(\xi,\eta)|f(\eta)|d\eta,\\
&L(\xi,\eta):=|a(\xi,\eta)|\varphi(2^\lambda\Phi(\xi,\eta))\varphi((2^{\D+2}R)^{-1}\Upsilon(\xi,\eta)).
\end{split}
\end{equation*}
We apply Proposition \ref{volume} with $\eps\approx 2^k2^{-\lambda}$, $\eps'\approx 2^{3k}R$. With $E'_1$ as in Proposition \ref{volume}, we have
\begin{equation*}
\begin{split}
&\sup_{\xi}\int_{\mathbb{R}^2}\mathbf{1}_{E'_1}(\xi,\eta)L(\xi,\eta)\,d\eta\lesssim 2^{14k}R^{1/8}2^{-\lambda}\lambda,\quad \sup_{\eta}\int_{\mathbb{R}^2}\mathbf{1}_{E'_1}(\xi,\eta)L(\xi,\eta)\,d\xi\lesssim 2^{8k}2^{-\lambda}\lambda.
\end{split}
\end{equation*}
Therefore, using Schur's test,
\begin{equation*}
\|T^1_1f\|_{L^2}\lesssim 2^{12k}R^{1/17}2^{-\lambda},\qquad T^1_1f(\xi):=\int_{\mathbb{R}^2}\mathbf{1}_{E'_1}(\xi,\eta)L(\xi,\eta)|f(\eta)|d\eta.
\end{equation*}
The contribution of the set $E'_2$ can be estimated in a similar way, and the conclusion of the lemma follows using also \eqref{res01}. 
\end{proof}

\begin{lemma}\label{ElemOpLem}
With the definitions above, for any $(i,j)\in V^2$,
\begin{equation*}
\big\|T_{ij}\big\|_{L^2\to L^2}\lesssim 2^{-5\lambda/4}R^{-3/2}.
\end{equation*}
\end{lemma}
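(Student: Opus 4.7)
The plan is to prove the bound via a $TT^*$ argument together with a sharp geometric estimate on the support of the resulting kernel, exploiting only the nondegeneracy $\Upsilon(v^i, v^j) \geq 2^{\mathcal{D}}R$. Since $\|T_{ij}\|_{L^2 \to L^2}^2 = \|T_{ij} T_{ij}^*\|_{L^2 \to L^2}$, Schur's test reduces the task to bounding the kernel
\[
K(\xi, \xi') = \int_{\mathbb{R}^2} e^{is[\Phi(\xi,\eta) - \Phi(\xi',\eta)]}\, A(\xi, \xi', \eta)\, d\eta,
\]
where the amplitude $A := a(\xi,\eta) \overline{a(\xi',\eta)} \varphi(2^\lambda \Phi(\xi,\eta)) \varphi(2^\lambda \Phi(\xi',\eta)) \chi_i(\xi) \chi_i(\xi') \chi_j(\eta)^2$ satisfies $|A| \lesssim 1$ and is supported in the region $|\xi - v^i|, |\xi' - v^i|, |\eta - v^j| \lesssim R$. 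I shall not use oscillation of the phase factor at all; only the support constraints will matter.

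The key geometric step is to bound $|K(\xi,\xi')|$ by the area of its $\eta$-support, which is the intersection of $B(v^j, CR)$ with the two tubes $\{|\Phi(\xi, \cdot)| \lesssim 2^{-\lambda}\}$ and $\{|\Phi(\xi', \cdot)| \lesssim 2^{-\lambda}\}$. The Lipschitz bound \eqref{cas72.5} and the smallness $R \ll R \cdot 2^{\mathcal{D}}$ propagate $\Upsilon \gtrsim R$ throughout the support, and $|\Upsilon| \lesssim |\nabla_\xi \Phi|\,|\nabla_\eta \Phi|$ together with the uniform upper bounds on gradients then force $|\nabla_\xi \Phi|, |\nabla_\eta \Phi| \gtrsim R$. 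Consequently each tube has length $\lesssim R$ and width $\lesssim 2^{-\lambda}/|\nabla_\eta \Phi| \lesssim 2^{-\lambda}/R$, giving the trivial bound $|K| \lesssim 2^{-\lambda}$. Writing $\hat{n}_\xi := \nabla_\xi \Phi(v^i, v^j)/|\nabla_\xi \Phi(v^i, v^j)|$ and decomposing $y = \xi - \xi' = y_\| \hat{n}_\xi + y_\perp \hat{n}_\xi^\perp$, the consistency condition $|\Phi(\xi,\eta) - \Phi(\xi',\eta)| \lesssim 2^{-\lambda}$ forces $|y_\|| \lesssim 2^{-\lambda}/R$. For the improvement when $|y_\perp|$ is not too small, I would Taylor expand $\nabla_\eta \Phi(\xi,\eta) - \nabla_\eta \Phi(\xi',\eta) = M^T y + O(|y|^2)$ with $M_{ij} := \partial_{\xi_i} \partial_{\eta_j} \Phi$, and use the identity $(\hat{n}_\xi^\perp)^T M \hat{n}_\eta^\perp = \Upsilon/(|\nabla_\xi \Phi| |\nabla_\eta \Phi|)$ to conclude that for $|y_\perp| \gtrsim 2^{-\lambda}/R$ the angle $\theta$ between the two tubes' normals satisfies $\sin\theta \gtrsim R|y_\perp|/(|\nabla_\xi \Phi||\nabla_\eta \Phi|^2)$. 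The classical area formula $d_1 d_2 / \sin\theta$ for the transverse intersection of two thin strips then yields the refined kernel bound
\[
|K(\xi, \xi')| \lesssim \min\Bigl(2^{-\lambda},\ \frac{2^{-2\lambda}}{R|y_\perp|}\Bigr).
\]

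Applying Schur's test with this pointwise bound and the constraint $|y_\|| \lesssim 2^{-\lambda}/R$ yields
\[
\int |K(\xi,\xi')|\, d\xi' \lesssim \frac{2^{-\lambda}}{R}\biggl[\frac{2^{-\lambda}}{R} \cdot 2^{-\lambda} + \int_{2^{-\lambda}/R}^{R} \frac{2^{-2\lambda}}{R\,r}\, dr\biggr] \lesssim \frac{\lambda\, 2^{-3\lambda}}{R^2},
\]
and symmetrically for the $\xi$-integration, so that $\|T_{ij}\|_{L^2 \to L^2}^2 \lesssim \lambda\, 2^{-3\lambda}/R^2$. Using the explicit choice $R = 2^{-17\lambda/242}$ from \eqref{cas70}, a direct comparison of the exponents $-3\lambda/2 + 17\lambda/242 = -692\lambda/484$ versus $-5\lambda/4 + 51\lambda/484 = -554\lambda/484$ shows $\sqrt{\lambda}\, 2^{-3\lambda/2} R^{-1} \leq 2^{-5\lambda/4} R^{-3/2}$ once $\lambda$ is large enough to absorb the logarithmic factor against the genuine polynomial gap. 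The main technical obstacle I anticipate is verifying the angle estimate uniformly across the transition regime $|y_\perp| \sim 2^{-\lambda}/R$, where the $O(|y|^2)$ remainder in the Taylor expansion must be controlled against the linear $\Upsilon$-contribution; this is the only place in the argument where the nondegeneracy hypothesis $\Upsilon(v^i,v^j) \geq 2^{\mathcal{D}}R$ is essential.
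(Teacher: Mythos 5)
Your $TT^\ast$ reduction and the use of the lower bound on $\Upsilon$ are the right starting point, but the decision to ``not use oscillation of the phase factor at all'' leaves a gap that the numerology cannot absorb. The fatal step is the claim that the consistency condition forces $|y_\Vert|\lesssim 2^{-\lambda}/R$. By the mean value theorem, $\Phi(\xi,\eta)-\Phi(\xi',\eta)=y\cdot\nabla_\xi\Phi(\tilde\xi,\eta)$, and as $\eta$ ranges over the full cell of radius $\sim R$ the direction of $\nabla_\xi\Phi(\tilde\xi,\eta)$ rotates by $O(R/|\nabla_\xi\Phi|)$; relative to a \emph{fixed} frame one therefore only gets $|y_\Vert|\,|\nabla_\xi\Phi|\lesssim 2^{-\lambda}+R|y|$. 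In your symmetric normalization $|\nabla_\xi\Phi|\gtrsim R$ this is vacuous, and even in the paper's sharper normalization (where \eqref{res01} gives $|\nabla_\xi\Phi|\approx 1$ and only $|\nabla_\eta\Phi|\approx\rho\gtrsim R$) it yields $|y_\Vert|\lesssim 2^{-\lambda}+R|y_\perp|$, not $2^{-\lambda}/R$. Re-running your Schur integral with the correct $y_\Vert$-support, the region $|y_\perp|\sim R$ contributes at least $\int^R (R\,r)\cdot\frac{2^{-2\lambda}}{Rr}\,dr\sim 2^{-2\lambda}R=2^{-2\lambda-17\lambda/242}$, which is far larger than the required $2^{-5\lambda/2}R^{-3}=2^{-554\lambda/242}$. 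The transversality estimate itself is also unjustified once $|y_\Vert|$ or $|y|^2$ becomes comparable to $R|y_\perp|$, since the contributions $y_\Vert\,\hat n_\xi^{T}M\hat n_\eta^{\perp}$ and the quadratic remainder are not controlled by $\Upsilon$ and may cancel the main term; but this is secondary to the support issue.

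This is precisely why the paper does not dispense with oscillation. It first localizes $\eta$ to $\approx 2^{\lambda/2}R$ balls of radius $2^{-\lambda/2}$ --- the scale at which the tubes are genuinely straight and all Taylor remainders are $O(2^{-\lambda})$ --- and proves the support bound \eqref{LOClaim1B}, $|\omega_1|\lesssim 2^{-\lambda}+\omega_2^2$, only for the localized kernels $K_{\eta_0}$. It then integrates by parts in the $V_2$ direction (Lemma \ref{tech5}) to obtain the rapid decay \eqref{LOClaim2B} in $2^{\lambda/2}b_0|\omega_2|$ with $Q$ arbitrarily large; this effectively truncates $|\omega_2|\lesssim 2^{-\lambda/2}b_0^{-1}$ and hence, via \eqref{LOClaim1B}, $|\omega_1|\lesssim 2^{-\lambda}b_0^{-2}$. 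One can check that even after the small-ball localization the purely geometric strip-intersection bound (decay only like $1/|\omega_2|$) still loses against the $\omega_2^2$ growth of the admissible $\omega_1$-range over $|\omega_2|\lesssim R$, so the rapid non-stationary-phase decay is essential, not a convenience. To repair your argument you would need to reinstate both ingredients: the $2^{-\lambda/2}$-scale decomposition in $\eta$ and the integration by parts transverse to the tube.
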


\begin{proof} Since $T_{ij}$ and $T_{ij}^\ast$ obey similar estimates,  using \eqref{cas72} and \eqref{res01}, we may assume without loss of generality that
\begin{equation}\label{cas80}
\big\vert\nabla_\xi\Phi(v^i,v^j)\big\vert\gtrsim 1,\qquad \big\vert\nabla_\eta\Phi(v^i,v^j)\big\vert\approx \rho\gtrsim R, \qquad \big\vert\Upsilon(v^i,v^j)\big\vert\gtrsim R.
\end{equation}
In view of \eqref{cas72.5} and the definitions, we see that these inequalities remain true for all points $(\xi,\eta)$ in the support of the operator $T_{ij}$. 

We compute 
\begin{equation*}
\begin{split}
(T_{ij} T_{ij}^\ast) f(\xi)&=\int_{\mathbb{R}^2}K_{ij}(\xi,\xi^\prime)f(\xi^\prime)d\xi^\prime,\\
K_{ij}(\xi,\xi^\prime):&=\chi_i(\xi)\chi_i(\xi')\int_{\mathbb{R}^2}e^{is[\Phi(\xi,\eta)-\Phi(\xi^\prime,\eta)]}a(\xi,\eta)\overline{a}(\xi^\prime,\eta)\varphi(2^\lambda\Phi(\xi,\eta))\varphi(2^\lambda\Phi(\xi^\prime,\eta))\chi_j^2(\eta)d\eta.
\end{split}
\end{equation*}
Using Schur's test, it suffices to show that
\begin{equation}\label{SuffDelta}
\sup_{\xi}\int_{\mathbb{R}^2}\vert K_{ij}(\xi,\xi^\prime)\vert d\xi^\prime+\sup_{\xi^\prime}\int_{\mathbb{R}^2}\vert K_{ij}(\xi,\xi^\prime)\vert d\xi\lesssim 2^{-5\lambda/2}R^{-3}.
\end{equation}
By symmetry, it suffices to bound the first term.

We fix $\xi\in B(v^i,4R/2^{\D})$ and consider an additional partition of the $\eta$-space into small balls of radius $\approx 2^{-\lambda/2}$, centered at the points in the lattice $2^{-\lambda/2}\mathbb{Z}^2$. Recalling that $\big\vert\nabla_\eta\Phi(\xi,\eta)\big\vert\gtrsim R$ for all $\eta\in  B(v^j,4R/2^{\D})$, it is easy to see at most $\approx 2^{\lambda/2} R$ of these small balls can interact with the support of the integral defining $K_{ij}(\xi,\xi')$. It suffices to show that
\begin{equation}\label{SuffTTastA}
\int_{\mathbb{R}^2}\vert K_{\eta_0}(\xi,\xi^\prime)\vert d\xi^\prime\lesssim 2^{-3\lambda}R^{-4},
\end{equation}
for any choice of $\eta_0\in B(v^j,4R/2^{\D})$, where
\begin{equation}\label{Knorm}
\begin{split}
K_{\eta_0}(\xi,\xi^\prime):=\chi_i(\xi')\int_{\mathbb{R}^2}&e^{is[\Phi(\xi,\eta)-\Phi(\xi^\prime,\eta)]}a(\xi,\eta)\overline{a}(\xi^\prime,\eta)\\
&\varphi(2^\lambda\Phi(\xi,\eta))\varphi(2^\lambda\Phi(\xi^\prime,\eta))\chi_j^2(\eta)\chi(2^{-\lambda/2}(\eta-\eta_0))\,d\eta.
\end{split}
\end{equation}

We fix a point $\eta'_0\in B(\eta_0,2^{-\lambda/2+2})$ with the property that $\Phi(\xi,\eta'_0)\leq 2^{-\lambda+2}$; if such a point does not exist then $K_{\eta_0}\equiv 0$. We define two orthonormal basis, $(e_1,e_2)$ and $(V_1,V_2)$ such that
\begin{equation*}
e_1=\frac{\nabla_\xi\Phi(\xi,\eta'_0)}{\vert\nabla_\xi\Phi(\xi,\eta'_0)\vert},\qquad V_1=\frac{\nabla_\eta\Phi(\xi,\eta'_0)}{\vert\nabla_\eta\Phi(\xi,\eta'_0)\vert}
\end{equation*}
and decompose
\begin{equation*}
\eta-\eta'_0=\eta_1V_1+\eta_2V_2,\qquad \xi^\prime-\xi=\omega=\omega_1e_1+\omega_2e_2.
\end{equation*}
Using the Taylor expansion, we see that if $|\Phi(\xi,\eta)|\lesssim 2^{-\lambda}$ and $|\eta-\eta_0|\lesssim 2^{-\lambda/2}$ then
\begin{equation*}
O(2^{-\lambda})=\Phi(\xi,\eta)=\Phi(\xi,\eta'_0)+(\eta-\eta'_0)\cdot\nabla_\eta\Phi(\xi,\eta'_0)+O(2^{-\lambda}),
\end{equation*}
so that
\begin{equation}\label{SizeEta}
\eta-\eta'_0=\eta_1V_1+\eta_2V_2,\qquad\vert\eta_1\vert\lesssim \rho^{-1}2^{-\lambda},\qquad\vert\eta_2\vert\le 2^{-\lambda/2}.
\end{equation}
We also define the numbers
\begin{equation}\label{Sizeb}
b_0:=\left\vert \nabla^2_{\xi,\eta}\Phi(\xi,\eta'_0)[e_2,V_2]\right\vert\gtrsim \rho^{-1}R\gg 2^{-\lambda/4},\qquad b_1:=\vert\nabla_\xi\Phi(\xi,\eta'_0)\vert\approx 1.
\end{equation}

We would like to describe now the essential support of the kernel $K_{\eta_0}(\xi,.)$, assuming that $\xi$ and $\eta_0$ are fixed. We show first that 
\begin{equation}\label{LOClaim1B}
\hbox{if}\quad \vert\omega_1\vert\gtrsim 2^{-\lambda}+\omega_2^2\qquad\hbox{then}\qquad K_{\eta_0}(\xi,\xi^\prime)=0.
\end{equation}
Indeed, assume that $\vert\omega_1\vert\ge 2^{\D}(2^{-\lambda}+\omega_2^2)$ and $|\Phi(\xi,\eta)|\lesssim 2^{-\lambda}$ for some $\eta\in B(\eta_0,2^{-\lambda/2+2})$. Using Taylor expansion and \eqref{SizeEta} we estimate
\begin{equation*}
\begin{split}
\Phi(\xi^\prime,\eta)&=\Phi(\xi,\eta)+(\xi^\prime-\xi)\cdot\nabla_\xi\Phi(\xi,\eta)+O(\vert\xi^\prime-\xi\vert^2)\\
&=(\xi^\prime-\xi)\cdot\nabla_\xi\Phi(\xi,\eta'_0)+\nabla^2_{\xi,\eta}\Phi(\xi,\eta'_0)[\xi^\prime-\xi,\eta-\eta'_0]+O(2^{-\lambda}+\vert \xi^\prime-\xi\vert^2),\\
&=b_1\omega_1+\eta_2\nabla^2_{\xi,\eta}\Phi(\xi,\eta'_0)[\xi^\prime-\xi,V_2]+O(2^{-\lambda}+\rho^{-1}2^{-\lambda}(\vert\omega_1\vert+\vert\omega_2\vert)+\vert \omega_1\vert^2+\vert\omega_2\vert^2).
\end{split}
\end{equation*}
Therefore
\begin{equation*}
\vert \Phi(\xi^\prime,\eta)\vert\gtrsim \vert\omega_1\vert-C2^{-\lambda/2}(|\omega_1|+|\omega_2|)-C\omega_1^2\gtrsim \vert\omega_1\vert.
\end{equation*}
The desired conclusion \eqref{LOClaim1B} follows.

We show now that, for any integer $Q\geq 1$,
\begin{equation}\label{LOClaim2B}
\left\vert K_{\eta_0}(\xi,\xi^\prime)\right\vert\lesssim_Q \rho^{-1}2^{-3\lambda/2}\big[\big(1+2^{\lambda/2}b_0|\omega_2|\big)^{-Q}+2^{-Q(m-\lambda)}\big].
\end{equation}
Indeed, in view of \eqref{LOClaim1B}, we may assume that
\begin{equation}\label{ConsLOClaim2A}
\vert\omega_1\vert\lesssim 2^{-\lambda}+\omega_2^2,\qquad 2^{\D}2^{-\lambda/2}b_0^{-1}\leq \vert\omega_2\vert\leq 4R/2^{\D}.
\end{equation}
We would like to integrate by parts in the formula \eqref{Knorm}, in the direction of the vector-field $V_2$.
Letting $\Theta(\eta)=\Theta_{\xi,\xi'}(\eta):=\Phi(\xi,\eta)-\Phi(\xi^\prime,\eta)$, we see that
\begin{equation}\label{lowbounds1}
\begin{split}
V_2(\Theta)&=\nabla^2_{\xi,\eta}\Phi(\xi,\eta)[\xi^\prime-\xi,V_2]+O(\vert\xi-\xi^\prime\vert^2)\\
&=\omega_1\nabla^2_{\xi,\eta}\Phi(\xi,\eta)[e_1,V_2]+\omega_2\nabla^2_{\xi,\eta}\Phi(\xi,\eta)[e_2,V_2]+O(\vert\xi-\xi^\prime\vert^2)\\
&=\omega_2\big[b_0+O(2^{-\lambda/2})\big]+O(\vert \omega_1\vert+\vert\omega_2\vert^2).
\end{split}
\end{equation}
Therefore, in the support of the integral $K_{\eta_0}(\xi,\xi')$,
\begin{equation}\label{lowbounds2}
2b_0|\omega_2|\geq |V_2(\Theta)|\geq b_0|\omega_2|/2.
\end{equation}

We would like to apply Lemma \ref{tech5} with 
\begin{equation*}
K\approx 2^mb_0|\omega_2|\qquad\text{ and }\qquad\eps\approx \min\big(2^{-m/2},2^{-\lambda}b_0^{-1}|\omega_2|^{-1}\big),
\end{equation*}
which would give the desired bound \eqref{LOClaim2B}. For this we notice that, for any integer $q\geq 1$,
\begin{equation*}
\big|(V_2\cdot\nabla _\eta)^q\big\{a(\xi,\eta)\overline{a}(\xi^\prime,\eta)\chi_j^2(\eta)\chi(2^{-\lambda/2}(\eta-\eta_0))\big\}\big|\lesssim 2^{qm/2}.
\end{equation*}
Moreover, using also \eqref{lowbounds1}, in the support of the integral,
\begin{equation*}
\begin{split}
\left\vert (V_2\cdot\nabla_\eta)\left\{\varphi(2^\lambda\Phi(\xi,\eta))\right\}\right\vert&\lesssim 2^{\lambda/2},\\
\left\vert (V_2\cdot\nabla_\eta)\left\{\varphi(2^\lambda\Phi(\xi',\eta))\right\}\right\vert&\lesssim 2^\lambda b_0|\omega_2|,
\end{split}
\end{equation*}
and, for any integer $q\geq 2$,
\begin{equation*}
\begin{split}
\left\vert (V_2\cdot\nabla_\eta)^q\left\{\varphi(2^\lambda\Phi(\xi,\eta))\right\}\right\vert&\lesssim 2^{q\lambda/2},\\
\left\vert (V_2\cdot\nabla_\eta)^q\left\{\varphi(2^\lambda\Phi(\xi',\eta))\right\}\right\vert&\lesssim (2^\lambda b_0|\omega_2|)^q.
\end{split}
\end{equation*}
Lemma \ref{tech5} applies to complete the proof of \eqref{LOClaim2B}.

Given \eqref{LOClaim1B} and \eqref{LOClaim2B} (with $Q$ sufficiently large depending on $\delta_1$), it follows that
\begin{equation*}
\int_{\mathbb{R}^2}\vert K_{\eta_0}(\xi,\xi^\prime)\vert d\xi^\prime\lesssim \rho^{-1}2^{-3\lambda/2}\cdot b_0^{-1}2^{-\lambda/2}\cdot b_0^{-2}2^{-\lambda}\lesssim R^{-3}2^{-3\lambda},
\end{equation*}
which gives the desired bound \eqref{SuffTTastA}. This completes the proof of the lemma.
\end{proof}

\subsection{Proof of Lemma \ref{BootstrapEE1}}\label{SSTcontrol}

%In this subsection we show how to control the high Sobolev norm in \eqref{bootstrap3}.
%
%\begin{proposition}\label{bootstrapIMP1} With the notation in Proposition \ref{bootstrap}, for any $t\in[0,T]$ we have 
%\begin{equation}\label{bootstrapIMP1.1}
%\|(U_e(t),U_b(t))\|_{H^{N_{0}}}\lesssim \eps_0+\epsilon_1^{3/2}.
%\end{equation} 
%\end{proposition} 

The rest of this subsection is concerned with the proof of Lemma \ref{BootstrapEE1}. We start with a lemma that connects our situation to the $L^2$ estimates in Lemma \ref{L2EstLem}.

\begin{lemma}\label{L2Lem1SmallMod}
Assume that $\delta_2:=10^{-3}$, $m\geq 0$, $s\in[2^{m}-1,2^{m+1}]$, and define
\begin{equation}\label{BAA}
\widehat{R[f,g]}(\xi):=\int_{\mathbb{R}^2}a(\xi,\eta)e^{is\Phi(\xi,\eta)}\varphi(2^{(1-\delta_2/2)m}\Phi(\xi,\eta))\widehat{f}(\xi-\eta)\widehat{g}(\eta)d\eta.
\end{equation}
Assume that $a$ satisfies the bounds \eqref{BdOnAL2Lem} and $k\geq 0$. Then
\begin{equation}\label{AA}
\big\|R\big[P_{\leq k}f,P_{\leq k}g\big]\big\|_{L^2}\lesssim 2^{12k}2^{-(1+\delta_2/2)m}\Vert f\Vert_{Z_1^b\cap H^{N_1/2}_\Omega}\Vert g\Vert_{L^2}.
\end{equation}
\end{lemma}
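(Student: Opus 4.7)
My plan is to reduce Lemma \ref{L2Lem1SmallMod} to the $L^2$ bound on oscillatory integral operators from Lemma \ref{L2EstLem}, by decomposing $f$ via the atoms dictated by the $Z_1^b$ structure and absorbing $\widehat f$ into the symbol. With $f_{j,k',n} := A_{n,(j)}^b Q_{jk'} P_{\leq k} f$ (as in Lemma \ref{LinEstLem}), write
\begin{equation*}
R[P_{\leq k}f, P_{\leq k}g] = \sum_{k'\leq k+O(1)} \sum_{j\geq -\min(k',0)} \sum_{n=0}^{j+1} R[f_{j,k',n}, P_{\leq k}g],
\end{equation*}
and treat each summand separately. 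Since $\|f\|_{Z_1^b\cap H^{N_1/2}_\Omega}\leq 1$ dominates $\|f\|_{Z_1^b\cap H^{N_1/8}_\Omega}\leq 1$, the full suite of bounds from Lemma \ref{LinEstLem} is available.

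For each summand, rewrite $\widehat{R[f_{j,k',n}, P_{\leq k}g]}(\xi) = \int \widetilde a(\xi,\eta)\,e^{is\Phi(\xi,\eta)}\varphi(2^{(1-\delta_2/2)m}\Phi(\xi,\eta))\widehat{P_{\leq k}g}(\eta)\,d\eta$ with effective symbol $\widetilde a(\xi,\eta):= a(\xi,\eta)\widehat{f_{j,k',n}}(\xi-\eta)$. The support condition $|\xi|,|\eta|\lesssim 2^k$ for Lemma \ref{L2EstLem} follows from the $P_{\leq k}$ truncations, and the derivative bound \eqref{FLinftybdDER} gives $|D^\alpha \widehat{f_{j,k',n}}|\lesssim 2^{|\alpha|j}\|\widehat{f_{j,k',n}}\|_{L^\infty}$. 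Thus, provided $j\leq m/2$, $\widetilde a$ satisfies the symbol hypothesis of Lemma \ref{L2EstLem} with constant $\|\widehat{f_{j,k',n}}\|_{L^\infty}$; the value $\lambda = (1-\delta_2/2)m$ lies in the allowed range $[5m/6,(1-\delta_1)m]$ (since $\delta_1 < \delta_2/2$), so Lemma \ref{L2EstLem} yields
\begin{equation*}
\|R[f_{j,k',n}, P_{\leq k}g]\|_{L^2}\lesssim 2^{12k}\cdot 2^{-1.004(1-\delta_2/2)m}\|\widehat{f_{j,k',n}}\|_{L^\infty}\|g\|_{L^2}.
\end{equation*}
With $\delta_2=10^{-3}$, the inequality $1.004(1-\delta_2/2)-(1+\delta_2/2) = 0.004 - 1.002\delta_2 > 0$ provides a spare factor $2^{-\tau m}$ ($\tau>0$) beyond the target, which absorbs the summation over $(k',j,n)$ with $j\leq m/2$ via the $L^\infty$ bounds \eqref{FLinftybd} on $\widehat{f_{j,k',n}}$ (supplying decay in $j-n$ and in $|k'|$, and vanishing unless $|k'|=O(1)$ when $n\geq 1$, thanks to the support of $\Psi_b^\dagger$).

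For the regime $j>m/2$, Lemma \ref{L2EstLem} no longer applies directly because the effective symbol scale of $\widetilde a$ exceeds $2^{m/2}$. In this range $\|f_{j,k',n}\|_{L^2}\lesssim 2^{(1/2-19\delta)n-(1-20\delta)j}$ is already small, and I would first decompose $f_{j,k',n}$ into angular modes via $\mathcal R_{\leq m/10}$ of \eqref{RadOp}, using $\|\Omega^{N_1/2}f_{j,k',n}\|_{L^2}\lesssim 1$ to discard the high-angular-mode part as negligible; then the low-angular-mode remainder can be treated by iterating the $\Omega_\eta$-integration by parts of Lemma \ref{RotIBP} to produce the additional gain needed to reach $2^{-(1+\delta_2/2)m}$. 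The main obstacle is precisely this large-$j$ regime, where the rotational regularity from $H^{N_1/2}_\Omega$ must be invoked to compensate for the failure of the symbol scale hypothesis.
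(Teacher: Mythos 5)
Your treatment of the regime $j\leq m/2$ is exactly the paper's: absorb $\widehat{f_{j,k',n}}$ into the symbol using \eqref{FLinftybdDER}, apply Lemma \ref{L2EstLem} with $\lambda=(1-\delta_2/2)m$, and use the numerical margin $1.004(1-\delta_2/2)>1+\delta_2/2$ to absorb the summation. That part is fine.

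The gap is in the regime $j>m/2$, which you only sketch, and the mechanism you propose does not work. First, Lemma \ref{RotIBP} cannot be invoked here: its hypothesis \eqref{hypo} requires $\sup_{a\leq 100}\Vert\Omega^a g\Vert_{L^2}\lesssim 1$ for \emph{both} inputs, whereas in Lemma \ref{L2Lem1SmallMod} the function $g$ carries no rotational regularity at all — it is an arbitrary $L^2$ function. Second, even where angular integration by parts is available, it only controls the portion of the $\eta$-integral where $|\Omega_\eta\Phi|$ is bounded below; on the thin angular sector around the stationary set $\Omega_\eta\Phi=0$ it gives nothing, and the needed gain there must come from the measure of the sublevel set $\{\eta:|\Phi(\xi,\eta)|\lesssim 2^{-(1-\delta_2/2)m}\}$. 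This is precisely what the paper uses and what your sketch omits: Schur's test on the kernel $a(\xi,\eta)\varphi(2^{(1-\delta_2/2)m}\Phi)\widehat{f_{j_1,k_1}}(\xi-\eta)$, with the row/column integrals bounded by Proposition \ref{volume} (estimates \eqref{cas4}, \eqref{cas5.5}, \eqref{cas5.55}), which yield a factor $\approx 2^{-m+\delta_2 m}$ times a refinement ($2^{\min k}$ or $(\epsilon'')^{1/2}$) coming from the frequency or $\Psi_b^\dagger$-localization of the atom. The remaining deficit of $2^{-\delta_2 m}$ and the summability in $j_1$ are then supplied by the pointwise smallness $\Vert\widehat{f_{j_1,k_1,n_1}}\Vert_{L^\infty}\lesssim 2^{21\delta j_1}2^{-(j_1-n_1)/4}$ (resp.\ $2^{-21\delta k_1}2^{-(j_1+k_1)/4}$ for small $k_1$) from \eqref{FLinftybd}, valid since $j_1\geq m/2$. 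Without the sublevel-set volume bounds your argument has no source of decay on the angular-resonant region, so the large-$j$ case remains unproved.
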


\begin{proof} Clearly we may assume $5k+\D\leq m$, $\|g\|_{L^2}=1$, and $\Vert f\Vert_{Z_1^b\cap H^{N_1/2}_\Omega}=1$. We decompose
\begin{equation*}
f=\sum_{(k_1,j_1)\in\mathcal{J}}f_{j_1,k_1}=\sum_{(k_1,j_1)\in\mathcal{J}}P_{[k_1-2,k_1+2]}Q_{j_1k_1}f.
\end{equation*}
Using \eqref{FLinftybdDER} and Lemma \ref{L2EstLem}, if $j_1\leq m/2$ then
\begin{equation}\label{Az1}
\big\|R\big[P_{\leq k}f_{j_1,k_1},P_{\leq k}g\big]\big\|_{L^2}\lesssim 2^{-\delta_2m/10}2^{12k}2^{-(1+\delta_2/2)m}\Vert g\Vert_{L^2}.
\end{equation}

On the other hand, if $j_1\geq m/2$ and $k_1\leq -m/10$ then $\|\widehat{f_{j_1,k_1}}\|_{L^\infty}\lesssim 2^{-21\delta k_1}2^{-(j_1+k_1)/4}$ (see \eqref{FLinftybd}). Using \eqref{cas4}, \eqref{cas5.55}, and Schur's lemma,
\begin{equation}\label{Az2}
\begin{split}
\big\|R\big[P_{\leq k}f_{j_1,k_1},P_{\leq k}g\big]\big\|_{L^2}&\lesssim 2^{k_1/4}2^{10k}2^{-m+\delta_2m}\|\widehat{f_{j_1,k_1}}\|_{L^\infty}\Vert g\Vert_{L^2}\lesssim 2^{-j_1/100}2^{12k}2^{-(1+\delta_2/2)m}.
\end{split}
\end{equation}

Finally, if $j_1\geq m/2$ and $k_1\in[-m/10,m/5]$ then we further decompose, as in \eqref{Alx100},
\begin{equation*}
f_{j_1,k_1}=\sum_{n_1\in\{0,\ldots,j_1+1\}}f_{j_1,k_1,n_1},\qquad \widehat{f_{j_1,k_1,n_1}}(\xi):=\varphi_{-n}^{[-j-1,0]}(\Psi^+_b(\xi))\widehat{f_{j_1,k_1}}(\xi).
\end{equation*}
Notice that $\|\widehat{f_{j_1,k_1,n_1}}\|_{L^\infty}\lesssim 2^{21\delta j_1}2^{-(j_1-n_1)/4}$, see \eqref{FLinftybd}. Using \eqref{cas4}, \eqref{cas5.5}, and Schur's lemma,
\begin{equation}\label{Az3}
\begin{split}
\big\|R\big[P_{\leq k}f_{j_1,k_1,n_1},P_{\leq k}g\big]\big\|_{L^2}&\lesssim 2^{n_1/8}2^{10k}2^{-m+\delta_2m}\|\widehat{f_{j_1,k_1,n_1}}\|_{L^\infty}\Vert g\Vert_{L^2}\\
&\lesssim 2^{-j_1/100}2^{12k}2^{-(1+\delta_2/2)m}.
\end{split}
\end{equation}
The desired conclusion follows from \eqref{Az1}--\eqref{Az3}.
\end{proof}

We prove now an estimate on certain trilinear integrals.

\begin{lemma}\label{MainL2Lem}
Assume that $\delta_2=10^{-3}$, $m\geq 1$, and $f,g,h\in C^1([2^{m-2},2^{m+2}]:L^2)$ satisfy
\begin{equation}\label{VFMainLemHyp}
\begin{split}
\Vert f(s)&\Vert_{Z_1^b\cap H^{N_1/2}_{\Omega}}+2^{(1-\delta_2/8)m}\Vert(\partial_sf)(s)\Vert_{L^2}\\
&+\Vert g(s)\Vert_{L^2}+\Vert h(s)\Vert_{L^2}+2^{(1-\delta_2/8)m}\big[\Vert(\partial_sg)(s)\Vert_{L^2}+\Vert(\partial_sh)(s)\Vert_{L^2}\big]\lesssim 1,
\end{split}
\end{equation}
for any $s\in [2^{m-2},2^{m+2}]$. Moreover, assume that
\begin{equation}\label{ImpDecay}
\begin{split}
&\Vert e^{-i(s+\lambda)\Lambda_\mu}f(s)\Vert_{L^\infty}\lesssim 2^{-(1-\delta_2/8)m},\qquad\partial_sf=F_2+F_\infty,\\
&\Vert e^{-i(s+\lambda)\Lambda_\mu}P_kF_{\infty}(s)\Vert_{L^\infty}\lesssim 2^{-2m+\delta_2m/4},\quad\Vert P_kF_{2}(s)\Vert_{L^2}\lesssim 2^{-3m/2+\delta_2m/8},
\end{split}
\end{equation}
for any $\lambda\in\mathbb{R}$ with $|\lambda|\leq 2^{m(1-\delta_2/10)}$ and any $k\in\mathbb{Z}$. Let 
\begin{equation*}
I[f,g,h]=\int_{\mathbb{R}}n(s)\int_{\mathbb{R}^2\times\mathbb{R}^2} a(\xi,\eta)e^{is\Phi(\xi,\eta)}\widehat{f}(\xi-\eta,s)\widehat{g}(\eta,s)\overline{\widehat{h}(\xi,s)}d\eta d\xi ds
\end{equation*}
where $n$ is a $C^1$ function supported in $[2^{m-2},2^{m+2}]$,
\begin{equation}\label{AssumptA}
\int_{\mathbb{R}}|n'(s)|\,ds\lesssim 1,\qquad\Vert\mathcal{F}^{-1}a\Vert_{L^1(\mathbb{R}^4)}\lesssim 1.
\end{equation}
Then, for any $k,k_1,k_2\in\mathbb{Z}$,
\begin{equation}\label{EstVF}
\big|I[P_{k_1}f,P_{k_2}g,P_kh]\big|\lesssim 2^{12\max(k,k_1,k_2,0)}2^{-\delta_2m/8}.
\end{equation}
\end{lemma}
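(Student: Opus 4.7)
The plan is to split the time--frequency integral according to the dyadic size of the modulation $|\Phi(\xi,\eta)|$, using the threshold $2^{\nu_0}$ with $\nu_0:=-(1-\delta_2/2)m$. Writing $a=a\varphi_{\le\nu_0}(\Phi)+\sum_{p>\nu_0}a\varphi_p(\Phi)$, we split $I=I^{\text{low}}+\sum_{p>\nu_0}I^p$. The low-modulation piece $I^{\text{low}}$ is the model case for which the $L^2$ machinery built in Lemma \ref{L2Lem1SmallMod} (which relies on the nondegeneracy Lemma \ref{L2EstLem}) applies: after pairing with $h$ by Cauchy--Schwarz and using the hypothesis that $f$ has $Z_1^b\cap H^{N_1/2}_\Omega$-norm under control, one finds
\[
|I^{\text{low}}|\lesssim 2^m\cdot 2^{12k_{\max+}}\,2^{-(1+\delta_2/2)m}\lesssim 2^{12k_{\max+}}2^{-\delta_2 m/2},
\]
where $k_{\max+}:=\max(k,k_1,k_2,0)$, which is already stronger than what is needed.

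For each high-modulation piece $I^p$ with $p>\nu_0$, I would integrate by parts in $s$ via $e^{is\Phi}=(i\Phi)^{-1}\partial_s e^{is\Phi}$, producing four terms $I^{p,n'},I^{p,f_s},I^{p,g_s},I^{p,h_s}$ coming from the time derivative landing on $n$, $\widehat f$, $\widehat g$, $\widehat h$ respectively. The new symbol $a\varphi_p(\Phi)/\Phi$ has pointwise size $\lesssim 2^{-p}$, and its $S^\infty_{kk_1k_2}$-norm is bounded by $2^{-p}$ up to harmless polynomial factors in $k_{\max+}$ (Lemma \ref{Sinfinity} applied together with the smoothness of $\Phi$ and $\varphi_p$). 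Rewriting each trilinear form in terms of the linear profiles $e^{-is\Lambda_{\mu,\nu,\sigma}}(\cdot)$ and invoking Lemma \ref{L1easy} with exponents $(L^\infty,L^2,L^2)$, the $f$-slot is estimated by $\|e^{-is\Lambda_\mu}f\|_{L^\infty}\lesssim 2^{-(1-\delta_2/8)m}$ and the other two slots in $L^2$. The extra decay contributed either by $\int|n'|ds\lesssim 1$, or by the $L^2$-decay $\|\partial_s g\|_{L^2},\|\partial_s h\|_{L^2}\lesssim 2^{-(1-\delta_2/8)m}$, together with the geometric sum $\sum_{p>\nu_0}2^{-p}\lesssim 2^{(1-\delta_2/2)m}$, yields a net gain of $2^{-c\delta_2 m}$ for these three pieces. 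The $F_\infty$-part of $\partial_s f$ in $I^{p,f_s}$ is handled analogously, using its $L^\infty$-decay $2^{-2m+\delta_2 m/4}$ in place of $\|e^{-is\Lambda_\mu}f\|_{L^\infty}$.

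The main obstacle is the $F_2$-contribution to $I^{p,f_s}$, where no $L^\infty$-decay is available but only the strong $L^2$-bound $\|P_{k_1}F_2\|_{L^2}\lesssim 2^{-3m/2+\delta_2 m/8}$. A straightforward $L^2\times L^2\to L^2$ estimate on the modulated bilinear operator loses a factor $2^{k_{\max+}}$ (via Schur), which combined with the summation over $p$ leaves a residual growth $\sim 2^{m/2+k_{\max}}$ that must be absorbed. I expect to close this case by two complementary observations: (i) the polynomial loss $2^{12k_{\max+}}$ permitted by the conclusion absorbs the $2^{k_{\max}}$ factor whenever $k_{\max}$ is not too small compared to $m$; (ii) in the complementary low-frequency regime, one refines the modulation partition with an additional medium scale $p\in[-(1-\delta_1)m,-5m/6]$, on which Lemma \ref{L2EstLem} supplies an improved $L^2$-bound $\lesssim 2^{12k_{\max+}}2^{-1.004|p|}$, the decisive factor $2^{-0.004 m}$ coming precisely from the restricted nondegeneracy of the resonant set. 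Assembling the estimates of Steps (a)--(c) then produces the desired bound $|I[P_{k_1}f,P_{k_2}g,P_kh]|\lesssim 2^{12k_{\max+}}2^{-\delta_2 m/8}$.
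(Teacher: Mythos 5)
Your architecture coincides with the paper's (same modulation threshold $2^{-(1-\delta_2/2)m}$, Lemma \ref{L2Lem1SmallMod} for the small-modulation piece, integration by parts in $s$ elsewhere, and the splitting $\partial_sf=F_2+F_\infty$), and your low-modulation estimate is correct. But your first key step in the high-modulation regime fails: the claim that $\|a\varphi_p(\Phi)/\Phi\|_{S^\infty_{kk_1k_2}}\lesssim 2^{-p}$ up to polynomial factors in $\max(k,k_1,k_2,0)$ is false. Each derivative of $\varphi_p(\Phi(\xi,\eta))$ costs a factor $2^{-p}$, so Lemma \ref{Sinfinity} only gives $\|\varphi_p(\Phi)\varphi_k\varphi_{k_1}\varphi_{k_2}\|_{S^\infty}\lesssim (1+2^{\max(k,k_1,k_2)-p})^3$, which for $p$ near $-(1-\delta_2/2)m$ is an exponential loss of order $2^{3m}$ that destroys all of your high-modulation estimates. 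This is exactly why the hypotheses \eqref{ImpDecay} are phrased for the shifted flow $e^{-i(s+\lambda)\Lambda_\mu}$ with $|\lambda|\leq 2^{m(1-\delta_2/10)}$: the paper writes $\varphi_p(\Phi)/\Phi=c\,2^{-p}\cdot 2^{p}\int e^{i\lambda\Phi}P(2^{p}\lambda)\,d\lambda$ with $P\in\mathcal{S}$ (see \eqref{Alx84}), converting the modulation cutoff into a time translation $s\mapsto s+\lambda$ with $|\lambda|\lesssim 2^{-p}$, after which Lemma \ref{L1easy} applies with the plain symbol $a$ and the shifted dispersive bound. Without this device the terms coming from $n'$, $\partial_sg$, $\partial_sh$ and $F_\infty$ cannot be closed as you describe.

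Your treatment of the $F_2$ contribution is also off target. Lemma \ref{L2EstLem} requires the symbol to obey $|D^\alpha_{\xi,\eta}a|\lesssim 2^{|\alpha|m/2}$, so inserting $\widehat{F_2}(\xi-\eta)$ into the symbol is legitimate only if $F_2$ is spatially localized at scale $2^{m/2}$ --- which is precisely the atomic reduction carried out inside Lemma \ref{L2Lem1SmallMod} for $f$ itself, and which is unavailable for $F_2$ since the hypotheses supply no spatial or $Z$-norm information about it, only the $L^2$ bound. The fix is simpler than what you propose: estimate the $F_2$ term directly in Fourier space by Schur's test combined with the sublevel-set bound of Proposition \ref{volume} (i). The measure of $\{|\Phi(\xi,\eta)|\approx 2^{p}\}$ in each variable is $\lesssim 2^{C\max(k,k_1,k_2,0)}2^{p}|p|$ rather than the crude $2^{2\max(k,k_1,k_2)}$ you used; this cancels the $2^{-p}$ from the symbol, and the strong decay $\|F_2\|_{L^2}\lesssim 2^{-3m/2+\delta_2m/8}$ then closes the estimate with room to spare, with no case analysis in the frequencies and no further appeal to the nondegeneracy lemma.
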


\begin{proof} Let $\overline{k}:=\max(k,k_1,k_2,0)$. We may assume that $m\geq \D$ and $f=P_{k_1}f,\,g=P_{k_2}g,\,h=P_{k}h$. We may also assume that $a(\xi,\eta)=\varphi_{\leq \overline{k}}(\xi)\varphi_{\leq \overline{k}}(\eta)$; otherwise we write
\begin{equation*}
a(\xi,\eta)=\int_{\mathbb{R}^2\times\mathbb{R}^2}k(x,y)e^{ix\cdot\xi}e^{iy\cdot\eta},
\end{equation*}
where $\|k\|_{L^1(\mathbb{R}^4)}\lesssim 1$, and notice that the complex exponentials $e^{ix\cdot\xi}$ and $e^{iy\cdot\eta}$ can be combined with the functions $g$ and $h$, without affecting the hypothesis \eqref{VFMainLemHyp}. We define
\begin{equation*}
\begin{split}
\widehat{R_\tau[f,g]}(\xi)&:=\int_{\mathbb{R}^2}a(\xi,\eta)e^{is\Phi(\xi,\eta)}\varphi_{\tau}^{[\tau_-,0]}(\Phi(\xi,\eta))\widehat{f}(\xi-\eta)\widehat{g}(\eta)d\eta,
\end{split}
\end{equation*}
where $\tau_-:=\lfloor-(1-\delta_2/2)m\rfloor\leq \tau\leq 0$. We use Lemma \ref{L2Lem1SmallMod} to estimate the contribution of $\tau=\tau_-$,
\begin{equation*}
\left\vert\int_{\mathbb{R}}n(s)\langle R_{\tau_+}[f(s),g(s)],h(s)\rangle ds\right\vert\lesssim 2^{12\max(k,k_1,k_2,0)}2^{-\delta_2m/2}.
\end{equation*}

For $\tau_-<\tau<0$, we integrate by parts in time to get the missing decay. We find that
\begin{equation*}
\int_{\mathbb{R}}n(s)\langle R_\tau[f(s),g(s)],h(s)\rangle ds=C(I+II+III+IV),
\end{equation*}
where
\begin{equation*}
I=\int_{\mathbb{R}}n'(s)\int_{\mathbb{R}^4}a(\xi,\eta)e^{is\Phi(\xi,\eta)}\frac{\varphi_{\tau}(\Phi(\xi,\eta))}{\Phi(\xi,\eta)}\widehat{f}(\xi-\eta,s)\widehat{g}(\eta,s)\overline{\widehat{h}(\xi,s)}d\eta d\xi ds,
\end{equation*}
\begin{equation*}
II=\int_{\mathbb{R}}n(s)\int_{\mathbb{R}^4}a(\xi,\eta)e^{is\Phi(\xi,\eta)}\frac{\varphi_{\tau}(\Phi(\xi,\eta))}{\Phi(\xi,\eta)}(\partial_s\widehat{f})(\xi-\eta,s)\widehat{g}(\eta,s)\overline{\widehat{h}(\xi,s)} d\eta d\xi ds,
\end{equation*}
\begin{equation*}
III=\int_{\mathbb{R}}n(s)\int_{\mathbb{R}^4}a(\xi,\eta)e^{is\Phi(\xi,\eta)}\frac{\varphi_{\tau}(\Phi(\xi,\eta))}{\Phi(\xi,\eta)}\widehat{f}(\xi-\eta,s)(\partial_s\widehat{g})(\eta,s) \overline{\widehat{h}(\xi,s)}d\eta d\xi ds,
\end{equation*}
\begin{equation*}
IV=\int_{\mathbb{R}}n(s)\int_{\mathbb{R}^4}a(\xi,\eta)e^{is\Phi(\xi,\eta)}\frac{\varphi_{\tau}(\Phi(\xi,\eta))}{\Phi(\xi,\eta)}\widehat{f}(\xi-\eta,s)\widehat{g}(\eta,s) \overline{(\partial_s\widehat{h})(\xi,s)}d\eta d\xi ds.
\end{equation*}
Using the Fourier transform, we see that
\begin{equation}\label{Alx84}
\frac{\varphi_0(2^{-\tau}\Phi(\xi,\eta))}{\Phi(\xi,\eta)}=c2^{-\tau}\cdot 2^{\tau}\int_{\mathbb{R}}e^{i\lambda\Phi(\xi,\eta)}P(2^{\tau}\lambda)d\lambda,\qquad P=\mathcal{F}(\varphi_0(x)/x)\in\mathcal{S}.
\end{equation}
Thus
\begin{equation*}
\begin{split}
I&=c\int_{\mathbb{R}^2}n'(s)P(2^{\tau}\lambda)\int_{\mathbb{R}^4}a(\xi,\eta)e^{i(s+\lambda)\Phi(\xi,\eta)}\widehat{f}(\xi-\eta,s)\widehat{g}(\eta,s)\overline{\widehat{h}(\xi,s)}\,d\eta d\xi dsd\lambda,\\
III&=c\int_{\mathbb{R}^2}n(s)P(2^{\tau}\lambda)\int_{\mathbb{R}^4}a(\xi,\eta)e^{i(s+\lambda)\Phi(\xi,\eta)}\widehat{f}(\xi-\eta,s)(\partial_s\widehat{g})(\eta,s)\overline{\widehat{h}(\xi,s)}\,d\eta d\xi dsd\lambda,\\
IV&=c\int_{\mathbb{R}^2}n(s)P(2^{\tau}\lambda)\int_{\mathbb{R}^4}a(\xi,\eta)e^{i(s+\lambda)\Phi(\xi,\eta)}\widehat{f}(\xi-\eta,s)\widehat{g}(\eta,s)\overline{(\partial_s\widehat{h})(\xi,s)}\,d\eta d\xi dsd\lambda.\\
\end{split}
\end{equation*}
Using \eqref{VFMainLemHyp}--\eqref{AssumptA}, and Lemma \ref{L1easy}, we estimate
\begin{equation*}
\begin{split}
\big|I\big|&\lesssim\int_{\mathbb{R}}n'(s) \vert P(2^{\tau}\lambda)\vert\cdot\Vert e^{-i(s+\lambda)\Lambda_\mu}f(s)\Vert_{L^\infty}\Vert g(s)\Vert_{L^2}\Vert h(s)\Vert_{L^2} ds d\lambda\lesssim 2^{-\delta_2 m/4},\\
\big|III\big|&\lesssim \int_{\mathbb{R}}n(s) \vert P(2^{\tau}\lambda)\vert\cdot\Vert e^{-i(s+\lambda)\Lambda_\mu}f(s)\Vert_{L^\infty}\Vert (\partial_sg)(s)\Vert_{L^2}\Vert h(s)\Vert_{L^2}  ds d\lambda\lesssim 2^{-\delta_2 m/4},\\
\big|IV\big|&\lesssim \int_{\mathbb{R}}n(s) \vert P(2^{\tau}\lambda)\vert\cdot\Vert e^{-i(s+\lambda)\Lambda_\mu}f(s)\Vert_{L^\infty}\Vert g(s)\Vert_{L^2}\Vert (\partial_sh)(s)\Vert_{L^2}  ds d\lambda\lesssim 2^{-\delta_2 m/4}.
\end{split}
\end{equation*}
In these estimates we have also used $2^\tau\gtrsim 2^{-m(1-\delta_2/2)}$ and the fact that $P$ has rapid decay. 

To estimate $|II|$ decompose $\partial_s\widehat{f}=\widehat{F_2}+\widehat{F_\infty}$ according to \eqref{ImpDecay}. The contribution of $\widehat{F_\infty}$ can be estimated in the same way as above, using \eqref{Alx84} and $\Vert g(s)\Vert_{L^2}+\Vert h(s)\Vert_{L^2}\lesssim 1$. On the other hand, the contribution of $\widehat{F_2}$ can be estimated directly in the Fourier space in the integral defining $II$, using Schur's lemma and Proposition \ref{volume} (i).

The contribution $R_0$ can be estimated in exactly the same way. This finishes the proof.
\end{proof}

We can now complete the proof of our main lemma.

\begin{proof}[Proof of Lemma \ref{BootstrapEE1}] We examine the definition of $\mathcal{SS}_\sigma$ and $\mathcal{B}_{SS}$ in \eqref{SSMult} and \eqref{BSS}. It suffices to prove that if $W^1=W^{p_1}_{\pm\sigma_1}$, $W^2=W^{p_2}_{\pm\sigma_2}$ as in \eqref{nor20}, $\sigma_1,\sigma_2\in\{e,b\}$, $p_1,p_2\in[0,N_1]$, $f\in\big\{\Omega^a U_{\mu}:\,\mu\in\{\pm e,\pm b\},\,a\leq N_1/2\big\}$, 
and $m$ is a symbol that gains one derivative, i.e.
\begin{equation}\label{log6.0}
\Vert m\Vert_{S^\infty_{kk_1k_2}}\lesssim 2^{-\max(k,k_1,k_2,0)}(1+2^{10\min(k,k_1,k_2)}),\qquad\text{ for any }k,k_1,k_2\in\mathbb{Z},
\end{equation}
then, for any $m\geq 0$,
\begin{equation}\label{log6}
\Big|\int_{\mathbb{R}}q_m(s)\int_{\mathbb{R}^2\times\mathbb{R}^2}m(\xi,\eta)\overline{\widehat{W^2}}(\xi,s)\widehat{W^1}(\eta,s)\widehat{f}(\xi-\eta,s)\,d\xi d\eta ds\Big|\lesssim \eps_1^3 2^{-\delta^2m}.
\end{equation}

To prove this, for any let, for any $k,k_1,k_2\in\mathbb{Z}$,
\begin{equation*}
\mathfrak{I}_{k,k_1,k_2}:= \int_{\mathbb{R}\times\mathbb{R}^4}q_m(s)m(\xi,\eta)\overline{\widehat{P_{k}W^2}}(\xi,s)\widehat{P_{k_2}W^1}(\eta,s)\widehat{P_{k_1}f}(\xi-\eta,s)d\xi d\eta ds.
\end{equation*}
Using Proposition \ref{BootSimple} and $L^2\ast L^2\ast L^\infty$ estimates as in Lemma \ref{L1easy} we have
\begin{equation*}
|\mathfrak{I}_{k,k_1,k_2}|\lesssim \eps_1^32^{-\max(k,k_1,k_2,0)}2^{24\delta m}\min(1,2^{m+\min(k,k_1,k_2)}).
\end{equation*}
Therefore, it suffices to consider the case when $\max(k,k_1,k_2,0)$ is small relative to $m$, i.e.
\begin{equation}\label{endpr1}
-2m\le k,k_1,k_2\le 25\delta m.
\end{equation}
This reduction to relatively small frequencies is the crucial gain coming from the strongly semilinear structure of symbols \eqref{log6.0}.

It the case \eqref{endpr1} we use Lemma \ref{MainL2Lem} and the bounds in Proposition \ref{BootSimple} to verify the hypothesis. Since $\delta\leq 4\cdot 10^{-3}\delta_2$, it follows that $|\mathfrak{I}_{k,k_1,k_2}|\lesssim \eps_1^32^{-\delta m}$, and the conclusion \eqref{log6} follows.
\end{proof}

\section{Dispersive control I: the functions $\partial_t V_e$ and $\partial_t V_b$}\label{partialt}

In the next two sections we prove our main dispersive estimates:

\begin{proposition}\label{BootstrapZnorm}
Assume that $(\rho,u,\widetilde{E},\widetilde{b})$ is a solution to \eqref{system2}-\eqref{condition} on a time interval $[0,T]$, $T\ge 1$, satisfying the hypothesis \eqref{bootstrap1}-\eqref{bootstrap2} in Proposition \ref{bootstrap}. Then, for any $t\in[0,T]$,
\begin{equation}\label{bootstrapZ}
\Vert (V_e(t),V_b(t))\Vert_{Z}\lesssim\epsilon_0+\epsilon_1^{3/2}.
\end{equation}
\end{proposition}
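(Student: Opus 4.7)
The plan is to control the Duhamel expansion \eqref{OPT5} (together with its higher-order corrections) atom by atom in the $Z$-norm, after applying $\Omega^a$ with $a\le N_1/2$. Since $\Omega$ commutes with $e^{it\Lambda_\sigma}$ and acts on the symbols $\mathfrak{m}(\xi,\eta)$ by $\Omega_\xi+\Omega_\eta$ (which preserves the class of symbols appearing here), it suffices to prove, for each fixed admissible $a$, the estimate
\begin{equation*}
\sup_{(k,j)\in\mathcal{J}}\sup_{0\le n\le j+1} 2^{6k_+} 2^{(1-20\delta)j}2^{-(1/2-19\delta)n}\,\big\|A^\sigma_{n,(j)}Q_{jk}\big[\Omega^a V_\sigma(t)-\Omega^a V_\sigma(0)\big]\big\|_{L^2}\lesssim \epsilon_0+\epsilon_1^{3/2},
\end{equation*}
uniformly in $t\in[0,T]$. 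Space-localization via $Q_{jk}$ is implemented by applying a derivative of order $\le j$ to the Duhamel integrand in $\xi$ (controlled by $2^j$), while the frequency and space-time-resonance localizations $P_k$ and $A^\sigma_{n,(j)}$ are Fourier multipliers that we distribute freely.

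First, I would perform a time decomposition using the $q_m$-partition of $[0,T]$ from \eqref{nh2}; on each dyadic piece $s\sim 2^m$, I would decompose both input profiles dyadically in frequency ($k_1,k_2$), in position ($j_1,j_2$ via $Q_{j_ik_i}$), and in the modulation $|\Phi(\xi,\eta)|\sim 2^\tau$. Trivial cases where one frequency is very large or very small, or $m$ is bounded, are dispatched by the linear dispersive bounds \eqref{LinftyBd}--\eqref{LinftyBd2} and $L^2\times L^\infty$ estimates as in Lemma \ref{L1easy}, combined with the loss $2^{6k_+}$ that the $Z$-norm allows for large output frequencies and the strong $2^{-N_0 k}$ decay from the Sobolev control.

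The core bilinear analysis then splits into three mechanisms. \emph{(i) Nonresonant regime:} when the modulation $|\Phi(\xi,\eta)|$ is large relative to $2^{-m}$, I would integrate by parts in $s$ (normal form) to replace the integral by cubic expressions of the form \eqref{ResultNFBootstrapEE2} in which $\partial_sV_\mu,\,\partial_sV_\nu$ are rewritten using Lemmas \ref{dtfLemPrelim}--\ref{dtfLem}; these lemmas exhibit $\partial_t V_\sigma$ as a sum of an $L^\infty$-small nonresonant piece, an $L^2$-small piece, and explicit secondary resonant oscillations that can be absorbed by the phase. \emph{(ii) Stationary-in-$\eta$ regime:} where the modulation is small but $|\nabla_\eta\Phi|$ is not, I would integrate by parts in $\eta$ via Lemmas \ref{tech5} and \ref{RotIBP}, exploiting the almost-radiality of the profiles, to obtain rapid decay. \emph{(iii) Space-time resonant regime:} in the narrow region around the spheres in \eqref{OPT6} cut out by $A^\sigma_{n,(j)}$, I would estimate directly, using the localization Lemma \ref{PhiLocLem}, the atomic bounds \eqref{RadL2}--\eqref{FL1bd}, and the nondegeneracy of $\nabla^2_{\eta\eta}\Phi$ on the resonant spheres to sum a stationary-phase contribution of the anticipated shape $c_j(\xi)\varphi_{\le -m}(|\xi|-R_j)$. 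The final output is precisely an element of $Z_1^b$ with norm $\lesssim\epsilon_1^2$, which is the reason the $Z$-norm was designed as in Definition \ref{MainZDef}.

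The main obstacle will be the last regime, when the output lies within $2^{-n}$ of a space-time resonant sphere for $\sigma=b$ and one of the inputs also lies near such a sphere. Here a naive normal form fails because $|\Phi|$ can be as small as $2^{-j}$, the gain from integration by parts in $\eta$ is weak, and cascading two such resonant interactions could in principle produce an output whose $A^\sigma_n$-localized $L^2$-norm exceeds the $Z$-budget. The rescue is the algebraic separation property stated in Proposition \ref{Separation2}: a space-time resonant output cannot coherently and resonantly interact with another wave to hit a third resonant sphere. Together with the restricted nondegeneracy \eqref{RNDC} and the refined description of $\partial_t V$ in Lemma \ref{dtfLem}, this forces the iterated-resonance contribution to lose an extra factor of $2^{-\delta m}$ over the naive estimate, closing the bootstrap. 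The bilinear estimates corresponding to each of these regimes are exactly the content of Lemmas \ref{ZNormEstSimpleLem1}--\ref{ResLem}, whose combined output yields \eqref{bootstrapZ}.
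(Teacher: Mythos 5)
Your proposal follows essentially the same route as the paper: the $q_m$ time decomposition, the dyadic splitting in frequency, position, and modulation, the three-way case analysis (normal form for large modulation, integration by parts in $\eta$ via Lemmas \ref{tech5} and \ref{RotIBP} off the stationary set, and direct estimation near the space-time resonant spheres using the $A^\sigma_{n,(j)}$ localization), and the resolution of the iterated-resonance difficulty via Proposition \ref{Separation2} combined with the refined description of $\partial_tV$ in Lemma \ref{dtfLem} are exactly the content of Lemmas \ref{ZNormEstSimpleLem1}--\ref{ResLem}. The only point treated too briefly is the cubic contribution $|\nabla|H_3(\rho)$ from the general pressure law, which the paper handles separately in Proposition \ref{CubicTermZnorm}, but this is the easy part of the argument.
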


\subsection{The Duhamel formula}

We will work mostly with the unknowns $(U_e,U_b)$ and $(V_e,V_b)$ defined in \eqref{variables4}. For these new unknowns, we see that \eqref{system2}-\eqref{condition} is equivalent to the system
\begin{equation}\label{ener4}
(\partial_t+i\Lambda_e)U_e=\mathcal{N}_e,\qquad (\partial_t+i\Lambda_b)U_b=\mathcal{N}_b,
\end{equation}
where, using \eqref{Exph}
\begin{equation}\label{ener4s}
\begin{cases}
\mathcal{N}_{e}&:=(1/2)|\nabla|\big(u_{1}^{2}+u_{2}^{2}\big)-i\Lambda_{e}\big(R_{1}(\rho u_{1})+R_{2}(\rho u_{2})\big)+(\kappa/2)\vert\nabla\vert(\rho^2)+\vert\nabla\vert H_3(\rho),\\
\mathcal{N}_{b}&:=-i\big(R_{1}(\rho u_{2})-R_{2}(\rho u_{1})\big),
\end{cases}
\end{equation}
where $H_3=H_3$ denotes the cubic antiderivative of $h_2$. The variables $\rho,u$ can be expressed in terms of the complex variables $U_e,U_b$,
\begin{equation}\label{variables5}
\rho=|\nabla|\Lambda_{e}^{-1}\Im(U_e),\quad u_{1}=-R_{1}\Re(U_e)+R_{2}\Lambda_{b}^{-1}\Re(U_b),\quad u_{2}=-R_{2}\Re(U_e)-R_{1}\Lambda_{b}^{-1}\Re(U_b).
\end{equation}

The system (\ref{ener4})-(\ref{variables5}) can be rewritten as
\begin{equation}\label{system8}
(\partial_{t}+i\Lambda_{\sigma})U_{\sigma}=\sum_{\mu,\nu\in\mathcal{P}}\mathcal{N}_{\sigma\mu\nu}(U_{\mu},U_{\nu})+\delta_{\sigma e}\vert\nabla\vert H_3(\rho)
\end{equation} for $\sigma\in\{e,b\}$, where the quadratic nonlinearities are defined by
\begin{equation}\label{system9}
\left(\mathcal{F}\mathcal{N_{\sigma\mu\nu}}(f,g)\right)(\xi)=\int_{\mathbb{R}^{2}}\mathfrak{m}_{\sigma\mu\nu}(\xi,\eta)\widehat{f}(\xi-\eta)\widehat{g}(\eta)\,d\eta.
\end{equation} 
The multipliers $\mathfrak{m}_{\sigma\mu\nu}$ are sums of functions of the form $m(\xi)m'(\xi-\eta)m''(\eta)$, satisfying suitable symbol-type estimates. We define $V_{\sigma}$ as in \eqref{variables4}, $V_{\sigma}(t)=e^{it\Lambda_{\sigma}}U_{\sigma}(t)$. The Duhamel formula is
\begin{equation}\label{duhamelDER}
\begin{split}
(\partial_t\widehat{V_{\sigma}})(\xi,s)&=[\partial_t\widehat{V_{\sigma}}]^{(2)}(\xi,s)+[\partial_t\widehat{V_{\sigma}}]^{(3)}(\xi,s),\\
[\partial_t\widehat{V_{\sigma}}]^{(2)}(\xi,s)&:=\sum_{\mu,\nu\in\mathcal{P}}\int_{\mathbb{R}^2}e^{is\Phi_{\sigma\mu\nu}(\xi,\eta)}\mathfrak{m}_{\sigma\mu\nu}(\xi,\eta)\widehat{V_{\mu}}(\xi-\eta,s)\widehat{V_{\nu}}(\eta,s)\,d\eta\\
[\partial_t\widehat{V_{\sigma}}]^{(3)}(\xi,s)&:=\delta_{\sigma e}\vert\xi\vert e^{is\Lambda_e}\widehat{H_3(\rho)}(\xi,s),
\end{split}
\end{equation}
or, in integral form,
\begin{equation}\label{duhamel}
\begin{split}
\widehat{V_{\sigma}}(\xi,t)&=\widehat{V_{\sigma}}(\xi,0)+\sum_{\mu,\nu\in\mathcal{P}}\int_{0}^{t}\int_{\mathbb{R}^2}e^{is\Phi_{\sigma\mu\nu}(\xi,\eta)}\mathfrak{m}_{\sigma\mu\nu}(\xi,\eta)\widehat{V_{\mu}}(\xi-\eta,s)\widehat{V_{\nu}}(\eta,s)\,d\eta ds\\
&+\delta_{\sigma e}\vert\xi\vert\int_0^te^{is\Lambda_e(\xi)}\widehat{H_3}(\rho)(\xi,s)ds .
\end{split}
\end{equation}

The rotation vector-field $\Omega$ acts according to the identity
\begin{equation*}
\begin{split}
\Omega_\xi&[\partial_t\widehat{V_{\sigma}}]^{(2)}(\xi,s)=\sum_{\mu,\nu\in\mathcal{P}}\int_{\mathbb{R}^2}(\Omega_\xi+\Omega_\eta)\big[e^{is\Phi_{\sigma\mu\nu}(\xi,\eta)}\mathfrak{m}_{\sigma\mu\nu}(\xi,\eta)\widehat{V_{\mu}}(\xi-\eta,s)\widehat{V_{\nu}}(\eta,s)\big]\,d\eta\\
&=\sum_{\mu,\nu\in\mathcal{P}}\sum_{a_1+a_2+a_3=1}\int_{\mathbb{R}^2}e^{is\Phi_{\sigma\mu\nu}(\xi,\eta)}(\Omega_\xi+\Omega_\eta)^{a_1}\mathfrak{m}_{\sigma\mu\nu}(\xi,\eta)(\Omega^{a_2}\widehat{V_{\mu}})(\xi-\eta,s)(\Omega^{a_3}\widehat{V_{\nu}})(\eta,s)\,d\eta.
\end{split}
\end{equation*}
Therefore, for $1\leq a\leq N_1$, letting $\mathfrak{m}_{\sigma\mu\nu}^{a_1}=(\Omega_\xi+\Omega_\eta)^{a_1}\mathfrak{m}_{\sigma\mu\nu}$, we have
\begin{equation}\label{DuhamelDER2}
\begin{split}
\Omega^a_\xi[\partial_t\widehat{V_{\sigma}}]^{(2)}(\xi,s)=\sum_{\mu,\nu\in\mathcal{P}}\sum_{a_1+a_2+a_3=a}\int_{\mathbb{R}^2}&e^{is\Phi_{\sigma\mu\nu}(\xi,\eta)}\mathfrak{m}_{\sigma\mu\nu}^{a_1}(\xi,\eta)\\
&\times(\Omega^{a_2}\widehat{V_{\mu}})(s,\xi-\eta)(\Omega^{a_3}\widehat{V_{\nu}})(s,\eta)\,d\eta.
\end{split}
\end{equation}
In integral form this becomes
\begin{equation}\label{duhamel2}
\begin{split}
\Omega^a_\xi\widehat{V_{\sigma}}&(\xi,t)=\Omega^a_\xi\widehat{V_{\sigma}}(\xi,0)+\sum_{\mu,\nu\in\mathcal{P}}\sum_{a_1+a_2+a_3=a}\int_0^t\int_{\mathbb{R}^2}e^{is\Phi_{\sigma\mu\nu}(\xi,\eta)}\mathfrak{m}_{\sigma\mu\nu}^{a_1}(\xi,\eta)\\
&\times(\Omega^{a_2}\widehat{V_{\mu}})(s,\xi-\eta)(\Omega^{a_3}\widehat{V_{\nu}})(s,\eta)\,d\eta ds+\delta_{\sigma e}\vert\xi\vert \int_0^te^{is\Lambda_e}\Omega_\xi^a\widehat{H_3(\rho)}(\xi,s)ds.
\end{split}
\end{equation}

\subsection{Control of the time derivatives}

In this section we prove several bounds on the functions $\partial_t V_\sigma$. These bounds rely on the assumptions \eqref{bootstrap2} and the Duhamel formula \eqref{duhamel}. Given $\Phi_{\sigma\mu\nu}$ as in \eqref{phasedef} we define
\begin{equation}\label{Alx200}
\Xi_{\mu\nu}(\xi,\eta):=(\nabla_\eta\Phi_{\sigma\mu\nu})(\xi,\eta)=-(\nabla\Lambda_\mu)(\eta-\xi)-(\nabla\Lambda_\nu)(\eta),\qquad  \Xi:\mathbb{R}^2\times\mathbb{R}^2\to\mathbb{R}.
\end{equation}

We start with a preliminary result.

\begin{lemma}\label{dtfLemPrelim}

Assume the hypothesis of Proposition \ref{bootstrap}, $m\geq 0$, $s\in [2^{m}-1,2^{m+1}]$, $k\in\mathbb{Z}$.
Then
\begin{equation}\label{Alx1}
\big\|P_{\leq k}(\partial_tV_{\sigma})(s)\big\|_{H^{N_0}}+\sum_{a\leq N_1}\big\|P_{\leq k}\Omega^a(\partial_tV_{\sigma})(s)\big\|_{L^2}\lesssim \eps_1^22^{k_+}2^{-m+22\delta m},
\end{equation}
\begin{equation}\label{dtFTotalBds}
\sum_{a\leq N_1/2}\|e^{-is\Lambda_\sigma}P_k\Omega^a(\partial_tV_{\sigma})(s)\|_{L^{\infty}}\lesssim\eps_1^2\min\{2^{-2m+43\delta m},2^{2k}\}.
\end{equation}
In addition, for any $a\in[0,N_1/2]\cap\mathbb{Z}$ we may decompose
\begin{equation}\label{DecdtfPrelim}
\Omega^a(\partial_tV_\sigma)(s)=\epsilon_1^2\{\widetilde{f_C}(s)+\widetilde{f_{NC}}(s)\},
\end{equation}
where, with $\Psi_{\sigma\mu\nu}$ as \eqref{psidag} and for any $k\in\mathbb{Z}$,
\begin{equation}\label{dtfType1Prelim}
\begin{split}
&\widehat{\widetilde{f_C}}(\xi,s)
=\sum_{\mu+\nu\ne0}e^{is\Psi_{\sigma\mu\nu}(\xi)}\sum_{0\le q\le m/2-10\delta m}g^q_{\sigma\mu\nu}(\xi,s),\\
&\varphi_k(\xi)g^q_{\sigma\mu\nu}(\xi,s)=0\quad\text{ if }\,q\geq 1\text{ and }k\leq-\D\quad\text{ or }\quad\text{ if }\,k\notin[-m/2+\delta^2 m/5,\delta^2 m/5],\\
&|\varphi_k(\xi)D^\alpha_\xi g^q_{\sigma\mu\nu}(\xi,s)|\lesssim 2^{-42\delta k_-}2^{-m+3\delta m}2^{-q+42\delta q}2^{(m/2+q+2\delta^2m)\vert\alpha\vert},\\
&\Vert \varphi_{k}\cdot\partial_sg^q_{\sigma\mu\nu}(s)\Vert_{L^\infty}\lesssim 2^{(6\delta-2)m}2^{q+42\delta q}\qquad\text{ if }\,k\geq -\D,
\end{split}
\end{equation}
and
\begin{equation}\label{dtfType2Prelim}
\begin{split}
&\Vert P_k\widetilde{f_{NC}}(s)\Vert_{L^2}\lesssim 2^{-3m/2+60\delta m}+2^{-m}(1+2^{m+k_--50\delta m})^{-1},\\
&\Vert \widehat{P_k\widetilde{f_{NC}}}(s)\Vert_{L^\infty}\lesssim 2^{60\delta m}\big(1+2^{m+k_-}\big)^{-1}.
\end{split}
\end{equation}
Moreover
\begin{equation}\label{OmegadtfPrelim}
\sup_{0\le b\le N_1/4}\big\{\Vert \Omega^{b}g^q_{\sigma\mu\nu}(s)\Vert_{L^2}+\Vert\Omega^b\widetilde{f_{NC}}(s)\Vert_{L^2}\big\}\lesssim 1.
\end{equation}
In particular, for any $a\in[0,N_1/2]\cap\mathbb{Z}$ and $k\in\mathbb{Z}$
\begin{equation}\label{dtFTotalBds2Prelim}
\Vert P_k\Omega^a(\partial_tV_\sigma)(s)\Vert_{L^2}\lesssim \epsilon_1^22^{-m+3\delta m+\delta^2m}.
\end{equation}
\end{lemma}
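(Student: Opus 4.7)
The starting point is the Duhamel identity \eqref{duhamelDER}, which writes $\partial_t V_\sigma$ as a sum of quadratic bilinear integrals with phase $\Phi_{\sigma\mu\nu}$ plus the cubic remainder $\delta_{\sigma e}|\xi|e^{is\Lambda_e}\widehat{H_3(\rho)}$. Combined with the $\Omega$-differentiated version \eqref{DuhamelDER2}, every bound to be proved reduces to estimates on pieces of the form
\begin{equation*}
J^{a_1,a_2,a_3}_{\mu\nu}(\xi,s):=\int_{\mathbb{R}^2}e^{is\Phi_{\sigma\mu\nu}(\xi,\eta)}\mathfrak{m}^{a_1}_{\sigma\mu\nu}(\xi,\eta)(\Omega^{a_2}\widehat{V_\mu})(\xi-\eta,s)(\Omega^{a_3}\widehat{V_\nu})(\eta,s)\,d\eta,
\end{equation*}
plus an error coming from the cubic term, which is harmless since it is quartic in $\epsilon_1$ times a factor from \eqref{Exph} gaining an extra $\eps_1$.

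First I would prove the $L^2$ bounds \eqref{Alx1} and the dispersive bound \eqref{dtFTotalBds}. For \eqref{Alx1}, I apply Lemma \ref{L1easy} to $J^{a_1,a_2,a_3}_{\mu\nu}$ placing the factor carrying fewer $\Omega$-derivatives in $L^\infty$ (using \eqref{LinftyBd2} to get the $2^{-m+21\delta m}$ decay) and the other factor in $L^2$ (using \eqref{bootstrap2}); the $2^{k_+}$ factor and the $H^{N_0}$ bound come from distributing the high Sobolev weight between the two factors, losing one derivative per high-frequency factor but compensated by $N_0 \gg N_1$. For \eqref{dtFTotalBds} the same product rule for $\Omega$ applies, but now both factors are estimated in $L^\infty$ (via \eqref{LinftyBd2} after transporting them by $e^{-is\Lambda_\mu}, e^{-is\Lambda_\nu}$), yielding the $2^{-2m+43\delta m}$ bound; the alternative $2^{2k}$ bound is the trivial Bernstein estimate on $P_k$ applied to an $L^\infty$-bounded function of unit amplitude.

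The decomposition \eqref{DecdtfPrelim}--\eqref{OmegadtfPrelim} is the core of the lemma and will be the main obstacle. For each $(\mu,\nu)$ with $\mu+\nu\neq 0$, the phase $\Phi_{\sigma\mu\nu}(\xi,\cdot)$ has a unique stationary point $\eta=p(\xi)$ (Proposition \ref{spaceres}), with nondegenerate Hessian, and value $\Psi_{\sigma\mu\nu}(\xi)$ at that point. I would insert the dyadic partition of unity
\begin{equation*}
1=\sum_{0\leq q\leq m/2-10\delta m}\varphi_q\big(2^{m/2}|\Xi_{\mu\nu}(\xi,\eta)|\big)+\varphi_{>m/2-10\delta m}(\ldots),
\end{equation*}
where $\Xi_{\mu\nu}=\nabla_\eta\Phi_{\sigma\mu\nu}$, localizing in distance from $p(\xi)$. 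On each piece I factor $e^{is\Phi}=e^{is\Psi_{\sigma\mu\nu}(\xi)}e^{is[\Phi-\Psi]}$ and define $g^q_{\sigma\mu\nu}$ as the remaining integral. The size bound in \eqref{dtfType1Prelim} follows from: the $\eta$-volume of the localization is $\lesssim 2^{-m+2q}$ after using nondegeneracy of the Hessian at $p(\xi)$; the $L^\infty$ bound on $\widehat{V_\mu}(\xi-\eta)\widehat{V_\nu}(\eta)$ comes from \eqref{FLinftybd} applied to the atomic decomposition in the $Z$-norm, producing the $2^{-m+3\delta m}$ factor (recovering one dispersive power from the $q=0$ volume gain and leaving the rest for the tail); the derivative bounds in $\xi$ of order $|\alpha|$ follow by differentiating under the integral, where each $\xi$-derivative produces either a factor of $s|\partial_\xi\Phi|\lesssim 2^m$ (bounded by $2^{m/2+q}$ on the support) or a factor of $2^j$ from differentiating $\widehat{V_\mu}, \widehat{V_\nu}$ (using \eqref{FLinftybdDER} with $j\lesssim m/2+q+\delta^2 m$). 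The time derivative $\partial_s g^q$ is bounded using \eqref{Alx1} applied to $\partial_s V_\mu, \partial_s V_\nu$ together with the $s$-derivative of the Schwartz cutoff, which produces an extra factor $|\partial_s\Psi|\lesssim 1$ and is thus harmless.

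The non-coherent part $\widetilde{f_{NC}}$ collects: the tail piece $|\Xi_{\mu\nu}(\xi,\eta)|\gtrsim 2^{-m/2+m/2-10\delta m}$, where repeated integration by parts via Lemma \ref{RotIBP} or \eqref{ln1} gains arbitrary powers of $2^{-m}$; the pieces with $\mu+\nu=0$ where the phase has no critical point in $\eta$ so direct integration by parts in $\eta$ works; the cubic piece (trivially small in $L^2$ by $\|\rho\|_{L^6}^3$-type estimates plus the bootstrap); and low-frequency tails $|\xi|\le 2^{-m/2+\delta^2m/5}$ where the Fourier $L^\infty$ bound \eqref{FLinftybd} replaces the coherent extraction. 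The two bounds in \eqref{dtfType2Prelim} then follow by H\"older and Plancherel respectively, and \eqref{OmegadtfPrelim} is obtained by the same analysis applied to the $\Omega$-differentiated version \eqref{DuhamelDER2}, using that $N_1/4+N_1/4\le N_1/2$ so both factors admit an $\Omega^{N_1/2}$-type bound. Finally, \eqref{dtFTotalBds2Prelim} follows by summing the $L^2$ bounds on $g^q_{\sigma\mu\nu}$ (from \eqref{dtfType1Prelim} combined with the support in $\xi$) and on $\widetilde{f_{NC}}$.
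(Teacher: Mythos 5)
Your treatment of \eqref{Alx1} and \eqref{dtFTotalBds} matches the paper's (these really do follow from \eqref{DuhamelDER2}, Lemma \ref{L1easy} and \eqref{LinftyBd2}), but the core of the lemma --- the coherent/non-coherent decomposition --- has a genuine gap. You index $q$ by the dyadic distance $|\Xi_{\mu\nu}(\xi,\eta)|\approx 2^{q-m/2}$ from the stationary point and then estimate $g^q$ by (volume of the localization) $\times$ ($L^\infty$ of the inputs). This gives at best $2^{2q-m}\cdot\|\widehat{V_\mu}\|_{L^\infty}\|\widehat{V_\nu}\|_{L^\infty}$, which \emph{grows} like $2^{2q}$, whereas \eqref{dtfType1Prelim} requires decay $2^{-q+42\delta q}$; over the full range $q\leq m/2-10\delta m$ this is a loss of order $2^{3q}\sim 2^{3m/2}$. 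The decay in $q$ has to come from tying $q$ to the spatial localization of the inputs: in the paper one first cuts off at $|\Xi_{\mu\nu}|\lesssim \kappa_r\approx 2^{\delta^2m/2}(2^{-m/2}+2^{j_2-m})$ (everything farther from $p(\xi)$ is killed by integration by parts because the input with the larger $j$ is localized at scale $2^{j_2}$), then sets $q=j_2-m/2$, and then the $2^{-q}$ decay is exactly the $2^{-j_2(1-21\delta)}$ decay of the $B_{j_2}$ atom measured via the $L^1$-on-rectangles bound \eqref{FL1bd}. Moreover, for $q\geq 1$ the support must be a $2^{-m/2}\times 2^{q-m/2}$ \emph{box}, not the full annulus of area $2^{2q-m}$ you use: the extra angular localization $\varphi(\kappa_\theta^{-1}\Omega_\eta\Phi)$ with $\kappa_\theta\approx 2^{-m/2+2\delta^2m}$, justified by Lemma \ref{RotIBP} and the almost-radiality of the profiles, is essential to the volume count. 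Without both ingredients the claimed size of $g^q$ is simply false for generic inputs satisfying \eqref{Ass2}-type hypotheses.

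Two further points would block your argument as written. First, you send all $\mu+\nu=0$ interactions to $\widetilde{f_{NC}}$ by ``direct integration by parts in $\eta$,'' but there $|\nabla_\eta\Phi(\xi,\eta)|\gtrsim|\xi|$ only, so integration by parts fails when the output frequency is small relative to the input localization ($m+k\leq j_2$); in that regime the paper must invoke the null structure of the $U_{\pm b}\times U_{\pm b}$ interaction (the factor $2^{\beta_{\mu\nu}k_-}$ coming from the gradient in $\mathcal{N}_e$) to close the $L^2$ bound \eqref{Alx116} for the pieces with $n_2\geq 1$. Your proposal never uses this structure. Second, the bound on $\partial_s g^q_{\sigma\mu\nu}$ is not ``harmless'': plugging the crude $L^2$ bound \eqref{Alx1} for $\partial_sV_\mu$ into the box of area $\approx 2^{q-m}$ yields only $2^{-3m/2+q/2+3\delta m}$, short of the required $2^{(6\delta-2)m}2^{q}$. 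The paper gets \eqref{Alx99.6} by applying the decomposition of the lemma recursively to $\partial_sf^\mu$ and $\partial_sf^\nu$, estimating the coherent parts in $L^\infty$ and the non-coherent parts via the radial $L^2$ bounds $\sup_\theta\|\cdot(r\theta)\|_{L^2(rdr)}$ over the thin box. You should either reproduce that recursion or find a substitute; the step cannot be waved through.
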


\begin{proof}

Using Lemma \ref{dtVCubic}, it suffices to treat $[\partial_tV_\sigma]^{(2)}$. The bounds \eqref{Alx1} and \eqref{dtFTotalBds} follow easily from the formula \eqref{DuhamelDER2} and the $L^\infty$ bound \eqref{LinftyBd2}. The desired decomposition \eqref{DecdtfPrelim} corresponds to the decomposition of $\partial_tV_\sigma$ into a coherent component $\widetilde{f_C}$ and a non coherent component $\widetilde{f_{NC}}$. We use \eqref{DuhamelDER2} and Lemma \ref{lem:CZ}. In addition, we examine the nonlinearities $\mathcal{N}_e$ and $\mathcal{N}_b$ in \eqref{ener4s} and notice that the quadratic interaction of $U_{\pm b}$ with itself is only present in the nonlinearity $\mathcal{N}_e$ and carries a gradient factor in front (a ``null structure"). 

It suffices to show the following: let $\mu,\nu\in\mathcal{P}$ and assume that
\begin{equation}\label{AssonFmunu}
\begin{split}
\Vert f^\mu\Vert_{H^{N_0/2}\cap Z_1^\mu\cap H^{N_1/2}_\Omega}+ \Vert f^\nu\Vert_{H^{N_0/2}\cap Z_1^\nu\cap H^{N_1/2}_\Omega}\le 1.
\end{split}
\end{equation}
Define $\beta_{\mu\nu}=1$ if $|\mu|=|\nu|=b$ and $\beta_{\mu\nu}=0$ otherwise. Define, for $\sigma\in\{e,b\}$,
\begin{equation}\label{Isigmamunu}
\begin{split}
\widetilde{I}^{\sigma\mu\nu}&:=\sum_{k,k_1,k_2\in\mathbb{Z}}2^{\beta_{\mu\nu}k_-}2^{\max\{k_1,k_2,0\}}P_kI^{\sigma\mu\nu}[P_{k_1}f^\mu,P_{k_2}f^\nu],\\
\mathcal{F}\left\{I^{\sigma\mu\nu}[f,g]\right\}(\xi,s)&:=\int_{\mathbb{R}^2}e^{is\Phi_{\sigma\mu\nu}(\xi,\eta)}\widehat{f}(\xi-\eta)\widehat{g}(\eta)d\eta.
\end{split}
\end{equation}
Then the bounds in Lemma \ref{dtfLemPrelim} hold for $\widetilde{I}^{\sigma\mu\nu}$ with $\epsilon_1=1$.

Using Lemma \ref{LinEstLem} and $L^2\times L^\infty$ estimates (using Lemma \ref{L1easy}), and recalling that $N_0=20/\delta^2$, it is easy to see that the contribution of $k,k_1,k_2$ with $\max(k_1,k_2)\geq \delta^2 m/5-\D$ or with $\min(k_1,k_2)\leq -3m$ is acceptable as required in \eqref{dtfType2Prelim}, as part of the component $\widetilde{f_{NC}}$. It remains to consider $P_kI^{\sigma\mu\nu}[P_{k_1}f^\mu,P_{k_2}f^\nu]$ uniformly in $-3m\le k_1,k_2\le \delta^2 m/5-\D$. We decompose
\begin{equation*}
\begin{split}
I^{\sigma\mu\nu}[P_{k_1}f^\mu,P_{k_2}f^\nu]&=\sum_{(k_1,j_1)\in\mathcal{J}}\sum_{(k_2,j_2)\in\mathcal{J}}I^{\sigma\mu\nu}[f^\mu_{j_1,k_1},f^\nu_{j_2,k_2}]=\sum_{(k_1,j_1)\in\mathcal{J}}\sum_{(k_2,j_2)\in\mathcal{J}}I^{\sigma\mu\nu}_{j_1,k_1,j_2,k_2}
\end{split}
\end{equation*}
where, as in \eqref{Alx100},
\begin{equation}\label{Alx8}
f^\mu_{j_1,k_1}=P_{[k_1-2,k_1+2]}Q_{j_1k_1}f^\mu,\qquad f^\nu_{j_2,k_2}=P_{[k_2-2,k_2+2]}Q_{j_2k_2}f^\nu.
\end{equation}

Lemma \ref{LinEstLem} and Plancherel theorem show that if $j_2=\max\{j_1,j_2\}\ge m-10\delta m-\D^2$ then
\begin{equation*}
\begin{split}
\Vert I^{\sigma\mu\nu}_{j_1,k_1,j_2,k_2}\Vert_{L^2}&\lesssim \Vert e^{-is\Lambda_\mu}f^\mu_{j_1,k_1}\Vert_{L^\infty}\Vert f^\nu_{j_2,k_2}\Vert_{L^2}\lesssim 2^{-(1-21\delta)m}2^{(\delta-1/2)j_2},\\
\Vert \mathcal{F}I^{\sigma\mu\nu}_{j_1,k_1,j_2,k_2}\Vert_{L^\infty}&\lesssim \Vert \widehat{f^\mu_{j_1,k_1}}\Vert_{L^\infty}\Vert \widehat{f^\nu_{j_2,k_2}}\Vert_{L^1}\lesssim 2^{-(1-45\delta)j_2}.
\end{split}
\end{equation*}
Summing in $j_1,j_2,k_1,k_2$, we obtain an acceptable contribution as part of the component $\widetilde{f_{NC}}$ in \eqref{dtfType2Prelim}. Therefore it suffices to obtain the desired decomposition for $I^{\sigma\mu\nu}_{j_1,k_1,j_2,k_2}$ uniformly in $k_1,k_2,j_1,j_2,m$ under the assumptions
\begin{equation}\label{Alx4}
-(1-10\delta)m+\D^2\le k_1,k_2\le\delta^2 m/5-\D,\qquad 0\le j_1\leq j_2\le (1-10\delta)m-\D^2.
\end{equation}

Let $\kappa_r:=2^{\delta^2m/2}\big(2^{-m/2}+2^{j_2-m}\big)$. With $\Xi_{\mu\nu}$ defined as in \eqref{Alx200}, we decompose
\begin{equation}\label{IBPdtfPrelim}
\begin{split}
I^{\sigma\mu\nu}_{j_1,k_1,j_2,k_2}&=\sum_{k\in\mathbb{Z}}\big[P_kI^{C}+P_kI^{NC}\big],\\
\widehat{I^{C}}(\xi)&:=\int_{\mathbb{R}^2}e^{is\Phi_{\sigma\mu\nu}(\xi,\eta)}\varphi(\kappa_r^{-1}\Xi_{\mu\nu}(\xi,\eta))\cdot\widehat{f^\mu_{j_1,k_1}}(\xi-\eta)\widehat{f^\nu_{j_2,k_2}}(\eta)d\eta,\\
\widehat{I^{NC}}(\xi)&:=\int_{\mathbb{R}^2}e^{is\Phi_{\sigma\mu\nu}(\xi,\eta)}(1-\varphi(\kappa_r^{-1}\Xi_{\mu\nu}(\xi,\eta)))\cdot\widehat{f^\mu_{j_1,k_1}}(\xi-\eta)\widehat{f^\nu_{j_2,k_2}}(\eta)d\eta.
\end{split}
\end{equation}
Integration by parts using Lemma \ref{tech5} and Lemma \ref{LinEstLem} shows that, for any $k\in\mathbb{Z}$,
\begin{equation}\label{Alx104}
\Vert \widehat{P_kI^{NC}}\Vert_{L^\infty}+\Vert P_kI^{NC}\Vert_{L^2}\lesssim 2^{-2m},
\end{equation}
which gives an acceptable contribution as part of $\widetilde{f_{NC}}$.

To understand the contributions of the functions $P_kI^C$ we consider several cases. 

{\bf{Case 1.}} We assume first that \eqref{Alx4} holds and, in addition,
\begin{equation}\label{Alx105}
\mu+\nu\ne 0\qquad\text{ and }\qquad 2^k\geq 2^{\delta m}\big(2^{-m/2}+2^{j_2-m}\big).
\end{equation}
This is the main case which produces the coherent contribution $\widetilde{f_C}$. We may assume that $2^{\min\{k_1,k_2\}}\gtrsim 2^{k_-}$ and consider the functions $\Psi_{\sigma\mu\nu}$, see Proposition \ref{spaceres}. For simplicity of notation, let $\Phi=\Phi_{\sigma\mu\nu}$, $\Psi=\Psi_{\sigma\mu\nu}$, and $\Xi=\Xi_{\mu\nu}$ in the rest of the proof. If $j_2\le m/2$ then let
\begin{equation}\label{Alx97.1}
g_{\sigma\mu\nu}(\xi):=\int_{\mathbb{R}^2}e^{is\left[\Phi(\xi,\eta)-\Psi(\xi)\right]}\varphi(\kappa_r^{-1}\Xi(\xi,\eta))\widehat{f^\mu_{j_1,k_1}}(\xi-\eta)\widehat{f^\nu_{j_2,k_2}}(\eta)d\eta.
\end{equation}
In view of Proposition \ref{spaceres} (iii), we notice that the support of the integral is included in the set $\{|\eta-p(\xi)|\lesssim \kappa_r2^{\delta^2m}\}$. In particular, using \eqref{FLinftybd},
\begin{equation*}
\Vert \varphi_k\cdot g_{\sigma\mu\nu}\Vert_{L^\infty}\lesssim 2^{2\delta^2m}\kappa_r^2\Vert\widehat{f^\mu_{j_1,k_1}}\Vert_{L^\infty}\Vert\widehat{f^\nu_{j_2,k_2}}\Vert_{L^\infty}\lesssim  2^{-(1-3\delta)m}2^{-42\delta k_-}.
\end{equation*}
In the support of the integral, using \eqref{cas10},
\begin{equation}\label{DerxiPhase}
\nabla_\xi\left[s(\Phi(\xi,\eta)-\Psi(\xi))\right]=s\left[\nabla_\xi\Phi(\xi,\eta)-\nabla_\xi\Phi(\xi,p(\xi))\right]\lesssim \vert s\vert \vert\eta-p(\xi)\vert\lesssim 2^m\kappa_r2^{\delta^2m},
\end{equation}
so we obtain an acceptable contribution as in \eqref{dtfType1Prelim} (corresponding to $q=0$). 

On the other hand, if
\begin{equation*}
m/2\le j_2\le (1-10\delta)m\quad\text{ and }\quad k\leq-\D
\end{equation*}
then we may assume that $2^{k_1}+2^{k_2}\ll 1$ and we can place the contribution in $\widetilde{f_{NC}}$:
\begin{equation*}
\begin{split}
\big\|P_kI^C\big\|_{L^2}&\lesssim 2^{-2m}+\Vert e^{-is\Lambda_\mu}f^\mu_{j_1,k_1}\Vert_{L^\infty}\Vert f^\nu_{j_2,k_2}\Vert_{L^2}\lesssim 2^{-3m/2+40\delta m},\\
\big\|\widehat{P_kI^C}\big\|_{L^\infty}&\lesssim 2^{\delta^2m}\kappa_r\Vert\widehat{f^\mu_{j_1,k_1}}\Vert_{L^\infty}\Vert\widehat{f^\nu_{j_2,k_2}}\Vert_{L^2}\lesssim  2^{-m+60\delta m}.
\end{split}
\end{equation*}

Finally, if
\begin{equation}\label{Alx104.1}
m/2\le j_2\le (1-10\delta)m\quad\text{ and }\quad k\geq-\D
\end{equation}
we may further decompose, with $\kappa_\theta:=2^{2\delta^2m-m/2}$,
\begin{equation*}
\begin{split}
I^C&=I^{||}+I^{\perp},\\
\widehat{I^{||}}(\xi)&:=\int_{\mathbb{R}^2}e^{is\Phi(\xi,\eta)}\varphi(\kappa_r^{-1}\Xi(\xi,\eta))\varphi(\kappa_\theta^{-1}\Omega_\eta\Phi(\xi,\eta))\widehat{f^\mu_{j_1,k_1}}(\xi-\eta)\widehat{f^\nu_{j_2,k_2}}(\eta)d\eta,\\
\widehat{I^{\perp}}(\xi)&:=\int_{\mathbb{R}^2}e^{is\Phi(\xi,\eta)}\varphi(\kappa_r^{-1}\Xi(\xi,\eta))(1-\varphi(\kappa_\theta^{-1}\Omega_\eta\Phi(\xi,\eta)))\widehat{f^\mu_{j_1,k_1}}(\xi-\eta)\widehat{f^\nu_{j_2,k_2}}(\eta)d\eta.\\
\end{split}
\end{equation*}
Integration by parts using Lemma \ref{RotIBP} shows that
\begin{equation*}
\Vert \widehat{P_kI^\perp}\Vert_{L^\infty}+\Vert P_kI^\perp\Vert_{L^2}\lesssim 2^{-2m},
\end{equation*}
which gives a contribution as in \eqref{dtfType2Prelim}. Now, let $q=j_2-m/2$ and
\begin{equation}\label{Alx99.8}
g^q_{\sigma\mu\nu}(\xi):=\int_{\mathbb{R}^2}e^{is\left[\Phi(\xi,\eta)-\Psi(\xi)\right]}\varphi(\kappa_r^{-1}\Xi(\xi,\eta))\varphi(\kappa_\theta^{-1}\Omega_\eta\Phi(\xi,\eta))\widehat{f^\mu_{j_1,k_1}}(\xi-\eta)\widehat{f^\nu_{j_2,k_2}}(\eta)d\eta.
\end{equation}
We claim that these functions lead to acceptable contributions as in \eqref{dtfType1Prelim}. Indeed, the volume of $\eta$-integration is essentially a $\kappa_\theta\times\kappa_r$ box around $p(\xi)$ and we estimate, using \eqref{FL1bd},
\begin{equation}\label{Alx99.81}
\begin{split}
\Vert \varphi_k\cdot g^q_{\sigma\mu\nu}\Vert_{L^\infty}&\lesssim \Vert \widehat{f^\mu_{j_1,k_1}}\Vert_{L^\infty}\cdot 2^{-j_2+21\delta j_2}\kappa_\theta 2^{-19\delta(m-j_2)}2^{4\delta^2m}\lesssim 2^{-m+2.9\delta m}2^{-q+42\delta q}.
\end{split}
\end{equation}
The $\xi$ derivatives of $g^q_{\sigma\mu\nu}$ can be estimated in a similar way, using \eqref{DerxiPhase}. The bound on $\partial_sg^q_{\sigma\mu\nu}$ is proved later, see \eqref{Alx99.6}. This completes the analysis in this case.

{\bf{Case 2.}} We assume now that
\begin{equation}\label{Alx108}
\mu+\nu=0\qquad\text{ and }\qquad 2^k\geq 2^{\delta m}\big(2^{-m/2}+2^{j_2-m}\big).
\end{equation}
We notice that, in the support of the integral $\widehat{I^C}$,
\begin{equation*}
\vert\nabla_\eta\Phi(\xi,\eta)\vert=\vert\nabla\Lambda_\mu(\eta-\xi)-\nabla\Lambda_\mu(\eta)\vert\gtrsim (1+2^{3k_1}+2^{3k_2})^{-1} 2^k,
\end{equation*}
therefore $P_kI^C=0$ in this case.

{\bf{Case 3.}} We assume now that
\begin{equation}\label{Alx111}
\mu+\nu\neq 0\qquad\text{ and }\qquad 2^k\leq 2^{\delta m}\big(2^{-m/2}+2^{j_2-m}\big).
\end{equation}
We may assume that $2^{k_1}+2^{k_2}\lesssim 2^{\delta m}\big(2^{-m/2}+2^{j_2-m}\big)$ and estimate first, using also \eqref{Alx4},
\begin{equation*}
\begin{split}
\Vert \widehat{P_kI^C}\Vert_{L^\infty}&\lesssim 2^{2\delta^2m}\min\big\{\kappa_r\Vert \widehat{f^\mu_{j_1,k_1}}\Vert_{L^\infty}\Vert f^\nu_{j_2,k_2}\Vert_{L^2},\,\,\kappa_r^2\Vert \widehat{f^\mu_{j_1,k_1}}\Vert_{L^\infty}\Vert \widehat{f^\nu_{j_2,k_2}}\Vert_{L^\infty}\big\}\lesssim 2^{-m+50\delta m}.
\end{split}
\end{equation*}
This gives an acceptable $\widetilde{f_{NC}}$-type contribution if $k\leq -m/2+5\delta m$. On the other hand, if $k\geq -m/2+5\delta m$ then $j_2\geq m/2+4\delta m-2$ (using \eqref{Alx111}) and we estimate
\begin{equation*}
\Vert P_kI^C\Vert_{L^2}\lesssim 2^{-2m}+\Vert e^{-is\Lambda_\mu}f^\mu_{j_1,k_1}\Vert_{L^\infty}\Vert f^\nu_{j_2,k_2}\Vert_{L^2}\lesssim 2^{-3m/2+40\delta m}.
\end{equation*}
This gives an acceptable $\widetilde{f_{NC}}$-type contribution, as desired.

{\bf{Case 4.}} We assume now that
\begin{equation}\label{Alx115}
\mu+\nu=0\qquad\text{ and }\qquad 2^k\leq 2^{\delta m}\big(2^{-m/2}+2^{j_2-m}\big).
\end{equation}
Notice that
\begin{equation}\label{PhaseMuNu}
\begin{split}
\vert\nabla_\eta\Phi(\xi,\eta)\vert=\vert\nabla\Lambda_\mu(\eta-\xi)-\nabla\Lambda_\mu(\eta)\vert\gtrsim (1+2^{3k_1}+2^{3k_2})^{-1}\vert\xi\vert,\qquad \vert D^\alpha_\eta\Phi(\xi,\eta)\vert\lesssim_\alpha \vert\xi\vert.
\end{split}
\end{equation}
Lemma \ref{tech5} (i) shows that $|\mathcal{F}I^{\sigma\mu\nu}_{j_1,k_1,j_2,k_2}(\xi)|\lesssim 2^{-2m}$ if $2^m|\xi|\geq 2^{j_2}2^{2\delta^2m}$. Therefore we may assume
\begin{equation}\label{Alx115.1}
m+k\leq j_2+3\delta^2m.
\end{equation}
The desired $L^\infty$ bound in \eqref{dtfType2Prelim} follows using \eqref{FL1bd},
\begin{equation*}
\big\|\widehat{P_kI^{\sigma\mu\nu}_{j_1,k_1,j_2,k_2}}\big\|_{L^\infty}\lesssim \Vert\widehat{f^\mu_{j_1,k_1}}\Vert_{L^\infty}\Vert\widehat{f^\nu_{j_2,k_2}}\Vert_{L^1}\lesssim 2^{-(1-21\delta)j_2}2^{21\delta m}\lesssim 2^{50\delta m}\big(1+2^{m+k_-}\big)^{-1}.
\end{equation*}

The $L^2$ bound also follows if $k_-\leq -m$ directly from the $L^\infty$ bound. On the other hand, if $-m\leq k_-\leq -9\delta m$ then we further decompose $f^\nu_{j_2,k_2}=\sum_{n_2=0}^{j_2+1} f^\nu_{j_2,k_2,n_2}$ as in \eqref{Alx100}. Then
\begin{equation}\label{Alx116}
\begin{split}
\Vert P_kI^{\sigma\mu\nu}[f_{j_1,k_1}^\mu,f_{j_2,k_2,n_2}^\nu\Vert_{L^2}&\lesssim \Vert e^{-is\Lambda_\mu}f^\mu_{j_1,k_1}\Vert_{L^\infty}\Vert f^\nu_{j_2,k_2,n_2}\Vert_{L^2}\\
&\lesssim 2^{-m}\min\big(2^{21\delta m},2^{j_1/2+\delta j_1}\big)2^{-j_2+20\delta j_2}2^{n_2/2-19\delta n_2}.
\end{split}
\end{equation}
Recalling \eqref{Alx115.1}, this suffices to prove the $L^2$ bound in \eqref{dtfType2Prelim} if $n_2=0$. On the other hand, the contribution for $n_2\geq 1$ is nontrivial only if $|\nu|=b$, and in this case we can use the null structure provided by the factor $2^{\beta_{\mu\nu}k_-}$ in \eqref{Isigmamunu}. The desired $L^2$ bound follows in this case as well.

{\bf{The remaining bounds.}} The bound \eqref{dtFTotalBds2Prelim} follows since we have good bounds for each term. On the other hand, \eqref{OmegadtfPrelim} follows by inspection of the defining formulas above using the assumptions \eqref{AssonFmunu} and the identities
\begin{equation}\label{Alx99.7}
\begin{split}
&\left\{\Omega_\xi+\Omega_\eta\right\}\chi(\Phi(\xi,\eta))\equiv0,\qquad \left\{\Omega_\xi+\Omega_\eta\right\}\chi(\kappa^{-1}\Omega_\eta\Phi(\xi,\eta))\equiv0,\\
&\left\{\Omega_\xi+\Omega_\eta\right\}\chi(\kappa^{-1}\nabla_{\eta}\Phi(\xi,\eta))=\kappa^{-1}\nabla^{\perp}_\eta\Phi(\xi,\eta)\cdot\nabla\chi(\kappa^{-1}\nabla_\eta\Phi(\xi,\eta)).
\end{split}
\end{equation}

Finally we prove the claim about the time derivative $\partial_sg^q_{\sigma\mu\nu}$,
\begin{equation}\label{Alx99.6}
\Vert \varphi_{\geq -\D}\partial_sg^q_{\sigma\mu\nu}(s)\Vert_{L^\infty}\lesssim 2^{(6\delta-2)m}2^{q+42\delta q}.
\end{equation}
Assume, for example, that $j_2\geq m/2$ and examine the defining formula \eqref{Alx99.8}. We may assume that $\min\{k_1,k_2\}\geq -2\D$. Notice that, in the support of integration,
\begin{equation*}
\left\vert\Phi(\xi,\eta)-\Psi(\xi)\right\vert=\left\vert\Phi(\xi,\eta)-\Phi(\xi,p(\xi))\right\vert\lesssim 2^{\delta^2m}\kappa_r^2.
\end{equation*}
Therefore, estimating as in \eqref{Alx99.81},
\begin{equation*}
\begin{split}
\Big\vert\int_{\mathbb{R}^2}e^{is\left[\Phi(\xi,\eta)-\Psi(\xi)\right]}\left[\Phi(\xi,\eta)-\Psi(\xi)\right]\varphi(\kappa_\theta^{-1}\Omega_\eta\Phi(\xi,\eta))\varphi(\kappa_r^{-1}\Xi(\xi,\eta))\widehat{f^{\mu}_{j_1,k_1}}(\xi-\eta)\widehat{f^{\nu}_{j_2,k_2}}(\eta)\,d\eta\Big\vert\\
\lesssim 2^{3\delta m-2m}2^{q+42\delta q}.
\end{split}
\end{equation*}
To estimate the remaining contributions we use the decomposition proved earlier,
\begin{equation*}
\begin{split}
\partial_sf^\mu_{j_1,k_1}=\widetilde{g_C^\mu}+\widetilde{g_{NC}^\mu},\qquad \partial_sf^\nu_{j_2,k_2}=\widetilde{g_C^\nu}+\widetilde{g_{NC}^\nu}.
\end{split}
\end{equation*}
The terms containing $\widetilde{g_C^\mu}$ and $\widetilde{g_C^\nu}$ in the integrals defining $\partial_s g_{\sigma\mu\nu}^q$ are dominated by
\begin{equation*}
\begin{split}
C2^{4\delta^2m}\kappa_r\kappa_\theta(\Vert \mathcal{F}\widetilde{g^{\mu}_C}\Vert_{L^\infty}\Vert \widehat{f^{\nu}_{j_2,k_2}}\Vert_{L^\infty}+\Vert \widehat{f^{\mu}_{j_1,k_1}}\Vert_{L^\infty}\Vert\mathcal{F}\widetilde{g^{\mu}_C}\Vert_{L^\infty})\lesssim 2^{5\delta m-2m+q}.
\end{split}
\end{equation*}
On the other hand, using also \eqref{OmegadtfPrelim}, the terms containing $\widetilde{g_{NC}^\mu}$ and $\widetilde{g_{NC}^\nu}$ in the integrals defining $\partial_s g_{\sigma\mu\nu}^q$ are dominated by
\begin{equation*}
\begin{split}
C2^{4\delta^2m}\kappa_\theta\kappa_r^{1/2}\big[\sup_{\theta\in\mathbb{S}^1}\Vert \mathcal{F}\widetilde{g^{\mu}_{NC}}(r\theta)\Vert_{L^2(rdr)}\Vert \widehat{f^{\nu}_{j_2,k_2}}\Vert_{L^\infty}+\Vert \widehat{f^{\mu}_{j_1,k_1}}\Vert_{L^\infty}\sup_{\theta\in\mathbb{S}^1}\Vert\mathcal{F}\widetilde{g^{\nu}_{NC}}(r\theta)\Vert_{L^2(rdr)}\big]\\
\lesssim
2^{-2m+q/2}.
\end{split}
\end{equation*}
The desired bound \eqref{Alx99.6} follows. The case $j_2\leq m/2$ is similar, using the simpler formula \eqref{Alx97.1} instead of \eqref{Alx99.8}, and this completes the proof of the lemma.
\end{proof}

\begin{corollary}\label{AlxCoro}
For any $s\in[2^{m}-1,2^{m+1}]\cap [0,T]$ and $a\in[0,N_1/2]$ we can decompose $\Omega^a(\partial_tV_{\sigma})(s)=G_2(s)+G_{\infty}(s)$ such that, for any $k\in\mathbb{Z}$,
\begin{equation}\label{Alx90}
\begin{split}
\sup_{|\lambda|\leq 2^{m(1-\delta/10)}}\|e^{-i(s+\lambda)\Lambda_\sigma}P_kG_{\infty}(s)\|_{L^{\infty}}&\lesssim\eps_1^22^{-2m+50\delta m},\\
\|P_kG_{2}(s)\|_{L^{2}}&\lesssim\eps_1^22^{-3m/2+60\delta m}.
\end{split}
\end{equation}
\end{corollary}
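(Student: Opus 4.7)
The plan is to invoke Lemma~\ref{dtfLemPrelim} to write
\[
\Omega^a(\partial_tV_\sigma)(s)=\eps_1^2\{\widetilde{f_C}(s)+\widetilde{f_{NC}}(s)\},
\]
and then distribute each piece between $G_2$ and $G_\infty$ via a Littlewood--Paley decomposition around the threshold $k_\ast:=\lfloor-m/2-10\delta m\rfloor$.

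For the non-coherent part, I would place $P_k\widetilde{f_{NC}}$ with $k\geq k_\ast$ into $G_2$. The $L^2$ bound in~\eqref{dtfType2Prelim} gives $\|P_k\widetilde{f_{NC}}\|_{L^2}\lesssim 2^{-3m/2+60\delta m}$ throughout this range, since the potentially bad second term $2^{-m}(1+2^{m+k_--50\delta m})^{-1}\lesssim 2^{-2m-k_-+50\delta m}$ is acceptable precisely when $k_-\geq -m/2-10\delta m$. For $k<k_\ast$, Bernstein combined with the Fourier-$L^\infty$ bound in~\eqref{dtfType2Prelim} yields
\[
\|e^{-i(s+\lambda)\Lambda_\sigma}P_k\widetilde{f_{NC}}\|_{L^\infty}\leq \|\widehat{P_k\widetilde{f_{NC}}}\|_{L^1}\lesssim 2^{2k}\cdot 2^{60\delta m}(1+2^{m+k_-})^{-1}\lesssim 2^{-2m+50\delta m},
\]
and these pieces join $G_\infty$.

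The coherent part $\widetilde{f_C}=\sum_{\mu+\nu\ne0,q}e^{is\Psi_{\sigma\mu\nu}(\xi)}g^q_{\sigma\mu\nu}(\xi,s)$ is placed entirely into $G_\infty$. Setting $\Theta(\xi):=\Lambda_\mu(\xi-p(\xi))+\Lambda_\nu(p(\xi))$, the identity $\Psi_{\sigma\mu\nu}=\Lambda_\sigma-\Theta$ reduces the required bound to the oscillatory integral
\[
I_q(x):=\int e^{i(x\cdot\xi-\lambda\Lambda_\sigma(\xi)-s\Theta(\xi))}\varphi_k(\xi)g^q(\xi,s)\,d\xi.
\]
The critical point relation $\nabla\Lambda_\mu(\xi-p(\xi))=\nabla\Lambda_\nu(p(\xi))$ yields $\nabla\Theta(\xi)=\nabla\Lambda_\mu(\xi-p(\xi))$, so the phase gradient has modulus $\gtrsim s$ except when $|x|\sim s$. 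The smoothness estimates on $g^q$ in~\eqref{dtfType1Prelim} (where the derivative scale $2^{m/2+q}$ is dominated by $s=2^m$ thanks to $q\leq m/2-10\delta m$) make Lemma~\ref{tech5} applicable, yielding rapid decay in $|x|$ away from $|x|\sim s$. At $|x|\sim s$, stationary phase with Hessian of order $s$ gives $|I_q(x)|\lesssim s^{-1}\|g^q\|_{L^\infty}\lesssim 2^{-2m+3\delta m}2^{-q+42\delta q}$; summing the geometric series in $q$ produces an $L^\infty$ bound of order $2^{-2m+4\delta m}$, well within tolerance.

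The principal obstacle is verifying that the Hessian of $\lambda\Lambda_\sigma+s\Theta$ is non-degenerate of size $\gtrsim s$ on the essential support of $g^q$, which is needed to run stationary phase at $|x|\sim s$. Since $|\lambda|\leq 2^{m(1-\delta/10)}\ll s$, the contribution of $\lambda D^2\Lambda_\sigma$ is negligible and one reduces to the non-degeneracy of $D^2\Theta$. Differentiating the implicit relation $\nabla_\eta\Phi_{\sigma\mu\nu}(\xi,p(\xi))=0$ expresses $D^2\Theta$ in terms of the Hessians of $\Lambda_\mu,\Lambda_\nu$, which are uniformly non-degenerate; any residual angular degeneracy near the space-time resonant spheres is suppressed by the angular cutoff $\varphi(\kappa_\theta^{-1}\Omega_\eta\Phi)$ built into the definition~\eqref{Alx99.8} of $g^q$, which pre-localizes $\eta$ to the tangential scale $\kappa_\theta\sim 2^{-m/2+2\delta^2m}$ and effectively reduces the oscillatory integral to a one-dimensional radial one where the restricted nondegeneracy property~\eqref{RNDC} provides the needed second-derivative lower bound.
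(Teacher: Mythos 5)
Your overall strategy follows the paper's: decompose via Lemma \ref{dtfLemPrelim}, send the coherent part to $G_\infty$ through a stationary-phase estimate for the integral $I_q$, and split $\widetilde{f_{NC}}$ by frequency. However, there is a genuine gap in your treatment of the low-frequency non-coherent piece. For $k$ between roughly $-m$ and your threshold $k_\ast=-m/2-10\delta m$, the chain $\|\widehat{P_k\widetilde{f_{NC}}}\|_{L^1}\lesssim 2^{2k}2^{60\delta m}(1+2^{m+k_-})^{-1}\lesssim 2^{-2m+50\delta m}$ is false: the middle quantity equals $2^{k-m+60\delta m}$ in that range, which at $k\approx -m/2$ is $2^{-3m/2+50\delta m}$, exceeding the target by a factor $2^{m/2}$ (even at $k=-m$ it is $2^{-2m+60\delta m}$, still too big). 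Nor can you push these frequencies into $G_2$: the second term in the $L^2$ bound of \eqref{dtfType2Prelim} is $\approx 2^{-2m-k_-+50\delta m}$, which exceeds $2^{-3m/2+60\delta m}$ precisely when $k_-<-m/2-10\delta m$. So no choice of threshold closes the argument using only \eqref{dtfType2Prelim} plus Bernstein. The missing ingredient is the bilinear dispersive bound \eqref{dtFTotalBds}: since $\widetilde{f_C}$ is supported at frequencies $\geq 2^{-m/2+\delta^2m/5}$, for $k<-m/2$ one has $\eps_1^2 P_k\widetilde{f_{NC}}=P_k\Omega^a(\partial_tV_\sigma)$, and \eqref{dtFTotalBds} gives $\|e^{-is\Lambda_\sigma}P_k\widetilde{f_{NC}}\|_{L^\infty}\lesssim 2^{-2m+43\delta m}$ directly; one then converts $e^{-is\Lambda_\sigma}$ to $e^{-i(s+\lambda)\Lambda_\sigma}$ using that $e^{-i\lambda\Lambda_\sigma}P_{\leq -m/2+10}$ is uniformly bounded on $L^\infty$ for $|\lambda|\lesssim 2^m$ (its kernel is integrable since $|\lambda|2^{2k}\lesssim 1$). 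This is how the paper handles $G_\infty^1$.

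Two further points on the coherent part. First, the claim $|I_q(x)|\lesssim s^{-1}\|g^q\|_{L^\infty}$ from stationary phase is not justified for $q\geq 1$: by \eqref{dtfType1Prelim} the amplitude $g^q$ varies at scale $2^{-(m/2+q)}$, which is \emph{faster} than the phase's critical scale $2^{-m/2}$, so the classical stationary-phase estimate does not apply. The correct argument (as in the paper, following \eqref{Alx90.4}) restricts the $\xi$-integral, up to negligible errors, to an anisotropic $2^{-m/2+2\delta^2m}\times 2^{q-m/2+2\delta^2m}$ box around the stationary point (the angular scale set by the phase and the almost-radiality of $g^q$, the radial scale by the amplitude's variation) and bounds the result by the box volume times $\|g^q\|_{L^\infty}$, yielding $2^{-2m+3\delta m+42\delta q}$ --- a factor $2^{q}$ worse than your claim, but still summable over $q\leq m/2$ within the tolerance $2^{-2m+50\delta m}$. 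Second, the non-degeneracy you need for the radial second derivative of $\Gamma_{\mu\nu}(\xi)=\Lambda_\mu(\xi-p(\xi))+\Lambda_\nu(p(\xi))$ is not the restricted nondegeneracy property \eqref{RNDC} (that concerns the function $\Upsilon$ used in the $L^2$ Lemma \ref{L2EstLem}); it is the lower bounds $|p'_+|\gtrsim 1$ and $|1-p'_+|\gtrsim 1$ on the relevant compact sets, from \eqref{cas10.1}. Also note that the angular cutoff $\varphi(\kappa_\theta^{-1}\Omega_\eta\Phi)$ in \eqref{Alx99.8} localizes the internal integration variable $\eta$ in the definition of $g^q$, not the output variable $\xi$ of the integral $I_q$, so it cannot be what reduces $I_q$ to a radial integral.
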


\begin{proof} We may assume $m\geq\D$ and define 
\begin{equation*}
G_2(s):=P_{>\delta^2m}[\Omega^a(\partial_tV_{\sigma})(s)]+P_{\leq\delta^2m}[\eps_1^2P_{\geq -m/2}\widetilde{f_{NC}}(s)].
\end{equation*}
The desired $L^2$ bound in \eqref{Alx90} follows from \eqref{Alx1} and \eqref{dtfType2Prelim}. Let
\begin{equation*}
G_\infty^1(s):=P_{\leq\delta^2m}[\eps_1^2P_{<-m/2}\widetilde{f_{NC}}(s)],\qquad G_\infty^2(s):=P_{\leq\delta^2m}[\eps_1^2\widetilde{f_{C}}(s)].
\end{equation*}
The desired bound on $G_\infty^1$ is a consequence of \eqref{dtFTotalBds} and the $L^\infty$ boundedness of the operator $P_{\leq -m/2+10}e^{-i\lambda\Lambda_\sigma}$. To prove the bound for $G_\infty^2(s)$ we examine  \eqref{dtfType1Prelim}. It suffices to prove that, for $k\in[-m/2+\delta^2m/5,2\delta^2m]$, $x\in\mathbb{R}^2$, and $\mu,\nu\in\mathcal{P}$, $\mu+\nu\neq 0$, 
\begin{equation}\label{Alx90.1}
\Big|\int_{\mathbb{R}^2}e^{ix\cdot\xi}e^{-i\lambda\Lambda_\sigma(\xi)-is[\Lambda_{\mu}(\xi-p_{\mu\nu}(\xi))+\Lambda_\nu(p_{\mu\nu}(\xi))]}\varphi_k(\xi)g_{\sigma\mu\nu}^q(\xi,s)\Big|\lesssim 2^{-2m+50\delta m}.
\end{equation}

Let $\Gamma_{\mu\nu}(\xi):=\Lambda_{\mu}(\xi-p_{\mu\nu}(\xi))+\Lambda_\nu(p_{\mu\nu}(\xi))$ and notice that
\begin{equation}\label{Alx91}
(\nabla\Gamma_{\mu\nu})(\xi)=(\nabla\Lambda_\mu)[\xi-p_{\mu\nu}(\xi)]=(\nabla\Lambda_\nu)[p_{\mu\nu}(\xi)].
\end{equation}
The desired estimate \eqref{Alx90.1} follows if $q=0$ by a standard stationary phase argument, using the bounds $\|\varphi_kD^\alpha g^0_{\sigma\mu\nu}(s)\|_{L^\infty}\lesssim 2^{-42\delta k_-}2^{-m+3\delta m}2^{(m/2+2\delta^2m)\vert\alpha\vert}$ and $|p'_{+;\mu\nu}(|\xi|)|\gtrsim 2^{-3\delta^2m}$, see \eqref{cas10.1}.

On the other hand, if $q\geq 1$ then we may assume $k\geq -\D$ and the desired bound \eqref{Alx90.1} follows by an argument similar to the one used in the proof of \eqref{LinftyBd}: up to negligible errors we may assume that $|x|\approx 2^m$ and restrict the integral to a small $2^{-m/2+2\delta^2m}\times 2^{q-m/2+2\delta^2m}$ box, as in \eqref{Alx90.4}. The contribution from this box can be then estimated using the bound $\|\varphi_k g^q_{\sigma\mu\nu}(s)\|_{L^\infty}\lesssim 2^{-m+3\delta m}2^{-q+42\delta q}$, which leads to the desired bound \eqref{Alx90.1}.
\end{proof}

\subsection{Precise estimates on the time derivative}

We prove now more precise estimates on the time derivative which are needed in the proof of the more difficult Lemma \ref{ResLem}.

\begin{lemma}\label{dtfLem}

Assume the hypothesis of Proposition \ref{bootstrap}, $m\geq 0$, $s\in [2^m-1,2^{m+1}]$, $\sigma\in\{e,b\}$.
For any $a\in[0,N_1/4]\cap\mathbb{Z}$ we may decompose
\begin{equation}\label{Decdtf}
\Omega^a(\partial_tV_\sigma)(s)=\epsilon_1^2\left\{f_C(s)+f_{SR}(s)+f_{NC}(s)+\partial_sF_C(s)+\partial_sF_{NC}(s)+\partial_sF_{LO}(s)\right\},
\end{equation}
where we have coherent inputs,
\begin{equation}\label{dtfType1}
\begin{split}
\widehat{f_C}(\xi,s)
=\sum_{\mu,\nu\in\mathcal{P},\,\mu+\nu\neq 0}e^{is\Psi_{\sigma\mu\nu}(\xi)}\sum_{0\le q\le (1/2-40\delta)m}g^q_{\sigma\mu\nu}(\xi,s)\varphi_{\leq-3\delta m}(\Psi_{\sigma\mu\nu}(\xi)),\\
\Vert D^\alpha_\xi g^q_{\sigma\mu\nu}(s)\Vert_{L^\infty}\lesssim 2^{-m-q+8\delta^2m}2^{(m/2+q+3\delta^2m)\vert\alpha\vert},\qquad
\Vert \partial_sg^q_{\sigma\mu\nu}(s)\Vert_{L^\infty}\lesssim 2^{(5\delta-2)m+ q};
\end{split}
\end{equation}
secondary resonances
\begin{equation}\label{dtfSR}
\begin{split}
\Vert D_\xi^\alpha \widehat{f_{SR}}(s)\Vert_{L^\infty}&\lesssim 2^{-3m/2+75\delta m}2^{(1-300\delta) m\vert\alpha\vert},\qquad f_{SR}=\widetilde{P}_{SRI}f_{SR};
\end{split}
\end{equation}
and nonresonant contributions
\begin{equation}\label{dtfType2}
\begin{split}
\Vert f_{NC}(s)\Vert_{L^2}&\lesssim 2^{-19m/10}.
\end{split}
\end{equation}
In addition
\begin{equation}\label{dtfType3}
\begin{split}
\Vert \widehat{F_C}(s)\Vert_{L^\infty}\lesssim 2^{3\delta m-m+10\delta^2m},&\qquad\Vert F_{NC}(s)\Vert_{L^2}\lesssim 2^{-41m/40},\\
\Vert (1+2^m\vert\xi\vert)\widehat{F_{LO}}(\xi,s)\Vert_{L^\infty_\xi}\lesssim 2^{5\delta m},&\qquad P_{\ge-13/15m}F_{LO}\equiv 0,
\end{split}
\end{equation}
and
\begin{equation}\label{Omegadtf}
\sup_{0\le b\le N_1/4}\big\{\Vert \Omega^{b}g^q_{\sigma\mu\nu}\Vert_{L^2}+\Vert\Omega^bf_{SR}(s)\Vert_{L^2}+\Vert\Omega^bf_{NC}(s)\Vert_{L^2}\big\}\lesssim 1.
\end{equation}
\end{lemma}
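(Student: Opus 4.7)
The plan is to refine the preliminary decomposition of Lemma \ref{dtfLemPrelim} (which already splits $\Omega^a(\partial_tV_\sigma) = \epsilon_1^2(\widetilde{f_C}+\widetilde{f_{NC}})$) by (a) sharpening the bounds on the coherent coefficients $g^q_{\sigma\mu\nu}$ using the atomic $(j,k,n)$ decomposition of the $Z$-norm, (b) isolating the part of $\widetilde{f_C}$ supported in the space-time resonance neighborhood $\{|\Psi_{\sigma\mu\nu}|\le 2^{-3\delta m}\}$ as the new $f_C$, and (c) applying normal-form transformations (integration by parts in $s$) on every piece where a suitable phase is bounded below, extracting the $\partial_s F_C,\partial_s F_{NC},\partial_s F_{LO}$ contributions. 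What cannot be removed in this way must be either a genuine secondary-resonance $f_{SR}$ or absorbed into the improved nonresonant remainder $f_{NC}$.

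First I would revisit the bilinear analysis of Lemma \ref{dtfLemPrelim}, but decomposing the inputs $V_\mu,V_\nu$ further via $A^\sigma_{n,(j)}$ (to control the distance to the space-time resonant sphere) in addition to the Littlewood--Paley/physical $Q_{jk}$ localization. For the coherent piece, this replaces the rough $\|\widehat{f^\mu_{j_1,k_1}}\|_{L^\infty}\lesssim 2^{-(1/2-21\delta)(j+k)}$ factor by the much sharper $B^\sigma_j$-based bound localized to a thin sublevel set of $\Psi^\dagger_\sigma$; combined with the volume gain on the coherent $\kappa_r\times\kappa_\theta$ box around $p(\xi)$, this yields the required $\|D^\alpha g^q_{\sigma\mu\nu}\|_{L^\infty}\lesssim 2^{-m-q+8\delta^2 m}2^{(m/2+q+3\delta^2 m)|\alpha|}$ and the time-derivative bound $\|\partial_s g^q_{\sigma\mu\nu}\|_{L^\infty}\lesssim 2^{(5\delta-2)m+q}$ (for the latter I would reuse the chain rule computation \eqref{Alx99.6}, now with the refined $g^q$). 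Then I would split $\widetilde{f_C}$ by a cutoff $\varphi_{\le -3\delta m}(\Psi_{\sigma\mu\nu})$: the support part is $f_C$, while on the complement I would write $e^{is\Psi}g^q=\partial_s\bigl[e^{is\Psi}g^q/(i\Psi)\bigr]-e^{is\Psi}(\partial_sg^q)/(i\Psi)$; the boundary term is a piece of $\partial_sF_C$ (with $\|\widehat{F_C}\|_{L^\infty}\lesssim 2^{-m+3\delta m+10\delta^2 m}$, using $|1/\Psi|\lesssim 2^{3\delta m}$), and the bulk term contains $\partial_sg^q$, which after summing the geometric series in $q$ has enough decay to be placed in $f_{NC}$.

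For $\widetilde{f_{NC}}$, I would go back to each underlying bilinear $I^{\sigma\mu\nu}[P_{k_1}V_\mu,P_{k_2}V_\nu]$ and perform a direct normal form $e^{is\Phi}=\partial_s(e^{is\Phi}/(i\Phi))$ modulation-by-modulation: in the regime $|\Phi_{\sigma\mu\nu}|\gtrsim 2^{-m(1-\delta)}$ the boundary term is harmless and contributes to $\partial_sF_{NC}$, while the remainder, which contains $\partial_s V_\mu$ or $\partial_s V_\nu$, is estimated using Corollary \ref{AlxCoro} and the bounds \eqref{ener42.5} on $1/\Phi$; the very-low-frequency output part ($k\le -13m/15$) is separated off as $\partial_s F_{LO}$, accounting for the weighted bound $\|(1+2^m|\xi|)\widehat{F_{LO}}\|_{L^\infty}\lesssim 2^{5\delta m}$. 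When substituting $\partial_s V_\mu$ (which by Lemma \ref{dtfLemPrelim} itself contains coherent $e^{is\Psi_{\sigma'\mu'\nu'}}$ oscillations) into this remainder, new stationary-phase points appear on a discrete finite union of spheres: these are precisely the secondary-resonance contributions, which I collect into $f_{SR}=\widetilde{P}_{SRI}f_{SR}$. The main obstacle, and the technical heart of the proof, is showing that these iterated coherent outputs are supported near the spheres projected by $\widetilde{P}_{SRI}$ and satisfy the $L^\infty$ bound $\|D^\alpha\widehat{f_{SR}}\|_{L^\infty}\lesssim 2^{-3m/2+75\delta m}2^{(1-300\delta)m|\alpha|}$; for this I would invoke the iterated-resonance separation result (Proposition \ref{Separation2}), which guarantees that no ``accidental'' 2-cascade of resonances occurs, so that the secondary resonant set is exactly captured by $\widetilde{P}_{SRI}$ and the remaining non-coherent contribution achieves the sharper bound $\|f_{NC}\|_{L^2}\lesssim 2^{-19m/10}$. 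Finally, the $\Omega$-bounds \eqref{Omegadtf} follow by tracking rotation invariance through the decomposition using identities such as \eqref{Alx99.7}.
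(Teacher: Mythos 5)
Your plan for extracting $f_C$ inverts the order of operations in a way that breaks the $f_{NC}$ bound. You propose to take the globally defined coherent pieces $e^{is\Psi}g^q$ from Lemma \ref{dtfLemPrelim}, keep the part with $|\Psi|\le 2^{-3\delta m}$, and on the complement integrate by parts in $s$ with denominator $\Psi(\xi)$. The bulk term of that normal form is $e^{is\Psi}(\partial_sg^q)/(i\Psi)$, and even with the sharpened bound $\Vert\partial_sg^q\Vert_{L^\infty}\lesssim 2^{(5\delta-2)m+q}$ it has size about $2^{3\delta m}\cdot 2^{(5\delta-2)m+q}$ in $L^\infty_\xi$; the sum over $q$ is dominated by $q\approx m/2$, giving roughly $2^{-3m/2+8\delta m}$, which (the frequency support having measure $O(2^{\delta^2 m})$) is far larger than the required $\Vert f_{NC}\Vert_{L^2}\lesssim 2^{-19m/10}$, and it is neither a total $s$-derivative nor supported near the resonant spheres. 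The loss is intrinsic: for $q=j_2-m/2$ large the phase error $|\Phi(\xi,\eta)-\Psi(\xi)|\approx\kappa_r^2\approx 2^{2q-m}$ on the stationary box is not small, so $\partial_sg^q$ is genuinely of size $2^{-2m+q}$. The paper avoids this by splitting by the modulation $|\Phi(\xi,\eta)|$ at threshold $2^{l_0}\approx 2^{-3\delta m}$ \emph{inside the bilinear integral}: the high-modulation part is normal-formed there, so the remainder contains $\partial_sf^\mu$ or $\partial_sf^\nu$ paired with the dispersive decay of the other input (an $L^2\times L^\infty$ estimate via Lemma \ref{PhiLocLem} gives $2^{(50\delta-2)m}$), and stationary phase is applied only to the low-modulation part, where the cutoff $\varphi_{\le-3\delta m}(\Psi)$ is then automatic.

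You have also misidentified the source of $f_{SR}$. In the paper it is the portion of the low-modulation, large-$j_2$ interaction where $\nabla_\xi\Phi$ is small while $\nabla_\eta\Phi$ is bounded below (the cutoff $\varphi_{Lo}$ in \eqref{DecFSR}): the output frequency then lies near a space-time-resonance \emph{input} sphere, which is exactly what $\widetilde{P}_{SRI}f_{SR}=f_{SR}$ records, and the bound $2^{-3m/2+75\delta m}$ comes from the $\kappa_\theta$-angular volume together with the radial $L^1$ bound \eqref{FL1bd} on the large-$j_2$ atom. This term survives because $\Phi$ is already small on its support; it is not produced by substituting $\partial_sV$ into a normal-form remainder. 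Conversely, the iterated resonances that do arise when the coherent part of $\partial_sf^\nu$ is inserted into the remainder are shown, via Proposition \ref{Separation2}, \emph{not} to create a new coherent output: either $\nabla_\eta[\Phi_{\sigma\mu\nu}+\Psi_{\nu\theta\kappa}]$ is bounded below (integrate by parts in $\eta$) or $\Phi_{\sigma\mu\nu}+\Psi_{\nu\theta\kappa}$ is bounded below (integrate by parts in $s$ again), and these contributions land in $f_{NC}$ or in $\partial_sF$. Under your scheme the genuine $\xi$-stationary secondary resonance is never isolated and would remain in your ``remainder'' with no admissible slot in the decomposition.
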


In \eqref{dtfSR}, $\widetilde{P}_{SRI}$ denotes the projection on a small $2^{-\D}$ neighborhood of the space-time resonance inputs $\alpha_{1,e}$, $\alpha_{2,e}$, and $\alpha_{2,b}$ defined in Remark \ref{spaceres} (ii).

\begin{remark}
Note that this is a weak form of Proposition \ref{ZNormProp}. It decomposes the time derivative into a main coherent contribution, which behaves in a similar way to $t^{-1}f$, but with different time oscillation, a secondary-resonance contribution, which is smoother than expected, a non-resonant contribution, which behaves like $\partial_tF$, and an error term which is very small in $L^2$.
\end{remark}

\begin{proof}[Proof of Lemma \ref{dtfLem}] Again, using Lemma \ref{dtVCubic}, it suffices to consider $[\partial_tV_\sigma]^{(2)}$. For simplicity of notation, we sometimes write $I^{\sigma\mu\nu}=I$, $\Phi_{\sigma\mu\nu}=\Phi$ and $\Psi_{\sigma\mu\nu}=\Psi$ as before.
We may use the same simplification as in the beginning of the proof of Lemma \ref{dtfLemPrelim} and in particular assume $\epsilon_1=1$ and reduce to $\widetilde{I}^{\sigma\mu\nu}$ together with 
\begin{equation}\label{Alx95}
\begin{split}
\Vert f^\mu\Vert_{H^{N_0/2}\cap Z_1^\mu\cap H^{N_1/2}_\Omega}+ \Vert f^\nu\Vert_{H^{N_0/2}\cap Z_1^\nu\cap H^{N_1/2}_\Omega}\le 1.
\end{split}
\end{equation}
Up to acceptable $f_{NC}$ errors we may assume $m\geq \D^2$ and restrict the sum in \eqref{Isigmamunu} to the range
\begin{equation}\label{Alx9}
-3m\le k,k_1,k_2\le \delta^2m/10-\D^2.
\end{equation} 

We decompose, with $l_0:=\lfloor-3\delta m-4\delta^2m\rfloor$, $\varphi_{hi}(x):=\varphi_{>l_0}(x)$, $\varphi_{lo}(x):=\varphi_{\leq l_0}(x)$,
\begin{equation*}
\begin{split}
I^{\sigma\mu\nu}[P_{k_1}f^\mu,P_{k_2}f^\nu]&=I^{hi}[P_{k_1}f^\mu,P_{k_2}f^\nu]+I^{lo}[P_{k_1}f^\mu,P_{k_2}f^\nu],\\
\mathcal{F}\{I^{\ast}[f,g]\}(\xi)&:=\int_{\mathbb{R}^2}e^{is\Phi(\xi,\eta)}\varphi_{\ast}(\Phi(\xi,\eta))\widehat{f}(\xi-\eta)\widehat{g}(\eta)d\eta,\qquad\ast\in\{hi,lo\}.
\end{split}
\end{equation*}

{\bf{Contribution of $I^{hi}$.}} We may rewrite
\begin{equation*}
\begin{split}
I^{hi}[P_{k_1}f^\mu,P_{k_2}f^\nu]&=\partial_s\left\{J[P_{k_1}f^\mu,P_{k_2}f^\nu]\right\}-J[P_{k_1}\partial_sf^\mu,P_{k_2}f^\nu]-J[P_{k_1}f^\mu,P_{k_2}\partial_sf^\nu],\\
\mathcal{F}\{J[f,g]\}(\xi)&:=\int_{\mathbb{R}^2}e^{is\Phi(\xi,\eta)}\frac{\varphi_{hi}(\Phi(\xi,\eta))}{i\Phi(\xi,\eta)}\widehat{f}(\xi-\eta)\widehat{g}(\eta)d\eta.
\end{split}
\end{equation*}
Using \eqref{LinftyBd2}, \eqref{dtFTotalBds2Prelim}, and Lemma \ref{PhiLocLem}, we easily see that
\begin{equation*}
\begin{split}
\Vert J[P_{k_1}\partial_sf^\mu,P_{k_2}f^\nu]\Vert_{L^2}+\Vert J[P_{k_1}f^\mu,P_{k_2}\partial_sf^\nu]\Vert_{L^2}&\lesssim 2^{(50\delta-2)m},\\
\end{split}
\end{equation*}
which gives an acceptable $f_{NC}$-type contribution as in \eqref{dtfType2}.

We define $f^\mu_{j_1,k_1}$ and $f^\nu_{j_2,k_2}$ as in \eqref{Alx8}. If $j_2=\max(j_1,j_2)\ge  m/18$ then, using Lemma \ref{PhiLocLem},
\begin{equation*}
\begin{split}
\Vert J[f^{\mu}_{j_1,k_1},f^\nu_{j_2,k_2}]\Vert_{L^2}
&\lesssim 2^{-(1/2-\delta)j_2}2^{-m+50\delta m}.
\end{split}
\end{equation*}
This gives an acceptable $F_{NC}$-type contribution as in \eqref{dtfType3}. On the other hand, if
\begin{equation*}
j_2=\max\{j_1,j_2\}\le m/18,\qquad\mu+\nu\ne 0,
\end{equation*}
then integration by parts, using Lemma \ref{tech5} (i), shows that
\begin{equation*}
\begin{split}
&\big\vert \mathcal{F}\big\{J[f^{\mu}_{j_1,k_1},f^{\nu}_{j_2,k_2}]-J^S\big\}(\xi)\big\vert\lesssim 2^{-2m},\qquad\kappa_r:=2^{\delta^2m-m/2},\\
&\widehat{J^S}(\xi):=\int_{\mathbb{R}^2}e^{is\Phi(\xi,\eta)}\frac{\varphi_{hi}(\Phi(\xi,\eta))}{i\Phi(\xi,\eta)}\varphi(\kappa_r^{-1}\nabla_\eta\Phi(\xi,\eta))\widehat{f^{\mu}_{j_1,k_1}}(\xi-\eta)\widehat{f^{\nu}_{j_2,k_2}}(\eta)d\eta.
\end{split}
\end{equation*}
In view of \eqref{cas10}, the $\eta$-integral in the definition of $\widehat{J^S}$ takes place over a ball of radius $\lesssim 2^{\delta^2m}\kappa_r$ centered at $p(\xi)$. If $|\Phi(\xi,\eta)|\geq 2^{-\D}$ in this ball then $|\widehat{J^S}(\xi)|\lesssim 2^{(3\delta-1) m}$, using \eqref{FLinftybd}. On the other hand, if $|\Phi(\xi,\eta)|\leq 2^{-\D+10}$ in this ball then we use Proposition \ref{separation1} (i) and (iii). In view of \eqref{FLinftybd}, we have $|\widehat{f^{\mu}_{j_1,k_1}}(\xi-\eta)|+|\widehat{f^{\nu}_{j_2,k_2}}(\eta)|\lesssim 1$ in the support of the integral, so we can conclude that $|\widehat{J^S}(\xi)|\lesssim 2^{(3\delta-1) m+8\delta^2m}$. In all cases we get an acceptable $F_{C}$-type contribution.

Finally, assume that
\begin{equation*}
j_2=\max\{j_1,j_2\}\le m/18,\qquad\mu+\nu= 0.
\end{equation*}
In this case, we use again \eqref{PhaseMuNu} as before, and integrate by parts using Lemma \ref{tech5} to conclude that $\big\|\mathcal{F}P_kJ[f^{\mu}_{j_1,k_1},f^{\nu}_{j_2,k_2}]\big\|_{L^\infty}\lesssim 2^{-2m}$ if $m+k\geq j_2+\delta^2m$. On the other hand, if $m+k\leq j_2+\delta^2m$ then we get an $F_{LO}$-type contribution. Indeed we may assume $2^{k_1}\approx 2^{k_2}$ and estimate
\begin{equation*}
\big|\varphi_k(\xi)\mathcal{F}J[f^{\mu}_{j_1,k_1},f^\nu_{j_2,k_2}](\xi)\big|\lesssim 2^{3.1\delta m}\Vert \widehat{f^\mu_{j_1,k_1}}\Vert_{L^\infty}\Vert \widehat{f^\nu_{j_2,k_2}}\Vert_{L^1}\lesssim 2^{3.1\delta m}2^{2\delta j_1}2^{-j_2+21\delta j_2}\lesssim 2^{4.8\delta m}2^{-j_2},
\end{equation*}
using \eqref{FL1bd} and \eqref{FLinftybd}. This suffices since $2^{-j_2}\lesssim 2^{\delta^2m}\big(1+2^{m+k})^{-1}$.

{\bf{Contribution of $I^{lo}$, $\underline{k}$ small.}} We consider now $P_{k}I^{lo}[P_{k_1}f^\mu,P_{k_2}f^\nu]$ in the case 
\begin{equation*}
\underline{k}=\min\{k,k_1,k_2\}\le -\D.
\end{equation*}
We define $f^\mu_{j_1,k_1},f^\mu_{j_2,k_2},f^\mu_{j_1,k_1,n_1},f^\mu_{j_2,k_2,n_2}$ as in \eqref{Alx100}. Using Lemma \ref{separation1}, we may assume that
\begin{equation*}
f^\mu_{j_1,k_1}=f^\mu_{j_1,k_1,0},\qquad f^\nu_{j_2,k_2}=f^\nu_{j_2,k_2,0},\qquad -\D\le\max\{k,k_1,k_2\}\le\D.
\end{equation*}
If $j_2=\max\{j_1,j_2\}\ge(1-\delta)m$ then, using \eqref{LinftyBd2} and Lemma \ref{PhiLocLem}, we see that
\begin{equation*}
\begin{split}
\big\Vert P_kI^{lo}[f^\mu_{j_1,k_1},f^\nu_{j_2,k_2}]\big\Vert_{L^2}&\lesssim 2^{-m+21\delta m}2^{-j_2+20\delta j_2}\lesssim 2^{-2m+50\delta m},
\end{split}
\end{equation*}
which gives an acceptable $f_{NC}$-type contribution as in \eqref{dtfType2}. On the other hand, using Proposition \ref{spaceres} (iv), $\vert\nabla_\eta\Phi(\xi,\eta)\vert\gtrsim 1$ on the support of the integral. Therefore, if $\max\{j_1,j_2\}\le(1-\delta)m$ then we can integrate by parts using Lemma \ref{tech5} to obtain again an acceptable $f_{NC}$-type contribution as in \eqref{dtfType2}.

{\bf{Contribution of $I^{lo}$, $\underline{k}$ not small.}} From now on, we may assume that
\begin{equation}\label{MedFreq}
\min\{k,k_1,k_2\}\ge -\D,\qquad j_1\leq j_2.
\end{equation}
Assume first that
\begin{equation*}
j_1=\min\{j_1,j_2\}\ge (1-200\delta)m,\qquad \min\{k,k_1,k_2\}\ge \D.
\end{equation*}
In this case, using Lemma \ref{Shur2Lem} and \eqref{RadL2}, we find that
\begin{equation*}
\big\Vert P_kI^{lo}[f^\mu_{j_1,k_1,n_1},f^\nu_{j_2,k_2,n_2}]\big\Vert_{L^2}\lesssim 2^{-\frac{n_1+n_2}{2}}\big\|\sup_{\theta}|\widehat{f^{\mu}_{j_1,k_1,n_1}}(r\theta)|\big\|_{L^2}\big\|\sup_{\theta}|\widehat{f^{\nu}_{j_2,k_2,n_2}}(r\theta)|\big\|_{L^2}\lesssim 2^{-1.95m},
\end{equation*}
for any $j_1,j_2,n_1,n_2$. This gives an acceptable $f_{NC}$-type contribution as in \eqref{dtfType2}. To control  the remaining contributions of $P_kI^{lo}$ we consider two cases.

{\bf{Case 1.}} Assume first that
\begin{equation}\label{dtfAss1}
\min\{k,k_1,k_2\}\ge -\D,\quad j_1\le(1-200\delta)m,\quad
j_2\ge(1-50\delta)m,
\end{equation}
and decompose, with $\kappa_\theta:=2^{-m/2+2\delta^2m}$,
\begin{equation*}
\begin{split}
P_{k}I^{lo}[f^\mu_{j_1,k_1},f^\nu_{j_2,k_2}]&=I^\perp+I^{||}[f^\mu_{j_1,k_1},f^\nu_{j_2,k_2}],\\
\widehat{I^\perp}(\xi)&:=\int_{\mathbb{R}^2}e^{is\Phi(\xi,\eta)}\varphi_{lo}(\Phi(\xi,\eta))(1-\varphi(\kappa_\theta^{-1}\Omega_\eta\Phi(\xi,\eta)))\widehat{f^\mu_{j_1,k_1}}(\xi-\eta)\widehat{f^\nu_{j_2,k_2}}(\eta)d\eta,\\
\mathcal{F}\{I^{||}[f,g]\}(\xi)&:=\int_{\mathbb{R}^2}e^{is\Phi(\xi,\eta)}\varphi_{lo}(\Phi(\xi,\eta))\varphi(\kappa_\theta^{-1}\Omega_\eta\Phi(\xi,\eta))\widehat{f}(\xi-\eta)\widehat{g}(\eta)d\eta.\\
\end{split}
\end{equation*}
Integration by parts using Lemma \ref{RotIBP} shows that $I^\perp$ yields an acceptable $f_{NC}$-type contribution as in \eqref{dtfType2}. Moreover, notice that, using Lemma \ref{PhiLocLem}, Lemma \ref{LinEstLem}, and Proposition \ref{separation1} (ii),
\begin{equation*}
\begin{split}
\Vert P_kI^{||}[f^\mu_{j_1,k_1},f^\nu_{j_2,k_2,n_2}]\Vert_{L^2}&\lesssim 2^{-m+21\delta m}2^{-j_2+20\delta j_2}2^{n_2/2}+2^{-4m}\lesssim 2^{-2m+100\delta m}2^{n_2/2},\\
P_kI^{||}[f^\mu_{j_1,k_1,n_1},f^\nu_{j_2,k_2,n_2}]&\equiv 0\qquad\text{ if }\,\,n_1,n_2\geq 0.
\end{split}
\end{equation*}
These are acceptable $f_{NC}$-type contributions if $n_2\leq m/20$. Thus, in the following, we may assume that
\begin{equation}\label{sw}
f^\mu_{j_1,k_1}=f^\mu_{j_1,k_1,0},\qquad |\nu|=b,\,\,f^\nu_{j_2,k_2}=\sum_{n_2\geq m/20}f^\nu_{j_2,k_2,n_2}.
\end{equation}

Using Schur's test and Lemma \ref{LinEstLem}, we see that
\begin{equation*}
\begin{split}
\Vert I^{||}[f^\mu_{j_1,k_1,0},f^\nu_{j_2,k_2,n_2}]\Vert_{L^2}&\lesssim \kappa_\theta\cdot \sup_{\theta\in\mathbb{S}^1}\Vert \widehat{f^\nu_{j_2,k_2,n_2}}(r\theta)\Vert_{L^1(rdr)} \cdot \Vert f^\mu_{j_1,k_1,0}\Vert_{L^2}\lesssim 2^{-3m/2+100\delta m}2^{-j_1+20\delta j_1}.
\end{split}
\end{equation*}
Therefore we may assume that $j_1\le m/2$. In this case, we further decompose
\begin{equation}\label{DecFSR}
\begin{split}
I^{||}[f^\mu_{j_1,k_1},f^\nu_{j_2,k_2}]&=I^C[f^\mu_{j_1,k_1},f^\nu_{j_2,k_2}]+I^{NC}[f^\mu_{j_1,k_1},f^\nu_{j_2,k_2}],\\
\widehat{I^{C}[f,g]}(\xi)&:=\int_{\mathbb{R}^2}e^{is\Phi(\xi,\eta)}\varphi_{lo}(\Phi(\xi,\eta))\varphi(\kappa_\theta^{-1}\Omega_\eta\Phi(\xi,\eta))\varphi_{Lo}(\xi,\eta)\widehat{f}(\xi-\eta)\widehat{g}(\eta)d\eta,\\
\widehat{I^{NC}[f,g]}(\xi)&:=\int_{\mathbb{R}^2}e^{is\Phi(\xi,\eta)}\varphi_{lo}(\Phi(\xi,\eta))\varphi(\kappa_\theta^{-1}\Omega_\eta\Phi(\xi,\eta))\varphi_{Hi}(\xi,\eta)\widehat{f}(\xi-\eta)\widehat{g}(\eta)d\eta,
\end{split}
\end{equation}
\begin{equation*}
\begin{split}
\varphi_{Lo}(\xi,\eta)&:=\varphi_{\le-400\delta m}(\nabla_\xi\Phi(\xi,\eta))\varphi_{\ge -\D}(\nabla_\eta\Phi(\xi,\eta)),\\
\varphi_{Hi}(\xi,\eta)&:=\varphi_{>-400\delta m}(\nabla_\xi\Phi(\xi,\eta))\varphi_{\ge -\D}(\nabla_\eta\Phi(\xi,\eta)).
\end{split}
\end{equation*}
Notice that $1=\varphi_{Lo}(\xi,\eta)+\varphi_{Hi}(\xi,\eta)$ in the support of the integral, in view of the assumption \eqref{sw} on $f^\nu_{j_2,k_2}$ and Proposition \ref{separation1} (iii).

We first consider the integrals $I^C$, which produce the secondary resonances $f_{SR}$. Indeed using \eqref{RadL2}, \eqref{FLinftybd} and \eqref{MedFreq}, we estimate
\begin{equation*}
\begin{split}
\Vert \widehat{I^C}\Vert_{L^\infty}&\lesssim\kappa_\theta\Vert\widehat{f^\mu_{j_1,k_1}}\Vert_{L^\infty}\cdot\big\Vert\sup_{\theta}|\widehat{f^\nu_{j_2,k_2}}(r\theta)|\big\Vert_{L^1(rdr)}\lesssim 2^{-3m/2+74\delta m},
\end{split}
\end{equation*}
The derivatives can be estimated in the same way, given that $j_1\leq m/2$ and the definition of the cutoff $\varphi_{Lo}$, while the support property is a consequence of the smallness of both $\Phi$ and $\nabla_\xi\Phi$.

{\bf{The integral $I^{NC}$.}} We show now that $I^{NC}$ gives acceptable contributions, i.e,
\begin{equation}\label{INCOK}
\begin{split}
&P_kI^{NC}[f^\mu_{j_1,k_1},f^\nu_{j_2,k_2}]=\partial_sF_1+F_2,\\
&\Vert F_1\Vert_{L^2}\lesssim 2^{-36m/35},\qquad\Vert F_2\Vert_{L^2}\lesssim 2^{-21 m/11}.
\end{split}
\end{equation}
Recall that $l_0=\lfloor-3\delta m-4\delta^2m\rfloor$, let $l_-:=\lfloor-14/15 m\rfloor$, and decompose further
\begin{equation*}
\begin{split}
&I^{NC}[f^\mu_{j_1,k_1},f^\nu_{j_2,k_2}]=\sum_{l_-\le l\le l_0}I^{NC}_{l},\\
&\widehat{I^{NC}_{l}}(\xi):=\int_{\mathbb{R}^2}e^{is\Phi(\xi,\eta)}\varphi_{l}^{[l_-,l_0+1]}(\Phi(\xi,\eta))\varphi(\kappa_\theta^{-1}\Omega_\eta\Phi(\xi,\eta)))\varphi_{Hi}(\xi,\eta)\widehat{f^{\mu}_{j_1,k_1}}(\xi-\eta)\widehat{f^{\nu}_{j_2,k_2}}(\eta)d\eta.\\
\end{split}
\end{equation*}
Using Schur's test with Lemma \ref{Shur3Lem} (i), we find that
\begin{equation*}
\begin{split}
\Vert P_kI^{NC}_{l_-}\Vert_{L^2}&\lesssim 2^{l_-}\kappa_\theta 2^{1000\delta m}\Vert f^{\nu}_{j_2,k_2}\Vert_{L^2}\lesssim 2^{-39m/20},
\end{split}
\end{equation*}
which is acceptable for \eqref{INCOK}. 

On the other hand, for $l_-<l\leq l_0$ we write
\begin{equation*}
\begin{split}
iI^{NC}_{l}&=\partial_sJ_l-\mathcal{A}_l-\mathcal{B}_l,\\
\widehat{J_l}(\xi)&:=\int_{\mathbb{R}^2}e^{is\Phi(\xi,\eta)}\widetilde{\varphi_{l}}(\Phi(\xi,\eta))\varphi(\kappa_\theta^{-1}\Omega_\eta\Phi(\xi,\eta)))\varphi_{Hi}(\xi,\eta)\widehat{f^{\mu}_{j_1,k_1}}(\xi-\eta)\widehat{f^{\nu}_{j_2,k_2}}(\eta)d\eta,\\
\widehat{\mathcal{A}_l}(\xi)&:=\int_{\mathbb{R}^2}e^{is\Phi(\xi,\eta)}\widetilde{\varphi_{l}}(\Phi(\xi,\eta))\varphi(\kappa_\theta^{-1}\Omega_\eta\Phi(\xi,\eta)))\varphi_{Hi}(\xi,\eta)\partial_s\widehat{f^{\mu}_{j_1,k_1}}(\xi-\eta)\widehat{f^{\nu}_{j_2,k_2}}(\eta)d\eta,\\
\widehat{\mathcal{B}_l}(\xi)&:=\int_{\mathbb{R}^2}e^{is\Phi(\xi,\eta)}\widetilde{\varphi_{l}}(\Phi(\xi,\eta))\varphi(\kappa_\theta^{-1}\Omega_\eta\Phi(\xi,\eta)))\varphi_{Hi}(\xi,\eta)\widehat{f^{\mu}_{j_1,k_1}}(\xi-\eta)\partial_s\widehat{f^{\nu}_{j_2,k_2}}(\eta)d\eta,
\end{split}
\end{equation*}
where $\widetilde{\varphi_{l}}(x):=x^{-1}\varphi_l(x)$. Recall \eqref{sw}. Lemma \ref{Shur3Lem} (i) and Lemma \ref{LinEstLem} give
\begin{equation*}
\begin{split}
\Vert J_{l}\Vert_{L^2}&\lesssim \sum_{n_2\geq 1}2^{2\delta^2m}\kappa_\theta\cdot 2^{-l}\cdot 2^{\frac{l-n_2}{2}+400\delta m}\Vert \widehat{f^{\mu}_{j_1,k_1}}\Vert_{L^\infty}\Vert f^\nu_{j_2,k_2,n_2}\Vert_{L^2}\lesssim 2^{-\frac{m+l}{2}+(1000\delta-1)m},
\end{split}
\end{equation*}
which gives an acceptable contribution as in \eqref{INCOK}. We can estimate $\mathcal{A}_l$ similarly,
\begin{equation*}
\begin{split}
\Vert \mathcal{A}_l\Vert_{L^2}&\lesssim \sum_{n_2\geq 1}2^{2\delta^2m}\kappa_\theta\cdot 2^{-\frac{l+n_2}{2}+400\delta m}\cdot \Vert \widehat{\partial_sf^{\mu}_{j_1,k_1}}\Vert_{L^\infty}\Vert f^{\nu}_{j_2,k_2,n_2}\Vert_{L^2}\lesssim 2^{-2m},
\end{split}
\end{equation*}
 using also Lemma \ref{dtfLemPrelim}, which gives an acceptable contribution as in \eqref{INCOK}. 

To control the term $\mathcal{B}_l$, we need more precise estimates. We use Lemma \ref{dtfLemPrelim} to decompose
\begin{equation*}
\partial_sf^{\nu}_{j_2,k_2}=\widetilde{g_C}+\widetilde{g_{NC}},
\end{equation*}
and let $\widehat{\mathcal{B}_l}=\widehat{\mathcal{B}_l^1}+\widehat{\mathcal{B}_l^2}$ denote the corresponding decomposition of $\widehat{\mathcal{B}_l}$. Using Schur's test (Lemma \ref{Shur3Lem} (i)) again, we find that
\begin{equation}\label{Alx48.2}
\Vert \mathcal{B}_l^2\Vert_{L^2}\lesssim \kappa_\theta\cdot 2^{400\delta m}\cdot \Vert \widetilde{g_{NC}}\Vert_{L^2}\lesssim 2^{-2m+1000\delta m},
\end{equation}
which gives an acceptable contribution as in \eqref{INCOK}. Moreover, using \eqref{dtfType1Prelim}, we can write $\widehat{\mathcal{B}_l^1}$ as a sum over $q\in[0,m/2-10\delta m]$ and over $\theta,\kappa\in\mathcal{P}$, $\theta+\kappa\neq 0$, of integrals of the form
\begin{equation}\label{Alx48}
\begin{split}
\mathcal{C}_l(\xi):=\int_{\mathbb{R}^2}e^{is[\Phi_{\sigma\mu\nu}(\xi,\eta)+\Psi_{\nu\theta\kappa}(\eta)]}&\widetilde{\varphi_{l}}(\Phi(\xi,\eta))\varphi(\kappa_\theta^{-1}\Omega_\eta\Phi(\xi,\eta)))\\
&\times\varphi_{Hi}(\nabla_\xi\Phi(\xi,\eta))\widehat{f^{\mu}_{j_1,k_1}}(\xi-\eta)h^q(\eta)d\eta.
\end{split}
\end{equation}
In view of \eqref{dtfType1Prelim} and \eqref{sw}, the functions $h^q=h^q_{\nu\theta\kappa}$ satisfy the properties
\begin{equation}\label{Alx48.1}
\begin{split}
&h^q(\eta)=h^q(\eta)\varphi_{\leq -m/21}(\Psi^\dagger_b(\eta)),\qquad \big\|D^\alpha_\eta h^q(s)\big\|_{L^\infty}\lesssim 2^{-m+50\delta m}2^{-q}2^{(m/2+q+2\delta^2m)\vert\alpha\vert},\\
&\Vert \partial_sh^q(s)\Vert_{L^\infty}\lesssim 2^{(6\delta-2)m}2^{q+42\delta q}.
\end{split}
\end{equation}
The contributions of exponents $q\geq 19m/40$ can be estimated as in \eqref{Alx48.2}, since the functions $h^q$ in this case have sufficiently small $L^2$ norm. Therefore we may assume that $q\leq 19m/40$.

The main observation we need is that either $|\Psi_{\nu\theta\kappa}(\eta)|\leq 2^{-m/22}$ or $|\Psi_{\nu\theta\kappa}(\eta)|\geq 2^{-\D}$ in the support of the integrals $\mathcal{C}_l$. This is due to the assumption $h^q(\eta)=h^q(\eta)\varphi_{\leq -m/21}(\Psi^\dagger_b(\eta))$ in \eqref{Alx48.1}. Therefore we can decompose $\mathcal{C}_l=\mathcal{C}_l^1+\mathcal{C}_l^2$, where $\mathcal{C}_l^1$ and $\mathcal{C}_l^2$ are defined by inserting the factors $\varphi_{\leq -m/25}(\Psi_{\nu\theta\kappa}(\eta))$ and $\varphi_{\geq -\D}(\Psi_{\nu\theta\kappa}(\eta))$ in \eqref{Alx48}. 

To control $\mathcal{C}_l^1$ we consider two cases: if $l\leq -1000\delta m$ then we use Proposition \ref{Separation2}. The restrictions $|\Psi_{\nu\theta\kappa}(\eta)|\leq 2^{-m/22}$, $|\Phi_{\sigma\mu\nu}(\xi,\eta)|\lesssim 2^{-1000\delta m}$, $|\nabla_\xi\Phi(\xi,\eta)|\gtrsim 2^{-400\delta m}$ show that
\begin{equation*}
\vert\nabla_\eta\left[\Phi(\xi,\eta)+\Psi(\eta)\right]\vert\gtrsim 2^{-500\delta m}.
\end{equation*}
Recalling that $j_1\le m/2$, $q\leq 19m/40$, and $l\geq l_-$, we can integrate by parts using Lemma \ref{tech5} to show that $|\mathcal{C}_l^1(\xi)|\lesssim 2^{-2m}$ in this case. On the other hand, if 
\begin{equation}\label{Alx48.5}
l\in[-1000\delta m,l_0]
\end{equation}
then we write $i\mathcal{C}_l^1(\xi)=\partial_sK_l^1(\xi)-\mathcal{E}_l^1(\xi)$, where
\begin{equation*}
\begin{split}
K_l^1(\xi):=\int_{\mathbb{R}^2}e^{is[\Phi_{\sigma\mu\nu}(\xi,\eta)+\Psi_{\nu\theta\kappa}(\eta)]}&\frac{\widetilde{\varphi_{l}}(\Phi(\xi,\eta))}{\Phi_{\sigma\mu\nu}(\xi,\eta)+\Psi_{\nu\theta\kappa}(\eta)}\varphi(\kappa_\theta^{-1}\Omega_\eta\Phi(\xi,\eta)))\\
&\times\varphi_{\leq -m/25}(\Psi_{\nu\theta\kappa}(\eta))\varphi_{Hi}(\nabla_\xi\Phi(\xi,\eta))\widehat{f^{\mu}_{j_1,k_1}}(\xi-\eta)h^q(\eta)d\eta
\end{split}
\end{equation*}
and
\begin{equation*}
\begin{split}
\mathcal{E}_l^1(\xi):=\int_{\mathbb{R}^2}&e^{is[\Phi_{\sigma\mu\nu}(\xi,\eta)+\Psi_{\nu\theta\kappa}(\eta)]}\frac{\widetilde{\varphi_{l}}(\Phi(\xi,\eta))}{\Phi_{\sigma\mu\nu}(\xi,\eta)+\Psi_{\nu\theta\kappa}(\eta)}\varphi(\kappa_\theta^{-1}\Omega_\eta\Phi(\xi,\eta)))\\
&\times\varphi_{\leq -m/25}(\Psi_{\nu\theta\kappa}(\eta))\varphi_{Hi}(\nabla_\xi\Phi(\xi,\eta))\partial_s\big[\widehat{f^{\mu}_{j_1,k_1}}(\xi-\eta,s)h^q(\eta,s)\big]d\eta.
\end{split}
\end{equation*}
Notice that, in view of \eqref{Alx48.1} (recall also that $q\leq 19m/40$) and Lemma \ref{dtfLemPrelim},
\begin{equation*}
|\widehat{f^{\mu}_{j_1,k_1}}(\xi-\eta)h^q(\eta)|\lesssim 2^{-m+50\delta m},\qquad \big|\partial_s\big[\widehat{f^{\mu}_{j_1,k_1}}(\xi-\eta,s)h^q(\eta,s)\big]\big|\lesssim 2^{-3m/2-m/100},
\end{equation*}
in the supports of the integrals. Therefore
\begin{equation*}
\begin{split}
|K_l^1(\xi)|&\lesssim 2^{\delta^2m}2^{-2l}\kappa_\theta 2^{-m+50\delta m}\lesssim 2^{-2l}2^{-3m/2+60\delta m},\\
|\mathcal{E}_l^1(\xi)|&\lesssim 2^{\delta^2m}2^{-2l}\kappa_\theta 2^{-3m/2-m/100}\lesssim 2^{-2l}2^{-2m}.
\end{split}
\end{equation*}
These are acceptable contributions as in \eqref{INCOK}, due to the condition \eqref{Alx48.5}.

The integral $\mathcal{C}_l^2$ containing the factor $\varphi_{\geq -\D}(\Psi_{\nu\theta\kappa}(\eta))$ can be controlled in a similar way, writing it as $\partial_sK_l^2(\xi)-\mathcal{E}_l^2(\xi)$ as in the case \eqref{Alx48.5} above. This completes the proof of \eqref{INCOK}.

{\bf{Case 2.}} We consider now $I^{lo}$ in the case
\begin{equation}\label{Alx95.5}
\min\{k,k_1,k_2\}\ge -\D,\quad j_1\leq\min(j_2,(1-200\delta)m),\quad
j_2\leq(1-50\delta)m.
\end{equation}
Integrations by parts, first in $\eta$ using Lemma \ref{tech5} then in $\Omega_\eta$ using Lemma \ref{RotIBP} show that
\begin{equation*}
\begin{split}
\Vert I^{lo}[f^\mu_{j_1,k_1},f^\nu_{j_2,k_2}]-&I^{S}[f^\mu_{j_1,k_1},f^\nu_{j_2,k_2}]\Vert_{L^2}\lesssim 2^{-2m},\\
\mathcal{F}\{I^{S}[f,g]\}(\xi):=&\int_{\mathbb{R}^2}e^{is\Phi(\xi,\eta)}\varphi_{lo}(\Phi(\xi,\eta))\varphi(\kappa_r^{-1}\nabla_\eta\Phi(\xi,\eta))\varphi(\kappa_\theta^{-1}\Omega_\eta\Phi(\xi,\eta))\widehat{f}(\xi-\eta)\widehat{g}(\eta)d\eta,
\end{split}
\end{equation*}
where $\kappa_r:=2^{\delta^2m}\big(2^{j_2-m}+2^{-m/2}\big)$, $\kappa_\theta:=2^{\delta^2m-m/2}$. Using  Proposition \ref{spaceres} and Proposition \ref{separation1} (iii), we see that
\begin{equation*}
I^S[f^\mu_{j_1,k_1},f^\nu_{j_2,k_2}]\equiv0\quad\text{ if }\quad\mu+\nu=0\quad\text{ and }\quad I^S[f^{\mu}_{j_1,k_1},f^\nu_{j_2,k_2}]\equiv I^S[f^{\mu}_{j_1,k_1,0},f^{\nu}_{j_2,k_2,0}].
\end{equation*}

We may rewrite the nontrivial terms as
\begin{equation*}
\begin{split}
\mathcal{F}\big\{I^S[f^\mu_{j_1,k_1},f^\nu_{j_2,k_2}]\big\}(\xi)&=e^{is\Psi(\xi)}g(\xi,s),\\
\end{split}
\end{equation*}
where
\begin{equation}\label{Alx96}
\begin{split}
g(\xi,s):=\int_{\mathbb{R}^2}e^{is\left[\Phi(\xi,\eta)-\Psi(\xi)\right]}\varphi(\kappa_\theta^{-1}\Omega_\eta\Phi(\xi,\eta))\varphi(\kappa_r^{-1}\nabla_\eta\Phi(\xi,\eta))\varphi_{lo}(\Phi(\xi,\eta))\\
\times\widehat{f^{\mu}_{j_1,k_1,0}}(\xi-\eta)\widehat{f^{\nu}_{j_2,k_2,0}}(\eta)\,d\eta.
\end{split}
\end{equation}
We claim that this gives a contribution as in \eqref{dtfType1} for $q:=\max\{0,j_2-m/2\}$. Indeed, using \eqref{FL1bd}, \eqref{FLinftybd}, and \eqref{cas10}, we see that
\begin{equation*}
\begin{split}
\vert \varphi_k(\xi)g(\xi,s)\vert&\lesssim 2^{\delta^2m}\Vert \widehat{f^{\mu}_{j_1,k_1,0}}\Vert_{L^\infty}\min\{\kappa_r^2\Vert \widehat{f^{\nu}_{j_2,k_2,0}}\Vert_{L^\infty},\kappa_\theta\kappa_r^{1/2}2^{-j_2+21\delta j_2}\}\lesssim 2^{7\delta^2m}2^{-m-q}.
\end{split}
\end{equation*}
The bound on $\xi$-derivatives follows from the fact that
\begin{equation*}
\vert\nabla_\xi s\left[\Phi(\xi,\eta)-\Psi(\xi)\right]\vert\lesssim \vert s\vert \vert \nabla_\xi\Psi(\xi,\eta)-\nabla_\xi\Phi(\xi,p(\xi))\vert\lesssim 2^{m}\kappa_r\lesssim (2^{m/2}+2^{j_2})2^{2\delta^2m}.
\end{equation*}
Finally, the bound on $\partial_sg$ follows in the same way as in the proof of Lemma \ref{dtfLemPrelim}, see \eqref{Alx99.6}. The bounds \eqref{Omegadtf} follow by examining the defining formulas above and the identities \eqref{Alx99.7}.
\end{proof}

\section{Dispersive control II: improved control of the $Z$ norm}\label{Sec:Z1Norm} In this section we show how to control the $Z$ component of the norms. Using the formulas \eqref{DuhamelDER2}, the symbol structure of the multipliers $\mathfrak{m}^a_{\sigma\mu\nu}$, and Lemma \ref{lem:CZ}, Proposition \ref{BootstrapZnorm} follows easily from Proposition \ref{ZNormProp} and Proposition \ref{CubicTermZnorm} below.

\begin{proposition}\label{ZNormProp}
Assume that $t\in[0,T]$ is fixed
\begin{equation*}
\sup_{0\le s\le t}\left\{\Vert f^\mu(s)\Vert_{H^{N_0/2}\cap Z_1^\mu\cap H^{N_1/2}_\Omega}+\Vert f^\nu(s)\Vert_{H^{N_0/2}\cap Z^\nu_1\cap H^{N_1/2}_\Omega}\right\}\le 1
\end{equation*}
and that $\partial_sf^\mu$, $\partial_sf^\nu$ satisfy the conclusions of Lemma \ref{dtfLemPrelim} and Lemma \ref{dtfLem}. For $\sigma,\mu,\nu\in \mathcal{P}$ and $m\in\{0,\ldots,L+1\}$ define
\begin{equation*}
\mathcal{F}\left\{T^{\sigma\mu\nu}_{m}[f,g]\right\}(\xi):=\int_{\mathbb{R}}q_m(s)\int_{\mathbb{R}^2}e^{is\Phi_{\sigma\mu\nu}(\xi,\eta)}\widehat{f}(\xi-\eta,s)\widehat{g}(\eta,s)d\eta ds.
\end{equation*}
Then
\begin{equation*}
\sum_{k_1,k_2\in\mathbb{Z}}2^{\max(k_1,k_2,0)}\Vert P_kT^{\sigma\mu\nu}_{m}[P_{k_1}f^\mu,P_{k_2}f^\nu]\Vert_{Z^\sigma_1}\lesssim 2^{-\delta^5m}.
\end{equation*}
\end{proposition}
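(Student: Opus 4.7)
The plan is to establish, for every choice of $(k,j) \in \mathcal{J}$, $n \in \{0,\ldots,j+1\}$ and $k_1,k_2 \in \mathbb{Z}$, the pointwise bound
\begin{equation*}
2^{6k_+} 2^{(1-20\delta)j-(1/2-19\delta)n}\big\|A^\sigma_{n,(j)}Q_{jk}P_kT^{\sigma\mu\nu}_{m}[P_{k_1}f^\mu,P_{k_2}f^\nu]\big\|_{L^2} \lesssim 2^{-\max(k_1,k_2,0)}\,C(j,k,k_1,k_2)\,2^{-2\delta^5 m},
\end{equation*}
with $C$ summable in all its arguments. A direct $L^2\times L^\infty$ estimate via Lemma \ref{L1easy} using the $H^{N_0/2}$ control on one factor and the linear dispersive bound \eqref{LinftyBd2} on the other, combined with the weight $2^{\max(k_1,k_2,0)}$ and the $N_0$-regularity of the inputs, restricts the real work to the moderate frequency range $-3m \leq k,k_1,k_2 \leq \delta^2 m$, so polynomial losses in $2^m$ in the frequencies are harmless.

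The argument then splits according to the physical-space scale $j$. When $j \leq (1-\delta)m + k_-$ I would estimate the output directly in $L^2$, expanding the $\eta$-integral via Schur's test (Lemma \ref{ShurLem}) combined with the $L^\infty$ decay from Lemma \ref{LinEstLem}; since $B_j^\sigma$ only carries the weight $2^{(1-20\delta)j}$ for such $j$, the $2^{-m+O(\delta m)}$ decay is sufficient. When $j \geq (1+\delta)m$, the spatial localization $\phii_j^{(k)}$ encoded in $Q_{jk}$ dominates: writing the kernel of $Q_{jk}P_kT^{\sigma\mu\nu}_m$ and integrating by parts in $\eta$ as in Lemma \ref{tech5}, using $|\nabla_\eta\Phi|\gtrsim 1$ away from the stationary curve (Proposition \ref{spaceres}) and, in the $\Omega_\eta$ direction, using Lemma \ref{RotIBP} exactly as in the proof of Lemma \ref{dtfLem}, produces arbitrarily fast decay in $2^{j-m}$.

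The intermediate range $(1-\delta)m+k_- \leq j \leq (1+\delta)m$ requires a normal form. I would split $e^{is\Phi}$ dyadically according to $|\Phi|$: the small-modulation piece $|\Phi| \leq 2^{-(1-\delta_2/2)m}$ is handled by the $L^2$ oscillatory-integral estimate of Lemma \ref{L2EstLem} together with the analysis in Lemma \ref{L2Lem1SmallMod}. On the larger-modulation piece, a normal-form integration by parts in $s$ replaces the kernel by $\varphi_l(\Phi)/(i\Phi)$ and produces a boundary contribution (localized near $s\approx 2^m$ and estimated using Lemma \ref{PhiLocLem}) together with two ``cubic'' terms involving $\partial_s f^\mu$ and $\partial_s f^\nu$. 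Into these I substitute the refined decomposition $\partial_s f = f_C + f_{SR} + f_{NC} + \partial_s F_C + \partial_s F_{NC} + \partial_s F_{LO}$ of Lemma \ref{dtfLem}: the $f_{NC}$, $F_{NC}$ pieces are small enough in $L^2$ to absorb directly, the $F_C$, $F_{LO}$, $F_{NC}$ pieces invite a second time integration by parts which gains $2^{-m}$ with only one extra derivative lost, and $f_{SR}$ is supported near isolated points so contributes at most a volume factor.

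The main obstacle is the coherent piece $f_C$ in the resonant case $\sigma = b$ with output frequency $\xi$ near the space-time resonance sphere $\{\Psi^\dagger_b=0\}$, where the localization operators $A^b_{n,(j)}$ bite. After substituting the coherent form $e^{is\Psi_{\nu\theta\kappa}(\eta)}g^q(\eta,s)$, the trilinear integrand carries the compound phase $\Phi_{\sigma\mu\nu}(\xi,\eta) + \Psi_{\nu\theta\kappa}(\eta)$, and the danger is a simultaneous vanishing of this compound phase, of its $\eta$-gradient, and of $\Psi^\dagger_b(\xi)$, which would block a further normal form. This is precisely the configuration ruled out by Proposition \ref{Separation2}: iterated space-time resonances cannot occur coincidentally, so either the compound phase itself or $\nabla_\eta$ of it admits a quantitative lower bound, and one more application of Lemma \ref{tech5} together with the nondegeneracy \eqref{RNDC} yields the required $2^{-\delta^5 m}$ gain. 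The delicate bookkeeping is to track the $A^b_{n,(j)}$ cutoff through each IBP and each dyadic split in $|\Phi|$, $|\nabla_\eta\Phi|$, and $|\Psi_{\nu\theta\kappa}|$, propagating the small constant $\delta^5$ consistently through the chain.
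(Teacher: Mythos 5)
Your overall architecture (reduction to moderate frequencies, a split in $j$ versus $m$, a modulation decomposition with a normal form, substitution of the refined decomposition of $\partial_s f^\nu$ from Lemma \ref{dtfLem}, and Proposition \ref{Separation2} to rule out iterated resonances in the coherent piece) tracks the paper's proof, but two of your steps would not close.

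First, the claim that the range $j\leq (1-\delta)m+k_-$ can be handled by a direct $L^2\times L^\infty$/Schur estimate is not correct, and this is precisely the ``division problem'' the whole section is built to overcome. A non-oscillatory bound on each time slice gives at best $\Vert \cdot\Vert_{L^2}\lesssim 2^{-m+21\delta m}$, so after integrating over $s\in[2^{m-1},2^{m+1}]$ the output is of size $2^{21\delta m}$ in $L^2$; multiplying by the weight $2^{(1-20\delta)j}$ with $j$ as large as $(1-\delta)m$ produces a loss of order $2^{(1-21\delta)m}$, nowhere near the required $2^{-\delta^5 m}$. Even at $j=0$ the direct bound $2^{21\delta m}$ already fails. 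This is why the paper runs the full modulation decomposition $T^{\sigma\mu\nu}_m=\sum_{l_-\leq l\leq l_0}T_{m,l}$ and the normal form for \emph{every} $j\leq m+\mathcal{D}$ (Lemmas \ref{TimeNonRes}--\ref{ResLem}), reserving genuinely direct estimates only for the degenerate regimes isolated in Lemma \ref{ZNormEstSimpleLem1}, e.g.\ $j+k\leq 19\delta j-17\delta m$, where the smallness of the frequency itself supplies the gain. (A smaller but real error in the same vein: for $j\geq(1+\delta)m$ you propose integration by parts in $\eta$; that variable does not see the spatial weight $x$, so it can only gain powers of $2^{-m}$, not $2^{j-m}$. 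The correct move, used in the proofs of \eqref{Alx24} and Lemma \ref{FSPLem}, is integration by parts in $\xi$ against $e^{ix\cdot\xi}$ with $|x|\approx 2^j\gg 2^m|\nabla_\xi\Phi|$.)

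Second, the small-modulation piece cannot be handled by Lemma \ref{L2EstLem}/Lemma \ref{L2Lem1SmallMod}. That $TT^\ast$ bound yields $\Vert Tf\Vert_{L^2}\lesssim 2^{-1.004\lambda}\Vert f\Vert_{L^2}$ with $2^\lambda\approx 2^{m(1-\delta_1)}$, i.e.\ a gain of only $2^{-cm}$ with $c\approx 4\cdot 10^{-3}$ beyond $2^{-m}$; after the time integration of length $2^m$ this leaves an $L^2$ bound of order $2^{-cm}$. That tiny gain is exactly what the energy estimate needs (summability of the increment over dyadic times), but the $B^\sigma_j$ norm demands $\Vert\cdot\Vert_{L^2}\lesssim 2^{-(1-20\delta)j}\approx 2^{-m}$, so you are short by essentially a full power of $2^m$. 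The paper instead proves Lemma \ref{StronglyResLem} by an instantaneous estimate: for fixed $s$ the sublevel set $\{|\Phi|\leq 2^{l}\}$ with $2^{l}\approx 2^{-m}$ has measure $\approx 2^l$ (Proposition \ref{volume}, Lemmas \ref{Shur2Lem}--\ref{Shur3Lem}), which after combining with angular localization, the stationary-phase restriction to a neighborhood of $p(\xi)$, and — crucially — the weaker, $\Psi^\dagger_b$-localized way the operators $A^b_{n,(j)}$ measure the output near the resonant spheres (see \eqref{Alx81} and \eqref{Alx67.5}), produces the needed $2^{-(1-20\delta)j}$ without any $TT^\ast$ input. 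Your bookkeeping of the $A^b_{n,(j)}$ cutoffs is mentioned only for the coherent piece, but it is in this small-modulation/strongly-resonant step that those operators are actually indispensable.
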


The rest of this section is concerned with the proof of Proposition \ref{ZNormProp}. We consider first a few simple cases before moving to the main analysis in the next subsections. Recall that, for any $k\in\mathbb{Z}$ and $m\in\{0,1,\ldots\}$,
\begin{equation}\label{DirectBds1}
\begin{split}
\sup_{0\leq s\leq t}\left\{\Vert P_kf^\mu(s)\Vert_{L^2}+\Vert P_kf^\nu(s)\Vert_{L^2}\right\}&\lesssim \min\{2^{(1-20\delta)k},2^{-N_0/2k}\},\\
\sup_{2^{m}-1\le s\le t}\left\{\Vert P_ke^{-is\Lambda_\mu}f^\mu(s)\Vert_{L^\infty}+\Vert P_ke^{-is\Lambda_\nu}f^\nu(s)\Vert_{L^\infty}\right\}&\lesssim \min\{2^{(2-20\delta)k},2^{-(1-21\delta)m}\}.
\end{split}
\end{equation}
For simplicity of notation, we often omit the subscripts $\sigma\mu\nu$ and write $\Phi_{\sigma\mu\nu}=\Phi$ and $\Psi_{\sigma\mu\nu}=\Psi$. Let $I_m$ denote the support of the function $q_m$. 

\begin{lemma}\label{ZNormEstSimpleLem1}
Assume that $f^\mu,f^\nu$ are as in Proposition \ref{ZNormProp} and let $(k,j)\in\mathcal{J}$. Then
\begin{equation}\label{Alx21}
2^{6k_+}\sum_{\max\{k_1,k_2\}\ge \delta^2(j+m)-\D^2}2^{\max(k_1,k_2,0)}\Vert Q_{jk}T^{\sigma\mu\nu}_m[P_{k_1}f^\mu,P_{k_2}f^\nu]\Vert_{B^\sigma_j}\lesssim 2^{-\delta^4m},
\end{equation}
\begin{equation}\label{Alx22}
2^{6k_+}\sum_{\min\{k_1,k_2\}\le -(j+m)(1+11\delta)/2+\D^2}2^{\max(k_1,k_2,0)}\Vert Q_{jk}T^{\sigma\mu\nu}_m[P_{k_1}f^\mu,P_{k_2}f^\nu]\Vert_{B^\sigma_j}\lesssim 2^{-\delta^4m},
\end{equation}
\begin{equation}\label{Alx23}
\text{ if }j+k\leq 19\delta j-17\delta m\text{ then }\,\,\sum_{k_1,k_2\in\mathbb{Z}}2^{\max(k_1,k_2,0)}\Vert Q_{jk}T^{\sigma\mu\nu}_m[P_{k_1}f^\mu,P_{k_2}f^\nu]\Vert_{B^\sigma_j}\lesssim 2^{-\delta^4m},
\end{equation}
\begin{equation}\label{Alx24}
\text{ if }j\geq 3m\text{ then }\,\, 2^{6k_+}\sum_{-j\le k_1,k_2\le 2\delta^2j}2^{\max(k_1,k_2,0)}\Vert Q_{jk}T^{\sigma\mu\nu}_m[P_{k_1}f^\mu,P_{k_2}f^\nu]\Vert_{B^\sigma_j}\lesssim 2^{-\delta^4m}.
\end{equation}
\end{lemma}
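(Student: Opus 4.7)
The four estimates together trim the frequency and physical-localization parameters so that only a bounded middle range remains for the more delicate analysis in subsequent lemmas. The common tool for \eqref{Alx21} and \eqref{Alx22} is the bilinear identity: the instantaneous bilinear operator inside $T^{\sigma\mu\nu}_m$ factors as $e^{is\Lambda_\sigma}\bigl[(e^{-is\Lambda_\mu}f^\mu)(e^{-is\Lambda_\nu}f^\nu)\bigr]$, so that
\begin{equation*}
\|P_kT^{\sigma\mu\nu}_m[P_{k_1}f^\mu,P_{k_2}f^\nu]\|_{L^2}\lesssim 2^m\sup_{s\in I_m}\|P_{k_1}f^\mu(s)\|_{L^2}\|e^{-is\Lambda_\nu}P_{k_2}f^\nu(s)\|_{L^\infty},
\end{equation*}
together with its symmetric version. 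The $B^\sigma_j$ weight is at most $2^{(3/2-39\delta)j}$, and this worst case is only attained for $\sigma=b$ with output frequency near the space-time resonant spheres $|\xi|=\gamma_{1,2}$.

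For \eqref{Alx21} I pair the dispersive bound \eqref{LinftyBd2} on the lower-frequency factor with the Sobolev bound $\|P_{k_\ast}f\|_{L^2}\lesssim 2^{-N_0k_\ast/2}$ on the high-frequency factor $k_\ast=\max(k_1,k_2)$. Since $N_0=20/\delta^2$, at the threshold $k_\ast\geq\delta^2(j+m)-\D^2$ the Sobolev factor contributes $2^{-10(j+m)+O(\D^2/\delta^2)}$, which easily absorbs the $B^\sigma_j$ weight, the prefactor $2^{7k_+}$, and the time factor $2^m$. For \eqref{Alx22} I instead apply Bernstein to the very-low-frequency factor, $\|e^{-is\Lambda_\nu}P_{k_2}f^\nu\|_{L^\infty}\lesssim 2^{(2-20\delta)k_2}$; combined with $\|P_{k_1}f^\mu\|_{L^2}\lesssim 1$, summation over $k_2\leq-(j+m)(1+11\delta)/2+\D^2$ yields decay $2^{-(j+m)(1+O(\delta))}$. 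For $\sigma=e$ the $B^e_j$ weight is only $2^{(1-20\delta)j}$ times $L^2$ and the bookkeeping closes directly. For $\sigma=b$ with moderate output frequency the sup over $n$ contributes an extra $2^{(1/2-19\delta)(j+1)}$; there I replace Bernstein with the dispersive bound applied to $f^\mu$ and use $\|P_{k_2}f^\nu\|_{L^2}\lesssim 2^{(1-20\delta)k_2}$, which still closes once the $A^b_n$-weight is accounted for.

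Estimates \eqref{Alx23} and \eqref{Alx24} are settled by integration by parts against the oscillating phase in the inverse Fourier representation of $Q_{jk}T^{\sigma\mu\nu}_m$. In \eqref{Alx23}, the condition $j+k\leq 19\delta j-17\delta m$ combined with $(k,j)\in\mathcal{J}$ forces $j\geq(17/19)m+O(1)$ and $2^k\leq 2^{-(1-19\delta)j-17\delta m}$; at such low output frequencies $|\nabla_\xi\Lambda_\sigma|\lesssim 2^k\ll 1$, so $|x+s\nabla_\xi\Lambda_\sigma|\gtrsim 2^j$ on the support of $\phii^{(k)}_j$, and Lemma \ref{tech5} gives arbitrary polynomial decay in $2^{-j}$. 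In \eqref{Alx24}, $j\geq 3m$ combined with $k_1,k_2\leq 2\delta^2 j$ places the output far outside the Huygens cone $|x|\lesssim 2^m$; integration by parts in $\xi$ using $|x|\gtrsim 2^j\gg 2^m\gtrsim s|\nabla_\xi\Phi|$ yields a factor $2^{-N(j-m)}$ for any $N$, easily controlled by the $H^{N_0/2}$ norms of the inputs.

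The most delicate sub-case is the $\sigma=b$, moderate-output-frequency piece of \eqref{Alx22}, where the $B^b_j$ weight is saturated near $2^{(3/2)j}$ and both inputs must be exploited simultaneously to close the estimate. The other three estimates are essentially template applications of a single bilinear bound with an appropriate decay mechanism: Sobolev for \eqref{Alx21}, Bernstein for \eqref{Alx22}, and Huygens-type integration by parts for \eqref{Alx23} and \eqref{Alx24}.
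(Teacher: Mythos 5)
Your treatments of \eqref{Alx21} and \eqref{Alx22} are essentially sound and match the paper's (crude $L^2\times L^2$ resp.\ $\widehat{L^1}\times L^2$ bounds, with the $H^{N_0/2}$ decay killing large frequencies and the Bernstein factor $2^{(2-20\delta)\min(k_1,k_2)}$ killing very small ones). However, you have misread the $B_j^\sigma$ norm: in \eqref{znorm2} the weight is $2^{-(1/2-19\delta)n}$ with a \emph{negative} exponent, so $\Vert g\Vert_{B_j^\sigma}\lesssim 2^{(1-20\delta)j}\Vert g\Vert_{L^2}$ for every $\sigma$ and every distribution of mass in $n$; there is no ``worst case $2^{(3/2-39\delta)j}$'' near the resonant spheres. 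Your entire sub-case analysis for $\sigma=b$ in \eqref{Alx22} addresses a problem that does not exist, and the proposed fix there (dispersive bound on $f^\mu$ plus $\Vert P_{k_2}f^\nu\Vert_{L^2}\lesssim 2^{(1-20\delta)k_2}$ ``once the $A^b_n$-weight is accounted for'') is not a coherent argument. This particular misreading happens to be harmless here because the true estimate is easier than you think, but it would be fatal elsewhere.

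The genuine gap is in \eqref{Alx23}. You propose integration by parts in $\xi$ using $|x+s\nabla_\xi\Lambda_\sigma(\xi)|\gtrsim 2^j$, on the grounds that $|\nabla\Lambda_\sigma(\xi)|\lesssim 2^k\ll 1$. But the relevant gradient is $\nabla_\xi\Phi(\xi,\eta)=\nabla\Lambda_\sigma(\xi)-\nabla\Lambda_\mu(\xi-\eta)$, and the second term is of size $\approx 1$ for $|\eta|\approx 1$ (which is the generic case after \eqref{Alx21}--\eqref{Alx22}). Hence $|\nabla_\xi[x\cdot\xi+s\Phi(\xi,\eta)]|\gtrsim 2^j$ only when $2^j\gg 2^m+2^{j_1}$, whereas the hypothesis $j+k\leq 19\delta j-17\delta m$ together with $j+k\geq 0$ only forces $j\geq (17/19)m$: e.g.\ $j=0.9m$, $k\approx -0.9m$ is admissible and your stationary-phase argument gives nothing there. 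The correct mechanism (used in the paper) is entirely different: since $2^k\lesssim 2^{-(1-19\delta)j-17\delta m}$, one has $\Vert Q_{jk}h\Vert_{B_j^\sigma}\lesssim 2^{(1-20\delta)j}\,2^{k}\,\Vert \widehat{P_kh}\Vert_{L^\infty}\lesssim 2^{-\delta j-17\delta m}\Vert\widehat{P_kh}\Vert_{L^\infty}$, so it suffices to prove $\Vert\mathcal{F}Q_{jk}T_m\Vert_{L^\infty}\lesssim 2^{16\delta m}$; this is done by splitting on the modulation $|\Phi|\gtrless 2^{-12\delta m}$, integrating by parts in time (normal form) for high modulations, and integrating by parts in $\eta$ for low modulations using $|\nabla_\eta\Phi|\gtrsim 1$ when the output frequency is small (Proposition \ref{separation1}(i)). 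Finally, in \eqref{Alx24} your $\xi$-integration by parts also breaks down when both inputs have spatial support of size $\approx 2^j$ (differentiating $\widehat{f^\mu_{j_1,k_1}}(\xi-\eta)$ costs $2^{j_1}$); the case $\min\{j_1,j_2\}\geq 99j/100$ must be handled separately by a direct $\widehat{L^1}\times L^2$ estimate, which closes since $2^m 2^{-(1-20\delta)j_1-(1/2-\delta)j_2}$ beats $2^{(1-20\delta)j}$ when $j\geq 3m$.
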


\begin{proof} Using \eqref{DirectBds1}, the left-hand side of \eqref{Alx21} is dominated by
\begin{equation*}
\begin{split}
C\sum_{\max\{k_1,k_2\}\ge \delta^2 (m+j)-\D^2}2^{j+m}2^{8\max(k_1,k_2,0)}\sup_{s\in I_m}\Vert P_{k_1}f^\mu(s)\Vert_{L^2}\Vert P_{k_2}f^\nu(s)\Vert_{L^2}\lesssim 2^{-2m},
\end{split}
\end{equation*}
which is acceptable. Similarly, 
\begin{equation*}
\begin{split}
&2^j\Vert T^{\sigma\mu\nu}_m[P_{k_1}f^\mu,P_{k_2}f^\nu]\Vert_{L^2}\\
&\lesssim 2^{j+m}\sup_{s\in I_m}\min\big\{\Vert \widehat{P_{k_1}f^\mu}(s)\Vert_{L^1}\Vert P_{k_2}f^\nu(s)\Vert_{L^2},\Vert P_{k_1}f^\mu(s)\Vert_{L^2}\Vert \widehat{P_{k_2}f^\nu}(s)\Vert_{L^1}\big\}\\
&\lesssim 2^{j+m}2^{(2-20\delta)\min\{k_1,k_2\}}2^{-N_0\max(k_1,k_2,0)/2},
\end{split}
\end{equation*}
and the bound \eqref{Alx21} follows by summation over $2\min\{k_1,k_2\}\le -(j+m)(1+11\delta)+2\D^2$.

To prove \eqref{Alx23} we may assume that
\begin{equation}\label{Alx25}
j+k\leq 19\delta j-17\delta m,\qquad-2(j+m)/3\le k_1,k_2\le \delta^2(j+m)-\D^2.
\end{equation}
With $l:=-12\delta m-\D$ we decompose
\begin{equation*}
\begin{split}
T^{\sigma\mu\nu}_m[P_{k_1}f^\mu,P_{k_2}f^\nu]&=T^{hi}[P_{k_1}f^\mu,P_{k_2}f^\nu]+T^{lo}[P_{k_1}f^\mu,P_{k_2}f^\nu],\\
\widehat{T^\ast[f,g]}(\xi)&:=\int_{\mathbb{R}}q_m(s)\int_{\mathbb{R}^2}e^{is\Phi(\xi,\eta)}\varphi_\ast(\Phi(\xi,\eta))\widehat{f}(\xi-\eta,s)\widehat{g}(\eta,s)d\eta ds,\\
\varphi_{lo}(x)&:=\varphi_{\le l}(x),\qquad\varphi_{hi}(x):=1-\varphi_{lo}(x),\qquad\ast\in\{hi,lo\}.
\end{split}
\end{equation*}
We first examine $T^{hi}$. Integration by parts in time shows that
\begin{equation*}
\begin{split}
cT^{hi}[P_{k_1}f^\mu,P_{k_2}f^\nu]&=\mathcal{A}[P_{k_1}f^\mu,P_{k_2}f^\nu]+\mathcal{B}[P_{k_1}\partial_sf^\mu,P_{k_2}f^\nu]+\mathcal{B}[P_{k_1}f^\mu,P_{k_2}\partial_sf^\nu],\\
\widehat{\mathcal{A}[f,g]}(\xi)&:=\int_{\mathbb{R}}q^\prime_m(s)\int_{\mathbb{R}^2}e^{is\Phi(\xi,\eta)}\widetilde{\varphi}_{hi}(\Phi(\xi,\eta))\widehat{f}(\xi-\eta,s)\widehat{g}(\eta,s)d\eta ds,\\
\widehat{\mathcal{B}[f,g]}(\xi)&:=\int_{\mathbb{R}}q_m(s)\int_{\mathbb{R}^2}e^{is\Phi(\xi,\eta)}\widetilde{\varphi}_{hi}(\Phi(\xi,\eta))\widehat{f}(\xi-\eta,s)\widehat{g}(\eta,s)d\eta ds,
\end{split}
\end{equation*}
where $\widetilde{\varphi}_{hi}(x):=x^{-1}\varphi_{hi}(x)$. We observe that, using \eqref{DirectBds1} and \eqref{dtFTotalBds2Prelim},
\begin{equation*}
\begin{split}
\Vert \mathcal{F}\mathcal{A}[P_{k_1}f^\mu,P_{k_2}f^\nu]\Vert_{L^\infty}\lesssim 2^{12\delta m}\sup_{s}\Vert P_{k_1}f^{\mu}(s)\Vert_{L^2}\Vert P_{k_2}f^\nu(s)\Vert_{L^2}\lesssim 2^{13\delta m},\\
\Vert \mathcal{F}\mathcal{B}[P_{k_1}\partial_sf^\mu,P_{k_2}f^\nu]\Vert_{L^\infty}\lesssim 2^{(1+12\delta)m}\sup_s\Vert P_{k_1}\partial_sf^\mu\Vert_{L^2}\Vert P_{k_2}f^\nu(s)\Vert_{L^2}\lesssim 2^{16\delta m},\\
\Vert \mathcal{F}\mathcal{B}[P_{k_1}f^\mu,P_{k_2}\partial_sf^\nu]\Vert_{L^\infty}\lesssim 2^{(1+12\delta)m}\sup_s\Vert P_{k_1}f^\mu\Vert_{L^2}\Vert P_{k_2}\partial_sf^\nu(s)\Vert_{L^2}\lesssim 2^{16\delta m}.
\end{split}
\end{equation*}
Summing in $k_1,k_2$ as in \eqref{Alx25}, we obtain an acceptable contribution. 

To bound the contribution of $T^{lo}$ we examine first Proposition \ref{separation1} (i). In particular, since $k\leq -\D$, $P_kT^{lo}$ is nontrivial only when $\vert k_1\vert,\vert k_2\vert \lesssim 1$. We define, as before,
\begin{equation}\label{Alx25.5}
f^\mu_{j_1,k_1}=P_{[k_1-2,k_1+2]}Q_{j_1k_1}f^\mu,\qquad f^\nu_{j_2,k_2}=P_{[k_2-2,k_2+2]}Q_{j_2k_2}f^\nu,
\end{equation}
for $(k_1,j_1),(k_2,j_2)\in\mathcal{J}$. It suffices to show that
\begin{equation}\label{Alx26}
\sum_{(k_1,j_1),(k_2,j_2)\in\mathcal{J}}\Vert \mathcal{F}P_kT^{lo}[f^\mu_{j_1,k_1},f^\nu_{j_2,k_2}]\Vert_{L^\infty}\lesssim 2^{16\delta m}.
\end{equation}

If $\max\{j_1,j_2\}\le (1-\delta^2)m$ then integration by parts in $\eta$, using Proposition \ref{separation1} (i) and Lemma \ref{tech5}, gives an acceptable contribution. On the other hand, if $j_1=\max\{j_1,j_2\}\ge (1-\delta^2)m$ then 
\begin{equation*}
\|\widehat{f^\mu_{j_1,k_1}}(s)\|_{L^2}\lesssim 2^{-j_1+20\delta j_1},\qquad \|\widehat{f^\nu_{j_2,k_2}}(s)\|_{L^\infty}\lesssim 1,
\end{equation*}
as a consequence of Proposition \ref{separation1} (i) and Lemma \ref{LinEstLem}. Therefore, using also \eqref{cas4},
\begin{equation*}
\begin{split}
\Vert \mathcal{F}P_kT^{lo}[f^\mu_{j_1,k_1},f^\nu_{j_2,k_2}]\Vert_{L^\infty}&\lesssim 2^m\sup_{s}\Vert\widehat{f^\mu_{j_1,k_1}}(s)\Vert_{L^2}\Vert\widehat{f^\nu_{j_2,k_2}}(s)\Vert_{L^\infty}2^{-5.5\delta m}\lesssim 2^{15\delta m}2^{-\delta^2j_1}.
\end{split}
\end{equation*}
The desired bound \eqref{Alx26} follows.

Finally, to prove \eqref{Alx24} we may assume that
\begin{equation*}
j\ge \max(3m,\D),\qquad j+k\ge 2\delta j,\qquad -j\le k_1,k_2\le 2\delta^2j,
\end{equation*}
and define $f^\mu_{j_1,k_1},f^\nu_{j_2,k_2}$ as before. If $\min\{j_1,j_2\}\ge 99j/100-\D$ then
\begin{equation*}
\begin{split}
\Vert T^{\sigma\mu\nu}_m[f^\mu_{j_1,k_1},f^\nu_{j_2,k_2}]\Vert_{L^2}&\lesssim 2^m\sup_{s}\Vert \widehat{f^\mu_{j_1,k_1}}(s)\Vert_{L^1}\Vert f^\nu_{j_2,k_2}(s)\Vert_{L^2}\lesssim 2^m2^{-(1-20\delta)j_1-(1/2-\delta)j_2}
\end{split}
\end{equation*}
and therefore
\begin{equation*}
\begin{split}
\sum_{-j\le k_1,k_2\le 2\delta^2j}\sum_{\min\{j_1,j_2\}\ge 99j/100-\D}2^{8\max(k_1,k_2,0)}\Vert Q_{jk}T^{\sigma\mu\nu}_m[f^\mu_{j_1,k_1},f^\nu_{j_2,k_2}]\Vert_{B^\sigma_j}\lesssim 2^{-m/10}.
\end{split}
\end{equation*}
On the other hand, if $j_1\le 99j/100-\mathcal{D}$ then we rewrite
\begin{equation*}
\begin{split}
&Q_{jk}T_{m}^{\sigma\mu\nu}[f^\mu_{j_1,k_1},f^\nu_{j_2,k_2}](x)\\
&=C\phii_j^{(k)}(x)\cdot\int_{\mathbb{R}}q_m(s)\int_{\mathbb{R}^2}\left[\int_{\mathbb{R}^2}e^{i\left[s\Phi(\xi,\eta)+x\cdot \xi\right]}\varphi_k(\xi)\widehat{f^\mu_{j_1,k_1}}(\xi-\eta,s)d\xi\right] \widehat{f^\nu_{j_2,k_2}}(\eta,s)d\eta ds.
\end{split}
\end{equation*}
In the support of integration, we have the lower bound $\left\vert\nabla_\xi\left[s\Phi(\xi,\eta)+x\cdot\xi\right]\right\vert\approx\vert x\vert\approx 2^j$. Integration by parts in $\xi$ using Lemma \ref{tech5} gives
\begin{equation}\label{Alx26.5}
\left\vert Q_{jk}T_{m}^{\sigma\mu\nu}[f^\mu_{j_1,k_1},f^\nu_{j_2,k_2}](x)\right\vert\lesssim 2^{-10j}
\end{equation}
which gives an acceptable contribution. This finishes the proof.
\end{proof}

\subsection{The main decomposition}

We may assume that
\begin{equation}\label{Ass2}
\begin{split}
&-(j+m)(1+11\delta)/2\le k_1,k_2\le \delta^2(j+m),\qquad j+k\ge 19\delta j-17\delta m,\\
&j\le 3m,\qquad m\geq \D^2/8.
\end{split}
\end{equation}
Recall the definition \eqref{Alx80}. We fix $l_-:=\lfloor-(1-\delta/2) m\rfloor$ and $l_0:=\lfloor-12\delta m\rfloor$, and decompose
\begin{equation*}
\begin{split}
T^{\sigma\mu\nu}_m[f,g]&=\sum_{l_-\leq l\leq l_0}T_{m,l}[f,g],\\
\widehat{T_{m,l}[f,g]}(\xi)&:=\int_{\mathbb{R}}q_m(s)\int_{\mathbb{R}^2}e^{is\Phi(\xi,\eta)}\varphi_l^{[l_-,l_0]}(\Phi(\xi,\eta))\widehat{f}(\xi-\eta,s)\widehat{g}(\eta,s)d\eta ds.
\end{split}
\end{equation*}
When $l_-<l\leq l_0$, we may integrate by parts in time to rewrite $T_{m,l}[P_{k_1}f^\mu,P_{k_2}f^\nu]$,
\begin{equation}\label{Alx41}
\begin{split}
&T_{m,l}[P_{k_1}f^\mu,P_{k_2}f^\nu]=i\mathcal{A}_{m,l}[P_{k_1}f^\mu,P_{k_2}f^\nu]+i\mathcal{B}_{m,l}[P_{k_1}\partial_sf^\mu,P_{k_2}f^\nu]+i\mathcal{B}_{m,l}[P_{k_1}f^\mu,P_{k_2}\partial_sf^\nu],\\
&\mathcal{F}\mathcal{A}_{m,l}[P_{k_1}f,P_{k_2}g](\xi):=\int_{\mathbb{R}}q_m^\prime(s)\int_{\mathbb{R}^2}e^{is\Phi(\xi,\eta)}\widetilde{\varphi_{l}}(\Phi(\xi,\eta))\widehat{P_{k_1}f}(\xi-\eta,s)\widehat{P_{k_2}g}(\eta,s)\,d\eta ds,\\
&\mathcal{F}\mathcal{B}_{m,l}[P_{k_1}f,P_{k_2}g](\xi):=\int_{\mathbb{R}}q_m(s)\int_{\mathbb{R}^2}e^{is\Phi(\xi,\eta)}\widetilde{\varphi_{l}}(\Phi(\xi,\eta))\widehat{P_{k_1}f}(\xi-\eta,s)\widehat{P_{k_2}g}(\eta,s)\,d\eta ds,
\end{split}
\end{equation}
where $\widetilde{\varphi}_l(x)=x^{-1}\varphi_{l}(x)$ for $l<l_0$ and $\widetilde{\varphi}_{l_0}(x)=x^{-1}\varphi_{\geq l_0}(x)$. It is easy to see that the main Proposition \ref{ZNormProp} follows from Lemma \ref{ZNormEstSimpleLem1} and Lemmas \ref{FSPLem}--\ref{ResLem} below.

\begin{lemma}\label{FSPLem}
Assume that \eqref{Ass2} holds and, in addition, $m+\D\le j$. Then, for $l_-\le l\le  l_0$,
\begin{equation*}
2^{(1-20\delta)j}\Vert Q_{jk}T_{m,l}[P_{k_1}f^\mu,P_{k_2}f^\nu]\Vert_{L^2}\lesssim 2^{-50\delta^2m}.
\end{equation*}
\end{lemma}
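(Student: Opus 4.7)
The hypothesis $j \geq m + \D$ places us firmly beyond the ``propagation radius'' of the linear Klein--Gordon flow on the time interval $I_m$, so the argument should be a quantitative finite-speed-of-propagation statement obtained by integration by parts in $\xi$. I would decompose $f^\mu = \sum_{(k_1,j_1) \in \mathcal{J}} f^\mu_{j_1,k_1}$ and $f^\nu = \sum_{(k_2,j_2) \in \mathcal{J}} f^\nu_{j_2,k_2}$ as in \eqref{Alx25.5}, with $k_1,k_2$ fixed by the hypothesis of Proposition \ref{ZNormProp}, and reduce matters to a control of the double sum in $(j_1,j_2)$. Writing the operator in physical space,
\begin{equation*}
Q_{jk} T_{m,l}[f^\mu_{j_1,k_1}, f^\nu_{j_2,k_2}](x) = C\phii^{(k)}_j(x) \int q_m(s) \int\!\!\int e^{i[x\cdot\xi + s\Phi(\xi,\eta)]} \varphi_k(\xi)\varphi^{[l_-,l_0]}_l(\Phi(\xi,\eta)) \widehat{f^\mu_{j_1,k_1}}(\xi-\eta,s)\widehat{f^\nu_{j_2,k_2}}(\eta,s)\,d\xi\,d\eta\,ds,
\end{equation*}
and noting that on $\mathrm{supp}\,\phii^{(k)}_j$ we have $|x| \approx 2^j$ while $|s \nabla_\xi \Phi| \lesssim 2^m \ll 2^j$, the $\xi$-gradient of the total phase satisfies $|\nabla_\xi[x\cdot\xi + s\Phi]| \approx 2^j$ and higher $\xi$-derivatives are bounded by $2^m$.

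\emph{Main case:} $\max(j_1,j_2) \leq j - \D$. I would apply iterated integration by parts in $\xi$ using Lemma \ref{tech5}(i) with $K \approx 2^j$. Each step gains a factor $2^{-j}$, while each $\xi$-derivative falling on the amplitude costs at most $\max(2^{-k}, 2^{-l}, 2^{j_1}, 2^{j_2})$: the bound $2^{-k}$ comes from $\varphi_k(\xi)$, the bound $2^{-l}$ from $\varphi^{[l_-,l_0]}_l(\Phi)$ (using $|\nabla_\xi \Phi| \lesssim 1$), and the bounds $2^{j_i}$ from the frequency-space profiles via \eqref{FLinftybdDER}. The assumptions in \eqref{Ass2} combined with $j \geq m + \D$ give $j+k \geq 2\delta m$, $j + l \geq \delta m/2$, and $j - j_i \geq \D$, so each step produces a net gain of at least $2^{-\min(2\delta m,\,\delta m/2,\,\D)} \leq 2^{-\D}$. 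Iterating $N$ times yields a pointwise bound $|Q_{jk}T_{m,l}[\cdots](x)| \lesssim_N 2^{-N\D}$, which after multiplication by $2^{(1-20\delta)j}$ and summation in $(j_1,j_2)$ is arbitrarily small.

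\emph{Tail case:} $\max(j_1,j_2) \geq j - \D$. Assume without loss of generality $j_2 \geq j_1$, so $j_2 \geq j - \D \geq m$. Here the IBP cost $2^{j_2}$ defeats the gain $2^{-j}$, so I would use Lemma \ref{PhiLocLem} (with a Schwartz approximation of $\varphi^{[l_-,l_0]}_l$ at scale $2^{-l}$, which is permissible since $(1+\delta/4)(-l) \leq m$) with the split $q=2$, $r=\infty$, combined with the linear decay from \eqref{LinftyBd} and the $Z$-norm $L^2$ control $\|f^\mu_{j_1,k_1}\|_{L^2} \lesssim 2^{-(1-20\delta)j_1}$. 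This gives
\begin{equation*}
\|T_{m,l}[f^\mu_{j_1,k_1}, f^\nu_{j_2,k_2}]\|_{L^2} \lesssim 2^m \,\|f^\mu_{j_1,k_1}\|_{L^2}\,\sup_{t \in [2^{m-2},2^{m+2}]} \|e^{-it\Lambda_\nu} f^\nu_{j_2,k_2}\|_{L^\infty},
\end{equation*}
and using the second line of \eqref{LinftyBd} for $j_2 \geq m$ yields a total of $2^m \cdot 2^{-(1-20\delta)(j_1+j_2)}$. Summing in $j_1 \geq 0$ and $j_2 \geq j - \D$, one gains an exponent of the form $(1-20\delta)\D - m$ in the unfavorable regime $j \sim m$, so closing the estimate requires exploiting either the phase-oscillation improvement in Lemma \ref{PhiLocLem} beyond the crude $L^\infty$ bound, or the angular refinement from the $H^{N_1/2}_\Omega$ control of the profiles (using that spatially localized atoms at scale $2^{j_2}$ are angularly spread when $j_2$ is large).

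\emph{Main obstacle.} The integration-by-parts argument in the main case is routine once the derivative bookkeeping is set up. The genuine difficulty is the tail case, where the bare $L^\infty \times L^2$ bound only marginally balances the $2^m$ factor from the time integration against the $2^{(1-20\delta)j}$ weight. Producing the additional $2^{-50\delta^2 m}$ gain likely requires either a finer use of Lemma \ref{PhiLocLem} (with $q=r=4$ and a $L^4$-interpolation of the linear bounds tailored to the regime $j_2 \geq m$) or an angular subdivision together with integration by parts in the rotation vector-field $\Omega$ via Lemma \ref{RotIBP}.
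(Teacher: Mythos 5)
Your overall instinct (integration by parts in $\xi$ against the weight $|x|\approx 2^j\gg 2^m\gtrsim |s\nabla_\xi\Phi|$) is the right engine for part of the proof, but there are two genuine gaps. First, the case split should be governed by $\min(j_1,j_2)$, not $\max(j_1,j_2)$, and your threshold $j-\D$ is too weak: in your main case the worst cost per integration by parts is $2^{j_i-j}=2^{-\D}$, so $N$ iterations only gain the fixed constant $2^{-N\D}$, which cannot absorb the prefactor $2^{(1-20\delta)j+m}\lesssim 2^{4m}$ for any fixed $N$ (and the constants in Lemma \ref{tech5} depend on $N$). The correct setup is to run the $\xi$-integration by parts whenever $\min(j_1,j_2)\le j-\delta m$, choosing the convolution variable so that the $\xi$-derivative falls on the better-localized atom; then every term in the amplitude costs at most $2^{j-\delta m/2}$ per derivative and a fixed $N=O(1/\delta)$ suffices. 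This in particular disposes of the configurations in your ``tail case'' with one small and one large $j_i$ (e.g.\ $j_1=0$, $j_2\approx j$), for which your $L^2\times L^\infty$ bound gives $2^{(1-20\delta)j}\cdot 2^m\cdot 2^{-(1-20\delta)j_2}\approx 2^m$, i.e.\ it misses the target by a full factor of $2^m$, not marginally.

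Second, in the genuinely hard regime where both $j_1,j_2\ge j-\delta m$, the mechanisms that actually close the estimate are absent from your sketch. For $l\le -3m/4$ the paper uses the bilinear Schur-type bounds of Lemma \ref{Shur2Lem}, which convert the modulation cutoff $\varphi_l(\Phi)$ into a gain $2^{l/2}\le 2^{-3m/8}$ measured against the angular-supremum norms \eqref{RadL2} of \emph{both} atoms (this is where the further $A_n$-decomposition near the resonant spheres and the $H^{N_1}_\Omega$ control enter). For $-3m/4\le l\le l_0$ that gain is insufficient, and one must integrate by parts in $s$ (the normal form \eqref{Alx41}), paying $2^{-l}$ but gaining the quadratic smallness of $\partial_s f^\nu$; the resulting terms are then handled with the coherent/noncoherent decomposition of $\partial_s f^\nu$ from Lemma \ref{dtfLemPrelim} — the coherent piece carries an explicit oscillation $e^{is\Psi_{\nu\alpha\beta}}$ that still permits the $\xi$-integration by parts, and the noncoherent piece is small in $L^2$ and is absorbed by another Schur bound. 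You gesture at Lemma \ref{RotIBP} and a finer use of Lemma \ref{PhiLocLem} as possible fixes, but never at integration by parts in time, which is indispensable when $l$ is near $l_0=-12\delta m$ and the modulation gain $2^{l/2}$ is only $2^{-6\delta m}$.
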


\begin{lemma}\label{TimeNonRes}
Assume that \eqref{Ass2} holds and, in addition, $j\leq m+\D$. Then
\begin{equation*}
2^{(1-20\delta)j}\Vert Q_{jk}T_{m,l_0}[P_{k_1}f^\mu,P_{k_2}f^\nu]\Vert_{L^2}\lesssim  2^{-50\delta^2m}.
\end{equation*}
\end{lemma}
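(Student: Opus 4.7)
The plan is to use integration by parts in time (normal form). Since we are in the high-modulation regime $|\Phi| \gtrsim 2^{l_0} = 2^{-12\delta m}$, we can divide by $\Phi$ at the cost of a factor $2^{-l_0} = 2^{12\delta m}$ in the multiplier. Applying formula \eqref{Alx41} decomposes $T_{m,l_0}$ into three pieces: a boundary term $\mathcal{A}_{m,l_0}$ carrying $q'_m(s)$, and two terms $\mathcal{B}_{m,l_0}[\partial_s f^\mu, f^\nu]$ and $\mathcal{B}_{m,l_0}[f^\mu, \partial_s f^\nu]$. Each piece needs to satisfy $2^{(1-20\delta)j}\|\cdot\|_{L^2} \lesssim 2^{-50\delta^2 m}$, and in view of $j \le m+\mathcal{D}$, this requires gains well beyond the naive dispersive rate.

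For the boundary term $\mathcal{A}_{m,l_0}$ I would dyadically split $\widetilde{\varphi_{l_0}}(\Phi) = \sum_{\tau \ge l_0}\varphi_\tau(\Phi)/\Phi$ and apply Lemma \ref{PhiLocLem} with exponents $(q,r)=(\infty,2)$, so that each piece costs $2^{-\tau}$ but gains $\|e^{-is\Lambda_\mu}P_{k_1}f^\mu\|_{L^\infty}\|P_{k_2}f^\nu\|_{L^2}$. Because $\int|q'_m(s)|ds \lesssim 1$ the factor $2^m$ from time integration does not appear; combining this with the refined pointwise bound \eqref{LinftyBd} (applied after atomically decomposing $f^\mu = \sum_{j_1,n_1}f^\mu_{j_1,k_1,n_1}$ and using the case $j_1 \le (1-\delta^2)m + (k_1)_-$ to extract the sharpest decay $2^{-m+2\delta m}2^{-3(k_1)_-/4}$) gives a bound small enough to dominate $2^{(1-20\delta)j}$ for $j\le m+\mathcal{D}$.

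For the two $\mathcal{B}_{m,l_0}[\partial_s f, g]$ terms, the key is the refined decomposition from Lemma \ref{dtfLem},
\begin{equation*}
\Omega^a(\partial_t V_\sigma) = \eps_1^2\{f_C + f_{SR} + f_{NC} + \partial_s F_C + \partial_s F_{NC} + \partial_s F_{LO}\}.
\end{equation*}
The $f_{NC}$ piece has $\|f_{NC}\|_{L^2}\lesssim 2^{-19m/10}$, so a Schur-type $L^2\times L^2\to L^2$ bound on each dyadic $\Phi$-slice is easily within tolerance. The secondary-resonance piece $f_{SR}$ is controlled by localizing to the small neighborhoods of the space-time resonance inputs from Proposition \ref{Separation2} combined with the pointwise bounds \eqref{dtfSR}, while the coherent piece $f_C = \sum e^{is\Psi}g^q$ is substituted into $\mathcal{B}_{m,l_0}$, producing a new effective phase $\Phi(\xi,\eta) + \Psi(\xi-\eta)$, and then handled by stationary phase in $\eta$ together with the separation of iterated resonances from Proposition \ref{Separation2}.

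The most delicate contribution is that of the $\partial_s F_\ast$ pieces, which I would handle by a \emph{second} integration by parts in time. The decisive structural gain is that the ``phase-derivative'' term produced by the second IBP carries the symbol $\Phi\cdot\widetilde{\varphi_{l_0}}(\Phi) = \varphi_{\ge l_0}(\Phi)$, i.e.\ the denominator $\Phi$ cancels exactly and no $2^{-l_0}$ loss is incurred the second time; combined with the $L^\infty_\xi$ bounds $\|\widehat{F_C}\|_{L^\infty}\lesssim 2^{-m+3\delta m + 10\delta^2m}$, $\|F_{NC}\|_{L^2}\lesssim 2^{-41m/40}$, and the improved support $P_{\ge -13m/15}F_{LO}\equiv 0$ of \eqref{dtfType3}, this yields the desired $2^{-50\delta^2 m}$ decay. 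The main obstacle is this iterated normal form: one must track that the combined phase appearing after substituting the coherent part of $F_C$ remains nondegenerate (using the separation of resonances from Proposition \ref{Separation2}) and that the second IBP does not reintroduce terms outside the scope of the decomposition; the restriction $j\le m+\mathcal{D}$ is used here to keep the output spatially localized on a scale comparable to $2^m$, so that the stationary-phase arguments close.
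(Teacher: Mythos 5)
Your overall strategy---a single normal form via \eqref{Alx41} followed by separate estimates for $\mathcal{A}_{m,l_0}$ and the two $\mathcal{B}_{m,l_0}$ terms---is the paper's strategy, and your treatment of $\mathcal{A}_{m,l_0}$ is close to the paper's first estimate. But for the $\mathcal{B}$ terms you have imported the machinery of the genuinely resonant case: the refined decomposition of Lemma \ref{dtfLem} into $f_C+f_{SR}+f_{NC}+\partial_sF_\ast$, the iterated-resonance analysis of Proposition \ref{Separation2}, and a second integration by parts in time. None of this is needed when $l=l_0$. Since the modulation is bounded below by $2^{l_0}=2^{-12\delta m}$, the division by $\Phi$ costs only $2^{12\delta m}$, and the coarse bounds of Lemma \ref{dtfLemPrelim} already close the estimate: the paper pairs $\Vert e^{-is\Lambda_\mu}P_{k_1}\partial_sf^\mu\Vert_{L^\infty}\lesssim 2^{-2m+43\delta m}$ from \eqref{dtFTotalBds} with $\Vert f^\nu_{j_2,k_2}\Vert_{L^2}\lesssim 2^{-j_2/3}$ when $j_2\gtrsim\delta m$, and swaps the roles (using \eqref{dtFTotalBds2Prelim} for the $L^2$ factor and the third case of \eqref{LinftyBd} for the $L^\infty$ factor) when $j_2\leq 500\delta m$. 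That $j_2$-dichotomy is the whole proof; the apparatus you describe is what is actually required for Lemma \ref{ResLem}, where $l$ can be as small as $l_-\approx -m$.

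Beyond the over-complication, two points in your sketch are genuine gaps as written. First, for $\mathcal{A}_{m,l_0}$ you only invoke the regime $j_1\le(1-\delta^2)m+k_-$ of \eqref{LinftyBd}; when $j_1$ (and hence $j_2$) is large the pointwise decay saturates at $2^{-j_1+20\delta j_1}$ and the $L^\infty\times L^2$ pairing no longer beats $2^{(1-20\delta)j}$ by itself---the paper switches to an $L^2\times L^1$ (Fourier-side) estimate via \eqref{FL1bd} for $j_1\geq m$, and you need some such complementary argument. Second, the analysis you propose for the coherent and $\partial_sF_\ast$ pieces relies on $\varphi_l(\Phi)$ localizing to $|\Phi|\approx 2^l\ll 1$ (this is how one concludes proximity to a space-time resonance and applies Proposition \ref{Separation2}); at $l=l_0$ the cutoff is $\varphi_{\geq l_0}(\Phi)$, whose support has no upper bound on $|\Phi|$, so those steps do not transfer verbatim and would require a further splitting that you do not supply. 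Neither gap is fatal, but both must be filled, and the simpler route above avoids them entirely.
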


\begin{lemma}\label{StronglyResLem}
Assume that \eqref{Ass2} holds and, in addition, $j\leq m+\D$. Then, for $l_-<l<l_0$
\begin{equation*}
\Vert Q_{jk}T_{m,l_-}[P_{k_1}f^\mu,P_{k_2}f^\nu]\Vert_{B^\sigma_j}+\Vert Q_{jk}\mathcal{A}_{m,l}[P_{k_1}f^\mu,P_{k_2}f^\nu]\Vert_{B_j^\sigma}\lesssim 2^{-50\delta^2m}.
\end{equation*}
\end{lemma}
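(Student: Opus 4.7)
The lemma concerns two contributions in the most delicate regime $j\le m+\D$: the smallest-modulation piece $T_{m,l_-}$ (with $|\Phi|\lesssim 2^{l_-}=2^{-(1-\delta/2)m}$) and the boundary terms $\mathcal{A}_{m,l}$, $l_-<l<l_0=\lfloor-12\delta m\rfloor$, produced by the normal-form integration by parts in time. The target gain $2^{-50\delta^2 m}$ is small, so the proof amounts to a careful accounting of small exponents built from Lemma \ref{L2EstLem}, the $A_n^b$ weighting in the $B_j^b$ norm, and the geometry of the space-time resonance set.

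For $\mathcal{A}_{m,l}$, the decisive observation is $\int_{\mathbb{R}}|q_m'(s)|\,ds\lesssim 1$: the time integration is effectively trivial and the problem reduces to a fixed-time bound on the bilinear operator with Fourier symbol $\widetilde\varphi_l(\Phi)=\varphi_l(\Phi)/\Phi$ of amplitude $\lesssim 2^{-l}$. Treating this as a linear Fourier integral operator on $P_{k_1}f^\mu$ (with $P_{k_2}f^\nu$ placed in $L^2$), I invoke Lemma \ref{L2EstLem} with $\lambda=-l$. Since $\delta/2<\delta_1$, the parameter $\lambda$ falls in the allowed range $[5m/6,m(1-\delta_1)]$ for every $l\in[-m(1-\delta_1),l_0)$; the thin residual sliver $l\in(l_-,-m(1-\delta_1))$ of length $(\delta_1-\delta/2)m$ is handled by a direct Schur estimate using the stationary-phase localization of Proposition \ref{spaceres} and Lemma \ref{ShurLem}. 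The operator-norm bound $\lesssim 2^{12k}2^{-1.004|l|}$ combined with the $2^{-l}$ amplitude produces a scale-by-scale gain of $2^{0.004 l}$, and geometric summation over $l$ together with the $L^\infty$ dispersion estimate of Lemma \ref{LinEstLem} for $P_{k_2}f^\nu$ (resp.\ the $A_n^b$ decomposition for $\sigma=b$, which provides the missing $2^{-(1/2-19\delta)n}$) gives the required $2^{-50\delta^2 m}$.

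For $T_{m,l_-}$, integration by parts in $\eta$ via Lemma \ref{tech5} removes the non-stationary contribution, after which $|\Phi|\lesssim 2^{l_-}$ pins $\eta$ into a $2^{-m/2+O(\delta^2)m}$-neighborhood of the $\eta$-critical point $p(\xi)$ and forces the output frequency into $\{|\Psi_\sigma^\dagger(\xi)|\lesssim 2^{l_-}\}$. When $\sigma=e$ this set is empty since $\Psi_e^\dagger\ge 10$, so the contribution is negligible. When $\sigma=b$ the output is supported in a $2^{l_-}$-thin annulus around the resonance spheres $|\xi|=\gamma_{1,2}$, so the $A_{n,(j)}^b$ projections in the $B_j^b$ norm are active only for $n\gtrsim -l_-=(1-\delta/2)m$. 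A pointwise bound $|\widehat{T_{m,l_-}}(\xi)|\lesssim 2^{O(\delta)m}$ from the $\kappa_r\times\kappa_\theta\approx 2^{-m/2}\times 2^{l_-+m/2}$ volume of the $\eta$-stationary region, combined with the thin annular support in $\xi$, yields $\|A_{n,(j)}^b Q_{jk}T_{m,l_-}\|_{L^2}\lesssim 2^{-n/2+O(\delta)m}$. Inserting this into the $B_j^b$ weight $2^{(1-20\delta)j}2^{-(1/2-19\delta)n}$ with $j\le m+\D$ and $n\in[(1-\delta/2)m,j+1]$ produces an exponent of order $-c\delta m+O(\delta^2)m$, beating $2^{50\delta^2 m}$ comfortably.

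The principal obstacle is the $\sigma=b$ part of $T_{m,l_-}$: the modulation scale $2^{l_-}$ sits just outside the direct range of applicability of Lemma \ref{L2EstLem}, and the gain must instead be extracted from the interplay of the $A_n^b$ weighting, the codimension-one structure of the space-time resonance set, and the restricted nondegeneracy property \eqref{RNDC}. Keeping the hierarchy of small parameters ($\delta^2\ll\delta\ll\delta_1$) consistent throughout, so that each geometric summation costs only a polynomial factor in $m$ and each exponent closes with the correct sign, is the central technical labor of the proof.
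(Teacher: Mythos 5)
Your reduction of $\mathcal{A}_{m,l}$ to a fixed-time bilinear bound with symbol $\widetilde{\varphi}_l(\Phi)$ of amplitude $2^{-l}$ is correct and is in fact how the paper organizes things (it proves a single ``instantaneous'' estimate \eqref{NRCCLNewBd} covering both $T_{m,l_-}$, where $2^{-l}$ absorbs the $2^m$ from the $ds$-integration, and $\mathcal{A}_{m,l}$, where $\int|q_m'|\lesssim1$). But the core of your argument for $\mathcal{A}_{m,l}$ — invoking Lemma \ref{L2EstLem} with $\lambda=-l$ — fails on two counts. First, the range: the lemma requires $5m/6\le\lambda\le m(1-\delta_1)$, whereas $-l$ runs over $(12\delta m,(1-\delta/2)m)$; the entire interval $l\in(-5m/6,l_0)$, i.e.\ most of the range, has $\lambda<5m/6$, so your claim that ``$\lambda$ falls in the allowed range for every $l\in[-m(1-\delta_1),l_0)$'' is simply false (and since $\delta/2<\delta_1$, the $l_-$ end also overshoots the upper limit). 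Second, and more fundamentally, the quantitative gain is far too weak: the best the lemma could give is an operator norm $2^{-1.004\lambda}$ against the amplitude $2^{-l}=2^{\lambda}$, i.e.\ a net $2^{0.004l}\ge2^{-0.004m}$, while the $B_j^\sigma$ norm carries the weight $2^{(1-20\delta)j}$ with $j$ as large as $m+\D$, so you need the output to be of size $\approx2^{-m}$ in $L^2$. No $L^2\to L^2$ operator bound of this type can produce that. The paper's mechanism is different: a \emph{pointwise} bound on the stationary contribution weighted by the distance to the resonant spheres, $\sup_\xi(1+2^m\Psi_b^\dagger(\xi))|\mathcal{R}_{||}(\xi)|\lesssim2^{l+O(\delta)m}$ (see \eqref{Alx81}), converted into the $B_j^b$ norm through the $2^{-n/2}$ measure of each annulus $\{\Psi_b^\dagger\approx2^{-n}\}$ together with the $2^{-(1/2-19\delta)n}$ weight (the computation \eqref{Alx67.5}). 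Lemma \ref{L2EstLem} is used only in the energy estimates of Section \ref{SSterm}, never in the dispersive analysis; even there, freezing $\widehat{f}(\xi-\eta)$ into the symbol $a$ requires $|D^\alpha\widehat f|\lesssim2^{|\alpha|m/2}$, i.e.\ $j_1\lesssim m/2$, a restriction you do not impose.

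Your sketch for $T_{m,l_-}$ is much closer to the paper's Case 1 (volume of the $\kappa_\theta\times\kappa_r$ stationary box, output pinned near $|\xi|\in\{\gamma_1,\gamma_2\}$, $A_{n,(j)}^b$ weights), but it only works when both $j_1,j_2$ are small enough that $\kappa_r^2\lesssim2^{l}$ and the inputs are not themselves concentrated on the resonant spheres. The remaining regimes are a substantial part of the proof and are absent from your proposal: $j_1\ge7m/8$ (handled by the angular Schur bound of Lemma \ref{Shur2Lem}), $j_2\ge9m/10$ with the further split on $\min\{k,k_1,k_2\}$, the sub-case $\kappa_r^2\ge2^l2^{j_2/4}$ requiring a dyadic decomposition in $|\nabla_\eta\Phi|$, and above all the case \eqref{Alx74.4} where the second input satisfies $n_2\ge1$ (is localized near a resonant sphere): there one must decompose in $|\nabla_\xi\Phi|\approx2^p$, use the two-sided volume bounds of Proposition \ref{volume} (iv), and finish with the approximate finite-speed-of-propagation argument \eqref{FinalEstSTRes}. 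Without these, and with the $\mathcal{A}_{m,l}$ argument resting on an inapplicable lemma, the proof does not close.
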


\begin{lemma}\label{ResLem}
Assume that \eqref{Ass2} holds and, in addition, $j\leq m+\D$. Then, for $l_-<l<l_0$
\begin{equation*}
\Vert Q_{jk}\mathcal{B}_{m,l}[P_{k_1}f^\mu,P_{k_2}\partial_sf^\nu]\Vert_{B^\sigma_j}\lesssim 2^{-50\delta^2m}.
\end{equation*}
\end{lemma}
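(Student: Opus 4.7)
\textbf{Proof plan for Lemma \ref{ResLem}.} The strategy is to substitute the fine decomposition of $\partial_s f^\nu$ provided by Lemma \ref{dtfLem} into the bilinear expression $\mathcal{B}_{m,l}[P_{k_1}f^\mu, P_{k_2}\partial_s f^\nu]$ and treat each resulting piece separately. Write
\[
\Omega^a\partial_s f^\nu = \epsilon_1^2 \{f_C + f_{SR} + f_{NC} + \partial_s F_C + \partial_s F_{NC} + \partial_s F_{LO}\}
\]
and decompose $\mathcal{B}_{m,l}[P_{k_1}f^\mu, P_{k_2}\partial_s f^\nu]$ into six corresponding bilinear terms. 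The six pieces carry very different information, and the bulk of the work is concentrated in the coherent contribution.

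The three terms involving $\partial_s F_C$, $\partial_s F_{NC}$, $\partial_s F_{LO}$ are handled by a \emph{second} integration by parts in $s$. This moves the $\partial_s$ onto $q_m$, onto $f^\mu$, or onto the modulation multiplier (producing only harmless factors since $\Phi$ is independent of $s$); the resulting integrands are controlled using the strictly better $L^\infty_\xi$ and $L^2$ bounds on $F_C,F_{NC},F_{LO}$ collected in \eqref{dtfType3}. These gains, combined with the factor $2^{-l}\leq 2^{(1-\delta/2)m}$ from $\widetilde{\varphi_l}$ and the modulation-localization bound of Lemma \ref{PhiLocLem}, close the estimate. The contribution of $f_{NC}$ is the easiest: since $\|f_{NC}\|_{L^2}\lesssim 2^{-19m/10}$, a Schur-test argument based on the sub-level set and rotation-localization machinery of Lemma \ref{RotIBP} is sufficient. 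For $f_{SR}$, one uses the fact that it is supported near the isolated space-time resonance inputs and carries the improved $L^\infty_\xi$-with-derivatives bound \eqref{dtfSR}: dyadic localization in frequency plus a further $\Omega_\eta$-integration by parts reduces the problem to a thin neighborhood whose small volume is absorbed by the prefactor $2^{-3m/2+75\delta m}$.

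The main difficulty is the coherent piece $f_C$. Inserting \eqref{dtfType1} produces bilinear integrals whose oscillation is driven by the \emph{iterated phase}
\[
\Phi_{\sigma\mu\nu}(\xi,\eta) + \Psi_{\nu\theta\kappa}(\eta), \qquad \mu+\nu\neq 0,\ \theta+\kappa\neq 0,
\]
which carries the risk of a genuine 2-cascade when $|\Phi_{\sigma\mu\nu}(\xi,\eta)|\approx 2^l$, $|\Psi_{\nu\theta\kappa}(\eta)|\lesssim 2^{-3\delta m}$, and $\nabla_\eta[\Phi_{\sigma\mu\nu}+\Psi_{\nu\theta\kappa}]$ nearly vanish simultaneously. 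The crucial algebraic input is Proposition \ref{Separation2}, which asserts a quantitative separation of iterated resonances: under the first two restrictions one has $|\nabla_\eta[\Phi_{\sigma\mu\nu}+\Psi_{\nu\theta\kappa}]|\gtrsim 2^{-C\delta m}$ except on a small exceptional $\xi$-set. Off the exceptional set, integration by parts in $\eta$ via Lemma \ref{tech5}, respectively in $\Omega_\eta$ via Lemma \ref{RotIBP}, yields rapid decay; on the exceptional set the $\xi$-volume is so small that a direct $L^1_\xi$ estimate using the $L^\infty$ control of $g^q_{\sigma\mu\nu}$ from \eqref{dtfType1} closes the bound. When $\sigma=b$ and $j$ is large, one must additionally track the position of the output relative to the space-time resonance spheres $|\xi|=\gamma_{1,2}$ through the projections $A^\sigma_{n,(j)}$ defining $B^\sigma_j$.

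The anticipated obstacle is precisely this coherent case: the summations over $q\in[0,(1/2-40\delta)m]$, $l\in[l_-,l_0]$, $n\in[0,j+1]$, and over the various frequency pairs $(k_1,k_2)$ must be organized so that each fixed $(q,l,n)$-slice is controlled with a geometric surplus. Two subtle points require care: the bound on $\partial_s g^q_{\sigma\mu\nu}$ in \eqref{dtfType1} carries an extra $2^q$ loss, so any additional integration by parts in $s$ applied to the coherent piece must preserve the $q$-decomposition and exploit all cancellations before summing; and matching the $n$-dependent weight $2^{(1/2-19\delta)n - (1-20\delta)j}$ inside $B^\sigma_j$ requires a simultaneous dyadic decomposition of $\xi$ with respect to $\Psi^\dagger_b(\xi)$, whose interplay with the iterated-phase integration by parts is the combinatorial heart of the proof.
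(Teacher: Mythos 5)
Your overall architecture is the same as the paper's: substitute the decomposition of Lemma \ref{dtfLem}, handle $\partial_sF_C,\partial_sF_{NC},\partial_sF_{LO}$ by a second integration by parts in $s$ together with the bounds \eqref{dtfType3}, dispose of $f_{NC}$ by Schur tests, treat $f_{SR}$ via its localization near the resonant inputs and the weights $A^\sigma_{n,(j)}$, and attack $f_C$ through the iterated phase $\mathfrak{p}(\xi,\eta)=\Phi_{\sigma\mu\nu}(\xi,\eta)+\Psi_{\nu\theta\kappa}(\eta)$ and Proposition \ref{Separation2}. Up to that point the plan is faithful.

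The gap is in your treatment of the degenerate regime for $f_C$, which is the only genuinely hard part of the lemma. First, you misquote Proposition \ref{Separation2}: it does not say that $|\nabla_\eta[\Phi+\Psi]|$ is bounded below off a small exceptional $\xi$-set; it says that smallness of $\nabla_\eta\mathfrak{p}$ forces smallness of $\nabla_\xi\Phi_{\sigma\mu\nu}$ (and conversely), with the quantitative consequences recorded in \eqref{PropPhiSupp2}, namely $|\nabla_\eta\mathfrak{p}|\approx|\xi-p(\eta)|\approx|\nabla_\xi\Phi|\approx 2^r$ and, crucially, $|\mathfrak{p}(\xi,\eta)|\approx 2^{2r}$. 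Second, your proposed fallback on the degenerate set --- a direct $L^1_\xi$ volume estimate using the $L^\infty$ bound on $g^q$ --- does not close. After localizing to $\varphi_r(\nabla_\eta\mathfrak{p})$, the direct Schur/pointwise estimates only handle $r\leq -(m-2q)/3$ or $j_1\geq m/2$; in the residual range $r\geq-(m-2q)/3$, $j_1\leq m/2$ the volume of the support is not small enough to beat the weight $2^{(1-20\delta)(m+r)}$ demanded by the $B^\sigma_j$ norm. The mechanism that actually closes this case is a \emph{second normal form in time}: because the iterated modulation satisfies $|\mathfrak{p}|\approx 2^{2r}$ with $2^r$ not too small there, one integrates by parts in $s$ against $e^{is\mathfrak{p}}$, paying only $2^{-2r}$, and then controls the resulting terms containing $\partial_s f^\mu$ and $\partial_s g_q$ by reinvoking the decomposition of Lemma \ref{dtfLemPrelim} (splitting $\partial_sf^\mu$ into coherent and non-coherent parts) together with the bound $\Vert\partial_sg_q\Vert_{L^\infty}\lesssim 2^{q-2m+5\delta m}$. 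You gesture at an extra integration by parts in $s$ in your closing paragraph, but you do not identify the lower bound $|\mathfrak{p}|\approx|\nabla_\eta\mathfrak{p}|^2$ as the reason it gains anything, and your stated primary mechanism for this region is insufficient.
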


\subsection{Approximate finite speed of propagation} In this subsection we prove Lemma \ref{FSPLem}. We define the functions $f^\mu_{j_1,k_1}$ and $f^\nu_{j_2,k_2}$ as before, see \eqref{Alx25.5}, and we further decompose 
\begin{equation}\label{Alx70}
f^\mu_{j_1,k_1}=\sum_{n_1=0}^{j_1+1} f^\mu_{j_1,k_1,n_1},\qquad f^\nu_{j_2,k_2}=\sum_{n_2=0}^{j_2+1} f^\nu_{j_2,k_2,n_2}
\end{equation}
as in \eqref{Alx100}. If $l\le -3m/4$ and $\min\{j_1,j_2\}\le j-\delta m$ then the same argument as in the proof of \eqref{Alx24} leads to rapid decay, as in \eqref{Alx26.5}. If $l\le -3m/4$ and $\min\{j_1,j_2\}\ge j-\delta m$, then we use Lemma \ref{Shur2Lem} (Schur's test) to estimate
\begin{equation*}
\begin{split}
\big\Vert T_{m,l}&[f^\mu_{j_1,k_1,n_1},f^\nu_{j_2,k_2,n_2}]\big\Vert_{L^2}\\
&\lesssim 2^m2^{l/2-n_1/2-n_2/2}\sup_{s}\big[\big\Vert \sup_\theta|\widehat{f^\mu_{j_1,k_1,n_1}}(r\theta,s)|\big\Vert_{L^2(rdr)}\big\Vert \sup_\theta|\widehat{f^\nu_{j_2,k_2,n_2}}(r\theta,s)|\big\Vert_{L^2(rdr)}\big].
\end{split}
\end{equation*}
The desired bound follows from \eqref{RadL2} and summation over $j_1,n_1,j_2,n_2$ in this case.

We now consider the case $-3m/4\le l\leq l_0$ and use the formula \eqref{Alx41}. The contribution of $\mathcal{A}_{m,l}$ can be estimated as above and we focus on the contribution of $\mathcal{B}_{m,l}$. We decompose
\begin{equation*}
\mathcal{B}_{m,l}[P_{k_1}f^\mu,P_{k_2}\partial_sf^\nu]=\sum_{j_1,n_1}\mathcal{B}_{m,l}[f^\mu_{j_1,k_1,n_1},P_{k_2}\partial_sf^\nu].
\end{equation*}
If $j_1\le j-\delta m$ then we integrate by parts in $\xi$ to prove rapid decay as in \eqref{Alx26.5}. If 
\begin{equation}\label{Alx72}
-3m/4\le l\leq l_0,\qquad j_1\ge j-\delta m,\qquad k_2\leq -m/2,
\end{equation}
then we notice that
\begin{equation*}
\sup_{|\lambda|\leq 2^{(1-\delta)m}}\|e^{-i(s+\lambda)\Lambda_\nu}(P_{k_2}\partial_sf^\nu)(s)\|_{L^{\infty}}\lesssim 2^{-2m+43\delta m},
\end{equation*}
as a consequence of \eqref{dtFTotalBds}. Therefore, using Lemma \ref{PhiLocLem},
\begin{equation}\label{Alx71}
2^{(1-20\delta)j}\big\|\mathcal{B}_{m,l}[f^\mu_{j_1,k_1,n_1},P_{k_2}\partial_sf^\nu]\big\|_{L^2}\lesssim 2^{(1-20\delta)j}2^m 2^{-l}\cdot 2^{-2m+43\delta m}2^{-j_1+20\delta j_1}2^{n_1/2-19\delta n_1}.
\end{equation}
Moreover, in view of Proposition \ref{separation1} (i),
\begin{equation*}
\mathcal{B}_{m,l}[f^\mu_{j_1,k_1,n_1},P_{k_2}\partial_sf^\nu]=0\qquad\text{ unless }n_1=0\text{ or }l=l_0.
\end{equation*}
The desired bound in the lemma follows from \eqref{Alx72} and \eqref{Alx71} if $n_1=0$ or if $l=l_0$ and $n_1\leq 3m/2$. On the other hand, if $l=l_0$ and $n_1\geq 3m/2$ then we estimate
\begin{equation*}
\begin{split}
2^{(1-20\delta)j}\big\|\mathcal{B}_{m,l}[f^\mu_{j_1,k_1,n_1},P_{k_2}\partial_sf^\nu]\big\|_{L^2}&\lesssim 2^{(1-20\delta)j}2^m 2^{-l_0}\cdot \sup_{s}\big[\|\widehat{f^\mu_{j_1,k_1,n_1}}(s)\|_{L^1}\|P_{k_2}\partial_sf^\nu(s)\|_{L^2}\big]\\
&\lesssim 2^{(1-20\delta)j}2^m 2^{12\delta m}\cdot 2^{-j_1+21\delta j_1}2^{-19\delta n_1}2^{-m+4\delta m},
\end{split}
\end{equation*}
using \eqref{FL1bd} and \eqref{dtFTotalBds2Prelim}. This completes the proof in the case \eqref{Alx72} (recall that $j\leq 3m$).

Finally, in the remaining case
\begin{equation}\label{Alx73}
-3m/4\le l\leq l_0,\qquad j_1\ge j-\delta m,\qquad k_2\geq -m/2,
\end{equation}
we decompose, according to Lemma \ref{dtfLemPrelim},
\begin{equation*}
\begin{split}
\partial_sf^\nu(s)&=\widetilde{f^\nu_C}(s)+\widetilde{f^\nu_{NC}}(s),\qquad \Vert P_{k_2}\widetilde{f^\nu_{NC}}(s)\Vert_{L^2}\lesssim 2^{-29/20m},\\
\widehat{\widetilde{f_{C}^\nu}}(\xi,s)&=\sum_{\alpha,\beta\in\mathcal{P},\,\alpha+\beta\neq 0} e^{is\Psi_{\nu\alpha\beta}(\xi)}g_{\nu\alpha\beta}(\xi,s),\qquad\Vert \varphi_{k_2}(\xi)D^\rho_\xi g_{\nu\alpha\beta}(\xi,s)\Vert_{L^\infty}\lesssim 2^{-m/2+(1-\delta)m\vert\rho\vert}.
\end{split}
\end{equation*}

We can rewrite
\begin{equation*}
\begin{split}
Q_{jk}&\mathcal{B}_{m,l}[f^\mu_{j_1,k_1,n_1},P_{k_2}\widetilde{f^\nu_C}](x)=c\sum_{\alpha,\beta\in\mathcal{P},\,\alpha+\beta\neq 0}\phii_j^{(k)}(x)\cdot\int_{\mathbb{R}}q_m(s)\int_{\mathbb{R}^2}\widehat{f^\mu_{j_1,k_1,n_1}}(\eta,s)\\
&\times \left[\int_{\mathbb{R}^2}e^{i[s\Phi_{\si\mu\nu}(\xi,\xi-\eta)+s\Psi_{\nu\alpha\beta}(\xi-\eta)+x\cdot \xi]}\widetilde{\varphi}_{l}(\Phi(\xi,\xi-\eta))\varphi_k(\xi)\varphi_{k_2}(\xi-\eta)\widehat{g_{\nu\alpha\beta}}(\xi-\eta,s)d\xi\right]d\eta ds.
\end{split}
\end{equation*}
Once again, integration by parts in $\xi$ using Lemma \ref{tech5} leads to an acceptable contribution. In addition, using Lemma \ref{Shur2Lem} and Lemma \ref{LinEstLem}, we find that
\begin{equation*}
\begin{split}
\big\Vert \mathcal{B}_{m,l}[f^\mu_{j_1,k_1,n_1},P_{k_2}\widetilde{f^\nu_{NC}}]\big\Vert_{L^2}&\lesssim 2^{m-l}2^{\frac{l-n_1}{2}}2^{2\delta^2m}\sup_{s}\big[\big\Vert \sup_{\theta\in\mathbb{S}^1}|\widehat{f^\mu_{j_1,k_1,n_1}}(r\theta,s)|\big\Vert_{L^2(rdr)}\Vert P_{k_2}\widetilde{f^\nu_{NC}}(s)\Vert_{L^2}\big]\\
&\lesssim 2^{m-l/2}2^{-j_1+21\delta j_1-19\delta n_1}2^{-29/20m}2^{4\delta^2m}.\\
\end{split}
\end{equation*}
The desired bound on this term follows from \eqref{Alx73}.

\subsection{Time-nonresonant interactions}\label{ProofZ1} In this subsection we prove Lemma \ref{TimeNonRes}. We define the functions $f^\mu_{j_1,k_1},f^\nu_{j_2,k_2}$ as before, and use \eqref{Alx41}. If $j_1\le j_2$, using Lemma \ref{PhiLocLem} we estimate
\begin{equation*}
\begin{split}
\Vert \mathcal{A}_{m,l_0}[f^\mu_{j_1,k_1},&f^\nu_{j_2,k_2}]\Vert_{L^2}\lesssim 2^{12\delta m}\sup_{s,\lambda\approx 2^m}\big[\Vert e^{-i\lambda\Lambda_\mu}f^\mu_{j_1,k_1}(s)\Vert_{L^\infty}\Vert f^\nu_{j_2,k_2}(s)\Vert_{L^2}+2^{-5m}\big]\\
&\lesssim
2^{-4m}+2^{12\delta m-m+2\delta^2m}\sup_{s}\Vert f^\mu_{j_1,k_1}(s)\Vert_{L^1}\Vert f^\nu_{j_2,k_2}(s)\Vert_{L^2}\\
&\lesssim 2^{-4m}+2^{12\delta m-m+2\delta^2m}2^{j_1}2^{-j_1/2+\delta j_1}2^{-j_2/2+\delta j_2}\\
\end{split}
\end{equation*}
and, using also Lemma \ref{LinEstLem},
\begin{equation*}
\begin{split}
\Vert \mathcal{A}_{m,l_0}[f^\mu_{j_1,k_1},f^\nu_{j_2,k_2}]\Vert_{L^2}&\lesssim 2^{12\delta m}\sup_{s\approx 2^m}\Vert f^\mu_{j_1,k_1}(s)\Vert_{L^2}\Vert \widehat{f^\nu_{j_2,k_2}}(s)\Vert_{L^1}\lesssim 2^{12\delta m-(1-21\delta)j_2-j_1/3}.
\end{split}
\end{equation*}
The desired bound on the $\mathcal{A}_{m,l_0}$ term follows by using the first estimate when $j_1\leq m$ and the second estimate when $j_1\geq m$.

Similarly, using also \eqref{dtFTotalBds} and Lemma \ref{L1easy},
\begin{equation*}
\begin{split}
\Vert \mathcal{B}_{m,l_0}[P_{k_1}\partial_sf^\mu,f^\nu_{j_2,k_2}]\Vert_{L^2}&\lesssim 2^m2^{100\delta m}\sup_{s}\Vert e^{-is\Lambda_\mu}\partial_sP_{k_1}f^\mu(s)\Vert_{L^\infty}\Vert f^\nu_{j_2,k_2}(s)\Vert_{L^2}\\
&\lesssim 2^{-m+150\delta m}2^{-j_2/3}.
\end{split}
\end{equation*}
Moreover, if $j_2\leq 500\delta m$ then we use Lemma \ref{PhiLocLem}, \eqref{dtFTotalBds2Prelim}, and \eqref{LinftyBd} to estimate
\begin{equation*}
\begin{split}
\Vert \mathcal{B}_{m,l_0}[P_{k_1}\partial_sf^\mu,f^\nu_{j_2,k_2}]\Vert_{L^2}&\lesssim 2^m2^{12\delta m}\sup_{s,\lambda\approx 2^m}\big[\Vert P_{k_1}\partial_sf^\mu(s)\Vert_{L^2}\Vert e^{-i\lambda\Lambda_\nu}f^\nu_{j_2,k_2}(s)\Vert_{L^\infty}+2^{-5m}\big]\\
&\lesssim 2^{m+12\delta m}2^{-m+4\delta m}2^{-m+2\delta m}.
\end{split}
\end{equation*}
The desired conclusion of the lemma follows from these two bounds.

\subsection{The case of strongly resonant interactions}\label{ProofZ2} In this subsection, we prove Lemma \ref{StronglyResLem}. This is where we need the localization operators $A^\sigma_{n,(j)}$ to control the output. It is an instantaneous estimate, in the sense that the time evolution will play no role. Hence, it suffices to show the following: let $\chi\in C^\infty_c(\mathbb{R}^2)$ be supported in $[-1,1]$ and assume that $j, l, s, m$ satisfy
\begin{equation}\label{Ass3}
-m+\delta m/2\le l\le-7\delta m,\qquad 2^{m-4}\le s\le 2^{m+4},\qquad j\leq m+\D.
\end{equation}
Assume that
\begin{equation}\label{Ass3.1}
\Vert f\Vert_{H^{N_0/2}\cap H^{N_1/2}_\Omega\cap Z_1^\mu}+ \Vert g\Vert_{H^{N_0/2}\cap H^{N_1/2}_\Omega\cap Z_1^\nu}\le 1,
\end{equation}
and define
\begin{equation*}
\begin{split}
\widehat{I[f,g]}(\xi)&:=\int_{\mathbb{R}^2}e^{is\Phi(\xi,\eta)}\chi_l(\Phi(\xi,\eta))\widehat{f}(\xi-\eta)\widehat{g}(\eta)d\eta,\qquad \chi_l(x)=\chi(2^{-l}x).
\end{split}
\end{equation*}
Assume also that $k,k_1,k_2,j,m$ satisfy \eqref{Ass2}. Then
\begin{equation}\label{NRCCLNewBd}
2^{\delta m/2}2^{-l}\Vert Q_{jk}I[P_{k_1}f,P_{k_2}g]\Vert_{B^\sigma_j} \lesssim 2^{-50\delta^2m}.
\end{equation}

To prove \eqref{NRCCLNewBd} we define $f_{j_1,k_1}, g_{j_2,k_2}, f_{j_1,k_1,n_1}, g_{j_2,k_2,n_2}$ as in \eqref{Alx100}, $(k_1,j_1),(k_2,j_2)\in\mathcal{J}$, $n_1\in[0,j_1+1]$, $n_2\in[0,j_2+1]$. We will analyze several cases depending on the relative sizes of the main parameters $m,l,k,j,k_1,j_1,k_2,j_2$. In most cases, such as \eqref{Ass3.2}, \eqref{Alx74}, \eqref{Alx74.2}, \eqref{Alx74.25}, \eqref{Alx74.3}, we will prove the stronger bound
\begin{equation}\label{SuffNRCCLNewBd}
2^{(1-20\delta)j}2^{\delta m/2}2^{-l}\Vert Q_{jk}I[f_{j_1,k_1},g_{j_2,k_2}]\Vert_{L^2}\lesssim 2^{-51\delta^2m}.
\end{equation}
However, in the main case \eqref{BulkCase}, we can only prove the weaker bound
\begin{equation}\label{SuffNRCCLNewBdPrev}
2^{\delta m/2}2^{-l}\Vert Q_{jk}I[f_{j_1,k_1},g_{j_2,k_2}]\Vert_{B^\sigma_{j}}\lesssim 2^{-51\delta^2m}.
\end{equation}
These bounds clearly suffice to prove \eqref{NRCCLNewBd}. We assume in the rest of the proof that $j_1\leq j_2$.

{\bf Case 1:} We prove first the bound \eqref{SuffNRCCLNewBdPrev} under the assumption
\begin{equation}\label{BulkCase}
\begin{split}
&j_2\le 9m/10,\qquad\min\{k,k_1,k_2\}\ge -\mathcal{D}.
\end{split}
\end{equation}
With $\kappa_\theta:=2^{-m/2+\delta^2m}$ and $\kappa_r:=2^{\delta^2m}\big(2^{-m/2}+2^{j_2-m}\big)$ we decompose,
\begin{equation*}
\begin{split}
&\mathcal{F}I[f_{j_1,k_1},g_{j_2,k_2}]=\mathcal{R}_{||}+\mathcal{R}_\perp+\mathcal{NR},\\
&\mathcal{R}_{||}(\xi):=\int_{\mathbb{R}^2}e^{is\Phi(\xi,\eta)}\chi_{l}(\Phi(\xi,\eta))\varphi(\kappa_r^{-1}\Xi(\xi,\eta))\varphi(\kappa_\theta^{-1}\Omega_\eta\Phi(\xi,\eta))\widehat{f_{j_1,k_1}}(\xi-\eta)\widehat{g_{j_2,k_2}}(\eta)d\eta,\\
&\mathcal{R}_{\perp}(\xi):=\int_{\mathbb{R}^2}e^{is\Phi(\xi,\eta)}\chi_{l}(\Phi(\xi,\eta))\varphi(\kappa_r^{-1}\Xi(\xi,\eta))(1-\varphi(\kappa_\theta^{-1}\Omega_\eta\Phi(\xi,\eta)))\widehat{f_{j_1,k_1}}(\xi-\eta)\widehat{g_{j_2,k_2}}(\eta)d\eta,\\
&\mathcal{NR}(\xi):=\int_{\mathbb{R}^2}e^{is\Phi(\xi,\eta)}\chi_{l}(\Phi(\xi,\eta))(1-\varphi(\kappa_r^{-1}\Xi(\xi,\eta)))\widehat{f_{j_1,k_1}}(\xi-\eta)\widehat{g_{j_2,k_2}}(\eta)d\eta.
\end{split}
\end{equation*}

With $\psi_1:=\varphi_{\leq (1-\delta/4)m}$ and $\psi_2:=\varphi_{>(1-\delta/4)m}$, we rewrite 
\begin{equation*}
\begin{split}
&\mathcal{NR}(\xi)=\mathcal{NR}_1(\xi)+\mathcal{NR}_2(\xi),\\
&\mathcal{NR}_i(\xi):=C2^{l}\int_{\mathbb{R}}\int_{\mathbb{R}^2}e^{i(s+\lambda)\Phi(\xi,\eta)}\widehat{\chi}(2^l\lambda)\psi_i(\lambda)(1-\varphi(\kappa_r^{-1}\Xi(\xi,\eta)))\widehat{f_{j_1,k_1}}(\xi-\eta)\widehat{g_{j_2,k_2}}(\eta)\,d\eta d\lambda.
\end{split}
\end{equation*}
Since $\widehat{\chi}$ is rapidly decreasing we have $\|\varphi_k\cdot\mathcal{NR}_2\|_{L^\infty}\lesssim 2^{-4m}$, which gives an acceptable contribution. On the other hand, in the support of the integral defining $\mathcal{NR}_1$, we have that $\vert s+\lambda\vert\approx 2^m$ and integration by parts in $\eta$ (using Lemma \ref{tech5}) gives $\|\varphi_k\cdot\mathcal{NR}_1\|_{L^\infty}\lesssim 2^{-4m}$.

The contribution of $\mathcal{R}=\mathcal{R}_{||}+\mathcal{R}_{\perp}$ is only present if we have a space-time resonance. In particular, we may assume that
\begin{equation}\label{Alx74.6}
\begin{split}
&-\D\le k,k_1,k_2\le \D,\qquad(\sigma,\mu,\nu)\in\{(b,e,e),(b,e,b),(b,b,e)\},\\
&f_{j_1,k_1}=f_{j_1,k_1,0},\qquad g_{j_2,k_2}=g_{j_2,k_2,0}.
\end{split}
\end{equation}
Notice that, if $\mathcal{R}(\xi)\neq 0$ then
\begin{equation}\label{EstimPsi}
\begin{split}
\vert \Psi(\xi)\vert &=\vert \Phi(\xi,p(\xi))\vert\lesssim\vert \Phi(\xi,\eta)\vert+\vert \Phi(\xi,\eta)-\Phi(\xi, p(\xi))\vert\lesssim 2^l+\kappa_r^2.
\end{split}
\end{equation}
Integration by parts using Lemma \ref{RotIBP} shows that $\big\|\phi_k\cdot\mathcal{R}_\perp\big\|\lesssim 2^{-4m}$, which gives an acceptable contribution. To bound the contribution of $\mathcal{R}_{||}$ we will show that
\begin{equation}\label{Alx81}
2^{\delta m/2}2^{-l}\sup_{|\xi|\approx 1} \vert (1+2^{m}\Psi^\dagger_b(\xi))\mathcal{R}_{||}(\xi)\vert\lesssim 2^{9\delta m/10},
\end{equation}
which is stronger than the bound we need in \eqref{SuffNRCCLNewBdPrev}. Indeed for $j$ fixed we estimate
\begin{equation}\label{Alx67.5}
\begin{split}
\sup_{0\leq n\leq j+1}&2^{(1-20\delta)j}2^{-n/2+19\delta n}\big\|A_{n,(j)}^bQ_{jk}\mathcal{F}^{-1}\mathcal{R}_{||}\big\|_{L^2}\\
&\lesssim \sup_{0\leq n\leq j+1}2^{(1-20\delta)j}2^{-n/2+19\delta n}\big\|\varphi_{-n}^{[-j-1,0]}(\Psi^\dagger_b(\xi))\mathcal{R}_{||}(\xi)\big\|_{L^2_\xi}\\
&\lesssim \sum_{n\geq 0}2^{(1-20\delta)j}2^{-n/2-(1/2-19\delta)\min(n,j)}\big\|\varphi_{-n}^{(-\infty,0]}(\Psi^\dagger_b(\xi))\mathcal{R}_{||}(\xi)\big\|_{L^\infty_\xi},
\end{split}
\end{equation} 
and notice that \eqref{SuffNRCCLNewBdPrev} would follow from \eqref{Alx81}.

Recall from Lemma \ref{LinEstLem} and \eqref{Alx74.6} that
\begin{equation}\label{NewBdinput}
\begin{split}
2^{(1/2-21\delta)j_1}\Vert \widehat{f_{j_1,k_1}}\Vert_{L^\infty}+2^{(1-21\delta)j_1}\sup_{\theta\in\mathbb{S}^1}\Vert \widehat{f_{j_1,k_1}}(r\theta)\Vert_{L^2(rdr)}&\lesssim 1,\\
2^{(1/2-21\delta)j_2}\Vert \widehat{g_{j_2,k_2}}\Vert_{L^\infty}+2^{(1-21\delta)j_2}\sup_{\theta\in\mathbb{S}^1}\Vert \widehat{g_{j_2,k_2}}(r\theta)\Vert_{L^2(rdr)}&\lesssim 1.
\end{split}
\end{equation}
We ignore first the factor $\chi_{l}(\Phi(\xi,\eta))$. In view of \eqref{cas10} the $\eta$ integration in the definition of $\mathcal{R}_{||}(\xi)$ takes place essentially over a $\kappa_\theta\times\kappa_r$ box in the neighborhood of $p(\xi)$. Using \eqref{EstimPsi} and \eqref{FL1bd}, and estimating $\Vert \widehat{f_{j_1,k_1}}\Vert_{L^\infty}\lesssim 1$, we have, if $j_2\geq m/2$,
\begin{equation*}
\vert (1+2^{m}\Psi(\xi))\mathcal{R}_{||}(\xi)\vert\lesssim 2^{m}(2^l+\kappa_r^2)2^{-j_2+21\delta j_2}\kappa_\theta\kappa_r^{1/2}\lesssim (2^l+\kappa_r^2)2^{-j_2(1/2-21\delta)}2^{2\delta^2m}.
\end{equation*}
On the other hand, if $j_2\leq m/2$ we estimate $\Vert \widehat{f_{j_1,k_1}}\Vert_{L^\infty}+\Vert \widehat{f_{j_2,k_2}}\Vert_{L^\infty}\lesssim 1$ and conclude that
\begin{equation*}
\vert (1+2^{m}\Psi(\xi))\mathcal{R}_{||}(\xi)\vert\lesssim 2^{m+l}\kappa_{\theta}\kappa_r\lesssim 2^l2^{2\delta^2m}.
\end{equation*}
The desired bound \eqref{Alx81} follows if $\kappa_r^22^{-l}\leq 2^{j_2/4}$.

Assume now that $\kappa_r^2\ge 2^l2^{j_2/4}$ (in particular $j_2\geq 11m/20$). In this case the restriction $|\Phi(\xi,\eta)|\leq 2^l$ is stronger and we have to use it. We decompose, with $p_-:=\lfloor\log_2(2^{l/2}\kappa_r^{-1})+\D\rfloor$,
\begin{equation*}
\begin{split}
\mathcal{R}_{||}(\xi)&=\sum_{p\in[p_-,0]}\mathcal{R}^p_{||}(\xi),\\
\mathcal{R}_{||}^p(\xi)&:=\int_{\mathbb{R}^2}e^{is\Phi(\xi,\eta)}\chi_{l}(\Phi(\xi,\eta))\varphi_p^{[p_-,1]}(\kappa_r^{-1}\nabla_\eta\Phi(\xi,\eta))\varphi(\kappa_\theta^{-1}\Omega_\eta\Phi(\xi,\eta))\widehat{f_{j_1,k_1}}(\xi-\eta)\widehat{g_{j_2,k_2}}(\eta)d\eta.
\end{split}
\end{equation*}
Notice that if $\mathcal{R}_{||}^p(\xi)\neq 0$ then $\vert\Psi(\xi)\vert\lesssim 2^{2p}\kappa_r^2$. The term $\mathcal{R}_{||}^{p_-}(\xi)$ can be bounded as before. Moreover, using Proposition \ref{spaceres} and the formula \eqref{cas7.1}, we notice that if $\xi=(s,0)$ is fixed then the set of points $\eta$ that satisfy the three restrictions $|\Phi(\xi,\eta)|\lesssim 2^l$, $|\nabla_\eta\Phi(\xi,\eta)|\approx 2^p\kappa_r$, $|\xi\cdot\eta^\perp|\lesssim \kappa_\theta$ is essentially contained in a union of two $\kappa_\theta\times 2^l2^{-p}\kappa_r^{-1}$ boxes. Using \eqref{EstimPsi} and \eqref{FL1bd}, and estimating $\Vert \widehat{f_{j_1,k_1}}\Vert_{L^\infty}\lesssim 1$, we have
\begin{equation*}
\vert (1+2^{m}\Psi(\xi))\mathcal{R}^p_{||}(\xi)\vert\lesssim 2^{m+2p}\kappa_r^22^{-j_2+21\delta j_2}\kappa_\theta(2^l2^{-p}\kappa_r^{-1})^{1/2}\lesssim 2^{3p/2}2^{-m+4\delta^2m}2^{l/2}2^{j_2/2+21\delta j_2}.
\end{equation*}
This suffices to prove \eqref{EstimPsi} since $2^p\leq 1$, $2^{-l/2}\leq 2^{m/2}$, and $2^{j_2}\leq 2^{9m/10}$, see \eqref{BulkCase}.

{\bf Case 2:} We prove now the bound \eqref{SuffNRCCLNewBd}. Assume first that
\begin{equation}\label{Ass3.2}
j_1\ge 7m/8.
\end{equation}
Using Lemma \ref{Shur2Lem} and \eqref{RadL2} we estimate
\begin{equation*}
\begin{split}
\Vert I[f_{j_1,k_1,n_1},&g_{j_2,k_2,n_2}]\Vert_{L^2}\\
&\lesssim 2^{\delta^2m}2^{l/2-n_1/2-n_2/2}\big\Vert\sup_{\theta\in\mathbb{S}^1}|\widehat{f_{j_1,k_1,n_1}}(r\theta)|\big\Vert_{L^2(rdr)}\big\Vert\sup_{\theta\in\mathbb{S}^1}|\widehat{g_{j_2,k_2,n_2}}(r\theta)|\big\Vert_{L^2(rdr)}\\
&\lesssim 2^{\delta^2m}2^{l/2}2^{-j_1+21\delta j_1}2^{-j_2+21\delta j_2},
\end{split}
\end{equation*}
and the desired bound follows. 

The bound \eqref{SuffNRCCLNewBd} also follows, using the $L^2\times L^\infty$ estimate (Lemma \ref{PhiLocLem}) and Lemma \ref{LinEstLem} if
\begin{equation}\label{Alx74}
j_2\ge 9m/10,\qquad l\geq -m/3.
\end{equation}

On the other hand, if
\begin{equation}\label{Alx74.2}
j_2\ge(1-30\delta)m\ge j_1,\qquad l\leq -m/3,\qquad\min\{k,k_1,k_2\}\le -200\delta m,
\end{equation}
then, using Proposition \ref{separation1} (i), we may assume that
\begin{equation*}
f_{j_1,k_1}=f_{j_1,k_1,0},\qquad g_{j_2,k_2}=g_{j_2,k_2,0},\qquad 2^{\max\{k,k_1,k_2\}}\approx 1.
\end{equation*}
We use Proposition \ref{separation1} (i), \eqref{cas5.55}, and Schur's test to estimate
\begin{equation*}
\begin{split}
\Vert P_kI[f_{j_1,k_1},g_{j_2,k_2}]\Vert_{L^2}&\lesssim 2^{2\delta^2m}2^{l}2^{\min(k,k_1,k_2)/2}\Vert \widehat{f_{j_1,k_1}}\Vert_{L^\infty}\Vert g_{j_2,k_2}\Vert_{L^2}\lesssim 2^{l-40\delta m}2^{-(1-20\delta) j_2}.
\end{split}
\end{equation*}
This completes the proof in the case \eqref{Alx74.2}.

Notice also that in the case
\begin{equation}\label{Alx74.25}
\min\{k,k_1,k_2\}\le -\D,\qquad j_2\le (1-30\delta)m
\end{equation}
we can use Proposition \ref{separation1} (i) and Lemma \ref{tech5} to obtain an acceptable contribution.

Assume now that
\begin{equation}\label{Alx74.3}
\begin{split}
&j_2\ge(1-30\delta)m\ge j_1,\qquad\qquad -200\delta m\le \min\{k,k_1,k_2\}\le -\D,\\
\hbox{or}\qquad&j_2\ge 9m/10\geq 7m/8\ge j_1,\,\,\qquad -\D\le  \min\{k,k_1,k_2\},\\
\end{split}
\end{equation}
We decompose, with $\kappa_\theta=2^{-m/4}$,
\begin{equation*}
\begin{split}
&I[f_{j_1,k_1},g_{j_2,k_2}]=\mathcal{A}_{||}[f_{j_1,k_1},g_{j_2,k_2}]+\mathcal{A}_{\perp}[f_{j_1,k_1},g_{j_2,k_2}],\qquad\\
&\widehat{\mathcal{A}_{||}[f,g]}(\xi)=\int_{\mathbb{R}^2}e^{is\Phi(\xi,\eta)}\chi_{l}(\Phi(\xi,\eta))\varphi(\kappa_\theta^{-1}\Omega_\eta\Phi(\xi,\eta))\widehat{f}(\xi-\eta)\widehat{g}(\eta)d\eta,\\
&\widehat{\mathcal{A}_{\perp}[f,g]}(\xi)=\int_{\mathbb{R}^2}e^{is\Phi(\xi,\eta)}\chi_{l}(\Phi(\xi,\eta))(1-\varphi(\kappa_\theta^{-1}\Omega_\eta\Phi(\xi,\eta)))\widehat{f}(\xi-\eta)\widehat{g}(\eta)d\eta.
\end{split}
\end{equation*}
Integration by parts using Lemma \ref{RotIBP} shows that $\big\|\mathcal{F}\mathcal{A}_\perp[f_{j_1,k_1},g_{j_2,k_2}]\big\|_{L^\infty}\lesssim 2^{-4m}$, while Proposition \ref{separation1} (ii) shows that
\begin{equation*}
\begin{split}
\mathcal{A}_{||}[f_{j_1,k_1,n_1},g_{j_2,k_2,n_2}]\equiv 0\qquad\text{ if }n_1\geq 1\text{ and }n_2\geq 1.
\end{split}
\end{equation*}
In addition, using Schur's test and Lemma \ref{Shur3Lem}
\begin{equation*}
\begin{split}
\Vert P_k\mathcal{A}_{||}[f_{j_1,k_1},g_{j_2,k_2,0}]\Vert_{L^2}&\lesssim 2^{2\delta^2m}2^{l-m/8}\Vert\widehat{f_{j_1,k_1}}\Vert_{L^\infty}\Vert g_{j_2,k_2,0}\Vert_{L^2}\lesssim 2^{l-m/10}2^{-(1-20\delta)j_2},
\end{split}
\end{equation*}
which gives an acceptable contribution. If $-200\delta m\le \min\{k,k_1,k_2\}\le -\D$ this covers all cases, in view of Proposition \ref{separation1} (i). 

On the other hand, if $\min\{k,k_1,k_2\}\ge-\D$ and $n_2\geq 1$ then we may assume that $\vert\nabla_\eta\Phi(\xi,\eta)\vert\gtrsim 1$ in the support of integration of $A_{||}[f_{j_1,k_1},g_{j_2,k_2,n_2}]$, in view of Proposition \ref{separation1} (iii). Integration by parts in $\eta$ using Lemma \ref{tech5} then gives an acceptable contribution unless $j_2\ge(1-\delta^2)m$. To summarize, it remains to estimate $\big\|Q_{jk}\mathcal{A}_{||}[f_{j_1,k_1,0},g_{j_2,k_2,n_2}]\big\|_{L^2}$ when
\begin{equation}\label{Alx74.4}
\begin{split}
j_2\ge(1-\delta^2)m\ge 7/8m\ge j_1,\qquad -\D\le k,k_1,k_2,\qquad n_2\geq 1.
\end{split}
\end{equation}

We decompose
\begin{equation*}
\begin{split}
\mathcal{A}_{||}[f_{j_1,k_1,0},&g_{j_2,k_2,n_2}]=\sum_{p\le \D}\mathcal{A}^{p}_{||}[f_{j_1,k_1,0},g_{j_2,k_2,n_2}],\\
\widehat{\mathcal{A}^{p}_{||}[f,g]}(\xi)&:=\int_{\mathbb{R}^2}e^{is\Phi(\xi,\eta)}\chi_{l}(\Phi(\xi,\eta))\varphi(\kappa_\theta^{-1}\Omega_\eta\Phi(\xi,\eta))\varphi_{p}(\nabla_\xi\Phi(\xi,\eta))\widehat{f}(\xi-\eta)\widehat{g}(\eta)d\eta.
\end{split}
\end{equation*}
As a consequence of Proposition \ref{volume} (iv), under our assumptions in \eqref{Alx74.4},
\begin{equation*}
\begin{split}
\sup_\xi\int_{\mathbb{R}^2}|\chi_{l}(\Phi(\xi,\eta))|\varphi(\kappa_\theta^{-1}\Omega_\eta\Phi(\xi,\eta))\varphi(\Psi^\dagger_b(\eta))\varphi_k(\xi)\varphi_{k_1}(\xi-\eta)d\eta&\lesssim 2^{2\delta^2m}2^{l}\kappa_\theta,\\
\end{split}
\end{equation*}
and
\begin{equation*}
\begin{split}
\sup_\eta\int_{\mathbb{R}^2}|\chi_{l}(\Phi(\xi,\eta))|\varphi(\kappa_\theta^{-1}\Omega_\eta\Phi(\xi,\eta))\varphi(\Psi^\dagger_b(\eta))\varphi_{p}(\nabla_\xi\Phi(\xi,\eta))\varphi_k(\xi)\varphi_{k_1}(\xi-\eta)d\xi\\
\lesssim 2^{2\delta^2m}\min\{2^p,2^{l-p}\}\kappa_\theta.
\end{split}
\end{equation*}
Using Schur's test we can then estimate, for $p\geq -20\delta m$
\begin{equation*}
\begin{split}
\Vert P_k\mathcal{A}^{p}_{||}[f_{j_1,k_1,0},g_{j_2,k_2,n_2}]\Vert_{L^2}&\lesssim 2^{-p/2}2^l2^{-m/2+4\delta^2m}\Vert \widehat{f_{j_1,k_1,0}}\Vert_{L^\infty}\Vert g_{j_2,k_2,n_2}\Vert_{L^2}\lesssim 2^l2^{-(1-15\delta)m},
\end{split}
\end{equation*}
which gives acceptable contributions. Similarly, the contributions of $Q_{jk}\mathcal{A}^{p}_{||}[f_{j_1,k_1,0},g_{j_2,k_2,n_2}]$ are acceptable if $p\leq l+20\delta m$, or if $j\leq (j_2+m)(1+8\delta)/2+p/2+2j_1/5$, or if $j\leq (j_2+m)(1+8\delta)/2+(l-p)/2+2j_1/5$ (using the bound $\Vert \widehat{f_{j_1,k_1,0}}\Vert_{L^\infty}\lesssim 2^{-2j_1/5}$, see \eqref{FLinftybd}). 

Therefore it remains to consider the case
\begin{equation}\label{Alx74.5}
\begin{split}
p\in [l+20\delta m,-20\delta m],\qquad j\geq (j_2+m)(1+8\delta)/2+2j_1/5+\max(p,l-p)/2.
\end{split}
\end{equation}
We use the approximate finite speed of propagation, showing that in this case,
\begin{equation}\label{FinalEstSTRes}
\Vert Q_{jk}\mathcal{A}^{p}_{||}[f_{j_1,k_1,0},g_{j_2,k_2,n_2}]\Vert_{L^2}\lesssim 2^{-3m}.
\end{equation}
Indeed, we decompose
\begin{equation*}
\begin{split}
\mathcal{F}\big\{P_k\mathcal{A}^{p}_{||}[f_{j_1,k_1,0},g_{j_2,k_2,n_2}]\big\}(\xi)=I_1(\xi)+I_2(\xi)
\end{split}
\end{equation*}
where, with $\psi_1:=\varphi_{\leq (1-\delta/4)m}$ and $\psi_2:=\varphi_{>(1-\delta/4)m}$,
\begin{equation*}
\begin{split}
I_i(\xi):=C\varphi_k(\xi)2^l\int_{\mathbb{R}}\psi_i(\lambda)\widehat{\chi}(2^l\lambda)\int_{\mathbb{R}^2}e^{i(s+\lambda)\Phi(\xi,\eta)}\varphi(\kappa_\theta^{-1}&\Omega_\eta\Phi(\xi,\eta))\varphi_{p}(\nabla_\xi\Phi(\xi,\eta))\\
&\times\widehat{f_{j_1,k_1,0}}(\xi-\eta)\widehat{g_{j_2,k_2,n_2}}(\eta)d\eta d\lambda.
\end{split}
\end{equation*}
Using the rapid decay of the function $\widehat{\chi}$ it is easy to see that $\|I_2\|_{L^\infty}\lesssim 2^{-4m}$, which gives an acceptable contribution. On the other hand, we may rewrite
\begin{equation*}
\begin{split}
\mathcal{F}^{-1}&(I_1)(x)=C2^l\int_{\mathbb{R}}\psi_1(\lambda)\widehat{\chi}(2^l\lambda)\widehat{g_{j_2,k_2,n_2}}(\eta)\\
&\times\Big[\int_{\mathbb{R}^2}\varphi_k(\xi)e^{i(s+\lambda)\Phi(\xi,\eta)+ix\cdot\xi}\varphi(\kappa_\theta^{-1}\Omega_\eta\Phi(\xi,\eta))\varphi_{p}(\nabla_\xi\Phi(\xi,\eta))\widehat{f_{j_1,k_1,0}}(\xi-\eta)\,d\xi\Big]\,d\eta d\lambda.
\end{split}
\end{equation*}
We examine the restrictions \eqref{Alx74.4} and \eqref{Alx74.5}. If $|x|\approx 2^j$ then we notice that in the support of $\xi$-integration we have $\vert\nabla_\xi\left[(s+\lambda)\Phi(\xi,\eta)+x\cdot\xi\right]\vert\approx \vert x\vert\approx 2^j$. Integration by parts in $\xi$ using Lemma \ref{tech5} gives  $\|\phii_j^{(k)}\mathcal{F}^{-1}(I_1)\|_{L^\infty}\lesssim 2^{-4m}$, as desired.

\subsection{The case of resonant interactions}\label{ProofZ3}
In this subsection we prove Lemma \ref{ResLem}. Using Lemma \ref{dtfLem} we may write, with $F^\nu=F^\nu_C+F^\nu_{NC}+F^\nu_{LO}$,
\begin{equation*}
\partial_sf^\nu(s)=f^\nu_C(s)+f^\nu_{SR}(s)+f^{\nu}_{NC}(s)+\partial_sF^\nu(s).
\end{equation*}

{\bf{Contribution of $f^\nu_C$.}} We start with the main term and decompose, according to \eqref{dtfType1},
\begin{equation}\label{Alx67.6}
\begin{split}
&f^\nu_{C}=\sum_{0\le q\le m/2-40\delta m}\sum_{\omega,\upsilon\in\mathcal{P},\omega+\upsilon\neq 0}f^{\nu\omega\upsilon}_{C,q},\quad\widehat{f^{\nu\omega\upsilon}_{C,q}}(\xi,s)=e^{is\Psi_{\nu\omega\upsilon}(\xi)}g_q(\xi,s),\\
&g_q(\xi,s)=g^q(\xi,s)\varphi_{\leq -3\delta m}(\Psi_{\nu\omega\upsilon}(\xi)),\quad \Vert D^\alpha_\xi g_q(s)\Vert_{L^\infty}\lesssim 2^{8\delta^2m-m-q}2^{\vert\alpha\vert(m/2+3\delta^2m+q)},\\
&\sup_{b\leq N_1/4}\|\Omega^bg_q(s)\|_{L^2}\lesssim 1,\qquad \|\partial_sg_q(s)\|_{L^\infty}\lesssim 2^{(5\delta-2)m+q}.
\end{split}
\end{equation}
We may assume $\nu=b$. It suffices to prove that for $k,j,m,l,k_1,k_2,q$ and $\sigma,\mu,\nu,\omega,\upsilon$ as before
\begin{equation}\label{ResLemMI1}
\begin{split}
2^{(1-20\delta)j}\Vert Q_{jk}\mathcal{B}_{m,l}[P_{k_1}f^\mu,P_{k_2}f^{\nu\omega\upsilon}_{C,q}]\Vert_{L^2}&\lesssim 2^{-51\delta^2m}.
\end{split}
\end{equation}
In the rest of this proof we set $\Phi=\Phi_{\sigma\mu\nu}$, $\Psi=\Psi_{\nu\omega\upsilon}$, $\mathfrak{p}(\xi,\eta):=\Phi_{\sigma\mu\nu}(\xi,\eta)+\Psi_{\nu\omega\upsilon}(\eta)$.

Notice that, in the support of integration we have
\begin{equation}\label{PropPhiSupp1}
|\Psi(\eta)|\lesssim 2^{-3\delta m}\ll 1,\qquad\vert\Phi(\xi,\eta)\vert\lesssim 2^{-7\delta m}\ll 1,\qquad\vert\nabla_\eta\Phi(\xi,\eta)\vert\gtrsim 1.
\end{equation}
In view of Proposition \ref{separation1}, we may assume that $\min\{k,k_1,k_2\}\ge-\D$. 
We define $f^\mu_{j_1,k_1}$ and $f^\mu_{j_1,k_1,n_1}$ as before. Assume first that $j_1\ge(1-100\delta^2)m$. Schur's test with Proposition \ref{volume} (i), (iii) give
\begin{equation*}
\begin{split}
\Vert P_k\mathcal{B}_{m,l}[f^\mu_{j_1,k_1,0},P_{k_2}f^{\nu\omega\upsilon}_{C,q}]\Vert_{L^2}&\lesssim 2^{2\delta^2m}2^{m-l}\cdot (2^l2^{-3\delta m/4})\sup_{s}\Vert f^\mu_{j_1,k_1,0}(s)\Vert_{L^2}\Vert \widehat{f^{\nu\omega\upsilon}_{C,q}}(s)\Vert_{L^\infty}.
\end{split}
\end{equation*}
Moreover, using \eqref{Shur2Lem3}, for $n_1\geq 1$
\begin{equation*}
\begin{split}
\Vert P_k\mathcal{B}_{m,l}[f^\mu_{j_1,k_1,n_1},P_{k_2}f^{\nu\omega\upsilon}_{C,q}]\Vert_{L^2}&\lesssim 2^{2\delta^2m}2^{m-l}2^{l-n_1/2}\sup_{s}\Vert \sup_{\theta\in\mathbb{S}^1}|\widehat{f^\mu_{j_1,k_1,n_1}}(r\theta,s)|\Vert_{L^2(rdr)}\Vert \widehat{f^{\nu\omega\upsilon}_{C,q}}(s)\Vert_{L^2}.
\end{split}
\end{equation*}
In both cases we use the estimates \eqref{RadL2} and \eqref{Alx67.6} to show that these are acceptable contributions as in \eqref{ResLemMI1}.

Assume now that $j_1\le (1-100\delta^2)m$, let 
%$h_q:=\mathcal{F}^{-1}g_q$, 
$\kappa_\theta=2^{2\delta^2m-m/2}$, and decompose
\begin{equation*}
\begin{split}
&\mathcal{B}_{m,l}[f^\mu_{j_1,k_1},P_{k_2}f^{\nu\omega\upsilon}_{C,q}]=\mathcal{B}^\perp_{m,l,j_1}+\mathcal{B}^{||,1}_{m,l,j_1}+\mathcal{B}^{||,2}_{m,l,j_1},\\
&\widehat{\mathcal{B}_{m,l,j_1}^{\perp}}(\xi)=\int_{\mathbb{R}}q_m(s)\int_{\mathbb{R}^2}e^{is\mathfrak{p}(\xi,\eta)}\widetilde{\varphi}_l(\Phi(\xi,\eta))(1-\varphi(\kappa_\theta^{-1}\Omega_\eta\Phi(\xi,\eta)))\widehat{f^\mu_{j_1,k_1}}(\xi-\eta,s)g_q(\eta,s)\,d\eta ds,\\
&\widehat{\mathcal{B}_{m,l,j_1}^{||,i}}(\xi)=\int_{\mathbb{R}}q_m(s)\int_{\mathbb{R}^2}e^{is\mathfrak{p}(\xi,\eta)}\widetilde{\varphi}_l(\Phi(\xi,\eta))\varphi(\kappa_\theta^{-1}\Omega_\eta\Phi(\xi,\eta))\psi_i(\xi,\eta)
\widehat{f^\mu_{j_1,k_1}}(\xi-\eta,s)g_q(\eta,s)d\eta ds,
\end{split}
\end{equation*}
where $\psi_1(\xi,\eta):=\varphi_{\leq -\D}(\nabla_\xi\Phi(\xi,\eta))$ and $\psi_2(\xi,\eta):=\varphi_{> -\D}(\nabla_\xi\Phi(\xi,\eta))$.

We apply first Lemma \ref{RotIBP} (with $\widehat{g}(\eta)=e^{is\Psi(\eta)}g_q(\eta,s)$) to conclude that $\big\|P_k\mathcal{B}_{m,l,j_1}^{\perp}\big\|_{L^2}\lesssim 2^{-2m}$, which is an acceptable contribution. In view of Proposition \ref{separation1} (ii), when considering $\mathcal{B}_{m,l,j_1}^{||,i}$, $i=1,2$, we may assume that $f^\mu_{j_1,k_1}=f^\mu_{j_1,k_1,0}$. Moreover, Proposition \ref{Separation2} (i) shows that $\vert\nabla_\eta\mathfrak{p}(\xi,\eta)\vert\gtrsim 1$ in the support of integration of $\mathcal{B}_{m,l,j_1}^{||,2}$. Therefore integration by parts in $\eta$ using Lemma \ref{tech5} shows that $\big\|P_k\mathcal{B}_{m,l,j_1}^{||,2}\big\|_{L^2}\lesssim 2^{-2m}$, which leads to an acceptable contribution.

We now turn to $\mathcal{B}_{m,l,j_1}^{||,1}$. We use Lemma \ref{Shur3Lem} (iii) and the identity $f^\mu_{j_1,k_1}=f^\mu_{j_1,k_1,0}$ to estimate
\begin{equation*}
\vert \phi_k(\xi)\widehat{\mathcal{B}_{m,l,j_1}^{||,1}}(\xi)\vert\lesssim 2^{2\delta^2m}2^{m-l}\cdot 2^l\kappa_\theta\cdot \Vert \widehat{f^\mu_{j_1,k_1}}(s)\Vert_{L^\infty}\Vert g_q(s)\Vert_{L^\infty}\lesssim 2^{20\delta^2m-m/2}.
\end{equation*}
The desired bound \eqref{ResLemMI1} follows if $j\le m/2+10\delta m$. We may now assume $j\ge m/2+10\delta m$. In view of Proposition \ref{Separation2} (ii), we may decompose
\begin{equation*}
\begin{split}
\mathcal{B}_{m,l,j_1}^{||,1}&=\sum_{r\in(-\infty,-\D/2]}\mathcal{B}_{m,l,r,q}^{j_1},
\end{split}
\end{equation*}
where
\begin{equation}\label{Alx68.4}
\begin{split}
\widehat{\mathcal{B}_{m,l,r,q}^{j_1}}(\xi)=\int_{\mathbb{R}}q_m(s)&\int_{\mathbb{R}^2}e^{is\mathfrak{p}(\xi,\eta)}\widetilde{\varphi}_l(\Phi(\xi,\eta))\varphi(\kappa_\theta^{-1}\Omega_\eta\Phi(\xi,\eta))\\
&\times\varphi_{\leq -\D}(\nabla_\xi\Phi(\xi,\eta))\varphi_{r}(\nabla_\eta\mathfrak{p}(\xi,\eta))\widehat{f^\mu_{j_1,k_1}}(\xi-\eta,s)g_q(\eta,s)\,d\eta ds.\\
\end{split}
\end{equation}

Using Proposition \ref{Separation2} (i), we see that on the support of integration of $\mathcal{B}_{m,l,r,q}^{j_1}$
\begin{equation*}
\begin{split}
\mathfrak{p}(\xi,\eta)=\Lambda_\sigma(\xi)-\Lambda_\mu(\xi-\eta)-\Lambda_\sigma(p(\eta))+\Lambda_\mu(\eta-p(\eta)),
\end{split}
\end{equation*}
where $p:=p_{(-\mu)\sigma}$ (so $\nabla\Lambda_{\mu}(x-p(x))+\nabla\Lambda_{\sigma}(p(x))=0$). Moreover, for some $\gamma\in\{\gamma_1,\gamma_2\}$,
\begin{equation}\label{PropPhiSupp2}
\begin{split}
\vert \nabla_\eta\mathfrak{p}(\xi,\eta)\vert\approx\vert\xi-p(\eta)\vert\approx  \vert\nabla_\xi\Phi(\xi,\eta)\vert\approx 2^r,\\
\vert \mathfrak{p}(\xi,\eta)\vert\approx \vert \xi-p(\eta)\vert^2\approx\vert\nabla_\xi\Phi(\xi,\eta)\vert^2\approx 2^{2r},\\
\vert\vert\eta\vert-\gamma\vert\approx\vert\Psi(\eta)\vert\lesssim 2^l+2^{2r},\\
\big||\xi|-p_+(\gamma)\big|\lesssim 2^l+2^r.
\end{split}
\end{equation}

Integrating by parts, using Lemma \ref{tech5} and \eqref{PropPhiSupp2}, we observe that $\big\|P_k\mathcal{B}_{m,l,r,q}^{j_1}\big\|_{L^2}\lesssim 2^{-2m}$ if
\begin{equation}\label{ResLemMI1IBP}
\begin{split}
2^r\ge 2^{5\delta^2m}\big(2^{-l-m}+2^{j_1-m}+2^{q-m/2}\big)\quad\text{ or }\quad 2^j\ge 2^{5\delta^2m}(2^{m+r}+2^{q+m/2}+2^{-r}+2^{j_1}).
\end{split}
\end{equation}
Indeed, in the first case we integrate by parts in $\eta$, while in the second case we integrate by parts in $\xi$ after taking the inverse Fourier transform and restricting to $x\approx 2^j$ (notice that $\big|D^\alpha_\xi[\widetilde{\varphi}_l(\Phi(\xi,\eta))]\big|\lesssim_{|\alpha|} 2^{-l}2^{|\alpha|(r-l)}$ in the support of the integral, in view of \eqref{PropPhiSupp2}). 

For \eqref{ResLemMI1} it suffices to prove that if $2^J\le 2^{5\delta^2m}(2^{m+r}+2^{q+m/2}+2^{-r}+2^{j_1})$ then
\begin{equation}\label{Alx68.1}
\sum_{r\leq -\D/2}2^{J(1-20\delta)}\big\|P_k\mathcal{B}_{m,l,r,q}^{j_1}\big\|_{L^2}\lesssim 2^{-60\delta^2m},
\end{equation}
for any fixed parameters $m,l,r,q,j_1$ as before. Using Schur's test and \eqref{cas5.55}--\eqref{cas5.6}, and recalling that $f_{j_1,k_1}=f_{j_1,k_1,0}$,
\begin{equation*}
\big\|P_k\mathcal{B}_{m,l,r,q}^{j_1}\big\|_{L^2}\lesssim 2^{5\delta^2m}2^{m-l}\cdot 2^l\kappa_\theta^{1/2}\sup_{s}\| g_q(s)\|_{L^\infty}\|f^\mu_{j_1,k_1}(s)\|_{L^2}\lesssim 2^{20\delta^2m}2^{-(1-20\delta)j_1}2^{-m/4-q}
\end{equation*}
and
\begin{equation*}
\big\|P_k\mathcal{B}_{m,l,r,q}^{j_1}\big\|_{L^2}\lesssim 2^{5\delta^2m}2^{m-l}\cdot 2^{2r}\sup_{s}\| g_q(s)\|_{L^\infty}\|f^\mu_{j_1,k_1}(s)\|_{L^2}\lesssim 2^{20\delta^2m}2^{-(1-20\delta)j_1}2^{-l+2r-q}.
\end{equation*}
The desired bound \eqref{Alx68.1} follows if $r\leq-m$ or if $J\leq j_1+m/4$. Using \eqref{PropPhiSupp1}, \eqref{PropPhiSupp2}, and \eqref{Alx64.1} we estimate
\begin{equation}\label{EstimParamr2suiv}
\begin{split}
\vert \widehat{\mathcal{B}_{m,l,r,q}^{j_1}}(\xi)\vert&\lesssim 2^{5\delta^2m}2^{m-l}\cdot 2^l\kappa_\theta\sup_s\Vert \widehat{f^\mu_{j_1,k_1}}\Vert_{L^\infty}\Vert g_q\Vert_{L^\infty}\lesssim 2^{20\delta^2 m-m/2-q}.
\end{split}
\end{equation}
The desired bound \eqref{Alx68.1} follows also if $J\leq \delta m+m/2+q$. It remains to show that
\begin{equation}\label{EstimParamr2Suff}
\big[2^{(1-20\delta)(m+r)}+2^{-(1-20\delta)r}\big]\Vert \mathcal{B}_{m,l,r,q}^{j_1}\Vert_{L^2}\lesssim 2^{-70\delta^2m}.
\end{equation}

We use now \eqref{PropPhiSupp1}  and \eqref{PropPhiSupp2} to bound
\begin{equation}\label{Alx68.2}
\begin{split}
\vert \widehat{\mathcal{B}_{m,l,r,q}^{j_1}}(\xi)\vert&\lesssim 2^{5\delta^2m} 2^{m-l}\cdot \min\{2^l,2^r\}\min\{2^r,2^{-m/2}\}\sup_s\Vert \widehat{f^\mu_{j_1,k_1}}\Vert_{L^\infty}\Vert g_q\Vert_{L^\infty}\\
&\lesssim 2^{20\delta^2m}2^{-q}\min\{1,2^{r-l}\}\min\{2^r,2^{-m/2}\}.
\end{split}
\end{equation}
The desired bound \eqref{EstimParamr2Suff} follows if $r\leq-m/2$. It also follows if $-m/2\leq r\le -(m-2q)/3$. On the other hand, if $r\ge -(m-2q)/3$ and $j_1\geq m/2$ then we recall \eqref{PropPhiSupp1} and use Schur's test with \eqref{Alx64.1} to estimate
\begin{equation*}
\begin{split}
\Vert \mathcal{B}_{m,l,r,q}^{j_1}\Vert_{L^2}&\lesssim 2^{5\delta^2m}2^{m-l}\cdot \kappa_\theta2^{l-r/2}\sup_{s}\Vert f^\mu_{j_1,k_1}(s)\Vert_{L^2}\Vert g_q(s)\Vert_{L^\infty}\lesssim 2^{20\delta^2m}2^{-m/2-r/2}2^{-(1-20\delta)j_1},
\end{split}
\end{equation*}
and the desired bound \eqref{EstimParamr2Suff} follows in this case as well. 

Therefore, it remains to prove \eqref{EstimParamr2Suff} in the case
\begin{equation}\label{EstimParamr2}
r\ge -(m-2q)/3\qquad\text{ and }\qquad j_1\leq m/2.
\end{equation}
From \eqref{ResLemMI1IBP} and \eqref{EstimParamr2}, we may assume that $2^r\le 2^{6\delta^2m}2^{-l-m}$. In particular, $l\leq-m/2\leq r$. The main point is to notice that the modulation is rather large in this case, in view of \eqref{PropPhiSupp2}. We integrate by parts in time in the formula \eqref{Alx68.4} to rewrite
\begin{equation*}
\begin{split}
\widehat{\mathcal{B}_{m,l,r,q}^{j_1}}(\xi)=c\int_{\mathbb{R}}&\int_{\mathbb{R}^2}e^{is\mathfrak{p}(\xi,\eta)}\frac{\widetilde{\varphi}_l(\Phi(\xi,\eta))\varphi(\kappa_\theta^{-1}\Omega_\eta\Phi(\xi,\eta))}{\mathfrak{p}(\xi,\eta)}\\
&\times\varphi_{\leq -\D}(\nabla_\xi\Phi(\xi,\eta))\varphi_{r}(\nabla_\eta\mathfrak{p}(\xi,\eta))\partial_s\big\{q_m(s)\widehat{f^\mu_{j_1,k_1}}(\xi-\eta,s)g_q(\eta,s)\big\}\,d\eta ds.\\
\end{split}
\end{equation*}
We use the bounds, see Lemma \ref{dtfLemPrelim} and \eqref{Alx67.6},
\begin{equation*}
\begin{split}
\Vert \widehat{f^\mu_{j_1,k_1}}(s)\Vert_{L^\infty}\lesssim 1,&\qquad \partial_sf^\mu_{j_1,k_1}(s)=f^\mu_C+f^\mu_{NC},\\
\Vert \widehat{f^\mu_C}(s)\Vert_{L^\infty}\lesssim 2^{3\delta m+\delta^2m-m},&\qquad \Vert f^\mu_{NC}(s)\Vert_{L^2}\lesssim 2^{-11m/8},\\
\Vert g_q(s)\Vert_{L^\infty}\lesssim2^{(\delta-1)m},&\qquad\Vert \partial_sg_q(s)\Vert_{L^\infty}\lesssim2^{q-2m+5\delta m}.
\end{split}
\end{equation*}
Shur's Lemma allows to control the contribution of $f^\mu_{NC}$ as
\begin{equation*}
\begin{split}
\mathcal{B}_{m,l,r,q}^{j_1}&=\mathcal{B}_{m,l,r,q}^{j_1,\ast}+\mathcal{B}_{m,l,r,q}^{j_1,NC}\\
\widehat{\mathcal{B}_{m,l,r,q}^{j_1,NC}}(\xi):=c\int_{\mathbb{R}}&\int_{\mathbb{R}^2}e^{is\mathfrak{p}(\xi,\eta)}\frac{\widetilde{\varphi}_l(\Phi(\xi,\eta))\varphi(\kappa_\theta^{-1}\Omega_\eta\Phi(\xi,\eta))}{\mathfrak{p}(\xi,\eta)}\\
&\times\varphi_{\leq -\D}(\nabla_\xi\Phi(\xi,\eta))\varphi_{r}(\nabla_\eta\mathfrak{p}(\xi,\eta))q_m(s)\widehat{f^\mu_{NC}}(\xi-\eta,s)g_q(\eta,s)\,d\eta ds,\\
\Vert \mathcal{B}_{m,l,r,q}^{j_1,NC}\Vert_{L^2}&\lesssim 2^{2\delta^2m}2^{m-l}\cdot 2^l\kappa_\theta\cdot 2^{-2r}\cdot 2^{(\delta-1)m}\sup_{s}\Vert f^\mu_{NC}(s)\Vert_{L^2}\lesssim 2^{-\frac{13}{7}m-2r}.
\end{split}
\end{equation*}
In view of \eqref{EstimParamr2}, this gives an acceptable estimate. The rest of $\mathcal{B}_{m,l,r,q}^{j_1}$ is estimated crudely, using also \eqref{PropPhiSupp2} and \eqref{Alx64.1}, 
\begin{equation*}
\begin{split}
\Vert \widehat{\mathcal{B}_{m,l,r,q}^{j_1,\ast}}\Vert_{L^\infty}&\lesssim 2^{m-l}\cdot 2^l\kappa_\theta\cdot2^{-2r}2^{q-2m+5\delta m}\lesssim 2^{q+6\delta m-2r-3m/2},\\
2^{(1-20\delta)(m+r)}\Vert \mathcal{B}_{m,l,r,q}^{j_1,\ast}\Vert_{L^2}&\lesssim 2^{-(1/2+20\delta)(m+r-2q)+6\delta m-40\delta q}.
\end{split}
\end{equation*}
This completes the proof of \eqref{EstimParamr2Suff}, in view of the assumption \eqref{EstimParamr2}.

{\bf{Contribution of $f^\nu_{SR}$.}} We now show that
\begin{equation}\label{SecResEst}
\begin{split}
\Vert Q_{jk}\mathcal{B}_{m,l}[P_{k_1}f^\mu,P_{k_2}f^\nu_{SR}]\Vert_{B^\sigma_j}\lesssim 2^{-51\delta^2m}.
\end{split}
\end{equation}
Recall from \eqref{dtfSR} that $f^\nu_{SR}=\widetilde{P}_{SRI}f^\nu_{SR}$ and 
\begin{equation}\label{dtfSR2}
\begin{split}
\Vert D_\xi^\alpha \widehat{f^\nu_{SR}}(\xi,s)\Vert_{L^\infty}&\lesssim 2^{75\delta m-3m/2}2^{(1-300\delta) m\vert\alpha\vert},\quad\sup_{b\leq N_1/4}\|\Omega^bf^\nu_{SR}(s)\|_{L^2}\lesssim 1.
\end{split}
\end{equation}
In view of Proposition \ref{separation1} (i), we may assume that $\min\{k,k_1,k_2\}\ge-\D$. We make the change of variables $\eta\to\xi-\eta$ and\footnote{By a slight abuse of notation we also let $\Phi$ denote the phase $(\xi,\eta)\to\Phi(\xi,\xi-\eta)$.} decompose, with $\kappa_\theta=2^{2\delta^2m-m/2}$,
\begin{equation*}
\begin{split}
&\mathcal{B}_{m,l}[P_{k_1}f^\mu,P_{k_2}f^\nu_{SR}]=\mathcal{B}^{||}_{m,l}[P_{k_2}f^\nu_{SR},P_{k_1}f^\mu]+\mathcal{B}^\perp_{m,l}[P_{k_2}f^\nu_{SR},P_{k_1}f^\mu],\\
&\mathcal{F}\mathcal{B}^{||}_{m,l}[f,g](\xi):=\int_{\mathbb{R}}q_m(s)\int_{\mathbb{R}^2}e^{is\Phi(\xi,\eta)}\widetilde{\varphi}_l(\Phi(\xi,\eta))\varphi(\kappa_\theta^{-1}\Omega_\eta\Phi'(\xi,\eta))\widehat{f}(\xi-\eta,s)\widehat{g}(\eta,s)\,d\eta ds,\\
&\mathcal{F}\mathcal{B}^{\perp}_{m,l}[f,g](\xi):=\int_{\mathbb{R}}q_m(s)\int_{\mathbb{R}^2}e^{is\Phi(\xi,\eta)}\widetilde{\varphi}_l(\Phi(\xi,\eta))(1-\varphi(\kappa_\theta^{-1}\Omega_\eta\Phi(\xi,\eta)))\widehat{f}(\xi-\eta,s)\widehat{g}(\eta,s)\,d\eta ds.\\
\end{split}
\end{equation*}
Using Lemma \ref{RotIBP}, we see that $\big\|\mathcal{F}\mathcal{B}^{\perp}_{m,l}[f,g]\big\|_{L^\infty}\lesssim 2^{-2m}$, and this gives an acceptable contribution. Let $f_{j_1,k_1}$ be defined as before. If $j_1\ge 3m/4$ we use Lemma \ref{Shur3Lem} to estimate
\begin{equation*}
\begin{split}
\Vert P_k\mathcal{B}^{||}_{m,l}[P_{k_2}f^\nu_{SR},f^\mu_{j_1,k_1}]\Vert_{L^2}&\lesssim 2^{2\delta^2m}2^{m-l}\cdot(2^l\kappa_\theta^\frac{1}{2})\cdot \sup_{s}\Vert \widehat{P_{k_2}f^\nu_{SR}}\Vert_{L^\infty}\Vert f^\mu_{j_1,k_1}\Vert_{L^2}\\
&\lesssim 2^{80\delta m-3m/4}2^{-j_1/2},
\end{split}
\end{equation*}
and we obtain an acceptable contribution. On the other hand, if $j_1\le 3m/4$ then we decompose
\begin{equation*}
\begin{split}
\mathcal{B}^{||}_{m,l}[P_{k_2}f^\nu_{SR},f^\mu_{j_1,k_1}]&=\sum_{p\in[p_-,p_0]}\mathcal{B}_{m,l,p}[P_{k_2}f^\nu_{SR},f^\mu_{j_1,k_1}],
\end{split}
\end{equation*}
where $p_-:=\lfloor l/2\rfloor$ and $p_0:=\lfloor -250\delta m\rfloor$ and 
\begin{equation*}
\begin{split}
\mathcal{F}\mathcal{B}_{m,l,p}[f,g](\xi):=\int_{\mathbb{R}}&q_m(s)\int_{\mathbb{R}^2}e^{is\Phi(\xi,\eta)}\widetilde{\varphi}_l(\Phi(\xi,\eta))\\
&\times\varphi(\kappa_\theta^{-1}\Omega_\eta\Phi(\xi,\eta))\varphi_{p}^{[p_-,p_0]}(\nabla_\eta\Phi(\xi,\eta))\widehat{f}(\xi-\eta,s)\widehat{g}(\eta,s)\,d\eta ds.
\end{split}
\end{equation*}
Integration by parts using Lemma \ref{tech5} and \eqref{dtfSR2} show that $\Vert P_k\mathcal{B}_{m,l,p_0}[P_{k_2}f^\nu_{SR},f^\mu_{j_1,k_1}]\Vert_{L^2}\lesssim 2^{-2m}$. On the other hand, for $p<p_0$, $\mathcal{F}\mathcal{B}_{m,l,p}$ is supported in a small neighborhood of a space-time resonance, more precisely in the set of points $\xi$ with the property that
\begin{equation}\label{Alx67.1}
\begin{split}
\vert\Psi(\xi)\vert\lesssim\vert\Phi(\xi,\eta)\vert+\vert\eta-p(\xi)\vert^2\lesssim\vert\Phi(\xi,\eta)\vert+\vert\nabla_\eta\Phi(\xi,\eta)\vert^2\lesssim 2^l+2^{2p}.
\end{split}
\end{equation}
Given $\xi$ satisfying \eqref{Alx67.1}, the support of $\eta$ integration essentially and $\kappa_\theta\times 2^{l-p}$ box. Therefore we can estimate, for $p\in[p_-,p_0-1]$
\begin{equation*}
\begin{split}
(1+2^m|\Psi(\xi)|)\big|\mathcal{F}\mathcal{B}_{m,l,p}[P_{k_2}f^\nu_{SR},f^\mu_{j_1,k_1}](\xi)\big|&\lesssim 2^{m+2p}2^{m-l}\cdot2^{l-p}\kappa_\theta\sup_{s}\Vert\widehat{f^\nu_{SR}}(s)\Vert_{L^\infty}\Vert\widehat{f^\mu_{j_1,k_1}}(s)\Vert_{L^\infty}\\
&\lesssim 2^{p+80\delta m}.
\end{split}
\end{equation*}
Summing over $p\le -250\delta m$, we find that
\begin{equation*}
\sum_{p\in[p_-,p_0]} (1+2^m|\Psi(\xi)|)\big|\mathcal{F}\mathcal{B}_{m,l,p}[P_{k_2}f^\nu_{SR},f^\mu_{j_1,k_1}](\xi)\big|\lesssim 2^{-100\delta m}.
\end{equation*}
As remarked in the proof of Lemma \ref{StronglyResLem}, see \eqref{Alx67.5}, this gives an acceptable contribution.

{\bf{Contribution of $f^\nu_{NC}$.}} Recall that 
\begin{equation}\label{Alx45.1}
\|f^\nu_{NC}(s)\|_{L^2}\lesssim 2^{-19m/10}, \qquad \sup_{b\leq N_1/4}\|\Omega^bf^\nu_{NC}(s)\|_{L^2}\lesssim 1.
\end{equation}
We show that
\begin{equation}\label{ResLem11}
\begin{split}
2^{(1-20\delta)m}\Vert P_k\mathcal{B}_{m,l}[P_{k_1}f^\mu,P_{k_2}f^{\nu}_{NC}]\Vert_{L^2}&\lesssim 2^{-51\delta^2m}.
\end{split}
\end{equation}
We define $f^\mu_{j_1,k_1}$ and $f^\nu_{j_1,k_1,n_1}$ as before. If $j_1\ge 2m/3$, then using Lemma \ref{Shur2Lem} and Lemma \ref{LinEstLem},
\begin{equation*}
\begin{split}
\Vert \mathcal{B}_{m,l}[f^\mu_{j_1,k_1,n_1},P_{k_2}f^{\nu}_{NC}]\Vert_{L^2}&\lesssim 2^{\delta^2m} 2^{m-l}2^{\frac{l-n_1}{2}}\sup_{s}\big\Vert \sup_{\theta\in\mathbb{S}^1}|\widehat{f^\mu_{j_1,k_1,n_1}}(r\theta,s)|\big\Vert_{L^2(rdr)}\Vert P_{k_2}f^\nu_{NC}(s)\Vert_{L^2}\\
&\lesssim 2^{-9m/10-l/2}2^{-(1-21\delta)j_1},
\end{split}
\end{equation*}
which gives acceptable contributions. 

On the other hand, if $j_1\le 3m/4$ and $\underline{k}=\min\{k,k_1,k_2\}\le -m/5$ then we use Schur's test, \eqref{cas5.55}, and Lemma \ref{LinEstLem} to estimate
\begin{equation*}
\begin{split}
\Vert P_k\mathcal{B}_{m,l}[f^\mu_{j_1,k_1},P_{k_2}f^{\nu}_{NC}]\Vert_{L^2}&\lesssim 2^{2\delta^2m}2^{m-l}\cdot 2^{l+\underline{k}/2}\cdot\sup_{s}\Vert \widehat{f^\mu_{j_1,k_1}}(s)\Vert_{L^\infty}\Vert f^\nu_{NC}(s)\Vert_{L^2}\lesssim 2^{-m+5\delta m},
\end{split}
\end{equation*}
which gives an acceptable contribution. 

We may now decompose, for $\kappa_\theta$ to be chosen
\begin{equation*}
\begin{split}
&\mathcal{F}\mathcal{B}_{m,l}[f^\mu_{j_1,k_1},P_{k_2}f^{\nu}_{NC}](\xi)=\int_{\mathbb{R}}q_m(s)\big[I^{||}(\xi,s)+I^\perp(\xi,s)\big]\,ds,\\
&I^{||}(\xi,s):=\int_{\mathbb{R}^2}e^{is\Phi(\xi,\eta)}\widetilde{\varphi}_l(\Phi(\xi,\eta))\varphi(\kappa_\theta^{-1}\Omega_\eta\Phi(\xi,\eta))\widehat{f^\mu_{j_1,k_1}}(\xi-\eta,s)\widehat{P_{k_2}f^\nu_{NC}}(\eta,s)\,d\eta,\\
&I^\perp(\xi,s):=\int_{\mathbb{R}^2}e^{is\Phi(\xi,\eta)}\widetilde{\varphi}_l(\Phi(\xi,\eta))(1-\varphi(\kappa_\theta^{-1}\Omega_\eta\Phi(\xi,\eta)))\widehat{f^\mu_{j_1,k_1}}(\xi-\eta,s)\widehat{P_{k_2}f^\nu_{NC}}(\eta,s)\,d\eta.
\end{split}
\end{equation*}
If $j_1\le 3m/4$ and $-m/5\le \underline{k}\le -\D$ then we set $\kappa_\theta:=2^{-2m/5}$, and apply Lemma \ref{RotIBP} and Lemma \ref{Shur3Lem} to estimate $\Vert P_kI^\perp(s)\Vert_{L^2}\lesssim 2^{-3m}$ and
\begin{equation*}
\Vert P_kI^{||}(s)\Vert_{L^2}\lesssim 2^{2\delta^2m}2^{-l}\cdot\kappa_\theta 2^l2^{-\underline{k}}\Vert \widehat{f^\mu_{j_1,k_1}}(s)\Vert_{L^\infty}\Vert f^\nu_{NC}(s)\Vert_{L^2}\lesssim 2^{-2m}.
\end{equation*}
This gives acceptable contributions. Assume finally that $j_1\le 3m/4$ and $\underline{k}\ge -\D$, and set $\kappa_\theta:=2^{2\delta^2m-m/2}$. Proceeding as before, we find that $\Vert P_kI^\perp(s)\Vert_{L^2}\lesssim 2^{-3m}$ and
\begin{equation*}
\begin{split}
\Vert P_kI^{||}(s)\Vert_{L^2}&\lesssim 2^{2\delta^2m}2^{-l}\cdot \kappa_\theta^{1/4} 2^l\Vert \widehat{f^\mu_{j_1,k_1}}(s)\Vert_{L^\infty}\Vert f^\nu_{NC}(s)\Vert_{L^2}\lesssim 2^{-2m}.
\end{split}
\end{equation*}
This gives again an acceptable contribution, which completes the proof in this case.

{\bf{Contribution of $\partial_sF^\nu$.}} It remains to show that
\begin{equation}\label{ResLem3}
\begin{split}
2^{(1-20\delta)m}\Vert P_k\mathcal{B}_{m,l}[P_{k_1}f^\mu,P_{k_2}\partial_sF^\nu_\alpha]\Vert_{L^2}&\lesssim 2^{-51\delta^2m},\qquad\alpha\in\{C,NC,LO\}.
\end{split}
\end{equation}
We define also $f^\mu_{j_1,k_1}$ and $f^\mu_{j_1,k_1,n_1}$ as before. We integrate by parts in $s$ to rewrite
\begin{equation*}
\mathcal{F}\mathcal{B}_{m,l}[P_{k_1}f^\mu,P_{k_2}\partial_sF^\nu_\alpha]=-B_1[P_{k_1}f^\mu,P_{k_2}F^\nu_\alpha]-iB_2[P_{k_1}f^\mu,P_{k_2}F^\nu_\alpha]-B_3[P_{k_1}\partial_sf^\mu,P_{k_2}F^\nu_\alpha],
\end{equation*}
\begin{equation*}
\begin{split}
B_1[P_{k_1}f^\mu,g](\xi)&:=\int_{\mathbb{R}}q^\prime_m(s)\int_{\mathbb{R}^2}e^{is\Phi(\xi,\eta)}\widetilde{\varphi}_l(\Phi(\xi,\eta))\widehat{P_{k_1}f^\mu}(\xi-\eta,s)\widehat{g}(\eta,s)\,d\eta ds,\\
B_2[P_{k_1}f^\mu,g](\xi)&:=\int_{\mathbb{R}}q_m(s)\int_{\mathbb{R}^2}e^{is\Phi(\xi,\eta)}\widetilde{\varphi}_l(\Phi(\xi,\eta))\Phi(\xi,\eta)\widehat{P_{k_1}f^\mu}(\xi-\eta,s)\widehat{g}(\eta,s)\,d\eta ds,\\
B_3[P_{k_1}\partial_sf^\mu,g](\xi)&:=\int_{\mathbb{R}}q_m(s)\int_{\mathbb{R}^2}e^{is\Phi(\xi,\eta)}\widetilde{\varphi}_l(\Phi(\xi,\eta))\partial_s\widehat{P_{k_1}f^\mu}(\xi-\eta,s)\widehat{g}(\eta,s)\,d\eta ds.
\end{split}
\end{equation*}

Using Lemma \ref{PhiLocLem}, we first see that, for $\beta\in\{1,2\}$,
\begin{equation*}
\begin{split}
\big\|B_\beta[P_{k_1}f^\mu,P_{k_2}F^\nu_{NC}]\big\|_{L^2}&\lesssim 2^m\big[\sup_{\lambda,s \approx 2^m}\Vert e^{-i\lambda\Lambda_\mu}P_{k_1}f^\mu(s)\Vert_{L^\infty}\Vert P_{k_2}F^\nu_{NC}(s)\Vert_{L^2}+2^{-5m}\big]\lesssim 2^{-m},
\end{split}
\end{equation*}
which gives an acceptable contribution. To bound $B_3$, we notice first that
\begin{equation}\label{ShurBB3}
\begin{split}
&\sup_\xi\int_{\mathbb{R}^2}\varphi_{\leq l}(\Phi(\xi,\eta))\varphi_k(\xi)\varphi_{k_2}(\eta)\vert\partial_s\widehat{P_{k_1}f^\mu}(\xi-\eta)\vert d\eta\\
&+\sup_\eta\int_{\mathbb{R}^2}\varphi_{\leq l}(\Phi(\xi,\eta))\varphi_k(\xi)\varphi_{k_2}(\eta)\vert\partial_s\widehat{P_{k_1}f^\mu}(\xi-\eta)\vert d\xi\lesssim 2^{100\delta m}2^{l-m}.
\end{split}
\end{equation}
This is a consequence of Lemma \ref{dtfLemPrelim} and \eqref{cas5.55}. Using Schur's test we have
\begin{equation*}
\begin{split}
\big\|\varphi_k\cdot B_3[P_{k_1}f^\mu,P_{k_2}F^\nu_{NC}]\big\|_{L^2}\lesssim 2^{m-l}\cdot 2^{100\delta m}2^{l-m}\cdot\sup_{s}\Vert P_{k_2}F^\nu_{NC}(s)\Vert_{L^2}\lesssim 2^{-m},
\end{split}
\end{equation*}
which gives an acceptable contribution.

We consider now the contribution of $F^\nu_{LO}$. We first remark that, using Proposition \ref{separation1} (i), we may assume that $k_1\geq-\D$ and $f^\mu_{j_1,k_1}=f^\mu_{j_1,k_1,0}$. 
Therefore, if $\beta\in\{1,2\}$,
\begin{equation*}
\begin{split}
\Vert B_\beta[f^\mu_{j_1,k_1},P_{k_2}F^\nu_{LO}]\Vert_{L^2}&\lesssim 2^m\sup_{s}\Vert \widehat{P_{k_2}F_{LO}}(s)\Vert_{L^1}\Vert f^\mu_{j_1,k_1}(s)\Vert_{L^2}\lesssim 2^{-4m/5}2^{-(1-20\delta)j_1},
\end{split}
\end{equation*}
which gives an acceptable contribution if $j_1\ge m/2$. Otherwise, using \eqref{LinftyBd} and Lemma \ref{PhiLocLem}
\begin{equation*}
\begin{split}
\Vert B_\beta[f^\mu_{j_1,k_1},P_{k_2}F^\nu_{LO}]\Vert_{L^2}
&\lesssim 2^m\big[\sup_{s,\lambda\approx 2^m}\Vert e^{-i\lambda\Lambda_\mu}f^\mu_{j_1,k_1}(s)\Vert_{L^\infty}\Vert P_{k_2}F^\nu_{LO}(s)\Vert_{L^2}+2^{-5m}\big]\lesssim 2^{-(1-10\delta)m},
\end{split}
\end{equation*}
which also gives an acceptable contribution. Finally, we use Lemma \ref{dtfLemPrelim} to write $\partial_sf^\mu=\widetilde{f^\mu_C}+\widetilde{f^\mu_{NC}}$. Then we use Schur's test with \eqref{cas5.55} to estimate
\begin{equation*}
\begin{split}
&\Vert B_3[P_{k_1}\widetilde{f^\mu_C},P_{k_2}F^\nu_{LO}]\Vert_{L^2}
\lesssim 2^{m-l}\cdot 2^{2\delta^2m}2^l\sup_{s}\Vert \widehat{P_{k_1}\widetilde{f^\mu_C}}(s)\Vert_{L^\infty}\Vert P_{k_2}F^\nu_{LO}(s)\Vert_{L^2}\lesssim 2^{-m+10\delta m},\\
&\Vert B_3[P_{k_1}\widetilde{f^\mu_{NC}},P_{k_2}F^\nu_{LO}]\Vert_{L^2}
\lesssim 2^{m-l}\cdot \sup_{s}\Vert \widehat{P_{k_1}\widetilde{f^\mu_{NC}}}(s)\Vert_{L^2}\Vert \widehat{P_{k_2}F^\nu_{LO}}(s)\Vert_{L^1}\lesssim 2^{-m}.
\end{split}
\end{equation*}
These are acceptable contributions, as in \eqref{ResLem3}.

We consider now the contribution of $F^\nu_C$. Using Schur's test, \eqref{cas5.55}, \eqref{dtfType3}, and \eqref{dtFTotalBds2Prelim},
\begin{equation*}
\begin{split}
\Vert \varphi_k\cdot B_3[P_{k_1}\partial_sf^\mu,P_{k_2}F^\nu_{C}]\Vert_{L^2}&\lesssim 2^{m-l}\cdot 2^l2^{2\delta^2m}\sup_{s}\Vert \widehat{F^\nu_C}(s)\Vert_{L^\infty}\Vert \partial_sP_{k_1}f^\mu(s)\Vert_{L^2}\lesssim \vert l\vert 2^{-(1-8\delta)m}.
\end{split}
\end{equation*}
This is an acceptable contribution as in \eqref{ResLem3}. In order to bound the operators $B_1$ and $B_2$, we use Lemma \ref{Shur2Lem} to estimate, for all $n_1$ and $\beta\in\{1,2\}$,
\begin{equation*}
\begin{split}
\Vert B_\beta[f^\mu_{j_1,k_1,n_1},P_{k_2}F^\nu_C]\Vert_{L^2}&\lesssim 2^m2^{(l-n_1)/2}2^{2\delta^2m}\cdot \sup_{s}\Vert \widehat{P_{k_2}F^\nu_C}(s)\Vert_{L^\infty}\big\Vert \sup_{\theta\in\mathbb{S}^1}|\widehat{f^\mu_{j_1,k_1,n_1}}(r\theta,s)|\big\Vert_{L^2(rdr)}\\
&\lesssim 2^{3.5\delta m}2^{l/2}2^{-19\delta n_1}2^{-(1-21\delta) j_1}.
\end{split}
\end{equation*}
Recalling that $2^l\lesssim 2^{-12\delta m}$, this gives an acceptable contribution if $j_1\ge m-\delta m$. On the other hand, if $j_1\le m-\delta m$ and $k_1\geq-\delta m/2$, then we use \eqref{LinftyBd} and Lemma \ref{PhiLocLem} to obtain
\begin{equation}\label{Alx68.7}
\begin{split}
\Vert B_\beta[f^\mu_{j_1,k_1},P_{k_2}F^\nu_C]\Vert_{L^2}
&\lesssim 2^m\big[\sup_{s,\lambda\approx 2^m}\Vert e^{-i\lambda\Lambda_\mu}f^\mu_{j_1,k_1}(s)\Vert_{L^\infty}\Vert P_{k_2}F^\nu_C(s)\Vert_{L^2}+2^{-5m}\big]\\
&\lesssim 2^m\cdot 2^{-m+3\delta m}2^{-m+4\delta m},
\end{split}
\end{equation}
which is an acceptable contribution.

Finally, it remains to consider the case
\begin{equation}\label{Alx68.5}
j_1\le m-\delta m,\qquad k_1\leq-\delta m/2,\qquad\beta\in\{1,2\}.
\end{equation}
In this case we need to improve slightly on both estimates above. Notice first that 
\begin{equation*}
B_\beta[f^\mu_{j_1,k_1},P_{k_2}F^\nu_C]=B_\beta[f^\mu_{j_1,k_1},P_{k_2}G^\nu_C],\quad\text{ where }\quad\widehat{G^\nu_C}(\xi,s):=\mathbf{1}_{\{\vert \vert\xi\vert-\gamma_0\vert\lesssim 2^l+2^{k_1}\}}(\xi)\widehat{F^\nu_C}(\xi,s).
\end{equation*}
This is a consequence of Proposition \ref{separation1} (i). Moreover, $f_{j_1,k_1}=f_{j_1,k_1,0}$ and we estimate, using Schur's test and \eqref{cas5.55}, for $\beta\in\{1,2\}$,
\begin{equation*}
\begin{split}
2^{(1-20\delta)m}\Vert B_\beta[f^\mu_{j_1,k_1},P_{k_2}G^\nu_C]\Vert_{L^2}&\lesssim 2^{(1-20\delta)m}2^m2^{l}2^{2\delta^2m}\cdot \sup_{s}\Vert \widehat{P_{k_2}G^\nu_C}(s)\Vert_{L^\infty}\big\Vert f^\mu_{j_1,k_1}\big\Vert_{L^2}\\
&\lesssim 2^{3\delta m}2^{l}2^{15\delta^2m}2^{(1-20\delta) (m-j_1)}.
\end{split}
\end{equation*}
This suffices if $m-j_1\leq 9\delta m$. On the other hand, if $j_1\leq m-9\delta m$ and $k_1\geq -9\delta m+10\delta^2m$ then the estimate \eqref{Alx68.7} still holds, since we can apply the strong $L^\infty$ bound in the last line of \eqref{LinftyBd}. Finally, if $j_1\leq m-9\delta m$ and $k_1\leq -9\delta m+10\delta^2m$ then we notice that 
\begin{equation*}
\|G^\nu_C\|_{L^2}\lesssim (2^l+2^{k_1})^{1/2}\|\widehat{G^\nu_C}\|_{L^\infty}\lesssim 2^{-m-\delta m}.
\end{equation*}
Using the bound in the first line of \eqref{LinftyBd} we can improve the estimate \eqref{Alx68.7}, 
\begin{equation*}
\begin{split}
\Vert B_\beta[f^\mu_{j_1,k_1},P_{k_2}G^\nu_C]\Vert_{L^2}
&\lesssim 2^m\big[\sup_{s,\lambda\approx 2^m}\Vert e^{-i\lambda\Lambda_\mu}f^\mu_{j_1,k_1}(s)\Vert_{L^\infty}\Vert P_{k_2}G^\nu_C(s)\Vert_{L^2}+2^{-5m}\big]\\
&\lesssim 2^m\cdot 2^{-m+20\delta j_1}2^{-m-\delta m},
\end{split}
\end{equation*}
which gives an acceptable contribution. This completes the proof of \eqref{ResLem3}.

\subsection{Control of cubic terms}

In this subsection, we control of the cubic terms due to the general pressure law. For simplicity of notation let $H_3:=H_3(\rho)$ and recall that
\begin{equation*}
\rho=(-i/2)|\nabla|\Lambda_e^{-1}[U_e-\overline{U_e}]=(-i/2)|\nabla|\Lambda_e^{-1}[e^{-it\Lambda_e}V_e-e^{it\Lambda_e}\overline{V_e}].
\end{equation*}

\begin{lemma}\label{dtVCubic}
Assume the hypothesis of Proposition \ref{bootstrap}, $m\ge 0$, $s\in [2^m-1,2^{m+1}]$, $k\in\mathbb{Z}$, then
\begin{equation*}
\begin{split}
\Vert P_{\le k}\vert\nabla\vert H_3(s)\Vert_{H^{N_0}}+\sup_{a\le N_1}\Vert P_{\le k}\Omega^a \vert\nabla\vert H_3(s)\Vert_{L^2}&\lesssim \epsilon_1^32^{k}2^{-2m+50\delta m},\\
\sup_{a\le N_1/2}\Vert P_k\Omega^a \vert\nabla\vert H_3(s)\Vert_{L^\infty}&\lesssim\epsilon_1^3\min\{2^{-3m+75\delta m},2^{3k}\},\\
\sup_{a\le N_1/2}\Vert  P_k\Omega^a\vert\nabla\vert H_3(s)\Vert_{L^1}&\lesssim \eps_1^3\min\{2^{k},1\}2^{-m+25\delta m}.
\end{split}
\end{equation*}
\end{lemma}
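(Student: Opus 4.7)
The plan is to reduce $H_3(\rho)$ to its leading cubic monomial $\rho^3$. By analyticity of $h$, the function $H_3$ admits a convergent Taylor expansion $H_3(\rho)=\sum_{n\ge 3}c_n\rho^n$ in a neighborhood of $\rho=0$; each higher-order monomial $\rho^n$, $n\ge 4$, gains an extra factor of $\|\rho\|_{L^\infty}\lesssim\epsilon_1\ll 1$ relative to the cubic baseline, so after absolute summation $H_3(\rho)$ satisfies the same bounds as $\rho^3$.

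First I would translate Proposition~\ref{BootSimple} into estimates on $\rho$ itself. Since $\rho=c|\nabla|\Lambda_e^{-1}\Im(U_e)$ and $|\nabla|\Lambda_e^{-1}$ is a bounded radial Fourier multiplier with symbol comparable to $\min(|\xi|,1)$, the bounds \eqref{BootSimple1}--\eqref{BootSimple2} give $\sup_{a\le N_1}\|\Omega^a\rho(s)\|_{L^2}\lesssim\epsilon_1$ and $\|P_k\Omega^a\rho(s)\|_{L^\infty}\lesssim\epsilon_1\min(2^k,1)\,2^{-m+21\delta m}$ for $a\le N_1/2$. Summing over $k$, with the high-frequency tail absorbed by Bernstein and the $H^{N_0}$ bound on $\rho$, also yields $\sup_{a\le N_1/2}\|\Omega^a\rho(s)\|_{L^\infty}\lesssim\epsilon_1\,2^{-m+22\delta m}$.

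Next, a Leibniz expansion $\Omega^a(\rho^3)=\sum_{a_1+a_2+a_3=a}C_{a_1a_2a_3}\,\Omega^{a_1}\rho\cdot\Omega^{a_2}\rho\cdot\Omega^{a_3}\rho$, ordered so that $a_1\ge a_2\ge a_3$, guarantees $a_2,a_3\le N_1/2$ by pigeonhole (since $2a_2\le a_1+a_2\le a\le N_1$); one therefore places the highest-derivative factor in $L^2$ and the remaining two in $L^\infty$ (or two in $L^2$ and one in $L^\infty$ for the $L^1$ bound). This produces $\|\Omega^a(\rho^3)\|_{L^2}\lesssim\epsilon_1^3 2^{-2m+44\delta m}$ for $a\le N_1$, $\|\rho^3\|_{L^1}\lesssim\epsilon_1^3 2^{-m+22\delta m}$, and $\|\Omega^a(\rho^3)\|_{L^\infty}\lesssim\epsilon_1^3 2^{-3m+66\delta m}$ for $a\le N_1/2$. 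The $H^{N_0}$ bound follows from the Moser product estimate $\|\rho^3\|_{H^{N_0}}\lesssim\|\rho\|_{L^\infty}^2\|\rho\|_{H^{N_0}}$, valid because $N_0\gg 1$.

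Finally, the Fourier multiplier $P_{\le k}|\nabla|$ (resp.\ $P_k|\nabla|$) is handled by routine Littlewood--Paley: its symbol $|\xi|\varphi_{\le k}(\xi)$ contributes a uniform factor $2^k$ in any $L^p$, yielding the first conclusion of the lemma as well as the third (for the $L^1$ bound, the direct estimate $\||\nabla|\rho^3\|_{L^1}\lesssim\|\rho\|_{L^\infty}\|\rho\|_{L^2}\|\nabla\rho\|_{L^2}\lesssim\epsilon_1^3 2^{-m+22\delta m}$ handles the regime $k\ge 0$, producing the $\min(2^k,1)$ shape). For the $L^\infty$ bound, the two competing factors in $\min(2^{-3m+75\delta m},2^{3k})$ arise from complementary Bernstein estimates: when $2^k$ is small, $\|P_k|\nabla|f\|_{L^\infty}\lesssim 2^{3k}\|f\|_{L^1}$ together with the crude $\|\rho^3\|_{L^1}\lesssim\epsilon_1^3$ (from Sobolev, ignoring time decay) yields the $2^{3k}$ bound; when $2^k\gtrsim 1$, $\|P_k|\nabla|f\|_{L^\infty}\lesssim 2^k\|f\|_{L^\infty}$ combined with $\|\rho^3\|_{L^\infty}\lesssim\epsilon_1^3 2^{-3m+66\delta m}$ yields the $2^{-3m+75\delta m}$ bound, while for $k\gg 1$ Bernstein $L^2\to L^\infty$ combined with the $H^{N_0}$ bound provides more than enough decay. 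The only mild obstacle is this two-regime Bernstein analysis for the $L^\infty$ bound and the bookkeeping of $\delta$-losses; otherwise the proof is a routine application of Proposition~\ref{BootSimple}, Leibniz, and H\"older inequalities.
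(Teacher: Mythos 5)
Your argument is correct and is exactly the intended one: the paper itself offers no details here, stating only that ``these bounds follow directly from the fact that $H_3$ is a cubic function of $\rho$,'' and your reduction to $\rho^3$ via the convergent Taylor series, followed by the Leibniz/H\"older estimates built on Proposition \ref{BootSimple} and the two-regime Bernstein analysis for the multiplier $P_k|\nabla|$, is the standard way to fill that in. The only points worth flagging are minor bookkeeping ones you already acknowledge (summing the high-frequency tail for $\|\Omega^a\rho\|_{L^\infty}$ with $a\ge 1$ uses the usual interpolation between $H^{N_0}$ and $H^{N_1}_\Omega$, and $N_0\delta\gg 1$ is needed to close the intermediate range $0\le k\lesssim \delta m$ in the $L^\infty$ estimate), none of which affects the validity of the proof.
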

Indeed, these bounds follow directly from the fact that $H_3$ is a cubic function of $\rho$.

\begin{proposition}\label{CubicTermZnorm}

Under the hypothesis of Proposition \ref{bootstrap}, for any $t\in[0,T]$ we have
\begin{equation*}
\sup_{a\leq N_1/2}\Big\Vert \int_0^te^{is\Lambda_e}\vert\nabla\vert \Omega^aH_3(s)ds\Big\Vert_{Z^1_e}\lesssim \epsilon_1^3.
\end{equation*}

\end{proposition}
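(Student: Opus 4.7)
The plan is to treat the integral $F(t) := \int_0^t e^{is\Lambda_e}|\nabla|\Omega^a H_3(\rho(s))\,ds$ as a trilinear (and higher-order) analogue of the quadratic Duhamel integrals estimated in Proposition \ref{ZNormProp}. First, since $h$ is analytic and vanishes to third order, expand $H_3(\rho) = \sum_{n \ge 3} c_n \rho^n$; for $n \ge 4$ one gains additional factors $\|\rho(s)\|_{L^\infty}^{n-3} \lesssim \epsilon_1^{n-3}(1+s)^{-(1-21\delta)(n-3)}$ from Lemma \ref{LinEstLem}, so it suffices to treat the leading cubic term $\rho^3$. Writing $\rho = c|\nabla|\Lambda_e^{-1}(e^{-is\Lambda_e}V_e - e^{is\Lambda_e}\overline{V_e})$ via \eqref{variables4}--\eqref{variables5}, we expand $\rho^3$ as a sum of trilinear Fourier integrals
\begin{equation*}
\sum_{(\iota_1,\iota_2,\iota_3) \in \{+,-\}^3} \int_0^t \iint e^{is\Phi^{(3)}(\xi,\eta,\zeta)}\mathfrak{m}^a(\xi,\eta,\zeta) \widehat{V_{\iota_1}}(\xi-\eta-\zeta,s)\widehat{V_{\iota_2}}(\eta,s)\widehat{V_{\iota_3}}(\zeta,s)\,d\eta\, d\zeta\, ds
\end{equation*}
with phase $\Phi^{(3)} = \Lambda_e(\xi) - \iota_1\Lambda_e(\xi-\eta-\zeta) - \iota_2\Lambda_e(\eta) - \iota_3\Lambda_e(\zeta)$ and multiplier $\mathfrak{m}^a$ containing the factor $|\xi|\cdot|\xi-\eta-\zeta|\cdot|\eta|\cdot|\zeta|\cdot\prod_j\Lambda_e(\cdot)^{-1}$ coming from $|\nabla|$ and the three copies of $|\nabla|\Lambda_e^{-1}$ in $\rho$, together with the rotation derivatives from $\Omega^a$.

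For the sign patterns with $\iota_1+\iota_2+\iota_3 \neq 1$, a direct computation (using that the Klein--Gordon masses $\Lambda_e \ge 1$ overdetermine) yields the uniform lower bound $|\Phi^{(3)}(\xi,\eta,\zeta)| \gtrsim (1+|\xi|+|\eta|+|\zeta|)^{-1}$, with the worst case being the collinear configuration $\xi_j = \xi/3$ at high frequencies. There are no time resonances, and a single integration by parts in $s$ converts these pieces into boundary terms at $s=t$ and $s=0$, plus an interior $\partial_sV_\iota$ term. The $\langle\xi\rangle$ loss from dividing by $\Phi^{(3)}$ is comfortably absorbed by the $2^{6k_+}$ weight in the $Z^\sigma_1$ norm. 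The $s=t$ boundary term is then a trilinear Fourier multiplier applied to $V_\iota(t)$, bounded in $Z^e_1$ by $\epsilon_1^3$ using the bootstrap assumption \eqref{bootstrap2} together with multilinear estimates analogous to those in Sections \ref{ProofZ1}--\ref{ProofZ3}. The interior term, after substituting $\|\partial_sV_\iota(s)\|_{L^2} \lesssim \epsilon_1^2(1+s)^{-1+22\delta}$ from Proposition \ref{BootSimple}, gives a quartic expression with more than enough time decay to be integrable.

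The main obstacle is the three sign patterns with $\iota_1+\iota_2+\iota_3 = 1$ (two $+$ and one $-$), for which $\Phi^{(3)}(0,0,0) = 0$. However, the space-time resonance set reduces to the single point $\xi=\eta=\zeta=0$, and the multiplier $\mathfrak{m}^a$ already vanishes to order four there (three factors $|\xi_j|$ from $\rho$ plus one factor $|\xi|$ from the outer $|\nabla|$), so the effective resonant contribution is harmless. For these terms we carry out a frequency-localized analysis following Sections \ref{ProofZ1}--\ref{ProofZ3}: dyadic time decomposition via the cutoffs $q_m$, approximate finite speed of propagation (as in Lemma \ref{FSPLem}) restricting to $j \le m+\mathcal{D}$, and then the pointwise bounds of Lemma \ref{dtVCubic} together with trilinear $L^2\times L^\infty\times L^\infty$ Hölder estimates (analogues of Lemmas \ref{TimeNonRes}, \ref{StronglyResLem}, \ref{ResLem}) to close the atomic estimate. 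The cubic structure provides two extra $L^\infty$ decay factors of size $\epsilon_1(1+s)^{-1+21\delta}$ relative to the quadratic case, which more than compensates for the trilinear bookkeeping and the degeneracy at the origin. No essentially new ingredients beyond those developed in Sections \ref{partialt}--\ref{Sec:Z1Norm} are required; the delicate point is simply tracking the vanishing orders of $\mathfrak{m}^a$ versus $\Phi^{(3)}$ near the origin so that the normal form transformation for the resonant sign patterns produces a multiplier in $S^\infty$ rather than a genuinely singular denominator.
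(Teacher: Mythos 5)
Your reduction to the leading cubic term $c\rho^3$ is fine and matches the paper, but the core of your plan --- trilinear normal forms organized by sign pattern --- rests on a false description of the resonance structure. For a pattern with two $+$ and one $-$, say $\Phi^{(3)}=\Lambda_e(\xi)-\Lambda_e(\xi-\eta-\zeta)-\Lambda_e(\eta)+\Lambda_e(\zeta)$, the time-resonant set $\{\Phi^{(3)}=0\}$ is a codimension-one set, and the space-time resonant set is \emph{not} the single point $\xi=\eta=\zeta=0$: setting $\nabla_\eta\Phi^{(3)}=\nabla_\zeta\Phi^{(3)}=0$ forces $\xi-\eta-\zeta=\eta=-\zeta$, i.e. $\eta=\xi$, $\zeta=-\xi$, and on this two-dimensional manifold $\Phi^{(3)}$ vanishes identically for every output frequency $\xi$ (the standard ``trivial resonance'' of $|u|^2u$-type interactions). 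The multiplier $|\xi|\cdot|\xi-\eta-\zeta|\,|\eta|\,|\zeta|\prod\Lambda_e^{-1}$ equals $|\xi|^4\Lambda_e(\xi)^{-3}\neq 0$ there, so its fourth-order vanishing at the origin does not help: dividing by $\Phi^{(3)}$ produces a genuinely singular denominator on a large set, and the proposed normal form for these patterns cannot produce an $S^\infty$ multiplier. This is a real gap in the argument as written.

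The fix is that no normal form is needed at all for the cubic term, resonant patterns included; this is exactly how the paper proceeds. After using Lemma \ref{dtVCubic} to discard $k\geq \delta^2(j+m)$ and $j\leq 2m/3$, and a Schur-test bound on $Q_{jk}e^{is\Lambda_e}Q_{j'k'}$ (approximate finite speed of propagation) to discard $j\geq (1+\delta)m$, one is left with $j\lesssim m$, where the trivial estimate costs $2^{(1-20\delta)j}\cdot 2^m\lesssim 2^{(2-19\delta)m}$ and it suffices to prove $\Vert \Omega^aH_3(s)\Vert_{L^2}\lesssim \eps_1^3 2^{-2m+12\delta m}$. This follows from the atomic decomposition $\rho=\sum Q_{j_ik_i}\rho$ and an $L^\infty\times L^\infty\times L^2$ H\"older estimate --- but note that the generic decay $\Vert\rho\Vert_{L^\infty}\lesssim\eps_1 2^{-m+21\delta m}$ from \eqref{LinftyBd2} only gives $2^{-2m+42\delta m}$, which is not quite enough; one must run the case analysis in $(k_i,j_i)$ and invoke the sharper bound $\Vert\Omega^aQ_{j_ik_i}\rho\Vert_{L^\infty}\lesssim\eps_1 2^{-m+3\delta m}$ from \eqref{LinftyBd} in the main regime $-2m/3\leq k_i\leq\delta m$, $j_i\leq m/100$. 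So your closing H\"older step is the right idea, but you should drop the normal-form/resonance discussion entirely and instead supply the exponent bookkeeping that makes the direct estimate close.
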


\begin{proof}

It suffices to show that, for $m\geq 0$ and $a\leq N_1/2$,
\begin{equation}\label{SuffCubicAdded0}
\begin{split}
I:=2^{6k_+}2^{(1-20\delta)j}2^k\Big\Vert \int_{\mathbb{R}}q_m(s)Q_{jk}e^{is\Lambda_e}\Omega^aH_3(s)ds\Big\Vert_{L^2}\lesssim\epsilon_1^32^{-\delta m}.
\end{split}
\end{equation}
Using Lemma \ref{dtVCubic}, we see that we can assume that $k\le \delta^2(j+m)$ and $j\geq 2m/3$. Moreover, if $k\le -j/2-10\delta m$, we find that
\begin{equation*}
\begin{split}
I&\lesssim 2^{6k_+}2^{(1-20\delta)j}2^{2k}2^{m}\sup_{s}\Vert \mathcal{F}P_k\Omega^aH_3(s)\Vert_{L^\infty},
\end{split}
\end{equation*}
which also gives an acceptable contribution.

We assume now that $j\ge (1+\delta)m$ and $k\ge -j/2-10\delta m$. Observe from Shur's test that
\begin{equation*}
\begin{split}
\Vert Q_{jk}e^{is\Lambda_e}Q_{j'k'}f\Vert_{L^2}\lesssim (\mathbf{1}_{\{\vert j-j'\vert\le 8\}}+2^{-10(j+j')})\|Q_{j'k'}f\|_{L^2}
\end{split}
\end{equation*}
if $|k-k'|\leq 4$. Since, as a consequence of Lemma \ref{LinEstLem},
\begin{equation*}
\begin{split}
\Vert Q_{j'k'}\Omega^aH_3\Vert_{L^2}&\lesssim \epsilon_1^32^{-(1-25\delta)(m+j')}2^{-j'/4},
\end{split}
\end{equation*}
we obtain an acceptable contribution.

We may now assume that $j\in[2m/3,(1+\delta)m]$. For \eqref{SuffCubicAdded0} it suffices to prove that
\begin{equation}\label{SuffCubicAdded}
\Vert \Omega^a H_3\Vert_{L^2}\lesssim \epsilon_1^32^{-2m+12\delta m}.
\end{equation}
For this, we decompose $H_3$ into a cubic term and a quartic and higher order term,
\begin{equation*}
\begin{split}
H_3=c\rho^3+H_4,\qquad\Vert \Omega^a H_4\Vert_{L^2}&\lesssim \epsilon_1^42^{-5m/2},
\end{split}
\end{equation*}
so it suffices to consider the cubic term. For this, we write
\begin{equation*}
\begin{split}
\rho^3=\sum_{j_1+k_1\ge0}\sum_{j_2+k_2\ge0}\sum_{j_3+k_3\ge0}(Q_{j_1k_1}\rho)(Q_{j_2k_2}\rho)(Q_{j_3k_3}\rho).
\end{split}
\end{equation*}
In view of Lemma \ref{LinEstLem}, for $0\le a\le N_1/2$,
\begin{equation*}
\Vert \Omega^a Q_{jk}\rho\Vert_{L^\infty}\lesssim\eps_1\min\{2^{2k-21\delta k},2^{-(N_0/3)k},2^{-m+22\delta m}\}.
\end{equation*}
Since $\Vert \Omega^a Q_{jk}\rho\Vert_{L^2}\lesssim \eps_12^{-j+20\delta j}$, the contribution is acceptable if $\min\{k_1,k_2,k_3\}\le -2m/3$ or if $\max\{k_1,k_2,k_3\}\ge \delta m$ or if $\max\{j_1,j_2,j_3\}\ge m/100$. On the other hand, if
\begin{equation*}
-2m/3\le k_1,k_2,k_3\le \delta m,\qquad \max\{j_1,j_2,j_3\}\leq m/100,
\end{equation*}
then we use \eqref{LinftyBd} to bound $\Vert \Omega^a Q_{j_lk_l}\rho\Vert_{L^\infty}\lesssim\eps_1 2^{-m+3\delta m}$, $l\in\{1,2\}$. It follows that 
\begin{equation*}
\Vert (\Omega^{a_1}Q_{j_1k_1}\rho)(\Omega^{a_2}Q_{j_2k_2}\rho)(\Omega^{a_3}Q_{j_3k_3}\rho)\Vert_{L^2}\lesssim \eps_1^32^{-2m+6\delta m},
\end{equation*}
which gives an acceptable contribution.
\end{proof}

\section{Estimates on phase functions}\label{phacolle}In this section we gather some important facts about the phase functions $\Phi$. In this section,
\begin{equation*}
\Phi(\xi,\eta)=\Phi_{\sigma\mu\nu}(\xi,\eta)=\Lambda_\sigma(\xi)-\Lambda_\mu(\xi-\eta)-\Lambda_\nu(\eta),\qquad \sigma,\mu,\nu\in\{e,b,-e,-b\},
\end{equation*}
is as in (\ref{phasedef}), and we often omit the subscripts. We sometimes let $+e=e$, $+b=b$. Let $\D_0$ denote a sufficiently large constant that may depend only on the parameter $d\in(0,1)$. 

\subsection{Resonant phases} We first observe that, among all phases, only few can be time resonant.  For clarity, let $\Phi_{\sigma;\mu,\nu}:=\Phi_{\sigma\mu\nu}$. We define three sets of phases
\begin{equation}\label{EllP}
\begin{split}
\mathcal{P}_{Ell}&:=\{\Phi_{e;+e,+e},\Phi_{e; +e, -e},\Phi_{e;-e,-e},\Phi_{e; +e, +b},\Phi_{e; +e, -b},\Phi_{e;-e, -b},\Phi_{e; +b, +b},\Phi_{e; -b, -b},\\
&\qquad \Phi_{b; +e, -e},\Phi_{b; -e, -e},\Phi_{b;+e,-b},\Phi_{b;-e,-b},\Phi_{b;+b,+b},\Phi_{b;+b,-b},\Phi_{b;-b,-b}\},\\
\mathcal{P}_{Hyp}^1&:=\{\Phi_{e;-e,+b},\Phi_{b;+e,+e}\},\\
\mathcal{P}_{Hyp}^2&:=\{\Phi_{e;+b,-b},\Phi_{b;+e,+b},\Phi_{b;-e,+b}\}.
\end{split}
\end{equation}

\begin{lemma}\label{PhaseClass}
If $\Phi\in\mathcal{P}_{Ell}$, then
\begin{equation}\label{EllPhiBdd}
\vert \Phi(\xi,\eta)\vert\gtrsim (1+\min\{\vert\xi\vert,\vert\xi-\eta\vert,\vert\eta\vert\})^{-1}.
\end{equation}
If $\Phi\in\mathcal{P}_{Hyp}^1$, the same bound holds unless
\begin{equation}\label{ResCond1}
\vert\xi\vert\approx \vert\xi-\eta\vert\approx\vert\eta\vert\quad\hbox{ or }\max\{\vert\xi\vert,\vert\xi-\eta\vert,\vert\eta\vert\}\lesssim 1.
\end{equation}
If $\Phi\in\mathcal{P}_{Hyp}^2$ then the bound \eqref{EllPhiBdd} is satisfied unless
\begin{equation}\label{ResCond2}
\begin{split}
&\Phi=\Phi_{e;b+,b-}\qquad{ and }\qquad \vert\xi-\eta\vert +1\approx\vert\eta\vert+1,\\
\text{ or }\quad&\Phi=\Phi_{b;e\pm,b+}\qquad{ and }\qquad\vert\xi\vert+1\approx\vert\eta\vert+1.
\end{split}
\end{equation}
\end{lemma}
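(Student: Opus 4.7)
My plan is to reduce all twenty phases to a handful of canonical forms and to apply three elementary facts. For $c\in\{\sqrt{d},1\}$ and $\zeta,\zeta_1,\zeta_2\in\mathbb{R}^2$ I will repeatedly use: (a) the Lipschitz bound $|\Lambda_c(\zeta_1)-\Lambda_c(\zeta_2)|\leq c|\zeta_1-\zeta_2|$; (b) the monotonicity $1\leq\Lambda_e(\zeta)\leq\Lambda_b(\zeta)$, with $\Lambda_b-\Lambda_e=(1-d)|\zeta|^2/(\Lambda_b+\Lambda_e)$; and (c) the identity $\Lambda_c(\zeta)-c|\zeta|=(\Lambda_c(\zeta)+c|\zeta|)^{-1}\gtrsim(1+|\zeta|)^{-1}$, which together with $\sqrt{(1+c^2x^2)(1+c^2y^2)}\geq 1+c^2xy$ and $\Lambda_c\geq 1$ yields the algebraic inequality $(\Lambda_c(\xi-\eta)+\Lambda_c(\eta))^2\geq 4+c^2|\xi|^2$.

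For the elliptic phases I separate by sign pattern. The five $+++$ phases satisfy $\Phi\geq 3$ trivially. For the monochromatic $+--$ phase $\Lambda_c(\xi)-\Lambda_c(\xi-\eta)-\Lambda_c(\eta)$, the algebraic inequality above gives
\[
\Phi\leq \Lambda_c(\xi)-\sqrt{4+c^2|\xi|^2}=\frac{-3}{\Lambda_c(\xi)+\sqrt{4+c^2|\xi|^2}}\lesssim -(1+|\xi|)^{-1},
\]
while writing $\Phi=[\Lambda_c(\xi)-\Lambda_c(\xi-\eta)]-\Lambda_c(\eta)$ and combining (a) with (c) yields $\Phi\lesssim -(1+|\eta|)^{-1}$, and symmetrically $\Phi\lesssim -(1+|\xi-\eta|)^{-1}$; these three bounds together give $|\Phi|\gtrsim(1+\min\{|\xi|,|\xi-\eta|,|\eta|\})^{-1}$. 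The mixed $+--$ cases $\Phi_{e;+e,+b}$ and $\Phi_{e;+b,+b}$ reduce to this via (b), since replacing $\Lambda_e$ by $\Lambda_b$ in a subtracted term only makes $\Phi$ more negative. The $+-+$ phases are treated symmetrically, with $\xi-\eta$ playing the role of $\xi$, and the mixed $+-+$ cases reduce to the monochromatic lower bound again by (b).

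The hyperbolic estimates form the heart of the lemma. For $\Phi_{e;-e,+b}=\Lambda_e(\xi)+\Lambda_e(\xi-\eta)-\Lambda_b(\eta)\in\mathcal{P}_{Hyp}^1$ I argue by contrapositive: if \eqref{ResCond1} fails, then $\max\{|\xi|,|\xi-\eta|,|\eta|\}\gg 1$ and some two of these quantities are of distinct orders of magnitude. By the triangle inequality $|\xi|\leq|\xi-\eta|+|\eta|$, this reduces to three subcases ($|\eta|\ll|\xi-\eta|$, $|\xi-\eta|\ll|\eta|$, and $|\xi|\ll|\xi-\eta|\approx|\eta|$); in each, the asymptotics $\Lambda_e(\zeta)\approx\sqrt{d}|\zeta|$ and $\Lambda_b(\zeta)\approx|\zeta|$ combined with the strict inequality $\sqrt{d}<1$ force a gap in $\Phi$ of order the larger frequency, which dwarfs $(1+\min)^{-1}$. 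The phase $\Phi_{b;+e,+e}$ is handled identically. For the three phases in $\mathcal{P}_{Hyp}^2$, each contains precisely two $\Lambda_b$ factors with opposite signs, so the only possible near-cancellation is between them; the Lipschitz bound (a) for this $\Lambda_b$ difference, combined with the remaining $\Lambda_e$ term, produces $|\Phi|\gtrsim(1+\min)^{-1}$ whenever the associated pair of frequencies is not comparable, which is precisely the complement of \eqref{ResCond2}. The main obstacle is bookkeeping rather than analytic depth: each of the twenty phases must be checked with the appropriate combination of (a)--(c), and the exception sets \eqref{ResCond1}--\eqref{ResCond2} must be recovered exactly, not merely up to constants.
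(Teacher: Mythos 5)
Your argument is correct and follows essentially the same route as the paper: the elliptic phases are reduced, via the monotonicity $\Lambda_e\leq\Lambda_b$, to the monochromatic inequality $\lambda_c(a)+\lambda_c(b)-\lambda_c(a+b)\gtrsim(1+\min\{a,b\})^{-1}$ (which you prove explicitly where the paper only asserts it), and the hyperbolic phases are handled by the same linear bounds $c|\zeta|\leq\Lambda_c(\zeta)\leq 1+c|\zeta|$ together with $\sqrt{d}<1$ to force a gap of size the largest frequency outside the exceptional sets \eqref{ResCond1}--\eqref{ResCond2}. The only (immaterial) slip is that there are six, not five, phases of the all-positive type in $\mathcal{P}_{Ell}$.
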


\begin{proof}[Proof of Lemma \ref{PhaseClass}]

Simple computations show that if $a,b\in[0,\infty)$ and $\sigma\in\{e,b\}$ then
\begin{equation}\label{ener42.5}
\lambda_\sigma(a)+\lambda_\sigma(b)-\lambda_\sigma(a+b)\gtrsim (1+\min\{a,b\})^{-1}.
\end{equation}
Therefore the bound \eqref{EllPhiBdd} holds when $(\sigma;\mu,\nu)\in\{(e;\pm e,\pm e),(b;\pm b,\pm b)\}$. Since $\lambda_b(r)\geq\lambda_e(r)$, the bound \eqref{EllPhiBdd} also holds for the remaining phases in $\mathcal{P}_{Ell}$.

Now to show \eqref{ResCond1}, it suffices to consider $\Phi_{b;e+,e+}$ and we may assume that $\vert\xi-\eta\vert\ge\vert \eta\vert$. We first observe that 
\begin{equation*}
\hbox{if }\quad\vert\xi\vert\le d^{-1/2}\vert\xi-\eta\vert\quad\hbox{ then }\quad\Phi_{b;+e,+e}(\xi,\eta)=\left[\Lambda_b(\xi)-\Lambda_e(\xi-\eta)\right]-\Lambda_e(\eta).
\end{equation*}
The terms in the right-hand side of this equality are negative so that \eqref{EllPhiBdd} holds in this case. On the other hand,
\begin{equation*}
\Phi_{b;+e,+e}(\xi,\eta)\ge \vert\xi\vert-2-d(\vert\xi-\eta\vert+\vert\eta\vert)\ge (1-d)\vert\xi-\eta\vert-2-(d+1)\vert\eta\vert,
\end{equation*}
so that if $1+\vert\eta\vert\ll\vert\xi-\eta\vert$ then \eqref{EllPhiBdd} holds again. The proof for the other phase in $\mathcal{P}_{Hyp}^1$ follows similarly after switching the variables.

Finally, for \eqref{ResCond2}, by symmetry it suffices to observe that
\begin{equation*}
\Lambda_b(\xi_1)-\Lambda_b(\xi_2)\pm\Lambda_e(\xi_1+\xi_2)\ge \vert\xi_1\vert-1-\vert\xi_2\vert-1-d\vert\xi_1+\xi_2\vert\ge (1-d)\vert\xi_1\vert-2-(d+1)\vert\xi_2\vert ,
\end{equation*}
so that if $\vert\xi_1\vert\gg\vert\xi_2\vert +1$, then $\Lambda_b(\xi_1)-\Lambda_b(\xi_2)\pm\Lambda_e(\xi_1+\xi_2)\gtrsim 1$.
\end{proof}

\subsection{Resonant sets} We start with a proposition describing the geometry of resonant sets.

\begin{proposition}(Structure of resonance sets)\label{spaceres} The following claims hold:

(i) If either $\nu+\mu=0$ or $\max(|\xi|,|\eta|,|\xi-\eta|)\geq 2^{\D_0}$ or $\min(|\xi|,|\eta|,|\xi-\eta|)\leq 2^{-\D_0}$ then 
\begin{equation}\label{res00}|\Phi(\xi,\eta)|\gtrsim (1+|\xi|+|\eta|)^{-1}\quad\mathrm{or}\quad|\nabla_{\eta}\Phi(\xi,\eta)|\gtrsim (1+|\xi|+|\eta|)^{-3}.
\end{equation}

(ii) For any $\xi,\eta\in\mathbb{R}^2$ we have
\begin{equation}\label{res01}(1+|\xi|+|\eta|)^3|\Phi(\xi,\eta)|+|\nabla_{\eta}\Phi(\xi,\eta)|+|\nabla_{\xi}\Phi(\xi,\eta)|\gtrsim 1.
\end{equation}

(iii) If $\nu+\mu\neq 0$, then there exists a function $p=p_{\mu\nu}:\mathbb{R}^{2}\to\mathbb{R}^{2}$ such that $|p(\xi)|\lesssim|\xi|$ and $|p(\xi)|\approx |\xi|$ for small $\xi$, and \[\nabla_{\eta}\Phi(\xi,\eta)=0\quad\Leftrightarrow\quad \eta=p(\xi).\] There is an odd smooth function $p_{+}:\mathbb{R}\to\mathbb{R}$, such that $p(\xi)=p_{+}(|\xi|)\xi/|\xi|$. Moreover,
\begin{equation}\label{cas10}
\text{ if }\quad|\eta|+|\xi-\eta|\leq U\in[1,\infty)\quad\text{ and }\quad|\nabla_{\eta}\Phi(\xi,\eta)|\leq\varep\quad\text{ then }\quad|\eta-p(\xi)|\lesssim\varep U^4.
\end{equation}
and, for any $s\in\mathbb{R}$,
\begin{equation}\label{cas10.1}
|D^\alpha p_+(s)|\lesssim_{\alpha} 1,\qquad |p'_+(s)|\gtrsim (1+|s|)^{-3},\qquad |1-p'_+(s)|\gtrsim (1+|s|)^{-3}.
\end{equation}

(iv) If $\nu+\mu\neq 0$, we define $p$ as above and $\Psi(\xi):=\Phi(\xi,p(\xi))$. Then $\Psi$ is a radial function, and there exist two positive constants $\gamma_{1}<\gamma_{2}$, such that $\Psi(\xi)=0$ if and only if either\[\pm(\sigma,\mu,\nu)=(b,e,e)\quad\mathrm{and}\quad |\xi|=\gamma_{1},\] or \[\pm(\sigma,\mu,\nu)\in\{(b,e,b),(b,b,e)\}\quad\mathrm{and}\quad |\xi|=\gamma_{2}.\]
\end{proposition}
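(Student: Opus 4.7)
The approach rests on the rotational invariance $\Phi(R\xi,R\eta)=\Phi(\xi,\eta)$ for $R\in SO(2)$, together with the strict monotonicity/convexity of the scalar profiles $\lambda_e(r)=\sqrt{1+dr^2}$ and $\lambda_b(r)=\sqrt{1+r^2}$. I would proceed in four steps, leaving parts (iii), (iv), (i)--(ii) in that order.

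For part (iii), rewrite the critical equation $\nabla_\eta\Phi=0$ as $\nabla\Lambda_\mu(\xi-\eta)=\nabla\Lambda_\nu(\eta)$, interpreting $\nabla\Lambda_{-\sigma}=-\nabla\Lambda_\sigma$. Since each $\nabla\Lambda_\sigma$ is a smooth radial bijection from $\mathbb{R}^2$ onto an open disk, this forces $\eta$ collinear with $\xi$ and reduces to a single scalar equation in the signed coordinate $t$ along $\xi/|\xi|$. Strict monotonicity of both sides, combined with the hypothesis $\mu+\nu\neq 0$ (which precludes the identical cancellation that occurs e.g.\ for $(\mu,\nu)=(+e,-e)$), yields a unique smooth solution $t=p_+(|\xi|)$. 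Implicit differentiation produces \eqref{cas10.1} using $|\lambda_\sigma''(r)|\approx (1+r^2)^{-3/2}$ and the sign-coherence coming from $\mu+\nu\neq 0$. The inversion bound \eqref{cas10} follows from a mean-value estimate on the segment joining $\eta$ to $p(\xi)$: the radial eigenvalue of $\nabla_\eta^2\Phi=-\nabla^2\Lambda_\mu(\xi-\eta)-\nabla^2\Lambda_\nu(\eta)$ is bounded below by $(1+U)^{-3}$ on $\{|\eta|+|\xi-\eta|\leq U\}$, giving $|\eta-p(\xi)|\lesssim \varepsilon U^4$ after splitting into radial and angular components.

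For part (iv), radiality of $\Psi$ follows from the symmetry above together with uniqueness of $p$. To locate its zeros, set $a=\xi-p(\xi)$ and $b=p(\xi)$; then $\Psi(\xi)=0$ is exactly the three-wave resonance system $\Lambda_\sigma(a+b)=\Lambda_\mu(a)+\Lambda_\nu(b)$ combined with the group-velocity matching $\nabla\Lambda_\mu(a)=\nabla\Lambda_\nu(b)$. Lemma \ref{PhaseClass} rules out all phases in $\mathcal{P}_{Ell}$ (their phases never reach $0$), and the regime \eqref{ResCond2} is incompatible with group-velocity matching for $\mathcal{P}_{Hyp}^2$. For $(b,+e,+e)$, $\mu=\nu$ forces $a=b=\xi/2$, so $\Psi(\xi)=\sqrt{1+|\xi|^2}-2\sqrt{1+d|\xi|^2/4}$ with unique positive zero $\gamma_1=\sqrt{3/(1-d)}$. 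For $(b,+e,+b)$ and $(b,+b,+e)$, parametrize by the common group speed $v\in(0,\sqrt{d})$: this gives $a(v)=v/\sqrt{d(d-v^2)}$, $b(v)=v/\sqrt{1-v^2}$, and one verifies that the energy balance $\sqrt{1+(a(v)+b(v))^2}=\sqrt{d/(d-v^2)}+1/\sqrt{1-v^2}$ admits a unique solution $v_*$, producing the second resonant radius $\gamma_2=a(v_*)+b(v_*)$, distinct from $\gamma_1$ by an analytic comparison at $v\to 0^+$. The $\pm$ in the statement covers the conjugate phases, which vanish on the same spheres.

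Finally, parts (i) and (ii) follow from Lemma \ref{PhaseClass} together with the nondegeneracy just established. When $\mu+\nu=0$, $\nabla_\eta\Phi(\xi,\eta)=\nabla\Lambda_\mu(\xi-\eta)+\nabla\Lambda_\mu(\eta)$ vanishes only at $\xi=0$ and otherwise has size $\gtrsim (1+|\xi|+|\eta|)^{-3}|\xi|$, handling that subcase of (i); away from \eqref{ResCond1}--\eqref{ResCond2} the $|\Phi|$ lower bound comes directly from Lemma \ref{PhaseClass}. For the uniform bound \eqref{res01}, points where all three of $|\Phi|,|\nabla_\eta\Phi|,|\nabla_\xi\Phi|$ are small must sit near a space-time resonance, but on the resonance spheres $|\nabla_\xi\Phi|\gtrsim |1-p_+'(|\xi|)|\gtrsim (1+|\xi|)^{-3}$ by \eqref{cas10.1}, which closes the estimate. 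The main obstacle will be step (iv): explicitly identifying $\gamma_2$ and excluding accidental zeros of $\Psi$ for $\mathcal{P}_{Hyp}^2$ phases reduces to global monotonicity properties of specific one-variable functions of $v$, which must be verified by hand via a careful signed computation.
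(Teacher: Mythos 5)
Your overall strategy for (iv) --- reduce to collinear configurations, use Lemma \ref{PhaseClass} to shrink the list of candidate phases, compute $\gamma_1=\sqrt{3/(1-d)}$ explicitly for $(b,e,e)$, and reduce $\gamma_2$ to a one-variable problem --- is the paper's strategy, and your group-velocity parametrization $v\mapsto(a(v),b(v))$ is a workable alternative to the paper's device (concavity of $F_\alpha(\beta)=\sqrt{1+\alpha^2}-\sqrt{1+d\beta^2}-\sqrt{1+(\alpha-\beta)^2}$ in $\beta$, then strict monotonicity of $\alpha\mapsto F_\alpha(\beta_\ast(\alpha))$). But the steps you defer or assert are exactly where the content is. First, the phases $\pm(\sigma,\mu,\nu)=(\ast,+b,-e)$ (equivalently $(\ast,-e,+b)$) \emph{do} admit collinear critical points of $\eta\mapsto\Phi(\xi,\eta)$, so they cannot be dismissed by saying group-velocity matching fails; one must compute the critical point (from $d\beta/\sqrt{1+d\beta^2}=\theta/\sqrt{1+\theta^2}=\pm\sqrt{k}$) and verify $\sqrt{1+d\beta^2}\geq\sqrt{1+\theta^2}$, hence $\Psi\geq\sqrt{1+d\alpha^2}>0$. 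Your blanket claim that \eqref{ResCond2} is incompatible with group-velocity matching for $\mathcal{P}_{Hyp}^2$ is in fact false as stated, since $\Phi_{b;+e,+b}\in\mathcal{P}_{Hyp}^2$ is exactly where $\gamma_2$ lives. Second, existence and uniqueness of $v_\ast$ and, crucially, the ordering $\gamma_1<\gamma_2$ (asserted in the statement and used later, e.g.\ in Proposition \ref{separation1}) are left as ``careful signed computations''; a comparison at $v\to0^+$ gives at most local information, not the global ordering. The paper gets uniqueness from concavity plus $\frac{d}{d\alpha}F_\alpha(\beta_\ast(\alpha))>0$, and the ordering from the sign computation $F_{\gamma_1}(\beta_\ast(\gamma_1))<0$.

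For (i)--(iii) the gaps are more substantial. In (i) you only discuss $\mu+\nu=0$, and even there $|\nabla_\eta\Phi|\gtrsim(1+|\xi|+|\eta|)^{-3}|\xi|$ does not give \eqref{res00} for small $|\xi|$ --- one must switch to the alternative $|\Phi|\geq\sqrt{1+d|\xi|^2}-|\xi|\geq 1/2$. The regimes $\max(|\xi|,|\eta|,|\xi-\eta|)\geq2^{\D_0}$ and $\min(|\xi|,|\eta|,|\xi-\eta|)\leq2^{-\D_0}$ are not addressed at all; they require separate arguments (e.g.\ for $\Phi_{b;+e,+e}$ with three large comparable frequencies, smallness of $\nabla_\eta\Phi$ forces $|2\eta-\xi|\ll1$ and then $|\Phi|\gtrsim1$; for $\nu\neq\pm\mu$ with $|\eta|,|\xi-\eta|$ both large one uses $|\nabla_\eta\Phi|\gtrsim|\sqrt{d_\mu}-\sqrt{d_\nu}|$). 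In (iii), ``strict monotonicity of both sides'' proves uniqueness of the critical point only when $\mu$ and $\nu$ carry the same sign; for mixed pairs such as $(\mu,\nu)=(e,-b)$ the function $t\mapsto\lambda_\mu(\alpha-t)+\lambda_\nu(t)$ is neither convex nor concave and both sides of the critical equation are monotone in the same direction, so uniqueness is not free (the paper simply cites \cite[Lemma 5.6]{IoPa2} here). Finally, in (ii) the relevant quantity at a near-resonance is $|\nabla_\xi\Phi(\xi,p(\xi))|=|\lambda_\sigma'(|\xi|)-\lambda_\mu'(|\xi-p(\xi)|)|=|\Psi'(|\xi|)|$, not $|1-p_+'(|\xi|)|$; the nondegeneracy needed is that $|\Psi|+|\Psi'|\gtrsim1$ on bounded sets, which is what \cite[Lemma 5.8]{IoPa2} supplies.
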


\begin{remark}
We remark the following interesting cancellation property: when $(\sigma,\mu,\nu)=(b,e,e)$ and the frequencies are parallel, the multiplier $\mathfrak{m}_{\sigma\mu\nu}$ vanishes, thus providing an unexpected ``null-form'' at $\gamma_1$. This can be seen from \eqref{ener4s} and \eqref{ABCD}. Such cancellation does not seem to be present at $\gamma_2$ and to keep the symmetry of our analysis, we will not use this fact.

\end{remark}

\begin{proof} We prove first (i). If $\mu=-\nu=\pm\sigma$ or $\mu=-\nu=\pm e$, then
$|\Phi(\xi,\eta)|\gtrsim (1+|\xi|+|\eta|)^{-1}$ due to \eqref{EllPhiBdd}. If $\mu=-\nu=b$ and $\sigma=e$ (the other choices are equivalent by symmetry), then \[\nabla_{\eta}\Phi=\frac{\xi-\eta}{\sqrt{1+|\xi-\eta|^{2}}}+\frac{\eta}{\sqrt{1+|\eta|^{2}}}.\] Thus $|\nabla_{\eta}\Phi|\gtrsim 1$ if $|\xi|\geq 1/2$ and $|\eta|\leq 1/8$. If $\min(|\xi|,|\eta|)\geq 1/8$ then
\begin{equation*}
|\nabla_{\eta}\Phi|\gtrsim (1+|\xi|+|\eta|)^{-3},
\end{equation*}
using the Lipschitz norm of the inverse map of $\xi\mapsto\xi/\sqrt{1+|\xi|^{2}}$. Finally, if $|\xi|\leq 1/2$ then \[|\Phi|\geq\sqrt{1+d|\xi|^{2}}-\left|\sqrt{1+|\xi-\eta|^{2}}-\sqrt{1+|\eta|^{2}}\right|\geq \sqrt{1+d|\xi|^{2}}-|\xi|\geq 1/2.\] This completes the case $\nu+\mu=0$.

Now suppose $\max(|\xi|,|\eta|,|\xi-\eta|)\geq 2^{\mathcal{D}/10}$, and assume the contrary. We may assume $\nu+\mu\neq 0$; if $\nu=\mu$, then the only possibility is $\nu=\mu=e$ and $\sigma=b$, due to Lemma \ref{PhaseClass}. In this case we must also have $|\eta|\approx|\xi-\eta|\approx|\xi|$ and $|2\eta-\xi|\ll 1$, by a similar argument as before. This implies that $|\Phi|\gtrsim 1$, contradiction. 

Now if $\nu\neq\pm\mu$, then we have $|\xi|\geq 2^{\D/10-2}$ and $\min(|\eta|,|\xi-\eta|)\leq 2^{\D/10-10}$, since otherwise $|\nabla_{\eta}\Phi|\gtrsim 1$. By symmetry assume $|\eta|\leq 2^{\D/10-10}$. The only possibility is $\sigma=\mu=\pm b$, since $\sigma\neq\mu$  would imply $|\Phi|\gtrsim |\xi|$, and $\sigma=\mu=\pm e$ would imply $|\Phi|\gtrsim |\xi|^{-1}$ due to \eqref{ener42.5}. Therefore $\nu=\pm e$, but then again $|\nabla_{\eta}\Phi|\gtrsim 1$, contradiction.

The proof in the case $\min(|\xi|,|\eta|,|\xi-\eta|)\leq 2^{-\mathcal{D}/10}$ is similar.

The claim (iii) is proved in \cite[Lemma 5.6]{IoPa2}. Moreover, (ii) follows from (i), (iii), and \cite[Lemma 5.8]{IoPa2} if $\max(|\xi|,|\eta|,|\xi-\eta|)\lesssim 1$. On the other hand, if $\max(|\xi|,|\eta|,|\xi-\eta|)\gg 1$ then a similar argument as before, using \eqref{ener42.5}, leads to the conclusion.

Finally, to prove (iv), we need to solve the equation $\Phi=\nabla_{\eta}\Phi=0$. Clearly we may assume $\xi=\alpha e$ and $\eta=\beta e$, where $e$ is a unit vector. By Lemma \ref{PhaseClass} we only need to consider the cases 
\begin{equation}\label{cas1}
(\sigma,\mu,\nu)\in\{(b,e,e),(b,b,e),(*,+b,-e)\},
\end{equation}
where $*$ represents $e$ or $b$.

In the third case above we claim that space-time resonance is impossible; in fact, $\nabla_{\eta}\Phi=0$ would imply\[\frac{d\beta}{\sqrt{1+d\beta^{2}}}=\frac{\beta-\alpha}{\sqrt{1+(\beta-\alpha)^{2}}}=\pm\sqrt{k},\] where $0\leq k<d<1$. We may then assume $\beta\geq0$, and hence $\theta=\beta-\alpha\geq0$, so\[\beta=\sqrt{\frac{k}{d(d-k)}},\qquad\theta=\sqrt{\frac{k}{1-k}}.\] 
Therefore \[\sqrt{1+d\beta^{2}}=\sqrt{\frac{d}{d-k}}\geq\sqrt{\frac{d}{d-dk}}=\sqrt{\frac{1}{1-k}}=\sqrt{1+\theta^{2}},\] thus $\Phi\geq \sqrt{1+d\alpha^{2}}>0$. 

In the first case in \eqref{cas1}, the equation $\nabla_{\eta}\Phi=0$ simply reduces to $\beta=\alpha-\beta$, or $\alpha=2\beta$. Using also $\Phi=0$, we obtain the only solution\[|\alpha|=\gamma_{1}:=\sqrt{\frac{3}{1-d}}.\] 

In the second case in \eqref{cas1}, we reduce to the system
\begin{equation}\label{resequation}
\frac{d\beta}{\sqrt{1+d\beta^{2}}}=\frac{\alpha-\beta}{\sqrt{1+|\alpha-\beta|^{2}}},\qquad \sqrt{1+\alpha^{2}}=\sqrt{1+d\beta^{2}}+\sqrt{1+(\alpha-\beta)^{2}}.
\end{equation}
Introducing
\begin{equation}\label{Falpha}
F_\alpha(\beta)=\sqrt{1+\alpha^{2}}-\sqrt{1+d\beta^{2}}-\sqrt{1+(\alpha-\beta)^{2}},
\end{equation}
we see that $F_\alpha$ is concave and achieves its maximum at some unique point $\beta_\ast(\alpha)\in(0,\alpha)$. Thus $\gamma_2$ corresponds to a point where
$F_{\gamma_2}(\beta_\ast(\gamma_2))=0$. To show existence and uniqueness of $\gamma_2$, it suffices to show that
the function $\alpha\to F_\alpha(\beta_\ast(\alpha))$ vanishes exactly once for $\alpha\in(0,\infty)$. Uniqueness follows from the computation that
\begin{equation*}
\frac{d}{d\alpha}(F_\alpha(\beta_\ast(\alpha)))=\frac{d }{d\alpha}_{|\beta=\beta_\ast(\alpha)}F_\alpha(\beta)>0.
\end{equation*}
To show the existence of $\gamma_2$, we remark that
\begin{equation*}
F_\alpha(\beta_\ast(\alpha))\ge F_\alpha(\alpha)=\sqrt{1+\alpha^2}-\sqrt{1+d\alpha^2}-1
\end{equation*}
which is positive for $\alpha$ large enough, while we see that when $\alpha=\gamma_1$,
\begin{equation}\label{Ordergamma}
F_{\gamma_1}(\beta_\ast(\gamma_1))<0,
\end{equation}
which also shows that $\gamma_1<\gamma_2$. To prove \eqref{Ordergamma}, it suffices to see that
\begin{equation*}
F_{\gamma_1}(\beta)=\sqrt{1+\gamma_1^2}-\sqrt{1+d(\gamma_1-\beta)^2}-\sqrt{1+d\beta^2}+\left[\sqrt{1+d(\gamma_1-\beta)^2}-\sqrt{1+(\gamma_1-\beta)^2}\right].
\end{equation*}
The term in bracket is negative unless $\beta=\gamma_1$, while we directly see by concavity that
\begin{equation*}
\sqrt{1+\gamma_1^2}-\sqrt{1+d(\gamma_1-\beta)^2}-\sqrt{1+d\beta^2}\le 0
\end{equation*}
with equality only in the case $\beta=\gamma_1/2$.
\end{proof}

\begin{remark}\label{largeres} (i) For $\D_0$ sufficiently large we define the function\[\Psi_{\sigma}^{\dagger}(\xi):=2^{\D_0}(1+|\xi|)\inf_{\mu,\nu\in\mathcal{P};\nu+\mu\neq 0}\left|\Psi_{\sigma\mu\nu}(\xi)\right|\] as in Section \ref{notation}. Using the conclusions of Proposition \ref{spaceres} we can easily prove 
\begin{equation}\label{cas2}
\Psi_{\pm b}^{\dagger}(\xi)\approx (1+|\xi|)^{-1}\cdot\min\big(\big||\xi|-\gamma_{1}\big|,\big||\xi|-\gamma_{2}\big|\big)\qquad\text{ and }\qquad10\leq \Psi_{\pm e}^{\dagger}(\xi)\lesssim 1.
\end{equation}

(ii) Let $\alpha_{1,e}$, $\alpha_{2,e}$, and $\alpha_{2,b}$ denote the absolute values of the space-time resonant inputs corresponding to the values $\gamma_1$ and $\gamma_2$. The analysis in the proof above shows that
\begin{equation}\label{Alx68.68}
\alpha_{1,e}=\gamma_1/2,\qquad 0<\alpha_{2,b}<\gamma_1,\qquad 0<\alpha_{2,e}.
\end{equation}
\end{remark}

In the next lemma we prove a separation property concerning the space-time resonances.

\begin{proposition}\label{separation1} (Separation property) (i) There exists $\gamma_0>\gamma_2>\gamma_1$ such that
\begin{equation}\label{Res0Est}
\vert\Phi(0,\eta)\vert\gtrsim \vert \vert\eta\vert-\gamma_0\vert/(1+\vert \eta\vert).
\end{equation}
In particular, we have that, when $\vert \vert\eta\vert-\gamma_0\vert\ll1$,
\begin{equation*}
\vert\nabla_\xi\Phi(0,\eta)\vert\gtrsim 1,\qquad \vert\nabla_\eta\Phi(0,\eta)\vert\gtrsim 1.
\end{equation*}

(ii) Assume that $\vert\xi-\eta\vert,\vert\eta\vert\in\{\gamma_1,\gamma_2\}$, that $\vert\mu\vert=\vert\nu\vert=b$ and that $\xi$ and $\eta$ are aligned. Then
\begin{equation*}
\vert\Phi(\xi,\eta)\vert\gtrsim 1.
\end{equation*}

(iii) Assume that $\vert\eta\vert\in\{\gamma_1,\gamma_2\}$ and $\vert\nu\vert=b$. Then, for any $\xi\in\mathbb{R}^2$,
\begin{equation}\label{Separation}
\vert\Phi(\xi,\eta)\vert+\vert\nabla_\eta\Phi(\xi,\eta)\vert\gtrsim 1.
\end{equation}
\end{proposition}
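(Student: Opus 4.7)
All three statements reduce ultimately to (i) identifying at which radii $\Phi(\cdot,\cdot)$ or its $\eta$-gradient can vanish and (ii) showing that these radii are separated from $\{\gamma_1,\gamma_2\}$; parts (i) and (ii) provide the vocabulary for part (iii). I will treat the three parts in order.

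For (i), I start from $\Phi(0,\eta)=\Lambda_\sigma(0)-\Lambda_\mu(\eta)-\Lambda_\nu(\eta)=\pm 1-\Lambda_\mu(\eta)-\Lambda_\nu(\eta)$. If $\mu$ and $\nu$ have the same sign, then $|\Lambda_\mu(\eta)+\Lambda_\nu(\eta)|\ge 2$, giving $|\Phi(0,\eta)|\ge 1$ uniformly. The case of opposite signs with the same type is excluded by $\mu+\nu\ne 0$. The remaining case -- opposite signs, different types -- reduces, up to global sign, to $\Phi(0,\eta)=\pm 1\pm f(|\eta|)$, where $f(r):=\sqrt{1+r^2}-\sqrt{1+dr^2}$. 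Since $f$ is strictly increasing on $[0,\infty)$ with $f(0)=0$ and $f(r)\sim(1-\sqrt{d})\,r$ at infinity, it attains the value $1$ at a unique radius $\gamma_0>0$; the bound $f'(\gamma_0)>0$ yields the quantitative estimate $|\Phi(0,\eta)|\gtrsim||\eta|-\gamma_0|/(1+|\eta|)$. To place $\gamma_0$ above $\gamma_2$, I evaluate $F_\alpha$ from \eqref{Falpha} at $(\alpha,\beta)=(\gamma_0,\gamma_0)$: the defining relation for $\gamma_0$ gives $F_{\gamma_0}(\gamma_0)=0$, while $F_{\gamma_0}'(\gamma_0)=-d\gamma_0/\sqrt{1+d\gamma_0^2}<0$, so the concave function $F_{\gamma_0}(\cdot)$ has its interior maximum at some $\beta_\ast(\gamma_0)<\gamma_0$ with $F_{\gamma_0}(\beta_\ast(\gamma_0))>0$; the monotonicity argument recorded in the proof of Proposition \ref{spaceres}(iv) then forces $\gamma_0>\gamma_2$. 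The gradient bounds near $|\eta|=\gamma_0$ come from a direct differentiation of $\Phi(\cdot,\eta)$ at $\xi=0$.

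For (ii), I parametrize an aligned configuration by $\xi=se$, $\eta=re$, with $e\in\mathbb{S}^1$, $r\in\{\gamma_1,\gamma_2\}$, and $|s-r|\in\{\gamma_1,\gamma_2\}$. Then $|s|$ lies in the finite set $\{\gamma_i+\gamma_j,\,|\gamma_i-\gamma_j|:i,j\in\{1,2\}\}$, and the equation $\Phi(\xi,\eta)=0$ becomes $\Lambda_\sigma(|s|)=\pm\sqrt{1+\gamma_i^2}\pm\sqrt{1+\gamma_j^2}$ for some choice of signs (since $|\mu|=|\nu|=b$). Squaring twice removes the radicals and leaves, for each of the finitely many choices of signs and of $\sigma\in\{e,b\}$, a polynomial identity in $\gamma_i,\gamma_j,d$. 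A direct inspection shows that none can hold: for instance, in the $\sigma=b$, same-sign case one obtains $4\gamma_i^2+4\gamma_j^2+4\gamma_i\gamma_j+3=0$, impossible for positive $\gamma_i,\gamma_j$, while the $\sigma=b$ opposite-sign case reduces to $4\gamma_i^2+4\gamma_j^2-4\gamma_i\gamma_j+3=0$, also impossible; the $\sigma=e$ cases eventually conflict with the defining equations for $\gamma_1,\gamma_2$.

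For (iii), I argue by contradiction: suppose $(\xi_n,\eta_n)$ is a sequence with $|\eta_n|\in\{\gamma_1,\gamma_2\}$, $|\nu|=b$, and $|\Phi(\xi_n,\eta_n)|+|\nabla_\eta\Phi(\xi_n,\eta_n)|\to 0$. By Lemma \ref{PhaseClass}, the phases in $\mathcal{P}_{Ell}$ with $|\eta|\approx 1$ already satisfy $|\Phi|\gtrsim 1$ via \eqref{EllPhiBdd}, so I may restrict to $\Phi\in\mathcal{P}_{Hyp}^1\cup\mathcal{P}_{Hyp}^2$ with $|\nu|=b$; for these, \eqref{ResCond1}--\eqref{ResCond2} with $|\eta|\in\{\gamma_1,\gamma_2\}$ confine $(\xi_n,\eta_n)$ to a bounded region. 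Applying \eqref{cas10} from Proposition \ref{spaceres}(iii) in that region gives $|\eta_n-p(\xi_n)|\to 0$, whence $\Psi(\xi_n)=\Phi(\xi_n,p(\xi_n))\to 0$; Proposition \ref{spaceres}(iv) then forces $|\xi_n|\to\gamma_1$ or $\gamma_2$ along a subsequence, and consequently $|\eta_n|\to|p(\xi_n)|\to\alpha$ with $\alpha\in\{\alpha_{1,e},\alpha_{2,e},\alpha_{2,b}\}$. The desired contradiction is that $\{\gamma_1,\gamma_2\}\cap\{\alpha_{1,e},\alpha_{2,e},\alpha_{2,b}\}=\emptyset$: the identifications $\alpha_{1,e}=\gamma_1/2$ and $\alpha_{2,b}<\gamma_1<\gamma_2$ are recorded in Remark \ref{largeres}(ii), while $\alpha_{2,e}\notin\{\gamma_1,\gamma_2\}$ is checked by substituting each hypothetical equality into the system \eqref{resequation} and noting the resulting inconsistency. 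This last separation step is the main obstacle, since $\alpha_{2,e}$ is not available in closed form and one must reason directly within \eqref{resequation}.
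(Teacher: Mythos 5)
Your part (i) follows the paper's proof essentially verbatim (monotonicity of $f(r)=\sqrt{1+r^2}-\sqrt{1+dr^2}-1$, then $F_{\gamma_0}(\beta_\ast(\gamma_0))>F_{\gamma_0}(\gamma_0)=0$ together with the monotonicity of $\alpha\mapsto F_\alpha(\beta_\ast(\alpha))$ to get $\gamma_0>\gamma_2$), and is correct.

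Part (ii) has a genuine gap. After discarding the elliptic combinations (all phases with $|\mu|=|\nu|=b$ and $|\sigma|=b$, or with $\mu,\nu$ of the same sign, lie in $\mathcal{P}_{Ell}$ and satisfy \eqref{EllPhiBdd}), the only nontrivial case is $\sigma=\pm e$ with $\mu,\nu$ of opposite $b$-type and $\{|\xi-\eta|,|\eta|\}=\{\gamma_1,\gamma_2\}$, i.e.\ the identity $\sqrt{1+\gamma_2^2}-\sqrt{1+\gamma_1^2}=\sqrt{1+d(\gamma_2\pm\gamma_1)^2}$. This is exactly the case you dismiss with ``the $\sigma=e$ cases eventually conflict with the defining equations for $\gamma_1,\gamma_2$.'' Squaring twice does not settle it: the resulting polynomial relation in $\gamma_1,\gamma_2,d$ is not visibly false, because $\gamma_2$ is only defined implicitly through \eqref{resequation}, and whether the relation holds depends on its actual value. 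The paper's proof does real work precisely here: it recognizes the identity as $F_{\gamma_2}(\gamma_2\pm\gamma_1)=0$, uses concavity of $F_{\gamma_2}$ and the fact that $F_{\gamma_2}$ vanishes only at its maximizer $\beta_\ast(\gamma_2)\in(0,\gamma_2)$ to force $\gamma_2-\gamma_1=\beta_\ast(\gamma_2)$, and then feeds this into the first equation of \eqref{resequation} to get $\sqrt{d}\geq \gamma_1/\sqrt{1+\gamma_1^2}=\sqrt{3}/\sqrt{4-d}$, which fails for $d\in(0,1)$. Without an argument of this kind your part (ii) is not a proof.

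Part (iii) is structurally sound (it is a compactness rephrasing of the paper's argument), but you misidentify the ``main obstacle.'' Because $|\nu|=b$, the only space--time resonance that can be approached is $\pm(b,e,b)$, for which the $\eta$-input has modulus $\alpha_{2,b}$; the separation $\alpha_{2,b}<\gamma_1<\gamma_2$ is already recorded in \eqref{Alx68.68}, so the contradiction is immediate. The claim $\alpha_{2,e}\notin\{\gamma_1,\gamma_2\}$, which you assert without proof and flag as the hard step, is never needed here (it would only arise for $\nu=\pm e$). Had it actually been needed, your proof of (iii) would share the same defect as (ii): an appeal to ``substituting into \eqref{resequation} and noting the inconsistency'' with no argument supplied.
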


\begin{proof}
We start with the proof of (i). Using \eqref{ener42.5}, letting $r=\vert\eta\vert$, to prove \eqref{Res0Est}, it suffices to show that
\begin{equation*}
f(r)=\sqrt{1+r^2}-\sqrt{1+dr^2}-1
%=\Lambda_b(r)-\Lambda_e(r)-1
\end{equation*}
vanishes only once where its derivative does not vanish. But this is obvious since $f(0)=-1$, $f(r)\to\infty$ as $r\to\infty$ and
\begin{equation*}
f^\prime(r)=r\left[\frac{1}{\sqrt{1+r^2}}-\frac{d}{\sqrt{1+dr^2}}\right]
\end{equation*}
is strictly positive where $r>0$. It remains to show that $\gamma_0>\gamma_2>\gamma_1$. We use $F_\alpha$ as defined in \eqref{Falpha}, which attains its maximum at a unique point $\beta_\ast(\alpha)\in(0,\alpha)$. We see that
\begin{equation*}
F_{\gamma_0}(\beta_\ast(\gamma_0))>F_{\gamma_0}(\gamma_0)=0.
\end{equation*}
Since $\alpha\mapsto F_\alpha(\beta_\ast(\alpha))$ is increasing and vanishes only when $\alpha=\gamma_2$, we see that $\gamma_2<\gamma_0$.

We now prove (ii). If $\vert\xi-\eta\vert=\vert\eta\vert$, this is clear. If $\vert\sigma\vert =b$, this is clear as well. We may now assume by contradiction that $\vert\xi-\eta\vert=\gamma_1$, $\vert\eta\vert=\gamma_2$ and
\begin{equation*}
\Lambda_b(\eta)=\Lambda_b(\xi-\eta)+\Lambda_e(\xi),
\end{equation*}
so that
\begin{equation*}
F_{\gamma_2}(\gamma_2\pm\gamma_1)=0,
\end{equation*}
and $\gamma_2\pm\gamma_1=\beta_\ast(\gamma_2)\in(0,\gamma_2)$. The only possibility is that $\gamma_2-\gamma_1=\beta_\ast(\gamma_2)$. Assuming this, however, we get a contradiction,
\begin{equation*}
\sqrt{d}\geq\frac{d\beta_\ast(\gamma_2)}{\sqrt{1+d\beta_\ast(\gamma_2)^2}}=\frac{\gamma_2-\beta_\ast(\gamma_2)}{\sqrt{1+(\gamma_2-\beta_\ast(\gamma_2))^2}}=\frac{\gamma_1}{\sqrt{1+\gamma_1^2}}=\frac{\sqrt{3}}{\sqrt{4-d}},
\end{equation*}
which is false if $d\in(0,1)$.

We now turn to the proof of (iii). Since $\vert\nu\vert=b$, it follows from Proposition \ref{spaceres} that the only possibility for a space-time resonance is $(\sigma,\mu,\nu)=\pm(b,e,b)$ and $\vert\xi\vert=\gamma_2$. In case $\vert\eta\vert=\gamma_2$, we see that $\vert\xi\vert>\vert\eta\vert=\gamma_2$ and we get \eqref{Separation}. If $\vert\eta\vert=\gamma_1$, we deduce easily that $\vert\nabla_\eta\Phi(\xi,\eta)\vert\gtrsim 1$.
\end{proof}

Our next lemma is used to control the second order interaction of space-time resonances and time resonances.

\begin{proposition}\label{Separation2} (Iterated resonances) (i) Assume that $\xi,\eta\in\mathbb{R}^2$, and $\Phi_{\sigma\mu\nu}$, $\Phi_{\nu\theta\kappa}$ and phases as in \eqref{phasedef}, $\theta+\kappa\neq 0$. Let $\Psi_{\nu\theta\kappa}(\eta)=\Phi_{\nu\theta\kappa}(\eta,p_{\theta\kappa}(\eta))$ as before, and assume that
\begin{equation}\label{Sep2Hyp}
\begin{split}
&|\Phi_{\sigma\mu\nu}(\xi,\eta)|\leq 2^{-\D/10},\qquad|\Psi_{\nu\theta\kappa}(\eta)|\leq 2^{-\D/10},\\
&\big|\nabla_\eta\left[\Phi_{\sigma\mu\nu}(\xi,\eta)+\Psi_{\nu\theta\kappa}(\eta)\right]\big|\leq\eps\leq 2^{-\D/10}.
\end{split}
\end{equation}
Then
\begin{equation}\label{Sep2CCL}
\begin{split}
&\sigma=\kappa,\qquad\mu=-\theta,\qquad |\xi-p_{\theta\kappa}(\eta)|\lesssim \eps,\\
\hbox{or}\qquad&\sigma=\theta,\qquad\mu=-\kappa,\qquad |\xi-\eta+p_{\theta\kappa}(\eta)|\lesssim\eps.\\
\end{split}
\end{equation}
In particular,
\begin{equation*}
\big|\nabla_\xi\Phi_{\sigma\mu\nu}(\xi,\eta)\big|\lesssim \eps.
\end{equation*}

(ii) Conversely, if
\begin{equation}\label{Sep2Hyp2}
\begin{split}
&|\Phi_{\sigma\mu\nu}(\xi,\eta)|\leq 2^{-\D/10},\qquad|\Psi_{\nu\theta\kappa}(\eta)|\leq 2^{-\D/10},\qquad\big|\nabla_\xi\Phi_{\sigma\mu\nu}(\xi,\eta)\big|\leq\eps\leq 2^{-\D/10},
\end{split}
\end{equation}
then \eqref{Sep2CCL} holds and 
\begin{equation*}
\big|\nabla_\eta\left[\Phi_{\sigma\mu\nu}(\xi,\eta)+\Psi_{\nu\theta\kappa}(\eta)\right]\big|\lesssim\eps.
\end{equation*}
\end{proposition}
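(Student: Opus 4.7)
The key algebraic step is an envelope-theorem computation for $\Psi_{\nu\theta\kappa}$. Writing $\zeta_0 := p_{\theta\kappa}(\eta)$, the defining relation for the critical point reads $\nabla\Lambda_\theta(\eta - \zeta_0) = \nabla\Lambda_\kappa(\zeta_0)$, and differentiating $\Psi_{\nu\theta\kappa}(\eta) = \Lambda_\nu(\eta) - \Lambda_\theta(\eta - \zeta_0) - \Lambda_\kappa(\zeta_0)$, the contributions from $\partial_\eta \zeta_0$ cancel by this relation to give
\begin{equation*}
\nabla_\eta \Psi_{\nu\theta\kappa}(\eta) = \nabla\Lambda_\nu(\eta) - \nabla\Lambda_\theta(\eta - \zeta_0).
\end{equation*}
Since $\nabla_\eta \Phi_{\sigma\mu\nu}(\xi,\eta) = \nabla\Lambda_\mu(\xi - \eta) - \nabla\Lambda_\nu(\eta)$, the $\nabla\Lambda_\nu(\eta)$ pieces cancel in the sum, and the hypothesis of (i) reduces to
\begin{equation*}
\bigl|\nabla\Lambda_\mu(\xi - \eta) - \nabla\Lambda_\kappa(\zeta_0)\bigr| \lesssim \varepsilon.
\end{equation*}

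The second step is a case analysis based on injectivity. By \eqref{cas10.1} and the explicit form of $\Lambda_e,\Lambda_b$, the map $x \mapsto \nabla\Lambda_\sigma(x) = \mathrm{sgn}(\sigma)\,\Lambda_{|\sigma|}'(|x|)\,x/|x|$ is a diffeomorphism of $\mathbb{R}^2$ onto an open disk of radius $\sqrt{c_{|\sigma|}}$ (with $c_e = d$, $c_b = 1$) with locally Lipschitz inverse. Inverting $\nabla\Lambda_\mu(\xi - \eta) = \nabla\Lambda_\kappa(\zeta_0) + O(\varepsilon)$ via this inverse, and using the identity $\nabla\Lambda_\kappa(\zeta_0) = \nabla\Lambda_\theta(\eta - \zeta_0)$, produces four candidate branches with $|\mu|\in\{|\theta|,|\kappa|\}$: $\mu = -\theta$ with $\xi \approx \zeta_0$ (branch I), $\mu = \theta$ with $\xi \approx 2\eta - \zeta_0$ (II), $\mu = -\kappa$ with $\xi \approx \eta - \zeta_0$ (III), and $\mu = \kappa$ with $\xi \approx \eta + \zeta_0$ (IV), together with mixed-modulus configurations in which $|\mu|\notin\{|\theta|,|\kappa|\}$.

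Branches (I) and (III) are exactly the two alternatives in \eqref{Sep2CCL}, and the index $\sigma$ is pinned down in each by substituting back into $|\Phi_{\sigma\mu\nu}| \lesssim 2^{-\mathcal{D}/10}$ and using $|\Psi_{\nu\theta\kappa}| \lesssim 2^{-\mathcal{D}/10}$: in (I), for instance, $\Lambda_\mu(\xi-\eta) = \Lambda_{-\theta}(\zeta_0 - \eta) = -\Lambda_\theta(\eta - \zeta_0)$ up to $O(\varepsilon)$, so the two smallness conditions combine into $\Lambda_\sigma(\zeta_0) = \Lambda_\kappa(\zeta_0) + O(\varepsilon + 2^{-\mathcal{D}/10})$, forcing $\sigma = \kappa$; the case (III) is symmetric. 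The concluding bound $|\nabla_\xi\Phi_{\sigma\mu\nu}| \lesssim \varepsilon$ is then immediate in (I) from
\begin{equation*}
\nabla\Lambda_\sigma(\xi) = \nabla\Lambda_\kappa(\zeta_0) + O(\varepsilon) = \nabla\Lambda_\theta(\eta - \zeta_0) + O(\varepsilon) = \nabla\Lambda_\mu(\xi-\eta) + O(\varepsilon),
\end{equation*}
and similarly in (III). Eliminating the spurious branches (II), (IV), and the mixed-modulus configurations is the main obstacle: substituting the candidate $\xi$ into $\Phi_{\sigma\mu\nu}\approx 0$ yields, in (II), an identity of the form $\Lambda_\sigma(2\eta - \zeta_0) = 2\Lambda_\theta(\eta - \zeta_0) + \Lambda_\kappa(\zeta_0) + O(2^{-\mathcal{D}/10})$ (and analogous identities in the other cases). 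One then checks, sign-pattern by sign-pattern in $(\sigma,\mu,\nu,\theta,\kappa)\in\mathcal{P}^5$, that these identities are inconsistent with the radial structure of $\Psi_{\nu\theta\kappa}$ from Proposition \ref{spaceres} (iv) (which pins $|\eta|$ to one of the finitely many resonant spheres) and with the strict convexity of $\Lambda_e, \Lambda_b$ on $[0,\infty)$—for instance, $\Lambda_{|\sigma|}(2w) \neq 2\Lambda_{|\theta|}(w)$ for any $w\ne 0$, since both sides agree at $w=0$ only if $\Lambda_{|\sigma|}(0)=2$, which fails. The $O(\varepsilon + 2^{-\mathcal{D}/10})$ errors pass through each comparison using the Lipschitz bounds on $\nabla\Lambda_\sigma$ and on $p_{\theta\kappa}$.

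Part (ii) is the mirror argument: the hypothesis $|\nabla_\xi \Phi_{\sigma\mu\nu}|\leq\varepsilon$ reads $\nabla\Lambda_\sigma(\xi) = \nabla\Lambda_\mu(\xi - \eta) + O(\varepsilon)$, and combining this with the two smallness conditions drives the identical injective matching/case-elimination procedure to yield \eqref{Sep2CCL}. Once \eqref{Sep2CCL} is established, the bound $|\nabla_\eta(\Phi + \Psi)|\lesssim \varepsilon$ follows from the envelope-theorem identity derived in Step 1 applied in reverse. The genuine difficulty in both parts is therefore the quantitative elimination of the parallel-sign and mixed-modulus branches; this reduces to a bounded list of algebraic incompatibilities between values of $\Lambda_e, \Lambda_b$ on the resonant spheres, all of which are stable under $O(\varepsilon + 2^{-\mathcal{D}/10})$ perturbations provided $\mathcal{D}$ is chosen sufficiently large.
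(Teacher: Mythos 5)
Your overall architecture matches the paper's: the envelope identity $\nabla_\eta\Psi_{\nu\theta\kappa}(\eta)=\nabla\Lambda_\nu(\eta)-\nabla\Lambda_\theta(\eta-p_{\theta\kappa}(\eta))$, the resulting reduction of \eqref{Sep2Hyp} to $|\nabla\Lambda_\mu(\xi-\eta)-\nabla\Lambda_\theta(\eta-p_{\theta\kappa}(\eta))|\lesssim\eps$ with $\nabla\Lambda_\theta(\eta-p_{\theta\kappa}(\eta))=\nabla\Lambda_\kappa(p_{\theta\kappa}(\eta))$, the identification of your branches I and III with the two alternatives of \eqref{Sep2CCL}, and the determination of $\sigma$ by adding the two phase conditions are all correct and are essentially the paper's steps. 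The gap is in the elimination of the spurious branches, which is the entire substance of the proposition and which you compress into ``a bounded list of algebraic incompatibilities.'' The one incompatibility you exhibit is not valid as stated: the inference that $\Lambda_{|\sigma|}(2w)\neq 2\Lambda_{|\theta|}(w)$ for $w\neq 0$ ``since both sides agree at $w=0$ only if $\Lambda_{|\sigma|}(0)=2$'' is a non sequitur, and in the mixed-modulus case the identity $\Lambda_b(2w)=2\Lambda_e(w)$ holds \emph{exactly} at $|w|=\gamma_1/2$ --- this is the definition of $\gamma_1$, i.e., precisely the hypothesis $|\Psi_{b,e,e}(\eta)|\ll 1$. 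So several of the relations your scheme would declare inconsistent are in fact the resonance identities that hold by assumption, and ``stable under $O(\eps+2^{-\D/10})$ perturbation'' cannot rescue them.

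What the elimination actually requires is testing each spurious branch against \emph{both} phase conditions simultaneously, with $|\eta|$ pinned near $\gamma_1$ or $\gamma_2$, and in one configuration the phase conditions alone do not suffice. For instance, with $(\nu,\theta,\kappa)=(b,e,e)$ and $\mu=e$ (your branch II, $\xi\approx 3\eta/2$), the two conditions give $\Lambda_\sigma(3\eta/2)=3\Lambda_e(\eta/2)+o$ and $\Lambda_\sigma(3\eta/2)=\tfrac{3}{2}\Lambda_b(\eta)+o$; the first excludes $\sigma=e$ and the second excludes $\sigma=b$ via the same-modulus scaling bound $|\Lambda_\sigma(tx)-t\Lambda_\sigma(x)|\gtrsim_d 1$ for $t\in\{3/2,3\}$ --- neither condition alone kills both choices of $\sigma$. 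And in the mixed-modulus configuration $\mu=-b$, $(\theta,\kappa)=(e,e)$, $\sigma=-e$, one must return to the gradient condition and use $|\nabla\Lambda_b(x)|-|\nabla\Lambda_e(x)|\gtrsim_d 1$ to force $|\eta|/2-|\xi-\eta|\gtrsim_d 1$ before the phase condition produces a contradiction. Since these finitely many sub-case arguments are omitted and the prototype you offer for them is false, the proof as written does not go through; you need to carry out the explicit case check over $(\nu,\theta,\kappa,\mu)$ as in the paper's Cases 1--3. Your treatment of part (ii) as a mirror of part (i) is workable, though the paper's route is shorter: $|\nabla_\xi\Phi_{\sigma\mu\nu}|\leq\eps$ places $\xi$ within $O(\eps)$ of $p_{(-\mu)\sigma}(\eta)$, whence $\eta$ lies near a space-time resonant sphere for both $\Phi_{\nu\theta\kappa}$ and $\Phi_{\nu(-\mu)\sigma}$, and $\gamma_1\neq\gamma_2$ identifies the indices.
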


\begin{proof} We prove (i) by examining all cases. Let $o$ denote small quantities, $|o|\lesssim_d2^{-\D/10}$. Let $\chi:=p_{\theta\kappa}(\eta)$. We start by remarking that, in view of \eqref{Sep2Hyp},
\begin{equation}\label{EqualityOfGrad}
\big|\nabla\Lambda_\mu(\xi-\eta)-\nabla\Lambda_\theta(\eta-\chi)\big|\lesssim_d\eps,\qquad\nabla\Lambda_\theta(\eta-\chi)=\nabla\Lambda_\kappa(\chi).
\end{equation}
In view of Proposition \ref{spaceres} (iv) and Proposition \ref{separation1} (i), we may assume that $|\eta|,\,|\xi|,|\xi-\eta|,|\chi|,|\eta-\chi|\approx_d 1$. Up to multiplying by $-1$ we may assume that $\nu=b$.

The smallness of the phases gives
\begin{equation}\label{TimeResSep2}
\begin{split}
-\Lambda_\sigma(\xi)+\Lambda_\mu(\xi-\eta)+\Lambda_b(\eta)=o,\qquad -\Lambda_b(\eta)+\Lambda_\theta(\eta-\chi)+\Lambda_\kappa(\chi)&=o.
\end{split}
\end{equation}
In particular, we directly see that $\mu\ne b$.

{\bf Case 1}: $(\nu,\theta,\kappa)=(b,e,e)$. In this case $2\chi=\eta$. Assume first that $\mu=e$, then by \eqref{EqualityOfGrad}, $|\xi+\chi-2\eta|\lesssim_d\eps$ and we find that
\begin{equation*}
-\Lambda_\sigma(3\eta/2)+\Lambda_e(\eta/2)+\Lambda_b(\eta)=o,\qquad -\Lambda_b(\eta)+2\Lambda_e(\eta/2)=o,
\end{equation*}
and therefore
\begin{equation*}
\Lambda_\sigma(3\eta/2)-3\Lambda_e(\eta/2)=o,\qquad \Lambda_\sigma(3\eta/2)-(3/2)\Lambda_b(\eta)=o.
\end{equation*}
This is impossible since $|\Lambda_\sigma(t x)-t\Lambda_\sigma(x)|\gtrsim_d 1$ if $t\in\{3/2,3\}$, $|x|\approx_d 1$, and $\sigma\in\{e,b\}$.

Assume now that $\mu=-e$. Then \eqref{EqualityOfGrad} gives that $\chi=\eta/2$, $|\xi-\eta/2|\lesssim_d\eps$, and we find that
\begin{equation*}
\begin{split}
-\Lambda_\sigma(\eta/2)-\Lambda_e(\eta/2)+\Lambda_b(\eta)&=o,\qquad -\Lambda_b(\eta)+2\Lambda_e(\eta/2)=o.
\end{split}
\end{equation*}
Therefore $\sigma=e$ and we obtain \eqref{Sep2CCL}.

Assume finally that $\mu=-b$. Then $\sigma=\pm e$. If $\sigma=e$, \eqref{TimeResSep2} gives that
\begin{equation*}\label{TimeResSep3}
\begin{split}
\Lambda_b(\eta)+o=\Lambda_e(\xi)+\Lambda_b(\xi-\eta)=\Lambda_e(\xi)+\Lambda_e(\xi-\eta)+\left[\Lambda_b(\xi-\eta)-\Lambda_e(\xi-\eta)\right].
\end{split}
\end{equation*}
Since the last bracket is $\gtrsim_d1$ and also $\Lambda_b(\eta)+o=2\Lambda_e(\eta/2)$ (using \eqref{TimeResSep2} again), we obtain that
\begin{equation*}
2\Lambda_e(\eta/2)-\big[\Lambda_e(\xi)+\Lambda_e(\xi-\eta)\big]\gtrsim_d1.
\end{equation*}
Since $\lambda_e$ is strictly convex, we obtain a contradiction. On the other hand, if $\sigma=-e$, using that $\vert\nabla\Lambda_b(\xi)\vert-\vert\nabla\Lambda_e(\xi)\vert\gtrsim_d1$, \eqref{EqualityOfGrad} gives that $\vert\eta\vert/2-\vert\xi-\eta\vert\gtrsim_d1$, while \eqref{TimeResSep2} gives that
\begin{equation*}
\Lambda_e(\xi)+\Lambda_b(\eta)=\Lambda_b(\xi-\eta)+o.
\end{equation*}
This is impossible in view of the fact that $\vert\xi-\eta\vert\le \vert\eta\vert$. 

{\bf Case 2}: $(\nu,\theta,\kappa)=(b,e,b)$. Assume that $\mu=e$, then \eqref{EqualityOfGrad} gives that $|\xi+\chi-2\eta|\lesssim_d\eps$, while
\eqref{TimeResSep2} gives that
\begin{equation}\label{Alx67.7}
-\Lambda_\sigma(\xi)+\Lambda_e(\xi-\eta)+\Lambda_b(\eta)=o,\qquad -\Lambda_b(\eta)+\Lambda_e(\eta-\chi)+\Lambda_b(\chi)=o.
\end{equation}
We directly see that the only possibility is $\sigma=b$, but in this case, subtracting the two equations, we obtain that
\begin{equation*}
2\Lambda_b(\frac{\xi+\chi}{2})-\Lambda_b(\chi)-\Lambda_b(\xi)=o.
\end{equation*}
Since $\Lambda_b$ is strictly convex, this implies that $\xi-\chi=o,\eta-\chi=o$, in contradiction with \eqref{Alx67.7}.

Assume now that $\mu=-e$,  then \eqref{EqualityOfGrad} gives that $|\xi-\chi|\lesssim_d\eps$, while \eqref{TimeResSep2} gives that
\begin{equation*}
-\Lambda_\sigma(\xi)-\Lambda_e(\xi-\eta)+\Lambda_b(\eta)=o,\qquad -\Lambda_b(\eta)+\Lambda_e(\eta-\chi)+\Lambda_b(\chi)=o.
\end{equation*}
Then $\sigma=b$ and we obtain the conclusion.

Assume finally that $\mu=-b$. In this case, \eqref{EqualityOfGrad} gives that $|\xi-\eta+\chi|\lesssim_d\eps$, while
\eqref{TimeResSep2} gives
\begin{equation*}
-\Lambda_\sigma(\xi)-\Lambda_b(\xi-\eta)+\Lambda_b(\eta)=o,\qquad -\Lambda_b(\eta)+\Lambda_e(\eta-\chi)+\Lambda_b(\chi)=o.
\end{equation*}
Therefore $\sigma=e$ and we obtain the conclusion.

{\bf Case 3}: $(\nu,\theta,\kappa)=(b,b,e)$. Changing variables $\chi\to\eta-\chi$, we get back to {\bf Case 2}.

We prove now (ii). We may assume that $\mu\neq\sigma$ and $|\xi-p_{-\mu,\sigma}(\eta)|\lesssim_d\eps$. Moreover, $\eta$ is close to a space-time resonance for the phases $\Phi_{\nu\theta\kappa}$ and $\Phi_{\nu(-\mu)\sigma}$. The result follows by Proposition \ref{spaceres}, using also the fact that $\gamma_1\ne\gamma_2$.
\end{proof}

\subsection{Bounds on sublevel sets} We prove first a general upper bound on the size of sublevel sets of functions. 

\begin{lemma}\label{lemma00}
Suppose $L,R,M\in\mathbb{R}$, $M\geq \max(1,L,L/R)$, and $Y:B_R:=\{x\in\mathbb{R}^n:|x|<R\}\to\mathbb{R}$ is a function satisfying $\|\nabla Y\|_{C^{l}(B_R)}\leq M$, for some $l\geq 1$. Then, for any $\epsilon>0$,
\begin{equation}\label{scale1}
\big|\big\{x\in B_R:|Y(x)|\leq\epsilon\text{ and }\sum_{|\alpha|\leq l}|\partial_{x}^{\alpha}Y(x)|\geq L\big\}\big|\lesssim R^{n}ML^{-1-1/l}\epsilon^{1/l}.
\end{equation} 
Moreover, if $n=l=1$, $K$ is a union of at most $A$ intervals, and $|Y'(x)|\geq L$ on K, then
\begin{equation}\label{scale2}\left|\{x\in K:|Y(x)|\leq\epsilon\}\right|\lesssim AL^{-1}\epsilon.\end{equation}
\end{lemma}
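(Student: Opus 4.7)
The second claim \eqref{scale2} is the simpler of the two. On each of the $A$ intervals comprising $K$, continuity together with $|Y'|\geq L>0$ forces $Y'$ to have constant sign, so $Y$ is strictly monotone with $|Y'|\geq L$; thus $\{|Y|\leq\epsilon\}$ has measure at most $2\epsilon/L$ on that interval, and summing over the $A$ intervals gives $|\{x\in K:|Y(x)|\leq\epsilon\}|\leq 2A\epsilon/L$.

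For \eqref{scale1} I would proceed by induction on $l$. The key technical ingredient is a reduction from mixed partials to \emph{directional} derivatives: both $\sup_{|v|=1}|(v\cdot\nabla)^k Y(x_0)|$ and $\max_{|\alpha|=k}|\partial^\alpha Y(x_0)|$ define norms on the finite-dimensional space of symmetric $k$-tensors (the $k$-jet of $Y$ at $x_0$), so they are equivalent with constants $c_{n,k}$ depending only on $n$ and $k$. Consequently, whenever $\sum_{|\alpha|=k}|\partial^\alpha Y(x_0)|\geq L'$ there exists a unit vector $v\in S^{n-1}$ with $|(v\cdot\nabla)^k Y(x_0)|\geq c_{n,k} L'$. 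This step allows a reduction to the classical one-dimensional sublevel set lemma stating that $|\{|f|\leq\epsilon\}|\lesssim(\epsilon/L)^{1/k}$ whenever $|f^{(k)}|\geq L$ on an interval.

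For the base case $l=1$, after reducing to the case $\epsilon\leq L/2$ (the complement being trivial since $M\geq L$ forces $R^nML^{-2}\epsilon\gtrsim R^n$), the hypothesis implies $|\partial_i Y|\geq L/(2n)$ for some $i$. A Fubini slicing in direction $e_i$ combined with the Lipschitz bound $|\partial_i^2 Y|\leq M$ shows that on each line the set $\{|\partial_i Y|\geq L/(4n)\}$ consists of $O(RM/L)$ intervals, on each of which $Y$ is monotone with $|Y'|\geq L/(4n)$, giving $|\{|Y|\leq\epsilon\}\cap\text{interval}|\lesssim \epsilon/L$. Multiplying yields the claimed bound $R^nML^{-2}\epsilon$. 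For the inductive step, split the target set into $A=\{\sum_{|\alpha|\leq l-1}|\partial^\alpha Y|\geq L/2\}$, handled directly by the inductive hypothesis (the resulting bound $R^n M L^{-1-1/(l-1)}\epsilon^{1/(l-1)}$ is dominated by the target $R^n M L^{-1-1/l}\epsilon^{1/l}$ whenever $\epsilon\leq L$, the complementary case $\epsilon\geq L$ being trivial), and $B=\{\sum_{|\alpha|=l}|\partial^\alpha Y|\gtrsim L\}$. On $B$, apply the directional derivative observation: cover $S^{n-1}$ by a fixed (depending on $n,l$) family of small spherical caps, and on the preimage $B_j$ of cap $C_j$ replace the pointwise direction $v(x)$ by the cap's center $v_j$ (still effective by continuity). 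Fubini over lines in direction $v_j$, extracting intervals of length $\sim L/M$ where the $l$-th directional derivative of $Y$ remains $\gtrsim L/2$ (a Vitali covering producing $O(RM/L)$ such intervals), and the one-dimensional sublevel set lemma together give $|B_j\cap\{|Y|\leq\epsilon\}|\lesssim R^{n-1}\cdot(RM/L)\cdot(\epsilon/L)^{1/l}=R^n M L^{-1-1/l}\epsilon^{1/l}$, closing the induction.

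The main obstacle I expect is precisely the handling of \emph{mixed} partial derivatives: a naive Fubini over coordinate directions fails because $\partial^\alpha Y$ for a mixed multi-index $\alpha$ is not a pure directional derivative of $Y$ along any coordinate axis. The tensor-norm equivalence converts this into a directional-derivative framework where the one-dimensional sublevel set estimate applies, at the acceptable combinatorial cost of covering the sphere by a bounded number of caps. The verification that the inductive bound at level $l-1$ is dominated by the target bound at level $l$ (which reduces to the case $\epsilon\leq L$, where the exponent $1/(l-1)\geq 1/l$ favors the target) is routine but should be written out explicitly to close the induction cleanly.
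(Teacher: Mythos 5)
Your argument is correct in substance and uses the same ingredients as the paper's proof: localize to spatial scale $L/M$, convert a large mixed partial into a large \emph{pure} directional derivative of some order $j\in\{1,\dots,l\}$ via equivalence of norms on symmetric $j$-tensors, apply the classical one-dimensional sublevel set estimate along lines in that direction, and absorb the lower-order cases using $(\epsilon/L)^{1/j}\leq(\epsilon/L)^{1/l}$ for $\epsilon\leq L$ (the paper does this in one pass, choosing per ball a direction $e$ and an order $j$, rather than by induction on $l$; the difference is purely organizational). One intermediate claim in your base case is false as stated: on a line, the set $\{|\partial_i Y|\geq L/(4n)\}$ need \emph{not} consist of $O(RM/L)$ intervals. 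Take $\partial_i Y=c+\delta\sin(Nt)$ with $c=L/(4n)$, $\delta$ tiny and $N=M/\delta$ huge; then $|\partial_i^2Y|\leq M$ but the set has $\sim NR$ components, which is unbounded as $\delta\to 0$. The correct statement --- and the one you in fact use in your inductive step --- is that one partitions the line into $O(RM/L)$ subintervals of length $\sim L/M$, on each of which the relevant derivative either never reaches $L/(4n)$ or, by the Lipschitz bound $M$ on its derivative, stays $\geq L/(8n)$ throughout; the monotonicity (or 1D sublevel) bound is then applied on each subinterval separately. With that substitution the proof closes.
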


\begin{proof} First we consider a simple case, in which $n=1$ and $|Y^{(l)}(\xi)|\geq 1$ for all $\xi$ in some interval $J$. A simple argument shows that
\begin{equation}\label{scale3}
|\{x\in J:|Y(x)|\leq\epsilon\}|\leq C_l\epsilon^{1/l},
\end{equation} 
for some constant $C_l\geq 1$. Note that by scaling, this also proves (\ref{scale2}).

We prove now \eqref{scale1}. We may assume $\epsilon\ll L$ and $M=1$. Choose a small absolute constant $\rho$, and cover $B_R$ by $\approx (R/L)^{n}$ balls of radius $\rho L$. For each ball we can find a vector $e\in \mathbb{S}^{n-1}$ and $0\leq j\leq l$ such that $|\partial_{e}^{j}Y|\gtrsim L$ at the center of that ball. We may assume $j\geq 1$; by the $C^{l+1}$ bound, this inequality also holds within that ball (with different constants). By an orthogonal transformation we may assume that $e$ is a coordinate vector, so the volume of the part of the set contained in that ball is bounded by $CL^{n-1}(\epsilon/L)^{1/l}$ by (\ref{scale3}). Adding up, we obtain the desired bound.
\end{proof}

We prove now several bounds on the sets of time resonances.

\begin{proposition}[Volume bounds of sublevel sets, I]\label{volume} (i) Let $k\geq 0$ and $\epsilon\leq 1/2$, and \[E=\{(\xi,\eta):\max(|\xi|,|\eta|)\leq 2^{k},|\Phi(\xi,\eta)|\leq 2^{-k}\epsilon\}.\]
Then
\begin{equation}\label{cas4} \sup_{\xi}\int_{\mathbb{R}^{2}}\mathbf{1}_{E}(\xi,\eta)\,d\eta+\sup_{\eta}\int_{\mathbb{R}^{2}}\mathbf{1}_{E}(\xi,\eta)\,d\xi\lesssim 2^{7k}\epsilon\log(1/\epsilon).
\end{equation} 

(ii) Assume now that $\epsilon\leq\epsilon'\leq 1/2$, and consider the sets
\begin{equation*}
\begin{split}
&E'_1=\{(\xi,\eta):\max(|\xi|,|\eta|)\leq 2^{k},\,|\Phi(\xi,\eta)|\leq 2^{-k}\epsilon,\,|{\Upsilon}(\xi,\eta)|\leq 2^{-3k}\epsilon',\,|\nabla_\eta\Phi(\xi,\eta)|\geq 2^{-\D}\},\\
&E'_2=\{(\xi,\eta):\max(|\xi|,|\eta|)\leq 2^{k},\,|\Phi(\xi,\eta)|\leq 2^{-k}\epsilon,\,|{\Upsilon}(\xi,\eta)|\leq 2^{-3k}\epsilon',\,|\nabla_\xi\Phi(\xi,\eta)|\geq 2^{-\D}\},
\end{split}
\end{equation*}
 where ${\Upsilon}$ is defined by \[\Upsilon(\xi,\eta)=\nabla_{\xi,\eta}^{2}\Phi(\xi,\eta)\left[\nabla_{\xi}^{\perp}\Phi(\xi,\eta),\nabla_{\eta}^{\perp}\Phi(\xi,\eta)\right].\] 
Then
\begin{equation}\label{cas5} 
\sup_{\xi}\int_{\mathbb{R}^{2}}\mathbf{1}_{E_{1}'}(\xi,\eta)\,d\eta+\sup_{\eta}\int_{\mathbb{R}^{2}}\mathbf{1}_{E_{2}'}(\xi,\eta)\,d\xi\lesssim 2^{12k}\epsilon\log(1/\eps)\cdot(\epsilon')^{1/8}.
\end{equation}

(iii) Assume that $\epsilon\leq\epsilon''\leq 1/2$, $r_0\in [2^{-\D},2^{\D}]$ and consider the sets
\[E''=\{(\xi,\eta):\max(|\xi|,|\eta|)\leq 2^{k},|\Phi(\xi,\eta)|\leq 2^{-k}\epsilon,\big||\xi-\eta|-r_0\big|\leq \epsilon''\},\]
Then we can write $E''=E_{1}''\cup E_{2}''$ such that
\begin{equation}\label{cas5.5} \sup_{\xi}\int_{\mathbb{R}^{2}}\mathbf{1}_{E_{1}''}(\xi,\eta)\,d\eta+\sup_{\eta}\int_{\mathbb{R}^{2}}\mathbf{1}_{E_{2}''}(\xi,\eta)\,d\xi\lesssim 2^{12k}\epsilon\log(1/\eps)\cdot(\epsilon'')^{1/2},
\end{equation}

(iv) Assume that $k\geq 0$, $\eps\leq 2^{-k-\D}$, $\kappa\leq \eps^{1/2} 2^{-5k-\D}$, and $\Phi=\Phi_{\si\mu\nu}$ with $|\nu|=b$. Then
\begin{equation}\label{cas5.7}
\begin{split}
\sup_\xi\int_{\mathbb{R}^2}\varphi(\eps^{-1}2^k\Phi(\xi,\eta))\varphi(\kappa^{-1}\xi\cdot\eta^\perp)\varphi(\Psi^\dagger_b(\eta))\varphi(2^{-k}\xi) d\eta\lesssim 2^{12k} \eps\kappa
\end{split}
\end{equation}
and, if $p\in\mathbb{Z}$,
\begin{equation}\label{cas5.8}
\begin{split}
\sup_\eta\int_{\mathbb{R}^2}\varphi(\eps^{-1}2^k\Phi(\xi,\eta))\varphi(\kappa^{-1}\xi\cdot\eta^\perp)\varphi(\Psi^\dagger_b(\eta))\varphi_{p}(\nabla_\xi\Phi(\xi,\eta))\varphi(2^{-k}\xi) d\xi\\
\lesssim 2^{12k}\min\{2^p,2^{-p}\eps\}\kappa.
\end{split}
\end{equation}
\end{proposition}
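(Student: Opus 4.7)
The plan is to reduce each of the four estimates to the general sublevel set bounds of Lemma~\ref{lemma00} by parameterizing level sets of $\Phi$ and exploiting the nondegeneracy properties of $\Phi$, $\nabla\Phi$, and $\Upsilon$ proved in Propositions~\ref{spaceres} and \ref{separation1}. By the symmetry of each statement, in every part it suffices to estimate one of the two integrals with either $\xi$ or $\eta$ fixed.

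For part~(i), fix $\xi$ with $|\xi|\leq 2^k$ and view $Y(\eta):=\Phi(\xi,\eta)$ as a function on $B_{2^k}$. By \eqref{res01} applied on $E$, either $|\Phi|\gtrsim 2^{-3k}$ (so $E$ is empty once $\epsilon\lesssim 2^{-2k}$) or $|\nabla_\eta\Phi|\gtrsim 2^{-3k}$. I perform a dyadic decomposition $|\nabla_\eta\Phi|\approx 2^l$ over the $O(\log(1/\epsilon))$ relevant scales, and on each shell apply \eqref{scale2} along the direction of $\nabla_\eta\Phi$ to get transverse measure $\lesssim 2^{-l-k}\epsilon$, while the tangential direction contributes at most $2^k$. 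The small-gradient regime near the critical point $\eta=p(\xi)$ (when $\mu+\nu\neq 0$) is handled separately via the Taylor expansion $\Phi(\xi,\eta)=\Psi(\xi)+\tfrac12\nabla^2_\eta\Phi(\xi,p(\xi))[\eta-p(\xi),\eta-p(\xi)]+O(|\eta-p(\xi)|^3)$ and \eqref{scale1} with $l=2$, using that $\nabla^2_\eta\Phi(\xi,p(\xi))$ is nondegenerate by \eqref{cas10.1}. Summing over the dyadic scales produces the factor $\log(1/\epsilon)$ and yields \eqref{cas4}.

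For part~(ii), assume $|\nabla_\eta\Phi|\geq 2^{-\mathcal{D}}$ on $E'_1$. The sublevel set $\{|\Phi(\xi,\cdot)|\leq 2^{-k}\epsilon\}$ is then a tube of transverse width $\lesssim 2^{-k}\epsilon$ about the smooth curve $\Gamma_\xi:=\{\eta:\Phi(\xi,\eta)=0\}$; parameterize $\Gamma_\xi$ by arclength $t$. A direct computation using $\nabla^\perp_\eta\Phi\cdot\nabla_\eta\Phi=0$ shows that $\Upsilon|_{\Gamma_\xi}$ coincides, up to a bounded nonzero factor, with the $t$-derivative of the scalar $\nabla_\xi\Phi\cdot\mathbf{e}^\perp$, where $\mathbf{e}=\nabla_\eta\Phi/|\nabla_\eta\Phi|$. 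A case-by-case analysis on the resonant circles of Proposition~\ref{spaceres}(iv), using the explicit formulas for $\Phi$ and $\Psi$ together with Remark~\ref{largeres}, then establishes that $\Upsilon$ and its derivatives of order $\leq 8$ along $\Gamma_\xi$ cannot vanish simultaneously; applying Lemma~\ref{lemma00} (with $n=1$, $l\leq 8$) to $\Upsilon|_{\Gamma_\xi}$ gives arclength measure $\lesssim(\epsilon')^{1/8}$, and multiplying by the tube width $2^{-k}\epsilon$ produces \eqref{cas5}. This identification of $\Upsilon$ as the correct scalar nondegeneracy invariant along $\Gamma_\xi$, together with the verification of its finite-order non-vanishing on the space-time resonant sets, is the main geometric input of the section and determines the fractional exponent $1/8$; it is the hardest step of the proposition.

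For parts~(iii) and (iv) the extra geometric constraints confine $\eta$ (resp.\ $\xi$) to thin subsets. In (iii), the annulus $\{||\xi-\eta|-r_0|\leq\epsilon''\}$ has width $\epsilon''$, and the splitting $E''=E_1''\cup E_2''$ is performed according to whether $|\nabla_\eta\Phi|\geq 2^{-\mathcal{D}}$ or $|\nabla_\xi\Phi|\geq 2^{-\mathcal{D}}$ (one of which holds on $E''$ by \eqref{res01}); on each piece, applying \eqref{scale2} along the dominant gradient, intersecting with the annular constraint, and interpolating (via Cauchy--Schwarz) between the sublevel width $2^{-k}\epsilon/|\nabla\Phi|$ and the annular width $\epsilon''$ produces the factor $(\epsilon'')^{1/2}$ and gives \eqref{cas5.5}. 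In (iv), the factor $\varphi(\Psi_b^\dagger(\eta))$ localizes $\eta$ near the circles $|\eta|=\gamma_j$, where Proposition~\ref{separation1}(iii) forces $|\nabla_\eta\Phi|\gtrsim 1$ on the sublevel set; the constraint $|\xi\cdot\eta^\perp|\leq\kappa$ confines $\eta$ to a strip of width $\lesssim\kappa 2^{-k}$ around the ray $\mathbb{R}\xi$, whose intersection with the resonance annulus has total length $O(1)$. Applying \eqref{scale2} along this intersection then gives \eqref{cas5.7}, while \eqref{cas5.8} follows by additionally localizing $|\nabla_\xi\Phi|\approx 2^p$, applying \eqref{scale2} in the direction of $\nabla_\xi\Phi$, and taking the minimum of the sublevel bound $2^{-p}\epsilon$ and the tangential bound $2^p$ arising from the overall $\xi$-support.
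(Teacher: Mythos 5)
There are two genuine gaps, both in the quantitatively delicate parts. In part (i), your treatment of the small-gradient region does not give the claimed bound: near the critical point $\eta=p(\xi)$ you invoke \eqref{scale1} with $l=2$, which only yields a measure of order $(2^{-k}\epsilon)^{1/2}$, far weaker than $2^{-2k}\epsilon\log(1/\epsilon)$; and your dyadic sum over shells $|\nabla_\eta\Phi|\approx 2^{l}$ with ``tangential measure $2^{k}$'' gives $\sum_{l}2^{-l}\epsilon$, which diverges as $l\to-\infty$ rather than producing a logarithm. The logarithm in \eqref{cas4} comes from a genuinely two-step analysis in the region where $|\nabla_\eta\Phi|$ is small: writing $\eta=(r\cos\theta,r\sin\theta)$, the angular sublevel measure at fixed $r$ is $\lesssim 2^{-k}\epsilon\,(|Z_\iota(r)|+2^{-k}\epsilon)^{-1/2}$ because $\Phi$ depends quadratically on $\theta$ near $\theta\in\{0,\pi\}$, and one then integrates in $r$ by decomposing dyadically in $|Z_\iota(r)|$ and using $|Z_\iota'|+|Z_\iota''|\gtrsim 1$ (which follows from \eqref{cas10}); each dyadic range contributes $\approx 2^{-k}\epsilon$ and there are $\log(1/\epsilon)$ of them. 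You need this sharper analysis of the sublevel sets of a nondegenerate quadratic plus a constant (or equivalently the observation that the shell $|\nabla_\eta\Phi|\approx 2^{l}$ has tangential extent $\approx 2^{l}$, not $2^{k}$), and a generic second-order sublevel bound cannot replace it.

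In part (ii) the entire content of the statement is the quantitative non-vanishing of $\Upsilon$ on the time-resonant set, and you assert it (``a case-by-case analysis \dots then establishes that $\Upsilon$ and its derivatives of order $\leq 8$ \dots cannot vanish simultaneously'') without proving it; as written this is not a proof. The actual verification is an explicit algebraic computation: using $\lambda_\gamma'(x)/x=d_\gamma/\lambda_\gamma(x)$ and the substitutions $s_\ast=\lambda_\sigma(|\xi|)$, $r_\ast=\lambda_\nu(|\eta|)$, $\rho_\ast=\lambda_\mu(|\xi-\eta|)$, the function $\Upsilon$ restricted to $\{s_\ast-r_\ast-\rho_\ast=0\}$ becomes a polynomial $G(r_\ast)$ of degree at most $4$ whose leading coefficient is $d_\sigma^2(d_\mu-d_\nu)^2$ (and which satisfies $G(0)\gtrsim 1$ when $d_\mu=d_\nu$), so its sublevel set in $r_\ast$ has measure $\lesssim(\epsilon')^{1/4}$ by Lemma \ref{lemma00}. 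The exponent $1/8$ in \eqref{cas5} then arises from balancing this against the angular cutoff $|\sin\theta|\lessgtr(\epsilon')^{1/8}$, not from an eighth-order vanishing of $\Upsilon$ along the level curve; your proposed mechanism is both different and unverified, and there is no a priori reason the order of vanishing is finite without the explicit computation. (Your outlines of (iii) and (iv) do follow the correct route; in (iii) the ``Cauchy--Schwarz interpolation'' should be made precise by the same polar-coordinate splitting at $|\sin\theta|=(\epsilon'')^{1/2}$.)
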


\begin{proof} Note that all derivatives of $\Phi$ and $\Upsilon$ are uniformly bounded, except for $\Phi$ itself.  

{\bf{Proof of (i).}} We may assume $\eps\leq 2^{-\D^2}$. By symmetry, it suffices to control the first term in the left-hand side of \eqref{cas4}. Fix $\xi$ and define
\begin{equation*}
\begin{split}
&K_{1,\xi}:=\{\eta:|\eta|\leq 2^{k},\,|\nabla_\eta\Phi(\xi,\eta)|\geq 2^{-3\D}2^{-3k}\},\\
&K_{2,\xi}:=\{\eta:|\eta|\leq 2^{k},\,|\nabla_\eta\Phi(\xi,\eta)|\leq 2^{-3\D}2^{-3k}\}.
\end{split}
\end{equation*}
It follows from Lemma \ref{lemma00} with $l=1$ that
\begin{equation}\label{cas6}
\big|E_{1,\xi}\big|\lesssim \eps 2^{7k}\qquad\text{ where }E_{1,\xi}:=\{\eta\in K_{1,\xi}:|\Phi(\xi,\eta)|\leq 2^{-k}\epsilon\}.
\end{equation}

For \eqref{cas4} it remains to prove a similar bound on the measure of the set
\begin{equation}\label{cas7}
E_{2,\xi}:=\{\eta\in K_{2,\xi}:|\Phi(\xi,\eta)|\leq 2^{-k}\epsilon\}.
\end{equation}
In view of Proposition \ref{spaceres} (i), we may assume that $|\xi|,|\eta|,|\xi-\eta|\in[2^{-\D},2^{\D}]$ and $\mu+\nu\neq 0$. Assuming that $\xi=(s,0),\eta=(r\cos\theta,r\sin\theta)$ we have
\begin{equation}\label{cas7.1}
-\Phi(\xi,\eta)=-\iota_\sigma\sqrt{1+d_\sigma s^2}+\iota_\nu\sqrt{1+d_\nu r^2}+\iota_\mu\sqrt{1+d_\mu (s^2+r^2-2sr\cos\theta)},
\end{equation}
where $\iota_\sigma,\iota_\mu,\iota_\nu\in\{+,-\}$ and $d_\sigma,d_\mu,d_\nu\in\{d,1\}$. Let
\begin{equation*}
Z_{\mp}(r):=-\iota_\sigma\sqrt{1+d_\sigma s^2}+\iota_\nu\sqrt{1+d_\nu r^2}+\iota_\mu\sqrt{1+d_\mu (s\mp r)^2}.
\end{equation*}
Recalling that $r,s\in [2^{-\D},2^{\D}]$, it is easy to see that, for any $r$ and $\xi$ fixed,
\begin{equation*}
\big|\{\theta\in [0,2\pi]:\,\eta=(r\cos\theta,r\sin\theta)\in K_{2,\xi},\,|\Phi(\xi,\eta)|\leq 2^{-k}\epsilon\}\big|\lesssim \sum_{\iota\in\{-,+\}}\frac{2^{-k}\eps}{[|Z_\iota(r)|+2^{-k}\eps]^{1/2}}.
\end{equation*}
Moreover, using \eqref{cas10}, $|Z'_{\mp}(r)|+|Z''_{\mp}(r)|\gtrsim 1$ if $s,r\in[2^{-\D},2^{\D}]$. Therefore, using Lemma \ref{lemma00},
\begin{equation*}
\big|\{r\in[2^{-\D},2^{\D}]:|Z_\iota(r)|\leq\kappa\}\big|\lesssim\kappa^{1/2}\qquad\text{ for any }\iota\in\{-,+\}\text{ and }\kappa>0.
\end{equation*}
We combine the last two inequalities, with $\kappa=(2^{-k}\eps)2^j$, $j=0,1,\ldots$, to conclude that
\begin{equation*}
\big|E_{2,\xi}\big|\lesssim \sum_{j\geq 0,\,2^j\lesssim 2^k\eps^{-1}}\frac{2^{-k}\eps}{(2^{-k}\eps 2^j)^{1/2}}\cdot (2^{-k}\eps2^j)^{1/2}\lesssim \eps\log(1/\eps).
\end{equation*}
The desired bound on the first term in \eqref{cas4} follows, using also \eqref{cas6}.

{\bf{Proof of (ii).}} By symmetry, it suffices to prove the bound on the first term in the left-hand side of \eqref{cas5}. We may assume that $\eps,\eps'$ are sufficiently small, i.e. $\eps'\leq 2^{-40\D}2^{-40k}$. Notice that
\begin{equation*}
\nabla_{\xi,\eta}^{2}\Phi(\xi,\eta)\big[\partial_i,\partial_j\big]=\lambda''_\mu(|\xi-\eta|)\frac{(\xi_i-\eta_i)(\xi_j-\eta_j)}{|\xi-\eta|^2}+\lambda'_\mu(|\xi-\eta|)\frac{\delta_{ij}|\xi-\eta|^2-(\xi_i-\eta_i)(\xi_j-\eta_j)}{|\xi-\eta|^3},
\end{equation*}
for $i,j\in\{1,2\}$. Also
\begin{equation*}
\nabla_\xi\Phi(\xi,\eta)=-\frac{\lambda'_\mu(|\xi-\eta|)}{|\xi-\eta|}(\xi-\eta)+\frac{\lambda'_\sigma(|\xi|)}{|\xi|}\xi,\qquad \nabla_\eta\Phi(\xi,\eta)=\frac{\lambda'_\mu(|\xi-\eta|)}{|\xi-\eta|}(\xi-\eta)-\frac{\lambda'_\nu(|\eta|)}{|\eta|}\eta.
\end{equation*}
Therefore
\begin{equation}\label{cas13}
\begin{split}
\Upsilon(\xi,\eta)&=\frac{\lambda'_\mu(|\xi-\eta|)-|\xi-\eta|\lambda''_\mu(|\xi-\eta|)}{|\xi-\eta|^3}\frac{\lambda'_\sigma(|\xi|)}{|\xi|}\frac{\lambda'_\nu(|\eta|)}{|\eta|}(\eta\cdot\xi^\perp)^2\\
&+\frac{\lambda'_\mu(|\xi-\eta|)}{|\xi-\eta|}\nabla_\xi\Phi(\xi,\eta)\cdot \nabla_\eta\Phi(\xi,\eta).
\end{split}
\end{equation}
Using this formula, Proposition \ref{spaceres}, and the smallness of $\eps'$, it is easy to see that
\begin{equation}\label{cas14}
\begin{split}
\text{ if }\quad (\xi,\eta)\in E'_1\quad&\text{ then }\quad \min(|\xi|,|\eta|,|\xi-\eta|)\geq 2^{-\D}\\
\text{ if }\quad d_\sigma=d_\mu=d_\nu\quad&\text{ then }\quad E^\prime_1=\emptyset.
\end{split}
\end{equation}

Let $E'_{1,\xi}:=\{\eta\in\mathbb{R}^2:\,(\xi,\eta)\in E'_1\}$. We may assume that $\xi=(s,0)$, $\eta=(r\cos\theta,r\sin\theta)$, $r,s\in[2^{-\D},2^k]$. Let
\begin{equation*}
E^{\prime}_{1,\xi,1}:=\{\eta\in E^{\prime}_{1,\xi}:|\sin\theta|\leq(\eps')^{1/8}\},\quad E^{\prime}_{1,\xi,2}:=\{\eta\in E^{\prime}_{1,\xi}:|\sin\theta|\geq(\eps')^{1/8}\}.
\end{equation*}
It follows easily from Lemma \ref{lemma00} that $\big|E^{\prime}_{1,\xi,1}\big|\lesssim 2^{12k}\epsilon\cdot(\epsilon')^{1/8}$. It remains to prove that
\begin{equation}\label{cas15.1} 
\big|E^{\prime}_{1,\xi,2}\big|\lesssim 2^{12k}\epsilon\cdot(\epsilon')^{1/8}.
\end{equation}

To prove \eqref{cas15.1} we have to understand more precisely the function $\Upsilon$. Notice that
\begin{equation}\label{cas20}
\frac{\lambda'_\gamma(x)}{x}=\frac{d_\gamma }{\lambda_\gamma(x)},\qquad \frac{\lambda_\gamma'(x)-x\lambda''_\gamma(x)}{x^3}=\frac{d_\gamma^2}{\lambda_\gamma(x)^3},\qquad d_\gamma x^2=\lambda_\gamma(x)^2-1
\end{equation}
for any $\gamma \in \mathcal{P}$ and $x\in\mathbb{R}$. Moreover, letting $\rho:=|\xi-\eta|=\sqrt{r^2+s^2-2rs\cos\theta}$ we have
\begin{equation}\label{cas21}
\begin{split}
&(2\eta\cdot \xi^\perp)^2=4r^2s^2-(r^2+s^2-\rho^2)^2,\qquad 2\xi\cdot\eta=r^2+s^2-\rho^2,\\
&2(\xi-\eta)\cdot\xi=s^2+\rho^2-r^2,\qquad 2(\xi-\eta)\cdot\eta=s^2-\rho^2-r^2.
\end{split}
\end{equation}
Using also \eqref{cas13} it follows that
\begin{equation*}
\begin{split}
4\Upsilon&(\xi,\eta)=\frac{d_\mu^2 d_\sigma d_\nu}{\lambda_\mu(\rho)^3\lambda_\sigma(s)\lambda_\nu(r)}\big[4r^2s^2-(r^2+s^2-\rho^2)^2\big]\\
&+\frac{d_\mu}{\lambda_\mu(\rho)}\Big[-4\frac{d_\mu^2\rho^2}{\lambda_\mu(\rho)^2}-2\frac{d_\sigma d_\nu(r^2+s^2-\rho^2)}{\lambda_\sigma(s)\lambda_\nu(r)}+2\frac{d_\sigma d_\mu(s^2+\rho^2-r^2)}{\lambda_\sigma(s)\lambda_\mu(\rho)}+2\frac{d_\mu d_\nu(s^2-\rho^2-r^2)}{\lambda_\mu(\rho)\lambda_\nu(r)}\Big].
\end{split}
\end{equation*}
Therefore
\begin{equation*}
\begin{split}
d_\sigma d_\nu\lambda_\mu(\rho)^3&\lambda_\sigma(s)\lambda_\nu(r)\cdot 4\Upsilon(\xi,\eta)=d_\sigma^2d_\mu^2  d_\nu^2\big[2r^2s^2+2r^2\rho^2+2s^2\rho^2-r^4-s^4-\rho^4\big]\\
&-4d_\sigma d_\mu^3 d_\nu\lambda_\sigma(s)\lambda_\nu(r)\rho^2-2d_\sigma^2d_\mu  d_\nu^2\lambda_\mu(\rho)^2(r^2+s^2-\rho^2)\\
&-2d_\sigma^2d_\mu^2 d_\nu\lambda_\mu(\rho)\lambda_\nu(r)(r^2-s^2-\rho^2)-2d_\sigma d_\mu^2 d_\nu^2\lambda_\sigma(s)\lambda_\mu(\rho)(r^2+\rho^2-s^2).
\end{split}
\end{equation*}

Let
\begin{equation*}
\rho_\ast:=\lambda_\mu(\rho),\qquad r_\ast:=\lambda_\nu(r),\qquad s_\ast:=\lambda_\sigma(s).
\end{equation*}
In view of \eqref{cas20}, $d_\mu\rho^2=\rho_\ast^2-1,\,d_\nu r^2=r_\ast^2-1,\,d_\sigma s^2=s_\ast^2-1$. Therefore
\begin{equation}\label{cas30}
-4d_\sigma d_\nu\lambda_\mu(\rho)^3\lambda_\sigma(s)\lambda_\nu(r)\Upsilon(\xi,\eta)=F(s_\ast,r_\ast,\rho_\ast),
\end{equation}
where
\begin{equation}\label{cas30.1}
F=F_{(4)}+F_{(2)}+F_{(0)},
\end{equation}
\begin{equation*}
\begin{split}
F_{(4)}(s_\ast,r_\ast,\rho_\ast)&:=d_\sigma d_\mu^2 d_\nu (2s_\ast^2r_\ast^2 -4s_\ast r_\ast \rho_\ast^2+2s_\ast^2r_\ast\rho_\ast-2s_\ast r_\ast^2\rho_\ast)+d_\sigma d_\mu d_\nu^2 (-2s_\ast\rho_\ast^3)\\
&+d_\sigma^2 d_\mu d_\nu(2r_\ast\rho_\ast^3)+d_\mu^2d_\nu^2(-s_\ast^4+2s_\ast^3\rho_\ast)+d_\sigma^2d_\mu^2(-r_\ast^4-2r_\ast^3\rho_\ast)+d_\sigma^2d_\nu^2(\rho_\ast^4),
\end{split}
\end{equation*}
\begin{equation*}
\begin{split}
F_{(2)}(s_\ast,r_\ast,\rho_\ast)&:=d_\sigma d_\mu^2 d_\nu\left[ 2(s_\ast-r_\ast)(\rho_\ast-s_\ast+r_\ast)      \right]+d_\sigma^2 d_\mu d_\nu\big[ -2r_\ast(r_\ast+\rho_\ast)\big]\\
&+ d_\sigma d_\mu d_\nu^2(-2s_\ast(s_\ast-\rho_\ast))+d_\sigma^2 d_\mu^2(2r_\ast(\rho_\ast+r_\ast))+d_\mu^2d_\nu^2(2s_\ast(s_\ast-\rho_\ast)),
\end{split}
\end{equation*}
\begin{equation*}
\begin{split}
F_{(0)}&:=2d_\sigma d_\mu d_\nu(d_\sigma+d_\mu+d_\nu)-d_\sigma^2d_\mu^2-d_\mu^2 d_\nu^2-d_\sigma^2 d_\nu^2.
\end{split}
\end{equation*}
Since $\Phi(\xi,\eta)=s_\ast-r_\ast-\rho_\ast$, we notice that
\begin{equation}\label{cas40}
\text{ if }\quad \eta\in E^{\prime}_{1,\xi}\quad\text{ then }\quad |F(s_\ast,r_\ast,s_\ast-r_\ast)|\lesssim 2^{2k}\eps'.
\end{equation}
We see that
\begin{equation*}
G(r_\ast):=F(s_\ast,r_\ast,s_\ast-r_\ast)=r_\ast^4d_\sigma^2\big(d_\mu -d_\nu\big)^2+G_{\leq 3}(r_\ast),
\end{equation*}
where $G_{\le 3}(r_\ast)$ denotes a polynomial in $r_\ast$ of degree at most $3$ (notice that $s_\ast$ is fixed). Therefore, if $d_\mu\ne d_\nu$, using Lemma \ref{lemma00},
\begin{equation}\label{cas41}
|K_{s_\ast}|\lesssim 2^{2k}(\eps')^{1/4}\qquad\text{ where }\qquad K_{s_\ast}:=\{r_\ast\in [0,2^{k+\D}]:\,|G(r_\ast)|\leq 2^{2k+\D}\eps'\}.
\end{equation}
It now follows from \eqref{cas40} that, in this case
\begin{equation*}
E'_{1,\xi,2}\subseteq\{\eta=(r\cos\theta,r\sin\theta):\,\lambda_\nu(r)\in K_{s_\ast},\,|\sin\theta|\geq (\eps')^{1/8},\,|\Phi(\xi,\eta)|\leq 2^{-k}\eps\}.
\end{equation*}
The desired estimate \eqref{cas15.1} follows using the formula \eqref{cas7.1}, and the bounds \eqref{cas14} and \eqref{cas41}.

It remains to estimate $E^{\prime}_{1,\xi,2}$ in the case $d_\mu= d_\nu$. In this case, we see that
\begin{equation*}
\begin{split}
F_{(4)}(s_\ast,r_\ast,s_\ast-r_\ast)&=d_\mu^2(d_\sigma-d_\mu)^2(s_\ast^4-2s_\ast^3r_\ast),\\ 
F_{(2)}(s_\ast,r_\ast,s_\ast-r_\ast)&=2d_\mu^3 (d_\mu-d_\sigma)s_\ast r_\ast,\\
F_{(0)}&=d_\mu^3(4d_\sigma-d_\mu). 
\end{split}
\end{equation*}
Therefore $G$ is a linear function in $r_\ast$. Moreover, since $s_\ast\geq 1$ by definition,
\begin{equation*}
G(0)=d_\mu^2(d_\sigma-d_\mu)^2(s_\ast^4-1)+d_\mu^2d_\sigma^2+2d_\sigma d_\mu^3\gtrsim 1.
\end{equation*}
Therefore \eqref{cas41} holds in this case as well. The proof then finishes as before.

{\bf{Proof of (iii).}} In proving \eqref{cas5.5} we may assume $\eps''\leq 2^{-10k-4\D}$. Define
\begin{equation}\label{cas50}
\begin{split}
&E''_{1}:=\{(\xi,\eta)\in E'':|\xi|\geq 2^{-\D-2}\text{ and }\,|\nabla_{\eta}\Phi(\xi,\eta)|\geq 2^{-3k-\D}\},\\
&E''_{2}:=\{(\xi,\eta)\in E'':|\eta|\geq 2^{-\D-2}\text{ and }\,|\nabla_{\xi}\Phi(\xi,\eta)|\geq 2^{-3k-\D}\}.
\end{split}
\end{equation}
It is easy to see that $E''=E''_1\cup E''_2$. By symmetry, it suffices to prove \eqref{cas5.5} for the first term in the left-hand side. Let, as before, $\xi=(s,0)$, $\eta=(r\cos\theta,r\sin\theta)$, and 
\begin{equation}\label{cas51}
\begin{split}
E''_{1,\xi,1}:&=\{\eta:(\xi,\eta)\in E''_{1},\,r\geq 2^{-2\D},\,|\sin\theta|\leq(\eps'')^{1/2}\},\\
E''_{1,\xi,2}:&=\{\eta:(\xi,\eta)\in E''_{1},\,r\geq 2^{-2\D},\,|\sin\theta|\geq(\eps'')^{1/2}\},\\
E''_{1,\xi,3}:&=\{\eta:(\xi,\eta)\in E''_{1},\,r\leq 2^{-2\D}\}.
\end{split}
\end{equation}
As before, it follows from Lemma \ref{lemma00} that $\big|E''_{1,\xi,1}\big|\lesssim 2^{12k}\epsilon\cdot(\epsilon'')^{1/2}$. To estimate $\big|E''_{1,\xi,2}\big|$ we use the formula \eqref{cas7.1}. With $r_\ast=\lambda_\nu(r)$ as before, it follows from definitions that 
\begin{equation*}
E''_{1,\xi,2}\subseteq\{\eta:r\geq 2^{-2\D},\,\lambda_\nu(r)\in K'_{s_\ast,\rho_\ast},\,|\sin\theta|\geq (\eps'')^{1/2},\,|\Phi(\xi,\eta)|\leq 2^{-k}\eps\},
\end{equation*}
where $K'_{s_\ast,\rho_\ast}$ is an interval of length $\lesssim \eps''$. Therefore, using the formula \eqref{cas7.1} as before,
\begin{equation*}
\big|E''_{1,\xi,2}\big|\lesssim 2^{6k}\eps(\eps'')^{1/2},
\end{equation*}
as desired. Finally, the estimate for the set $E''_{1,\xi,3}$ follows by reversing the roles of $\eta$ and $\xi-\eta$.

{\bf{Proof of (iv).}} We prove first \eqref{cas5.7}. In view of \eqref{cas2} we may assume that $\big||\eta|-\gamma _1\big|\ll 1$ or $\big||\eta|-\gamma _2\big|\ll 1$. In particular, using Proposition \ref{separation1} (i), we may assume that $|\xi|\gtrsim 1$, $|\xi-\eta|\gtrsim 1$, and $|\nabla_{\eta}\Phi(\xi,\eta)|\gtrsim 2^{-3k}$. Let $\xi=(s,0)$, $\eta=(r\cos\theta,r\sin\theta)$, and recall the formula \eqref{cas7.1}. The hypothesis shows that, in the support of the integral,  for any $\theta$ fixed with $|\theta|\lesssim \kappa$ the absolute value of the derivative in $r$ of the function $r\to \Phi(\xi,r\theta)$ is $\gtrsim 1$. The bound \eqref{cas5.7} follows.

To prove \eqref{cas5.8}, we may assume that $|\xi|\gtrsim 1$, $|\xi-\eta|\gtrsim 1$ in the support of the integral, in view of Proposition \ref{separation1}. Letting $\eta=(s,0)$ and $\xi=(r\cos\theta,r\sin\theta)$, we have
\begin{equation}\label{cas52}
-\Phi(\xi,\eta)=-\iota_\sigma\sqrt{1+d_\sigma r^2}+\iota_\nu\sqrt{1+d_\nu s^2}+\iota_\mu\sqrt{1+d_\mu (s^2+r^2-2sr\cos\theta)}.
\end{equation}
Assuming $\theta$ fixed with $|\theta|\lesssim \kappa$, and recalling that $\kappa\leq \eps^{1/2} 2^{-5k-\D}$, we let 
\begin{equation*}
Z(r)=Z_{\theta,s}(r):=-\Phi((r\cos\theta,r\sin\theta),\eta).
\end{equation*}
We have $|Z'(r)|+|Z''(r)|\gtrsim 1$ in the support of the integral. The conclusion follows from Lemma \ref{lemma00}, by considering the two cases $2^p\leq \eps^{1/2}$ and $2^p\geq\eps^{1/2}$.
\end{proof}

We prove now several bounds concerning simultaneous phase and angular localization.

\begin{lemma}\label{Shur3Lem}
[Volume bounds of sublevel sets, II] (i) Assume $l,p,q\leq-\D/10$ and define
\begin{equation*}
E=\{(\xi,\eta):\,\vert\Phi(\xi,\eta)\vert\le 2^l,\,\vert\Omega_\eta\Phi(\xi,\eta)\vert\le\kappa_\theta,\,\vert\nabla_\xi\Phi(\xi,\eta)\vert\ge 2^p\gg\kappa_\theta,\,\vert\nabla_\eta\Phi(\xi,\eta)\vert\ge 2^q\gg \kappa_\theta\}.
\end{equation*}
If $1\lesssim 2^{\min(k,k_2)}\leq 2^{\max(k,k_1,k_2)}\leq U\in[1,\infty)$ then
\begin{equation}\label{Alx64.1}
\begin{split}
\sup_\xi\int_{\mathbb{R}^2}\mathbf{1}_E(\xi,\eta)\varphi_{k}(\xi)\varphi_{k_1}(\xi-\eta)\varphi_{k_2}(\eta)d\eta&\lesssim U^4\kappa_\theta 2^{l-q},\\
\sup_\eta\int_{\mathbb{R}^2}\mathbf{1}_E(\xi,\eta)\varphi_{k}(\xi)\varphi_{k_1}(\xi-\eta)\varphi_{k_2}(\eta)d\xi&\lesssim U^4\kappa_\theta 2^{l-p}.
\end{split}
\end{equation}

(ii) Assume that $2^l,\kappa_\theta\leq2^{-\D/10}$ and define
\begin{equation*}
E'=\{(\xi,\eta):\,\vert\Phi(\xi,\eta)\vert\le 2^l,\,\vert\Omega_\eta\Phi(\xi,\eta)\vert\le\kappa_\theta\}.
\end{equation*}
If $k_2\leq -\D/10$ then
\begin{equation}\label{Alx64.2}
\begin{split}
\sup_\xi\int_{\mathbb{R}^2}\mathbf{1}_{E'}(\xi,\eta)\varphi_{k}(\xi)\varphi_{k_1}(\xi-\eta)\varphi_{k_2}(\eta)\,d\eta&\lesssim \kappa_\theta 2^{l}|l|,\\
\sup_\eta\int_{\mathbb{R}^2}\mathbf{1}_{E'}(\xi,\eta)\varphi_{k}(\xi)\varphi_{k_1}(\xi-\eta)\varphi_{k_2}(\eta)\,d\xi&\lesssim \kappa_\theta 2^{-k_2}2^{l}|l|.
\end{split}
\end{equation}

Moreover, if $2^k+2^{k_1}+2^{k_2}\leq U\in[1,\infty)$ then
\begin{equation}\label{cas5.55}
\begin{split}
\sup_{\xi}\int_{\mathbb{R}^2}\varphi(2^{-l}\Phi(\xi,\eta))\varphi_{k}(\xi)\varphi_{k_1}(\xi-\eta)\varphi_{k_2}(\eta)\,d\eta&\lesssim U^{8}|l|2^{l}2^{\min(k_1,k_2)},\\
\sup_{\eta}\int_{\mathbb{R}^2}\varphi(2^{-l}\Phi(\xi,\eta))\varphi_{k}(\xi)\varphi_{k_1}(\xi-\eta)\varphi_{k_2}(\eta)\,d\xi&\lesssim U^{8}|l|2^{l}2^{\min(k_1,k)}.
\end{split}
\end{equation}

(iii) Assume that $2^l,\kappa_\theta\leq 2^{-\D/10}$, $U\geq 1$, and consider the sets
\[E''=\{(\xi,\eta):|\xi|,|\eta|,|\xi-\eta|\in[2^{-2\D},U],\,|\Phi(\xi,\eta)|\leq 2^l,\big|\Omega_\eta\Phi(\xi,\eta)\vert\le\kappa_\theta\}.\]
Then we can write $E''=E_{1}''\cup E_{2}''$ such that
\begin{equation}\label{cas5.6} 
\sup_{\xi}\int_{\mathbb{R}^{2}}\mathbf{1}_{E_{1}''}(\xi,\eta)\,d\eta+\sup_{\eta}\int_{\mathbb{R}^{2}}\mathbf{1}_{E_{2}''}(\xi,\eta)\,d\xi\lesssim U^{10}2^l|l|\kappa_\theta.
\end{equation}
\end{lemma}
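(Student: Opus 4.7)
The plan is to reduce each of the three assertions to the one-dimensional sublevel set bound of Lemma \ref{lemma00} by exploiting the constraint $|\Omega_\eta\Phi|\le\kappa_\theta$ as an angular localization. The key formula is
\begin{equation*}
\Omega_\eta\Phi(\xi,\eta)=\frac{\lambda'_\mu(|\xi-\eta|)}{|\xi-\eta|}\,\xi\cdot\eta^\perp,
\end{equation*}
from \eqref{OmegaBounds}. Rotating so that $\xi$ lies along $e_1$ with $|\xi|=s$, the angular constraint becomes $s\,|\eta_2|\lesssim\kappa_\theta(1+|\xi-\eta|)$, which confines $\eta$ to a thin horizontal slab whose width we can read off directly.

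For part (i), fix $\xi=(s,0)$ with $s\approx 2^k\gtrsim 1$. The slab width in the $\eta_2$ direction is bounded by $\lesssim U^3\kappa_\theta/s\lesssim U^3\kappa_\theta$. Inside the slab, the identity $|\nabla_\eta\Phi|^2=|\partial_r\Phi|^2+r^{-2}|\partial_\theta\Phi|^2$ combined with $\kappa_\theta\ll 2^q$ forces $|\partial_r\Phi|\gtrsim 2^q$, and since $\eta$ is essentially horizontal in our frame, this translates to $|\partial_{\eta_1}\Phi|\gtrsim 2^q$ as well. Applying Lemma \ref{lemma00} with $n=l=1$ to the one-dimensional map $\eta_1\mapsto\Phi(\xi,(\eta_1,\eta_2))$ for each fixed $\eta_2$ in the slab yields $|\{\eta_1:|\Phi|\le 2^l\}|\lesssim 2^{l-q}$, and multiplying the two widths gives $U^4\kappa_\theta 2^{l-q}$. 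The $\sup_\eta$ bound is entirely symmetric after switching the roles of the two variables.

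For part (ii), when $k_2\le-\D/10$ the variable $\eta$ lies in a small disk around the origin, so the difficulties associated with near-stationary angular directions are absent. Decomposing dyadically over $|\nabla_\eta\Phi|\sim 2^q$ and applying (i) on each piece reduces matters to summing $\sum_q\kappa_\theta 2^{l-q}$. Proposition \ref{spaceres}(i) together with the smallness of $k_2$ constrains this sum to a range of $q$ of logarithmic length in $2^l$, producing the factor $|l|$; the extra $2^{-k_2}$ in the $\sup_\eta$ bound comes from inverting the identity above when $\eta$ is the fixed small variable, in which case the effective slab width in $\xi$ is $\kappa_\theta/|\eta|$ rather than $\kappa_\theta$. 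The general bound \eqref{cas5.55} at bounded frequencies is proved by the same dyadic scheme, but without the $\Omega_\eta$-localization the slab is replaced by the transverse annulus width $\min(2^{k_1},2^{k_2})$ (or $\min(2^{k_1},2^{k})$), which accounts for the appearance of $\min(k_1,k_2)$ in the final bound.

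For part (iii), all frequencies are confined to $[2^{-2\D},U]$, and the non-degeneracy estimate \eqref{res01} guarantees that on every point of $E''$ either $|\nabla_\eta\Phi|\gtrsim U^{-3}$ or $|\nabla_\xi\Phi|\gtrsim U^{-3}$. Splitting $E''=E_1''\cup E_2''$ according to this alternative, on each piece we decompose dyadically over the magnitude of the relevant gradient and apply part (i); the logarithmic factor $|l|$ arises by the same mechanism as in the proof of Proposition \ref{volume}(i)--(ii), where the critical subset $\{|\partial_r\Phi|\ll 1\}$ is handled via Lemma \ref{lemma00} with $l=2$ using a second-derivative lower bound. The main technical obstacle lies precisely here: extracting a clean $|l|$ (rather than $U^c|l|$) from the dyadic summation requires verifying that, inside the $\Omega_\eta$-slab, the second radial derivative of $\Phi$ admits a uniform lower bound away from a small exceptional set, which parallels the convexity-based reasoning of \eqref{cas30}--\eqref{cas41} but must now be carried out in the angularly localized regime.
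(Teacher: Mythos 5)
Your treatment of part (i) is correct and is essentially the paper's argument: the constraint $|\Omega_\eta\Phi|\le\kappa_\theta\ll 2^q$ forces $|\partial_r\Phi|\ge|\nabla_\eta\Phi|-|\eta|^{-1}|\Omega_\eta\Phi|\gtrsim 2^q$ along each admissible ray, and the one-dimensional bound \eqref{scale2} then gives measure $\lesssim 2^{l-q}$ in the radial variable against an angular width $\lesssim U^4\kappa_\theta$; working in Cartesian rather than polar coordinates is cosmetic.

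Parts (ii) and (iii), however, contain genuine gaps. In (ii) you cannot ``apply (i) on each piece'': the hypothesis $1\lesssim2^{\min(k,k_2)}$ of (i) fails precisely because $k_2\le-\D/10$, and, more seriously, the accounting $\sum_q\kappa_\theta2^{l-q}\lesssim\kappa_\theta2^l|l|$ is false --- a geometric sum over $q$ in any range is dominated by its smallest term and yields $\kappa_\theta2^{l-q_{\min}}$, which exceeds $\kappa_\theta2^l|l|$ unless $q_{\min}\ge-\log_2|l|$, a lower bound on $|\nabla_\eta\Phi|$ you have not established. The correct route is Proposition \ref{separation1}(i): when $|\eta|\approx2^{k_2}\ll1$ and $|\Phi|\le2^l\ll1$, the other variable is pinned near a radius where $|\nabla_\eta\Phi|\gtrsim1$ uniformly, so no dyadic sum in $q$ is needed, and the factor $|l|$ (together with the regime $\kappa_\theta\gtrsim2^{k_2}$ where the angular constraint is vacuous) is inherited from the logarithm in \eqref{cas4}. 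Similarly, \eqref{cas5.55} is obtained in the paper by reduction --- to \eqref{cas4} when all frequencies are $\gtrsim1$, to \eqref{Alx64.1} when $2^{k_1}\ll1$, and to \eqref{Alx64.2} with $\kappa_\theta\approx2^{\min(k,k_2)}$ otherwise --- whereas your direct ``transverse annulus width'' argument would still have to reproduce the $\eps\log(1/\eps)$ mechanism of Proposition \ref{volume}(i) in the comparable-frequency case, which you do not address. Finally, in (iii) the ``main technical obstacle'' you flag (a second-derivative analysis inside the angular slab) is not the issue and is not needed: on $E_1''$ one has $|\nabla_\eta\Phi|\ge2^{-\D}$ outright from \eqref{res01}, hence $|\partial_r\Phi|\gtrsim1$ once $\kappa_\theta$ is small enough, and Lemma \ref{lemma00} with $l=1$ gives radial measure $\lesssim2^l$ directly; the factor $|l|$ is either a harmless loss or comes from \eqref{cas4} in the complementary regime where the angular cutoff is discarded. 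As written, your proofs of (ii), of \eqref{cas5.55}, and of (iii) do not close.
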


\begin{proof} For \eqref{Alx64.1}, it suffices to show the first estimate. Assume $\xi=(s,0)$, $\eta=(r\cos\alpha,r\sin\alpha)$. We first use the fact that
\begin{equation*}
\left\vert\Omega_\eta\Phi(\xi,\eta)\right\vert=\left\vert\frac{\lambda^\prime_\mu(\vert\xi-\eta\vert)}{\vert\xi-\eta\vert}(\xi\cdot\eta^\perp)\right\vert\approx\frac{sr}{1+\vert\xi-\eta\vert}\vert\sin\alpha\vert.
\end{equation*}
In addition, remark that
\begin{equation}\label{Alx64.4}
\vert\partial_r\Phi(\xi,\eta)\vert\ge\vert\nabla_\eta\Phi(\xi,\eta)\vert-\vert\eta\vert^{-1}\vert\Omega_\eta\Phi(\xi,\eta)\vert\gtrsim 2^{q},
\end{equation}
and the estimate now follows from Lemma \ref{lemma00}.

The proof of \eqref{Alx64.2} is similar, using also Proposition \ref{separation1} (i) and the bounds \eqref{cas4}. The bounds \eqref{cas5.55} follow from \eqref{cas4} if $2^{\min(k,k_1,k_2)}\geq 2^{-\D}$, or from \eqref{Alx64.1} if $2^{k_1}\leq 2^{-\D/10}$, or from \eqref{Alx64.2} with $\kappa_\theta\approx 2^{\min(k,k_2)}$ if $2^{\min(k,k_2)}\leq 2^{-\D/10}$.

To prove \eqref{cas5.6} we may assume $\kappa_\theta\ll U^{-2}$ and $2^l\ll U^{-4}$. Define, as in \eqref{cas50}
\begin{equation*}
\begin{split}
&E''_{1}:=\{(\xi,\eta)\in E'':\,|\nabla_{\eta}\Phi(\xi,\eta)|\geq 2^{-\D}\},\quad E''_{2}:=\{(\xi,\eta)\in E'':\,|\nabla_{\xi}\Phi(\xi,\eta)|\geq 2^{-\D}\}.
\end{split}
\end{equation*}
Clearly $E''=E''_1\cup E''_2$, as a consequence of \eqref{res01}. By symmetry, it suffices to prove \eqref{cas5.6} for the first term in the left-hand side. Let, as before, $\xi=(s,0)$, $\eta=(r\cos\theta,r\sin\theta)$. The restriction $\big|\Omega_\eta\Phi(\xi,\eta)\vert\le\kappa_\theta$ gives $|\sin\theta|\lesssim \kappa_\theta$, while the lower bound on $|\nabla_{\eta}\Phi(\xi,\eta)|$ gives, as in \eqref{Alx64.4}, $\vert\partial_r\Phi(\xi,\eta)\vert\gtrsim 1$. The desired conclusion follows from Lemma \ref{lemma00}.
\end{proof}

Finally, we prove some bilinear estimates that involve localization.

\begin{lemma}\label{Shur2Lem}
Assume that $l,n,p\leq -\D/10$. Then
\begin{equation}\label{Shur2Lem1}
\Big\Vert \int_{\mathbb{R}^2}\varphi_l(\Phi_{\sigma\mu\nu}(\xi,\eta))\varphi_n(\Psi_\mu^\dagger(\xi-\eta))\widehat{f}(\xi-\eta)\widehat{g}(\eta)d\eta\Big\Vert_{L^2_\xi}\lesssim 2^{\frac{l+n}{2}}\big\Vert \sup_{\theta\in\mathbb{S}^1}|\widehat{f}(r\theta)|\,\big\Vert_{L^2(rdr)}\Vert g\Vert_{L^2},
\end{equation}
\begin{equation}\label{Shur2Lem3}
\begin{split}
\Big\Vert \int_{\mathbb{R}^2}\varphi_l(\Phi_{\sigma\mu\nu}(\xi,\eta))\varphi_n(\Psi_\mu^\dagger(\xi-\eta))&\varphi_{p}(\Psi_\nu^\dagger(\eta))\widehat{f}(\xi-\eta)\widehat{g}(\eta)d\eta\Big\Vert_{L^2_\xi}\\
&\lesssim \min\{2^{l/2},2^{p/2}\}2^{(l+n)/2}\big\Vert \sup_{\theta\in\mathbb{S}^1}|\widehat{f}(r\theta)|\,\big\Vert_{L^2(rdr)}\Vert g\Vert_{L^2},
\end{split}
\end{equation}
and, assuming $2^k,2^{k_1},2^{k_2}\leq U$,
\begin{equation}\label{Shur2Lem2}
\begin{split}
\Big\Vert \int_{\mathbb{R}^2}&\varphi_l(\Phi(\xi,\eta))\varphi_k(\xi)\varphi_{k_2}(\eta)\varphi_{k_1}(\xi-\eta)\widehat{f}(\xi-\eta)\widehat{g}(\eta)d\eta\Big\Vert_{L^2_\xi}\\
&\lesssim U^4\big\Vert \sup_{\theta\in\mathbb{S}^1}|\widehat{f}(r\theta)|\,\big\Vert_{L^2(rdr)}\Vert g\Vert_{L^2}
\begin{cases}
2^{3l/4}(1+|l|)\qquad&\text{ if }1\lesssim 2^{\min(k,k_1,k_2)},\\
2^{l/2}(1+|l|)\qquad&\text{ in all cases }.
\end{cases}
\end{split}
\end{equation}
\end{lemma}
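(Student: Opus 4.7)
The three estimates share the structure of a bilinear form with one input measured in the ``near-radial'' norm $\bigl\|\sup_{\theta\in\mathbb{S}^1}|\widehat{f}(r\theta)|\bigr\|_{L^2(rdr)}$ and the other in $L^2$. My plan is to reduce all three to a common computation: Schur's test carried out in polar coordinates centered at $\xi$, using the angular $L^\infty$-type control on $\widehat{f}$ to collapse the $\omega$-dependence of the first factor. More precisely, dominate $|\widehat{f}(\xi-\eta)| \le F(|\xi-\eta|)$ with $F(\rho):=\sup_{\theta}|\widehat{f}(\rho\theta)|$ and pass to polar coordinates $\eta = \xi-\rho\omega$, $\rho>0$, $\omega\in\mathbb{S}^1$, with Jacobian $\rho\,d\rho\,d\omega$. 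Since $\Psi^\dagger_\mu$ is radial, the factor $\varphi_n(\Psi^\dagger_\mu(\xi-\eta))$ in \eqref{Shur2Lem1}--\eqref{Shur2Lem3} becomes $\chi_n(\rho)$, and by \eqref{cas2} it is supported on a set of measure $\lesssim 2^n$ (two thin annuli around $\rho = \gamma_{1,2}$).

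Two successive applications of Cauchy--Schwarz--first in $\rho$ against $F\in L^2(\rho\,d\rho)$, then in $\omega\in\mathbb{S}^1$--reduce the $L^2_\xi$ norm to a product of $\|F\|^2\|\widehat g\|^2$ with two geometric factors
\begin{equation*}
A_0 := \sup_{\xi,\rho}\bigl|\{\omega\in\mathbb{S}^1 : |\Phi(\xi,\xi-\rho\omega)|\le 2^l\}\bigr|,\qquad V := \sup_\eta \int \varphi_l(\Phi(\xi,\eta))\,\chi_n(|\xi-\eta|)\,d\xi,
\end{equation*}
after unfolding the change of variables $\eta = \xi-\rho\omega$. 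The task is then to show $A_0\cdot V \lesssim 2^{l+n}$. The map $\omega\mapsto\Phi(\xi,\xi-\rho\omega)$ has uniformly bounded $C^2$ norm (all derivatives of $\Phi$ are $O(1)$), so any degeneracy of $\partial_\omega\Phi$ is at worst simple and Lemma \ref{lemma00}(i) applied with $l=2$ yields $A_0 \lesssim 2^{l/2}$. For $V$, the cutoff $\chi_n$ localizes $|\xi-\eta|$ to a radial set of measure $\lesssim 2^n$, and on each fixed radial slice the sublevel set $\{|\Phi|\le 2^l\}$ has angular measure $\lesssim 2^{l/2}$ by the same 1D argument, yielding $V \lesssim 2^{l/2+n}$. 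The product gives \eqref{Shur2Lem1}.

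Inequality \eqref{Shur2Lem3} is proved identically, except that the additional radial cutoff $\varphi_p(\Psi^\dagger_\nu(\eta))$ further restricts $|\eta|$ to an annulus of width $\lesssim 2^p$; absorbing this into the bookkeeping produces the claimed $\min\{2^{l/2},2^{p/2}\}$ gain (taking the radial cut in $p$ when $p\le l$ and the angular cut in $l$ otherwise). For \eqref{Shur2Lem2} the same polar decomposition is used, but the $\Psi^\dagger$-cutoffs are replaced by frequency cutoffs $\varphi_k,\varphi_{k_1},\varphi_{k_2}$; the 2D volume $V$ is then controlled by \eqref{cas4} or \eqref{cas5.55}, producing the $U^4$ factor and the intrinsic logarithmic loss $(1+|l|)$. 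The improved exponent $2^{3l/4}$ in the regime $1 \lesssim 2^{\min(k,k_1,k_2)}$ comes from replacing the worse of the two radial--angular sublevel bounds by the sharper estimate \eqref{cas4}, using that at such frequencies $\partial_\rho\Phi$ is bounded below thanks to Proposition \ref{spaceres}.

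The chief technical point is the angular sublevel bound $A_0\lesssim 2^{l/2}$, which would deteriorate near configurations where $\partial_\omega\Phi$ vanishes to high order. This does not happen because in \eqref{Shur2Lem1}--\eqref{Shur2Lem3} the condition $n\le-\mathcal{D}/10$ keeps $|\xi-\eta|$ separated from $\gamma_{1,2}$, so by Proposition \ref{separation1}(i)--(iii) the integration is uniformly away from the space-time resonances and $\partial_\omega^2\Phi$ stays bounded below on the relevant arcs; in \eqref{Shur2Lem2} the frequency restrictions play the same role via Proposition \ref{spaceres}(i). The most delicate single step is the $2^{3l/4}$ refinement, which demands partitioning the angular arc into sub-arcs where $\partial_\omega\Phi$ respectively $\partial_\rho\Phi$ dominates and applying Lemma \ref{lemma00} separately on each.
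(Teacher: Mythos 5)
Your treatment of \eqref{Shur2Lem1} and \eqref{Shur2Lem3} follows essentially the paper's route: Schur's test with the kernel containing $F(|\xi-\eta|):=\sup_\theta|\widehat f(|\xi-\eta|\theta)|$, polar coordinates centered at $\xi$, an angular sublevel bound of size $2^{l/2}$, and Cauchy--Schwarz in the radial variable against $F\in L^2(r\,dr)$ over the annuli of measure $2^n$ where $\varphi_n(\Psi^\dagger_\mu(\xi-\eta))$ is supported. One correction: the cutoff $\varphi_n(\Psi^\dagger_\mu(\xi-\eta))$ with $n\le-\D/10$ localizes $|\xi-\eta|$ \emph{near} $\gamma_{1,2}$ (this is precisely what produces the factor $2^{n/2}$); it does not separate it from them, contrary to what you assert in your last paragraph. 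The uniformity of the angular bound does not come from separation from space-time resonances but from the fact that, once $|\xi|\gtrsim1$ and $|\eta|\gtrsim 1$ (which one may assume by Proposition \ref{separation1}), the function $\theta\mapsto\Phi(\xi,\xi-r\theta)$ satisfies $|\partial_\theta\Phi|+|\partial^2_\theta\Phi|\gtrsim1$, so Lemma \ref{lemma00} applies as in the proof of Proposition \ref{volume}(i). For \eqref{Shur2Lem3} the correct refinement is the angular bound $\int_\theta\varphi_l(\Phi)\varphi_p(\Psi^\dagger_\nu(\xi-r\theta))\,d\theta\lesssim\min\{2^l,2^p\}$ — both cutoffs are first-derivative-nondegenerate constraints in $\theta$ thanks to the angle separation of Proposition \ref{separation1}(ii) — rather than your ``radial cut in $p$''; the outcome is the same.

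The genuine gap is in \eqref{Shur2Lem2}, in the derivation of the exponent $2^{3l/4}$. Your mechanism rests on the claim that for $2^{\min(k,k_1,k_2)}\gtrsim1$ the radial derivative $\partial_\rho\Phi$ is bounded below by Proposition \ref{spaceres}. This is false: space resonances $\nabla_\eta\Phi(\xi,\eta)=0$ occur at $\eta=p(\xi)$ with $|p(\xi)|\approx|\xi|\approx1$ (for instance $\eta=\xi/2$ when $(\sigma,\mu,\nu)=(b,e,e)$), so the radial derivative does vanish in exactly the regime at issue; Proposition \ref{spaceres}(i) excludes this only when $\mu+\nu=0$ or the frequencies are extreme. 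Your fallback of partitioning into sub-arcs where $\partial_\omega\Phi$ or $\partial_\rho\Phi$ dominates yields only first-derivative sublevel bounds of size $2^l$ on each piece and does not produce the exponent $3/4$. The actual source of $2^{3l/4}$ is an interpolation: after Cauchy--Schwarz against $\|F\|_{L^2(rdr)}$ one must bound $\big\|\int_\theta\varphi_l(\Phi)\varphi_{k_2}(\xi-r\theta)\,d\theta\big\|_{L^2(rdr)}$, and
\begin{equation*}
\Big\|\int_\theta h\,d\theta\Big\|_{L^2(rdr)}\lesssim \Big(\sup_{r}\int_\theta h\,d\theta\Big)^{1/2}\cdot\|h\|_{L^1(r\,dr\,d\theta)}^{1/2},
\end{equation*}
i.e.\ the geometric mean of the angular sublevel measure ($\lesssim2^{l/2}$, valid only when $|\xi|,|\eta|\gtrsim1$, whence the frequency restriction) and the full two-dimensional volume bound \eqref{cas4} ($\lesssim U^{O(1)}2^{l}|l|$, which is where the logarithm enters). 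This gives $U^{O(1)}2^{l/4}\cdot2^{l/2}(1+|l|)^{1/2}$ for the Schur factor and hence $2^{3l/4}(1+|l|)$ overall; without it your argument only recovers the weaker bound $2^{l/2}(1+|l|)$ of the second alternative in \eqref{Shur2Lem2}.
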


\begin{proof} We record the following identity: if $\xi=(s,0)$, $\eta=(r\cos\alpha,r\sin\alpha)$, then
\begin{equation}\label{SimpleComp}
\partial_\alpha\vert\xi-\eta\vert=\frac{sr}{\vert \xi-\eta\vert}\sin\alpha=-\frac{\xi\cdot\eta^\perp}{\vert\xi-\eta\vert}.
\end{equation}
We may assume that $\big\Vert \sup_{\theta\in\mathbb{S}^1}|\widehat{f}(r\theta)|\,\big\Vert_{L^2(rdr)}=1$.

We start with \eqref{Shur2Lem1}. Note that by Proposition \ref{separation1} (i), we may freely assume that $\vert\xi\vert\gtrsim 1$ and $\vert\eta\vert\gtrsim 1$.  By Schur's test, it suffices to show that
\begin{equation}\label{Shur2suff}
\begin{split}
\sup_{\xi}\int_{\mathbb{R}^2}\varphi_l(\Phi_{\sigma\mu\nu}(\xi,\eta))\varphi_n(\Psi_\mu^\dagger(\xi-\eta))\vert \widehat{f}(\xi-\eta)\vert d\eta&\lesssim 2^{\frac{l+n}{2}},\\
\sup_\eta\int_{\mathbb{R}^2}\varphi_l(\Phi_{\sigma\mu\nu}(\xi,\eta))\varphi_n(\Psi_\mu^\dagger(\xi-\eta))\vert \widehat{f}(\xi-\eta)\vert d\xi&\lesssim 2^{\frac{l+n}{2}}.
\end{split}
\end{equation}
We focus on the first inequality. Fix $\xi\in\mathbb{R}^2$ and introduce polar coordinates. The left-hand side is dominated by
\begin{equation*}
C\sum_{i\in\{1,2\}}\int_{\theta\in\mathbb{S}^1}\int_0^\infty \varphi_l(\Phi(\xi,\xi-r\theta))\varphi_{\leq n+C}(|r-\gamma_i|)\vert \widehat{f}(r\theta)\vert rdrd\theta,
\end{equation*}
for a constant $C$ sufficiently large. Therefore, it suffices to show that
\begin{equation}\label{EstimThetaInt}
\sup_{r,\vert\xi\vert\gtrsim 1}\int_{\theta\in\mathbb{S}^1}\varphi_l(\Phi(\xi,\xi-r\theta))\varphi_{\geq -\D}(\xi-r\theta)d\theta\lesssim 2^{l/2},
\end{equation}
which is readily verified (see Proposition \ref{volume} (i) for similar arguments). The second inequality in \eqref{Shur2suff} follows similarly.

We now turn to \eqref{Shur2Lem3}. We may assume that $|\mu|=|\nu|=b$. Using Proposition \ref{separation1}, we know that $\vert\angle\xi,\eta\vert\gtrsim 1$, $\vert\xi\vert\gtrsim 1$ and therefore, using \eqref{SimpleComp}, we have $\big|\partial_\alpha\vert\xi-\eta\vert\big|\approx 1$. We proceed as for \eqref{Shur2suff} but replace \eqref{EstimThetaInt} by
\begin{equation*}
\sup_{r\approx 1}\int_{\theta\in\mathbb{S}^1}\varphi_l(\Phi_{\sigma\mu\nu}(\xi,\xi-r\theta))\varphi_p(\Psi_\nu^\dagger(\xi-r\theta))d\theta\lesssim \min\{2^l,2^p\}.
\end{equation*}

We now turn to \eqref{Shur2Lem2}. To prove the inequality when $1\lesssim 2^{\min(k,k_1,k_2)}$ it suffices to show that
\begin{equation}\label{Alx30}
\begin{split}
\sup_{\xi}\int_{\mathbb{R}^2}\varphi_l(\Phi(\xi,\eta))\varphi_k(\xi)\varphi_{k_1}(\xi-\eta)\varphi_{k_2}(\eta)\vert \widehat{f}(\xi-\eta)\vert d\eta & \lesssim U^42^{3l/4}(1+|l|),\\
\sup_\eta\int_{\mathbb{R}^2}\varphi_l(\Phi(\xi,\eta))\varphi_k(\xi)\varphi_{k_1}(\xi-\eta)\varphi_{k_2}(\eta)\vert \widehat{f}(\xi-\eta)\vert d\xi& \lesssim U^42^{3l/4}(1+|l|).
\end{split}
\end{equation}
We show the first inequality. Introducing polar coordinates, we estimate
\begin{equation*}
\begin{split}
\int_{\mathbb{R}^2} &\varphi_l(\Phi(\xi,\xi-r\theta))\varphi_k(\xi)\varphi_{k_1}(r\theta)\varphi_{k_2}(\xi-r\theta)\vert \widehat{f}(r\theta)\vert rdr d\theta\\
&\lesssim \big\Vert\sup_\theta |\widehat{f}(r\theta)|\,\big\Vert_{L^2(rdr)}\Big\Vert \int_\theta\varphi_l(\Phi(\xi,\xi-r\theta))\varphi_k(\xi)\varphi_{k_1}(r\theta)\varphi_{k_2}(\xi-r\theta)\Big\Vert_{L^2(rdr)}\\
&\lesssim \Vert \varphi_l(\Phi(\xi,\xi-\eta))\Vert_{L^2_\eta} \sup_{r,|\xi|\gtrsim 1}\left\{\int_\theta\varphi_l(\Phi(\xi,\xi-r\theta))\varphi_{k_2}(\xi-r\theta)d\theta\right\}^\frac{1}{2}\\
&\lesssim U^42^{3l/4}(1+|l|),
\end{split}
\end{equation*}
using Proposition \ref{volume} (i) and \eqref{EstimThetaInt}. The second inequality in \eqref{Alx30} follows in a similar way. Also, the inequality in \eqref{Shur2Lem2} corresponding to $2^{\min(k,k_1,k_2)}\ll 1$ follows in the same way.
\end{proof}

\end{document}